\DeclareMathOperator{\Ker}{{Ker}}
\DeclareMathOperator{\Coker}{{Coker}}
\DeclareMathOperator{\Hom}{{Hom}}
\DeclareMathOperator{\sHom}{\underline{\Hom}}
\DeclareMathOperator{\RHom}{\mathrm{R}\!\Hom}
\DeclareMathOperator{\End}{{End}}
\DeclareMathOperator{\Ext}{{Ext}}
\DeclareMathOperator{\pd}{proj.dim}
\DeclareMathOperator{\id}{inj.dim}
\DeclareMathOperator{\gd}{gl.dim}
\DeclareMathOperator{\md}{{mod}}
\DeclareMathOperator{\Md}{{Mod}}
\DeclareMathOperator{\smd}{\underline{\md}}
\DeclareMathOperator{\injsmd}{\overline{\md}}
\DeclareMathOperator{\proj}{{proj}}
\DeclareMathOperator{\inj}{{inj}}
\DeclareMathOperator{\thick}{thick}
\DeclareMathOperator{\per}{per}
\DeclareMathOperator{\add}{{add}}
\DeclareMathOperator{\Tr}{Tr}
\DeclareMathOperator{\CM}{{CM}}
\DeclareMathOperator{\sCM}{\underline{\CM}}
\DeclareMathOperator{\GP}{CM}
\DeclareMathOperator{\sGP}{\underline{\GP}}
\DeclareMathOperator{\Supp}{Supp}
\DeclareMathOperator{\gr}{gr}
\DeclareMathOperator{\image}{Im}
\renewcommand{\Im}{\image}
\def\A{\mathcal{A}}
\def\C{\mathcal{C}}
\def\D{\mathcal{D}}
\def\E{\mathcal{E}}
\def\sE{\underline{\E}}
\def\H{\mathcal{H}}
\def\M{\mathcal{M}}
\def\S{\mathcal{S}}
\def\T{\mathcal{T}}
\def\U{\mathcal{Y}}
\def\sU{\underline{\U}}
\def\X{\mathcal{X}}
\def\Z{\mathbb{Z}}
\def\a{\alpha}
\def\b{\beta}
\def\Del{\Delta}
\def\L{\Lambda}
\def\Om{\Omega}
\def\op{\mathrm{op}}
\def\G{\Gamma}
\def\sG{\underline{\Gamma}}
\def\ac{\mathrm{ac}}
\def\rD{\mathrm{D^b}}
\def\Dsg{\mathrm{D_{sg}}}
\def\grsg{\mathrm{D_{sg}^\Z}}
\def\rK{\mathrm{K^b}}
\def\rC{\mathrm{C^b}}
\def\DF{\mathrm{DF}}
\def\KF{\mathrm{KF}}
\def\CF{\mathrm{CF}}
\def\GrF{\mathrm{GrF}}
\newtheorem{Thm}{Theorem}[section]
\newtheorem{Lem}[Thm]{Lemma}
\newtheorem{Prop}[Thm]{Proposition}
\newtheorem{Cor}[Thm]{Corollary}
\theoremstyle{definition}
\newtheorem{Def}[Thm]{Definition}
\newtheorem{Ex}[Thm]{Example}
\newtheorem{Con}[Thm]{Construction}
\newtheorem{Not}[Thm]{Notation}
\theoremstyle{remark}
\newtheorem{Rem}[Thm]{Remark}
\renewcommand{\theprf}
\renewcommand{\theequation}{\arabic{section}.\arabic{equation}}
\title{Yoneda algebras and their singularity categories}
\author{Norihiro Hanihara}
\subjclass[2010]{16E35, 16G50, 16E65, 16P70, 18E30}
\keywords{Yoneda algebra, singularity category, Cohen-Macaulay module, stable category, derived category, tilting theory, realization functor}
\address{Graduate School of Mathematics, Nagoya University, Chikusa-ku, Nagoya, 464-8602, Japan}
\email{m17034e@math.nagoya-u.ac.jp}
\thanks{This work is supported by JSPS KAKENHI Grant Number JP19J21165}
\begin{document}
\begin{abstract}
For a finite dimensional algebra $\L$ of finite representation type and an additive generator $M$ for $\md\L$, we investigate the properties of the Yoneda algebra $\G=\bigoplus_{i \geq 0}\Ext_\L^i(M,M)$. We show that $\G$ is graded coherent and Gorenstein of self-injective dimension at most $1$, and the graded singularity category $\grsg(\G)$ of $\G$ is triangle equivalent to the derived category of the stable Auslander algebra of $\L$.
These results remain valid for representation-infinite algebras. For this we introduce the Yoneda category $\U$ of $\L$ as the additive closure of the shifts of the $\L$-modules in the derived category $\rD(\md\L)$. We show that $\U$ is coherent and Gorenstein of self-injective dimension at most $1$, and the singularity category of $\U$ is triangle equivalent to the derived category $\rD(\md(\smd\L))$ of the stable category $\smd\L$. To give a triangle equivalence, we apply the theory of realization functors. We show that any algebraic triangulated category has an f-category over itself by formulating the filtered derived category of a DG category, which assures the existence of a realization functor.
\end{abstract}
\maketitle
\section{Introduction}
Yoneda algebras form a class of algebras which has long been studied in ring theory and representation theory. They are defined, for a ring $\L$ and a $\L$-module $M$, by $\G=\bigoplus_{i \geq 0}\Ext_\L^i(M,M)$ as an abelian group. It has a structure of a graded ring given by the Yoneda product, that is, the concatenation of Yoneda classes of long exact sequences. Usually, Yoneda algebras are studied for the case $M$ is a semisimple module, for example, in Koszul duality \cite{BGG,BGS,Ke94,Ke01,GM1,GM2}, and cohomology rings of finite groups in modular representation theory \cite{Ben2}. 

The subject of this paper, on the other hand, is to investigate the properties of Yoneda algebras $\G$ arising from an additive generator $M$. Although there are some interesting results in this setup for a construction of derived equivalences \cite{HX}, only few studies on such $\G$ have been done. We will give some fundamental ring theoretic (Theorem \ref{thm1}) and representation theoretic (Theorem \ref{introthm}) properties of these Yoneda algebras. Our fully general results will be summarized in Theorem \ref{introThm}.

Let us state our setup precisely. Throughout we fix a field $k$. 
\begin{Def}\label{eqyon}
Let $\L$ be a finite dimensional $k$-algebra of finite representation type, and $M\in\md\L$ an additive generator. We call
\[ \G=\bigoplus_{i \geq 0}\Ext_\L^i(M,M) \]
the {\it Yoneda algebra of $\L$}, which is uniquely determined by $\L$ up to graded Morita equivalence.
\end{Def}

Clearly $\G$ is finite dimensional if and only if $\L$ has finite global dimension, but in general $\G$ is even far from being Noetherian. On the other hand, we have the following notion which generalizes Noetherian property. 
\begin{Def}
Let $R$ be a graded ring.
\begin{enumerate}
\item We say that $R$ is {\it left {\rm (}resp. right{\rm )} graded coherent} if the category $\md^\Z\!R$ (resp. $\md^\Z\!R^\op$) of finitely presented graded left (resp. right) $R$-modules is abelian. We say that $R$ is {\it graded coherent} if it is left and right graded coherent.
\item A graded coherent ring $R$ is called {\it $d$-Gorenstein} if it has injective dimension at most $d$ in $\md^\Z\!R$ and in $\md^\Z\!R^\op$.
\end{enumerate}
\end{Def}
Clearly, any Noetherian graded ring is graded coherent. The notion of Gorenstein rings is important in ring theory \cite{Ma,EJ} and algebraic geometry \cite{Har}, and our definition is an adaptation to coherent rings. Our first main result shows that our Yoneda algebras enjoy nice ring theoretic and homological properties that classical ones do not.
\begin{Thm}\label{thm1}
The Yoneda algebra $\G$ of $\L$ in Definition \ref{eqyon} is graded coherent and $1$-Gorenstein.
\end{Thm}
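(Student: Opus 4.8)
The plan is to transport everything into the bounded derived category. Since $M$ is a module, $\Hom_{\rD(\md\L)}(M,M[i])=\Ext^i_\L(M,M)$ for $i\ge 0$ and vanishes for $i<0$, so $\G$ is canonically the graded endomorphism ring $\bigoplus_{i}\Hom_{\rD(\md\L)}(M,M[i])$ of $M$. Setting $\U=\add\{M[i]\mid i\in\Z\}\subseteq\rD(\md\L)$, the assignment $X\mapsto\bigoplus_i\Hom_{\rD(\md\L)}(M,X[i])$ is a graded Morita equivalence $\U\xrightarrow{\sim}\proj^\Z\!\G$, hence induces an equivalence $\md^\Z\!\G\simeq\md\,\U$ with the category of finitely presented functors on $\U$. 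The left/right symmetry of the statement is dealt with once and for all by the $k$-duality $D\colon\md\L\to\md\L^\op$, which sends $M$ to an additive generator $DM$ of $\md\L^\op$ and identifies $\G^\op$ with the Yoneda algebra of $\L^\op$; so it suffices to argue on one side throughout.

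Graded coherence then amounts to showing that $\U$ has weak kernels. Given $f\colon U\to V$ in $\U$, complete it to a triangle $W\to U\xrightarrow{f}V\to W[1]$ in $\rD(\md\L)$. Because $\L$ is representation-finite, $\add M=\md\L$, so every object of $\rD(\md\L)$ — in particular $W$ — has all of its cohomologies in $\add M$. The key lemma is that such an object admits a right $\U$-approximation $p\colon U_W\to W$, i.e. $\Hom_{\rD(\md\L)}(U',p)$ is surjective for all $U'\in\U$: one constructs $U_W$ by induction on the cohomological amplitude of $W$ via the truncation triangles $\tau_{\le a}C\to C\to\tau_{>a}C\to$, the point being that the stalk complexes $H^i(C)[-i]$ already lie in $\U$ and that the obstruction to lifting an approximation of $\tau_{>a}C$ through the triangle can be controlled — and, when necessary, killed by enlarging the approximation — using the directedness $\Hom_{\rD(\md\L)}(M[n],M[m])=0$ for $n>m$. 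Granting this, the composite $U_W\to W\to U$ is a weak kernel of $f$ in $\U$, so $\md\,\U\simeq\md^\Z\!\G$ is abelian and $\G$ is graded coherent.

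For the Gorenstein statement I would invoke Auslander--Reiten theory. Since $\L$ is finite dimensional, $\rD(\md\L)$ is Hom-finite and has Auslander--Reiten triangles and $\md\L$ is a dualizing $k$-variety; these properties pass to $\U$ and show that $\md\,\U$ has enough injectives together with a duality exchanging $\md\,\U$ and $\md(\U^\op)$ under which the projective $\Hom_\U(-,M[n])$ — that is, $\G$ regarded as a graded module over itself — corresponds to an injective object. To bound its injective dimension by $1$ it suffices, using graded coherence to keep syzygies finitely presented, to show $\Ext^2_{\md\,\U}(F,\Hom_\U(-,M))=0$ for every finitely presented $F$; writing $F$ as a second syzygy and resolving it by representables obtained from $\U$-approximations of the cones appearing in step (2), this becomes the vanishing of $H^{\ge 2}$ of the complex obtained by applying $\Hom_{\rD(\md\L)}(-,M)$ to a rotated triangle, which holds because in $\rD(\md\L)$ the cone of a map between modules has cohomology concentrated in two consecutive degrees, so the syzygy process stabilises after one step in the relevant range. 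Equivalently, the almost split triangles in $\rD(\md\L)$, read relative to $\U$, produce for each non-relatively-projective indecomposable of $\U$ an exact sequence in $\md\,\U$ of the expected shape, which is exactly what forces the injective coresolution of the projectives of $\md^\Z\!\G$ to have length at most $1$.

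The main obstacle is this last point: establishing injective dimension \emph{at most} $1$ rather than merely $\le 2$. The degree-zero part $\G^0=\End_\L(M)$ is the classical Auslander algebra, whose injective dimension is in general exactly $2$, so the bound genuinely uses the positive part of $\G$: a map of modules whose cone would otherwise contribute an $\Ext^2$ is resolved one step earlier once one is allowed to use the shifted copies $M[n]$. Making the approximation arguments of steps (2) and (3) uniform, and bookkeeping the (possibly non-Noetherian) passage between $\md^\Z\!\G$ and $\Md^\Z\!\G$ when forming injective coresolutions, is where the real work lies; the rest is formal.
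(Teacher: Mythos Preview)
Your setup is exactly right and matches the paper: translate to the Yoneda category $\U=\add\{M[i]\}\subset\rD(\md\L)$ via the Morita equivalence $\U\simeq\proj^\Z\!\G$, and argue on one side using the $k$-duality.

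For coherence your idea is correct but less direct than the paper's. Rather than inducting on cohomological amplitude, the paper takes an injective resolution $L\in\mathrm{K}^{+,\mathrm{b}}(\inj\L)$ and observes that the natural map from the cycles $\bigoplus_{i\le n}Z^iL[-i]\to L$ (for $n$ with $H^{\ge n}L=0$) is already a right $\U$-approximation: any map $A[j]\to L$ from a module is either represented by a chain map (hence factors through $Z^{-j}L[j]$) or, for $j<-n$, factors through the brutal truncation $L_{\ge n+1}\simeq Z^{n+1}L[-n]$. This is cleaner and also yields the precise form of the approximation needed later.

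The Gorenstein part has a genuine gap. First, your appeal to Auslander--Reiten triangles in $\rD(\md\L)$ and to $\U$ being a dualizing variety presupposes $\gd\L<\infty$ (that is when $\rD(\md\L)$ has a Serre functor); but Theorem~\ref{thm1} is stated for arbitrary representation-finite $\L$, and the paper explicitly remarks that $\U$ is \emph{not} a dualizing variety when $\gd\L=\infty$. Second, even granting a Serre functor, your argument for $\id\le 1$ is only heuristic. The decisive lemma you are missing is that for \emph{every} $L\in\rD(\md\L)$ the functor $\U(-,L)$ has projective dimension at most $1$: the cycle/boundary short exact sequence $0\to Z^{\le n}L\to L\to B^{\le n}L[1]\to 0$ (in complexes) gives a triangle $B^{\le n}L\to Z^{\le n}L\to L\to B^{\le n}L[1]$ with both end terms in $\U$, and since shifts of the approximation are again approximations this yields a length-one projective resolution $0\to\U(-,B^{\le n}L)\to\U(-,Z^{\le n}L)\to\U(-,L)\to 0$. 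In the finite global dimension case this already finishes: injectives in $\md\U$ are $D\U(A,-)\simeq\U(-,\nu A)$, hence have $\pd\le 1$, so projectives have $\id\le 1$. For $\gd\L=\infty$ the paper instead uses the Auslander--Bridger sequence: the length-one resolution above shows that the comparison map $\U(L,-)\to\U(-,L)^\ast$ is surjective for all $L$, hence $X\to X^{\ast\ast}$ is surjective for all $X\in\md\U^\op$, which forces $\Ext^2_\U(-,\U)=0$. Your sketch via ``cones of maps between modules have cohomology in two degrees'' points in the right direction but does not isolate this lemma, and the AR-triangle formulation you propose gives $\gd\le 2$ information on simples rather than the required $\id\le 1$ on projectives.
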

The next aim of this paper is to study the representation theory of $\G$. Recall that the (graded) {\it singularity category $\grsg(R)$} of a graded ring $R$ is the Verdier quotient of the bounded derived category $\rD(\md^\Z\!R)$ by the perfect derived category $\per^\Z\!R=\rK(\proj^\Z\!R)$, which was introduced by Buchweitz in \cite{Bu} and rediscovered by Orlov in the context of mirror symmetry \cite{O}.
Over Gorenstein rings, the singularity category has another presentation as the {\it stable category of Cohen-Macaulay modules}. 
\begin{Def}
Let $R$ be a graded coherent Gorenstein ring. A graded module $X$ is {\it Cohen-Macaulay} if it is finitely presented and satisfies $\Ext_R^i(X,R)=0$ for all $i>0$.
\end{Def}
These Cohen-Macaulay modules over coherent Gorenstein rings are sometimes called {\it Gorenstein-projective} (or {\it totally reflexive}) modules \cite{ABr,EJ} which are defined over arbitrary rings. For a graded Gorenstein ring $R$, we denote by $\GP^\Z\!R$ the category of graded Cohen-Macaulay $R$-modules. It is a Frobenius category and hence the stable category $\sGP^\Z\!R$ has a natural structure of a triangulated category \cite{Hap}. Moreover, there exists a canonical equivalence $\sCM^\Z\!R \xrightarrow{\simeq} \grsg(R)$ of triangulated categories \cite{Bu,KV,Ric}, by which we will identify these categories. The studies of Cohen-Macaulay modules over Gorenstein rings have attracted enormous attention \cite{CR,Yo,Si,LW,Iy2} and recently, various Gorenstein algebras have been discovered and their Cohen-Macaulay representation theory is investigated, for example, in \cite{AIR,BIRSc,BIY,DL,GLS,IO,IT,JKS,KST,KMV,KR,Ki2,LZ,MU,MYa,SV,U,Ya}.

The second main result of this paper is the following equivalence between the singularity categories of Yoneda algebras and derived categories.
\begin{Thm}\label{introthm}
Let $\G$ be the Yoneda algebra of $\L$ in Definition \ref{eqyon}, and $\sG$ the stable Auslander algebra of $\L$. Then there exists a triangle equivalence
\[ \xymatrix{ \rD(\md\sG) \ar[r]^-\simeq & \grsg(\G)=\sGP^\Z\!\G}, \]
which takes $\sG$ to $\G_0^\ast[-1]$, and $D\sG$ to $\G_0$. Here, $(-)^\ast=\RHom_\G(-,\G)$ and $D=\Hom_k(-,k)$.
\end{Thm}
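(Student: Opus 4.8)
The plan is to realize $\grsg(\G)=\sGP^\Z\!\G$ as a triangulated category admitting a silting (in fact tilting) object, identify its (graded) endomorphism algebra with $\sG$, and then invoke a tilting/realization argument to conclude the equivalence with $\rD(\md\sG)$. The natural candidate for the tilting object is $\G_0^\ast[-1]$, where $\G_0=\End_\L(M)$ is the (ordinary) Auslander algebra: since $\G$ is $1$-Gorenstein by Theorem \ref{thm1}, $\G_0^\ast=\RHom_\G(\G_0,\G)$ is concentrated in two degrees, and the truncation / syzygy calculus for Cohen-Macaulay modules should show $\G_0^\ast[-1]\in\sGP^\Z\!\G$. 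The first substantive step is therefore to compute $\underline{\Hom}^\bullet_{\sGP^\Z\!\G}(\G_0^\ast[-1],\G_0^\ast[-1])$ and show it is concentrated in degree $0$ and isomorphic, as an algebra, to $\sG=\underline{\End}_\L(M)$; here the identification $\Ext^*_\L(M,M)=\G$ together with the long exact sequences defining $\G_0\to\G$ and its kernel/cokernel in $\md^\Z\!\G$ should reduce the Ext-computation to stable homomorphisms of $\L$-modules, i.e. to $\sG$. Dually, $\G_0=D\sG\otimes_{\sG}(-)$-type bookkeeping explains the claim that the equivalence sends $D\sG$ to $\G_0$.

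Next I would establish that $\G_0^\ast[-1]$ (or equivalently its shifts) generates $\sGP^\Z\!\G$ as a thick subcategory, preferably that it is a classical generator so the object is tilting rather than merely partial tilting. The idea is that $\md^\Z\!\G$ is built from the simple $\G$-modules $S_v$ at the vertices $v$ and their shifts, that each $S_v$ has finite projective dimension away from finitely many, and that the Cohen-Macaulay approximations of the $S_v$ lie in $\add$ of shifts of $\G_0^\ast[-1]$ modulo perfect complexes. Concretely, the minimal graded projective resolution of $\G_0$ over $\G$ should be (eventually) $2$-periodic-free after one step because of $1$-Gorensteinness, which both places $\G_0^\ast[-1]$ in $\sGP^\Z\!\G$ and controls the thick closure. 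This generation/Hom-computation step is where the representation theory of the Auslander algebra (almost split sequences, the structure of $\md(\smd\L)$, Auslander--Reiten duality) enters, and it is the step I expect to be the main obstacle: pinning down exactly why no higher self-extensions appear and why the shifts of this single module suffice to reach all of $\sGP^\Z\!\G$.

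Once a tilting object $T:=\G_0^\ast[-1]$ with $\End(T)\cong\sG$ and $\thick(T)=\sGP^\Z\!\G$ is in hand, the final step is to produce the triangle equivalence $\rD(\md\sG)\xrightarrow{\simeq}\grsg(\G)$ sending $\sG$ to $T$. Since $\sGP^\Z\!\G$ is the stable category of a Frobenius category it is algebraic, so by the realization-functor machinery developed later in the paper (every algebraic triangulated category admits an f-category over itself, hence a realization functor) one obtains an exact functor $\rD(\md\sG)=\rD(\md\End T)\to\sGP^\Z\!\G$ extending $\sG\mapsto T$; it is fully faithful because $T$ has no self-extensions and $\md\sG$ has finite global dimension (as $\sG$ is, for $\L$ representation-finite, a finite-dimensional algebra of finite global dimension), and essentially surjective because $\thick(T)=\sGP^\Z\!\G$. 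The image of $D\sG$ under this functor is computed from the equivalence $\rD(\md\sG)\simeq\per\sG$ and the above Hom-computation, giving $\G_0$. I would remark that the representation-infinite generalization replaces $\md\sG$ by $\md(\smd\L)$ and $\sGP^\Z\!\G$ by the singularity category of the Yoneda category $\U$, with the same proof scheme.
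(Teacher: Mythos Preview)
Your broad strategy---find an object in $\sGP^\Z\!\G$ with no self-extensions and stable endomorphism algebra $\sG$, then deduce the equivalence---matches the paper's approach for the case $\gd\L<\infty$ (Section~\ref{finitegldim}). There the paper works with $\G_0$ rather than $\G_0^\ast[-1]$, obtaining $\injsmd\L$ as the stable endomorphism category; your dual choice is equally valid and indeed is what Lemma~\ref{star} records.

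The genuine gap is your claim that $\sG$ has finite global dimension for every representation-finite $\L$. This is false: for $\L=k[x]/(x^{l+1})$ with $l\geq 2$ the stable Auslander algebra is the preprojective algebra of type $A_l$, which is self-injective and non-semisimple, hence of infinite global dimension (see the example at the end of Section~\ref{examples}). The paper explicitly notes after Theorem~1.6 that in general there is \emph{no} tilting object in $\grsg(\G)$ when $\gd\L=\infty$. Consequently your step ``$\thick(T)=\sGP^\Z\!\G$'' fails, and so does your argument for full faithfulness of the realization functor, both of which you justify via $\gd\sG<\infty$.

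The paper's remedy is substantial and is the content of Section~\ref{infinitegldim}: rather than relying on a tilting object, one constructs a bounded $t$-structure $(t^{\leq0},t^{\geq0})$ on $\sGP^\Z\!\G$ directly (Theorem~\ref{t-structure}, via Wakamatsu's lemma and a careful analysis of which triangles in $\D$ give objects of $t^{\leq0}$ and $t^{\geq0}$), identifies its heart with $\md\sG$ (Proposition~\ref{heart}), proves boundedness (Proposition~\ref{bdd}) to get $\thick\H=\sGP^\Z\!\G$, and then checks the coeffaceability criterion of Theorem~\ref{realization}(\ref{coeff}) using that $\H$ has enough injectives given by $\U_0$. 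None of these steps is automatic from the existence of $\G_0^\ast[-1]$; your proposal would need to supply them.
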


In fact, the above results can be generalized for arbitrary finite dimensional algebras $\L$ which are not necessarily representation-finite: we consider the {\it Yoneda category}
\[ \U(\L)= \add \{ A[i] \mid A \in \md\L, \ i \in \Z \} \]
which is the full subcategory of the derived category $\rD(\md\L)$ of $\L$. This is a categorical analogue of the Yoneda algebra of $\L$ (see Proposition \ref{UG}). As in the case of algebras, we have the notion of {coherence}, {Gorenstein} property, and {singularity categories} for categories, and {Cohen-Macaulay} modules over Gorenstein categories, see Definitions \ref{cohdef}, \ref{gordef}, and \ref{cmdef}.
The following is our general main result which contains Theorems \ref{thm1} and \ref{introthm}.
\begin{Thm}\label{introThm}
Let $\L$ be an arbitrary finite dimensional algebra and $\U=\U(\L)$ be the Yoneda category of $\L$.
\begin{enumerate}
\item\label{coherence1} {\rm (Lemma \ref{funct}, Theorem \ref{coherence})} $\U$ is coherent.
\item\label{sid1} {\rm (Theorem \ref{YonIG}, \ref{2IG})} $\U$ is $1$-Gorenstein.
\item\label{tilting0} {\rm (Theorem \ref{tilting}, \ref{tilting2})} There exists a triangle equivalence
\[ \xymatrix{ \rD(\md(\smd\L)) \ar[r]^-\simeq & \Dsg(\U)=\sGP\U}. \]
\end{enumerate}
\end{Thm}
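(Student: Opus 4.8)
The plan is to prove the three parts in order. Write $\D=\rD(\md\L)$, so that $\U$ is the additive closure in $\D$ of the shifts of $\L$-modules; I use throughout that $\D$ is $\Hom$-finite, that the shift $[1]$ restricts to an autoequivalence of $\U$ (so $\md\U$ carries a $\Z$-action), and that every object of $\D$ is a finite iterated extension of objects of $\U$ via its truncation triangles. For part (1) (Lemma \ref{funct} and Theorem \ref{coherence}) the content is that $\U$ has weak kernels, which then makes $\md\U$ abelian. The key input, isolated as Lemma \ref{funct}, is that $\Hom_\D(-,X)|_\U$ is a finitely presented $\U$-module for every $X\in\D$; its simplest nontrivial instance is that $\Ext^m_\L(-,M)$ is a finitely presented functor on $\md\L$ for each $M$ and $m\ge1$, presented by $\Hom_\L(-,I^{m-1})\to\Hom_\L(-,\Omega^{-m}M)$ coming from an injective coresolution of $M$. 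Granting this, a morphism of $\U$ is up to shift a map $f\colon A\to B[n]$ with $A,B\in\md\L$, $n\ge0$, and its weak kernel is read off from the triangle $C\to A\xrightarrow{f}B[n]\to C[1]$ in $\D$ with $C=\mathrm{fib}(f)$: this gives exactness of $\Hom_\D(-,C)|_\U\to\U(-,A)\to\U(-,B[n])$, and finite presentation of $\Hom_\D(-,C)|_\U$ yields finitely many generators of the formal kernel; the right-hand version follows from $\U^{\op}\simeq\U(\L^{\op})$ via the duality $D=\Hom_k(-,k)$. I expect Lemma \ref{funct} to be the main labor, proved by induction on the amplitude and the length of the cohomology of $X$ using the truncation triangles and the presentation of $\Ext^m_\L$ above; the subtlety is to run the induction without circularly using coherence.

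For part (2) (Theorems \ref{YonIG} and \ref{2IG}), with coherence in hand I would bound the injective dimension in $\md\U$ of every representable $\U(-,X)$ by $1$ (the statement for $\U^{\op}$ again following from the duality $D$). The injective objects of $\md\U$ are the functors $J_Y=D\,\U(Y,-)|_\U$, $Y\in\U$: they are injective since $\Hom_{\md\U}(G,J_Y)\cong D\,G(Y)$ naturally in $G$ and evaluation at $Y$ is exact. So it suffices, for each $A\in\md\L$ (only the degree-$0$ shift matters, by the $\Z$-action), to build an exact sequence $0\to\U(-,A)\to J_{Y_0}\to J_{Y_1}\to 0$, with $Y_0,Y_1\in\U$ produced from a projective resolution and an injective coresolution of $A$ over $\L$; the length is $1$ because, once shifts are allowed, reconstructing $A$ from these two resolutions is a two-step operation. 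Equivalently, writing $(-)^{\ast}=\RHom_\U(-,\U)$, one shows $G^{\ast}$ is concentrated in cohomological degrees $0$ and $1$ for every $G\in\md\U$, using a projective resolution of $G$ pulled back from $\D$ as in part (1). The obstacle is to make $Y_0,Y_1$ (equivalently, the bound on $G^{\ast}$) explicit; the input that should unlock it is that a bounded complex over $\L$, viewed through $\U$, has ``amplitude one''.

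For part (3), $\Dsg(\U)=\sGP\U$ is an algebraic triangulated category, being the stable category of the Frobenius category of Cohen-Macaulay $\U$-modules; so by this paper's construction of the filtered derived category of a DG category it admits an f-category over itself, hence a realization functor attached to any bounded t-structure. I would: (a) put on $\sGP\U$ a bounded t-structure with heart equivalent to $\md(\smd\L)$; (b) identify the projectives of this heart with a tilting subcategory $\S\simeq\smd\L$ of $\sGP\U$; (c) show the realization functor $\rho\colon\rD(\md(\smd\L))\to\sGP\U$ of that t-structure is a triangle equivalence (Theorems \ref{tilting} and \ref{tilting2}). For (a) and (b) I would use the shift grading on $\U$: to $A\in\md\L$ attach the $\U$-module supported in degree $0$ that equals $\Hom_\L(-,A)$ extended by zero along $\md\L\hookrightarrow\U$, and apply $(-)^{\ast}$ and a shift to get an object $\iota A\in\sGP\U$ through the equivalence $\sGP\U\xrightarrow{\simeq}\Dsg(\U)$; then $\iota$ kills the projective $\L$-modules — which is why $\smd\L$, not $\md\L$, appears — and the crucial computation is $\Hom_{\sGP\U}(\iota A,\iota B[i])\cong\Hom_{\smd\L}(A,B)$ for $i=0$ and $=0$ for $i\ne0$, the vanishing for $i>0$ being exactly part (2) and the case $i<0$ being similar. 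Thus $\S=\add\{\iota A\mid A\in\md\L\}$ is tilting; the t-structure on $\sGP\U$ coming from the grading has heart $\md\S\simeq\md(\smd\L)$, with $\iota$ realizing the embedding of its projectives, and it is bounded because every Cohen-Macaulay $\U$-module sits in a finite window of the grading, by finiteness of $\L$.

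Finally, $\rho$ restricts on the heart to the inclusion $\md(\smd\L)\hookrightarrow\sGP\U$, hence is essentially surjective since the heart of a bounded t-structure generates. So the theorem comes down to the full faithfulness of $\rho$, which by the standard criterion means that $\Ext^{n}_{\md(\smd\L)}(F,F')\to\Hom_{\sGP\U}(\rho F,\rho F'[n])$ is an isomorphism for all $F,F'$ and all $n\ge0$. For $n\le1$ this is automatic for the heart of a t-structure; for $n\ge2$ I would argue by d\'evissage, noting that when $F,F'$ are projective in the heart both sides are computed directly on $\S$ and match by part (2), and passing to general $F,F'$ by resolving them by heart-projectives, using exactness of $\rho$, effaceability, and the long exact sequences in $\Ext$ and in $\Hom_{\sGP\U}(-,-[\bullet])$. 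I expect this last step — full faithfulness of the realization functor — to be the principal difficulty of the theorem. The representation-finite Theorems \ref{thm1} and \ref{introthm}, with the Yoneda algebra $\G$ in place of $\U$ and the stable Auslander algebra $\sG$ in place of $\smd\L$, are then the special case where $\md\L$ has finite type, via the identification $\md\U\simeq\md^{\Z}\G$ of Proposition \ref{UG}.
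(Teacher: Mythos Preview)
Your overall architecture for part (3) --- bounded $t$-structure, heart $\simeq \md(\smd\L)$, realization functor --- matches the paper's, but there are two substantive issues elsewhere and a misreading of where the work lies.

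\textbf{Part (2).} Your plan to exhibit a length-one injective coresolution $0\to\U(-,A)\to J_{Y_0}\to J_{Y_1}\to 0$ with $J_Y=D\,\U(Y,-)$ breaks down when $\gd\L=\infty$. In that case $\U$ is \emph{not} a dualizing variety (the paper notes this explicitly at the start of Section~\ref{infinitegldim}), so the functors $D\,\U(Y,-)$ need not lie in $\md\U$ and there is no Serre functor identifying them with representables. The paper's proof of Theorem~\ref{2IG} avoids injective objects entirely: it shows that the natural map $\alpha\colon\U(L,-)\to\U(-,L)^\ast$ is surjective for every $L\in\D$ (using the explicit approximation triangle from Proposition~\ref{inftyfpd}), deduces that $X\to X^{\ast\ast}$ is surjective for all $X\in\md\U^\op$, and then invokes the Auslander--Bridger sequence to conclude $\Ext^2_\U(-,\U)=0$. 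Your alternative formulation via $G^\ast$ being concentrated in degrees $0,1$ is closer, but you still need an argument that does not pass through injectives of $\md\U$.

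\textbf{Part (3).} You repeatedly call $\S$ a \emph{tilting} subcategory, but the paper is explicit that in general there is no tilting object in $\sGP\U$ when $\gd\L=\infty$ (see the remark after Theorem~1.6). The Hom-orthogonality $\sHom_\U(\U_0,\U_0[i])=0$ for $i\neq0$ does hold (by the support argument in the proof of Theorem~\ref{tilting}(2)(a), not ``exactly part (2)'' as you write), but generation $\thick\S=\sGP\U$ fails when $\gd(\smd\L)=\infty$. So step (b) of your plan should be dropped; the $t$-structure alone carries the argument. You also misjudge the difficulty: full faithfulness of the realization functor is the \emph{easy} step --- one line in the paper (Theorem~\ref{tilting2}), using the effaceability criterion with the heart-injectives $\U_0$, which satisfy $\sHom_\U(\H,\U_0[n])=0$ for $n>0$ by the very definition of $t^{\leq0}$. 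The real labor is constructing the $t$-structure (Theorem~\ref{t-structure}): the paper proves $t^{\leq0}$ is contravariantly finite via a delicate truncation argument and Wakamatsu's lemma (Lemma~\ref{aisle}), and identifies $(t^{\leq-1})^\perp=t^{\geq0}$ using the duality $(-)^\ast$ (Lemma~\ref{perp}).

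\textbf{Part (1).} Your induction on amplitude would likely work, but the paper's argument is more direct: replacing $L$ by an injective resolution in $\mathrm{K^{+,b}}(\inj\L)$, the map $Z^{\leq n}L=\bigoplus_{i\leq n}Z^iL[-i]\to L$ is a right $\U$-approximation (Lemma~\ref{inftyrapp}), giving functorial finiteness and hence weak kernels in one stroke.
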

The equivalence (\ref{tilting0}) is given in \cite{Ki} (see also \cite[4.11]{IO}) for the very special case $\L$ is hereditary.
We have two independent strategies to build the triangle equivalence above. In both cases, we depend on the fact that $\Dsg(\U)=\sCM\U$ is an {algebraic} triangulated category, that is, the stable category of a Frobenius category.

The first one, which is valid for the restrictive case $\gd\L$ is finite, is {tilting theory} \cite{AHK}. The existence of a tilting object $U$ in an algebraic triangulated category $\T$ implies a triangle equivalence $\T \simeq \per\End_\T(U)$ under a mild assumption (see Theorem \ref{keller}). We have the following reformulation of Theorem \ref{introthm} for the case $\gd\L<\infty$. Here we state the representation-finite case for simplicity, see Theorem \ref{tilting} for the general version.

\begin{Thm}[Theorem \ref{tilting}]
Let $\L$ be a representation-finite algebra of finite global dimension, and let $\G$ be the Yoneda algebra of $\L$ in Definition \ref{eqyon}. Then, we have the following.
\begin{enumerate}
\item $\G_0\in\grsg(\G)$ is a tilting object. 
\item $\End_{\grsg(\G)}(\G_0)$ is the stable Auslander algebra $\sG$ of $\L$, and has finite global dimension.
\end{enumerate}
Consequently, there exists a triangle equivalence $\grsg(\G) \simeq \per\sG=\rD(\md\sG)$.
\end{Thm}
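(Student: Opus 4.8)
We keep the notation of the statement, so $\L$ is representation-finite with $\gd\L<\infty$. Since $\Ext^i_\L(M,M)=0$ for $i>\gd\L$, the algebra $\G$ is finite dimensional and $\md^\Z\!\G$ is noetherian; by Theorem~\ref{thm1} it is $1$-Gorenstein, so $\grsg(\G)=\sGP^\Z\!\G$ is an algebraic triangulated category, and it is $\Hom$-finite because $\G$ is. Note that $\G_0=\End_\L(M)$ is the Auslander algebra of $\L$ (hence $\gd\G_0\le2$) and that $\sG=\G_0/\G_0e\G_0$, where $e\in\G_0$ is the idempotent cutting out the summand $\add_\L\L\subseteq\add M$. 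The plan is to show that $\G_0$, regarded as a graded $\G$-module concentrated in degree $0$, is a tilting object of $\grsg(\G)$, to identify $\End_{\grsg(\G)}(\G_0)$ with $\sG$, and then to appeal to Theorem~\ref{keller}; the last clause $\per\sG=\rD(\md\sG)$ will follow once $\gd\sG<\infty$ is established.

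\emph{Reduction to a Cohen--Macaulay module, and generation.} From $0\to\G_{\ge1}\to\G\to\G_0\to0$ together with $\id_\G\G\le1$ one gets $\Ext^i_\G(\G_{\ge1},\G)=0$ for $i>0$, hence $\G_{\ge1}\in\GP^\Z\!\G$ and $\G_0\cong\G_{\ge1}[1]$ in $\grsg(\G)$; thus every morphism space out of $\G_0$ in $\grsg(\G)$ is a stable $\Hom$ between the Cohen--Macaulay modules $\G_{\ge1}[\ast]$, and in positive homological degree coincides with an ordinary $\Ext_\G$. For generation, $\gd\G_0<\infty$ implies that every simple graded $\G$-module lies in $\thick\{\G_0(j)\mid j\in\Z\}$, so $\rD(\md^\Z\!\G)=\thick\{\G_0(j)\mid j\in\Z\}$ and therefore $\grsg(\G)=\thick\{\G_0(j)\mid j\in\Z\}$. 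Using the finite filtration of $\G$ whose $i$-th subquotient is the $\G_0$-module $\Ext^i_\L(M,M)$ placed in internal degree $i$, together with the vanishing in $\grsg(\G)$ of $\G$ and of the perfect module $D\G$ (perfect since $\pd_\G D\G\le1$), a dévissage reduces the problem to the stability of $\thick(\G_0)$ under the grading shift; carrying that out gives $\grsg(\G)=\thick(\G_0)$.

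\emph{Self-orthogonality and the endomorphism algebra.} It remains to compute $\Hom_{\grsg(\G)}(\G_0,\G_0[i])$: one must get $0$ for $i\ne0$ and $\sG$ for $i=0$. Here I would use the DG algebra $\E=\RHom_\L(M,M)$, whose cohomology is $\G$ and which is concentrated in cohomological degrees $[0,\gd\L]$, together with the equivalence $\per\E\simeq\rD(\md\L)$ given by $\RHom_\L(M,-)$; in a model with $\E^{<0}=0$ one has the DG subalgebra $\G_0=Z^0(\E)\subseteq\E$. Transporting $\RHom_\G(\G_0,\G_0)$ through this comparison---equivalently, running the hyper-$\Ext$ spectral sequence attached to a finite injective coresolution of $M$ over $\L$, using $\Ext^{>0}_\L(M,J)=0$ for $J$ injective---rewrites these morphism spaces in terms of $\Hom_\L$ among shifts of $M$. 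This should yield the vanishing for $i\ne0$ and, in degree $0$, the identification $\Hom_{\grsg(\G)}(\G_0,\G_0)=\underline\End_\L(M)=\sG$; the mechanism is that the summand $\G_0e$ of $\G_0$ attached to the projective $\L$-modules becomes zero in $\grsg(\G)$, producing a surjection $\sG=\G_0/\G_0e\G_0\twoheadrightarrow\End_{\grsg(\G)}(\G_0)$ which the computation shows to be an isomorphism of algebras.

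\emph{Finite global dimension and conclusion.} Finally $\gd\sG<\infty$: since $\sG=\G_0/\G_0e\G_0$ with $\gd\G_0\le2$ and $e\G_0e=\End_\L(\L)\cong\L^{\op}$ of finite global dimension---so that $\G_0e$ has finite projective dimension over $e\G_0e$---the homological theory of idempotent ideals forces $\gd\sG<\infty$; alternatively this follows a posteriori, since $\grsg(\G)=\sGP^\Z\!\G$ carries a Serre functor by Auslander--Reiten duality over the Iwanaga--Gorenstein algebra $\G$, and $\per\sG$ admits a Serre functor only when $\gd\sG<\infty$. Granting the previous two steps, $\G_0$ is a tilting object of the algebraic triangulated category $\grsg(\G)$ with $\End_{\grsg(\G)}(\G_0)=\sG$, so Theorem~\ref{keller} gives a triangle equivalence $\grsg(\G)\simeq\per\sG$, and $\gd\sG<\infty$ yields $\per\sG=\rD(\md\sG)$. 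I expect the main obstacle to be the third step: controlling $\Ext^\ast_\G(\G_0,\G_0)$ for the Yoneda algebra $\G$, which typically has infinite global dimension, and translating it faithfully into homological data over $\L$, so as to extract simultaneously the self-orthogonality of $\G_0$ and the exact identification of its endomorphism ring with the stable Auslander algebra; the generation step is delicate for the same reason.
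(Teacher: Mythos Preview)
Your outline has the right shape, but the two steps you flag as delicate are precisely where the paper's argument diverges from yours and becomes elementary.

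For generation, the decisive fact you are missing is that suspension and grading shift are related by $[3]\simeq(1)$ on $\grsg(\G)$. This comes from the description of Cohen--Macaulay $\G$-modules via triangles in $\rD(\md\L)$ (Proposition~\ref{CMcategory}): if $X=\Coker(-,g)$ for a triangle $A\to B\xrightarrow{g}C\to A[1]$ with $A,B,C\in\md\L$, then the third syzygy of $X$ is $X(-1)$. Once $[3]\simeq(1)$ is in hand, stability of $\thick(\G_0)$ under the grading shift is automatic and your d\'evissage goes through; without it you have no mechanism for producing $\G_0(j)$ from $\G_0$, and the sentence ``carrying that out gives $\grsg(\G)=\thick(\G_0)$'' is not a proof.

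For self-orthogonality there is no need for $\RHom_\L(M,M)$ or any spectral sequence. The paper observes (following \cite{Ya}) that for each $n\ge1$ the syzygy $\Om^n\G_0$ is concentrated in internal degree $\ge1$, while $\G_0$ sits in degree $0$; since the supports are disjoint there are literally no nonzero $\G$-module maps between them, so $\sHom_\G(\G_0,\G_0[n])=0$. This one-line degree argument replaces your entire DG comparison.

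For the endomorphism algebra, the idempotent that kills a summand of $\G_0$ in $\grsg(\G)$ is the one attached to the \emph{injective} $\L$-modules, not the projective ones: for $I$ injective one has $\Ext^{>0}_\L(M,I)=0$, so $\G_0e_I=\G e_I$ is already a projective graded $\G$-module and vanishes in the singularity category. Accordingly the paper identifies $\End_{\grsg(\G)}(\G_0)$ with $\injsmd\L$ (equivalent to $\smd\L$, of course), via an explicit comparison with the category $\sE$ of short exact sequences up to homotopy (Lemma~\ref{trick}, Lemma~\ref{routine}). Your claim that $\G_0e$ vanishes for $e$ the projective idempotent is not justified: for projective $P$ one has $\Ext^{>0}_\L(M,P)\ne0$ in general, so $\G_0e_P\ne\G e_P$ and there is no reason for $\G_0e_P$ to be perfect.

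Finally, $\gd\sG<\infty$ is simply quoted from \cite[10.2]{AR}; no idempotent-ideal machinery is needed.
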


Note that in general there is {\it no} tilting object in $\grsg(\G)$ if $\gd\L$ is infinite. We therefore need the second strategy which works for arbitrary $\L$, namely {realization functors} \cite{BBD,PV}. Let $\T$ be a triangulated category endowed with a $t$-structure and let $\H$ be its heart, which is an abelian category \cite{BBD}. 
A triangle functor $\rD(\H) \to \T$ extending the inclusion $\H \subset \T$ is called a {\it realization functor}. 
In the appendix we formulate the filtered derived category of a DG category, which assures the existence of a realization functor for algebraic triangulated categories (Corollary \ref{algreal}). 
We refer to \cite{KV,Ke90,ChR,M} for different approaches.

To prove Theorem \ref{introthm}, we first show that the triangulated category $\grsg(\G)$ has a $t$-structure, then describe its heart, and finally prove that the realization functor is an equivalence. The results are given as follows, which explain the equivalence in Theorem \ref{introthm}. Again, we state them for the representation-finite case for simplicity.
\begin{Thm}
Let $\L$ be an arbitrary representation-finite algebra and $\G$ its Yoneda algebra in Definition \ref{eqyon}. 
\begin{enumerate}
\item{\rm (Theorem \ref{t-structure})} Set
\begin{equation*}
\begin{aligned}
	t^{\leq 0}&=\{ X \in \grsg(\G) \mid \Hom_{\grsg(\G)}(X,\G_0[i])=0 \text{ for all } i<0 \},\\
	t^{\geq 0}&=\{ X \in \grsg(\G) \mid \Hom_{\grsg(\G)}(X,\G_0[i])=0 \text{ for all } i>0 \}. 
\end{aligned}
\end{equation*}
Then, $(t^{\leq 0}, t^{\geq 0})$ is a $t$-structure on $\grsg(\G)$.
\item{\rm (Proposition \ref{heart})} The heart of $(t^{\leq 0}, t^{\geq 0})$ is equivalent to $\md\sG$ for the stable Auslander algebra $\sG$ of $\L$.
\item{\rm (Theorem \ref{tilting2})} The realization functor $\rD(\md\sG) \to \grsg(\G)$ is a triangle equivalence.
\end{enumerate}
\end{Thm}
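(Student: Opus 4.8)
The plan is to prove the three parts in order, using the identification $\grsg(\G)=\sGP^\Z\!\G$ from Theorem \ref{thm1} throughout. For (1) I want to show that $\G_0$, viewed in $\grsg(\G)$, is in the appropriate bounded sense a cosilting object, so that $(t^{\leq 0},t^{\geq 0})$ is exactly the $t$-structure it determines. Three things are needed. First, \emph{self-orthogonality}: $\Hom_{\grsg(\G)}(\G_0,\G_0[i])=0$ for $i\neq 0$ and $\End_{\grsg(\G)}(\G_0)\cong\sG^{\op}$; I would extract this from the Frobenius structure of $\GP^\Z\!\G$ together with $1$-Gorensteinness, by replacing $\G_0$ with a Cohen--Macaulay approximation and computing stable $\Hom$'s along a minimal graded resolution, so that the contributions in nonzero cohomological degree cancel and the degree-zero term is the stable endomorphism algebra of $M$. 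Second, \emph{cogeneration}: if $\Hom_{\grsg(\G)}(X,\G_0[i])=0$ for all $i$ then $X=0$; here the point is that these groups detect, layer by layer, the associated graded of a finitely generated Cohen--Macaulay module $X$ for the $\G_{\geq 1}$-adic filtration, so their collective vanishing makes $X$ perfect. Third, and most importantly, the \emph{truncation triangles} $\tau^{\leq 0}X\to X\to\tau^{\geq 1}X\to\tau^{\leq 0}X[1]$ with $\tau^{\leq 0}X\in t^{\leq 0}$ and $\tau^{\geq 1}X\in t^{\geq 1}:=t^{\geq 0}[-1]$: for finitely generated Cohen--Macaulay $X$ I would assemble a Postnikov tower from iterated minimal right $\add\{\G_0[i]\mid i\geq 0\}$-approximations and argue that it terminates, i.e.\ that $\Hom_{\grsg(\G)}(X,\G_0[i])$ vanishes for $|i|\gg 0$, using finite generation of $X$ and the bound $1$ on self-injective dimension. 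Together these give that $(t^{\leq 0},t^{\geq 0})$ is a bounded $t$-structure; I expect the termination of this Postnikov tower to be the real obstacle in part (1).

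For (2), write $\H=t^{\leq 0}\cap t^{\geq 0}$. By the cosilting picture $\G_0$ lies in $\H$ and is an injective cogenerator of it, and by part (1) $\End_{\grsg(\G)}(\G_0)\cong\sG^{\op}$. Since $\sG$ is finite dimensional, $\md\sG$ is a length category with injective cogenerator $D\sG$ satisfying $\End_{\md\sG}(D\sG)\cong\sG^{\op}$, so the identification $\H\simeq\md\sG$ will follow from a Morita/Gabriel-type recognition theorem for abelian length categories with an injective cogenerator, once one checks that $\H$ is a length category with the expected simple objects. These I would realize as the radical-layer quotients of $\G_0$ inside $\grsg(\G)$ and match with the simple $\sG$-modules, using the filtration argument of part (1) to see that every object of $\H$ is a finite iterated extension of them.

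For (3), $\grsg(\G)=\sGP^\Z\!\G$ is an algebraic triangulated category, so the appendix (Corollary \ref{algreal}) provides an f-category over it, and with \cite{BBD} we obtain a realization functor $\rho\colon\rD(\H)=\rD(\md\sG)\to\grsg(\G)$ which is $t$-exact and restricts to the identity on hearts; tracing through the construction also gives $\rho(D\sG)=\G_0$, and one identifies the image of the projective generator $\sG$ with $\G_0^{\ast}[-1]$. It remains to prove $\rho$ is an equivalence. Essential surjectivity is immediate from boundedness of the $t$-structure. For full faithfulness one must show that the canonical map $\Ext_{\sG}^i(A,B)\to\Hom_{\grsg(\G)}(A,B[i])$ is an isomorphism for all $A,B\in\H$ and all $i$; this is automatic for $i\leq 1$, and for $i\geq 2$ it suffices, by the effaceability criterion of \cite{BBD}, to annihilate every class in $\Hom_{\grsg(\G)}(A,B[i])$ along a projective cover $P\twoheadrightarrow A$ in $\H$, which reduces to the vanishing $\Hom_{\grsg(\G)}(\G_0^{\ast}[-1],B[i])=0$ for $i>0$. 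I would derive this from the self-orthogonality of part (1) through the duality $(-)^{\ast}=\RHom_\G(-,\G)$ between $\grsg(\G)$ and $\grsg(\G^{\op})$, which exchanges $\G_0$ with $\G_0^{\ast}$ and the heart $\H$ with its analogue on the opposite side. Making this comparison precise---keeping track of how $(-)^{\ast}$ interacts with the shift and confirming that $\G_0^{\ast}[-1]$ really behaves as the projective generator of $\H$---is where I expect the remaining work of the proof to concentrate.
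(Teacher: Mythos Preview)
Your outline follows a cosilting-theoretic route that differs substantially from the paper's and, in part (1), the proposed construction of truncation triangles does not yet work.

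The paper's argument for (1) is concrete rather than abstract. It rests on the description of every object of $\sGP^\Z\!\G$ as $\Im\U(-,f)$ for a triangle $A\xrightarrow{f}B\to C\to A[1]$ in $\rD(\md\L)$ with $A,B,C$ direct sums of shifted $\L$-modules (Proposition \ref{GPcategory}). Proposition \ref{subcat} then shows that $X\in t^{\leq0}$ (resp.\ $t^{\geq0}$) if and only if such a triangle can be chosen with $A,B,C\in\U_{\leq0}$ (resp.\ $\U_{\geq0}$). With this characterization, truncation triangles are obtained via Wakamatsu's lemma (Proposition \ref{Wa}): contravariant finiteness of $t^{\leq0}$ is proved by constructing an explicit right $t^{\leq0}$-approximation of any $X$, obtained by applying the standard $t$-structure truncation of $\rD(\md\L)$ to the terms of the defining triangle and completing via a $3\times3$ diagram (Lemma \ref{aisle}); the equality $(t^{\leq-1})^\perp=t^{\geq0}$ is then a short computation with the duality $(-)^\ast$ (Lemma \ref{perp}). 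The heart is read off directly as the category $\sE$ of short exact sequences in $\md\L$ modulo homotopy, hence as $\md\sG$.

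Your Postnikov-tower proposal does not produce truncations as written: iterated right $\add\{\G_0[i]\mid i\geq0\}$-approximations of $X$ assemble an object in the thick subcategory generated by these shifts, but nothing forces the resulting filtration to converge to an object of $t^{\leq0}$ with cofiber in $t^{\geq1}$; the issue is not merely termination but the shape of what you are building. What is really needed is contravariant finiteness of $t^{\leq0}$ itself, and that is exactly what the paper supplies by its explicit approximation. For (3), you and the paper both invoke effaceability, but the paper uses the \emph{injective} form: since the heart has enough injectives in $\add\G_0$ (Lemma \ref{star}) and $t^{\geq0}$ is \emph{by definition} the left orthogonal of $\G_0[>\!0]$, any composite $X\to Y[n]\to I[n]$ with $X\in\H$, $I\in\add\G_0$, $n\geq1$ vanishes immediately. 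Your projective variant requires $\Hom(\G_0^\ast[-1],B[n])=0$ for $n>0$, which does not follow from the defining orthogonality and would need the extra duality work you anticipate; switching to the injective form removes that detour entirely.
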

One can see that under the equivalence in (3), the $t$-structure in (1) becomes the standard one on $\rD(\md\sG)$. 

Now we explain the structure of this paper. Section \ref{prelim} is a preparatory section where we recall some basic concepts on functor categories and tilting theory. Sections \ref{finitegldim} and \ref{infinitegldim} are central ingredients. We treat the case $\gd\L<\infty$ in Section \ref{finitegldim}, where tilting theory is applied. The general case is treated in Section \ref{infinitegldim} via realization functors. Note that all the results (except those in Section \ref{3.5}) in Section \ref{finitegldim} follow from those in Section \ref{infinitegldim}, but we give separate proofs since the arguments in Section \ref{finitegldim} are much simpler and also motivate some constructions in Section \ref{infinitegldim}. In Section \ref{hereditary}, we discuss the special case $\gd\L \leq 1$. Finally, we give examples in Section \ref{examples}. In the appendix, we give a concrete description of the filtered derived category of a DG category and give a proof of the existence of a realization functor for algebraic triangulated categories.
\subsection*{Acknowledgement}
The author is grateful to his supervisor Osamu Iyama for valuable suggestions and discussions.
\section{Preliminaries}\label{prelim}
\subsection{Functor categories}
We recall in this subsection the basic concepts on functor categories which will be used throughout this paper. 

Let us first fix some notations and conventions.
The composition $\xrightarrow{ \ f \ }\xrightarrow{ \ g \ }$ of morphisms is denoted by $\xrightarrow{ \ fg \ }$. A {\it module} over a preadditive category $\C$ means a contravariant additive functor from $\C$ to the category of abelian groups. In particular, a module over a ring means a left module. 

When $\C$ is essentially small, we denote by $\Md\C$ the abelian category of modules over $\C$, by $\Hom_\C(-,-)$ the $\Hom$-spaces, and by $\Ext_\C^i(-,-)$ the $\Ext$-spaces in $\Md\C$. A $\C$-module $X$ is {\it finitely presented} (resp. {\it finitely generated}) if there exists an exact sequence $\C(-,B) \to \C(-,A) \to X \to 0$ (resp. $\C(-,A) \to X \to 0$) for some $A, B \in \C$. We denote by $\md\C$ the full subcategory of $\Md\C$ consisting of finitely presented modules.

The following notion, introduced in \cite{AR}, provides a fundamental class of categories over which one can develop the representation theory just like over finite dimensional algebras.
\begin{Def}
Let $k$ be a field and let $\C$ be a $k$-linear, $\Hom$-finite category. We say that $\C$ is a {\it dualizing $k$-variety} if $D=\Hom_k(-,k) \colon \Md\C \leftrightarrow \Md\C^\op$ induces a duality $\md\C \leftrightarrow \md\C^\op$.
\end{Def}

The most basic example of a dualizing variety is the category $\proj\L$ of finitely generated projective modules over a finite dimensional algebra $\L$.
If $\C$ is a dualizing variety, then $\md\C$ is an abelian category with enough projectives and injectives. By Yoneda's lemma, the projective objects are given by the representable functors $\C(-,C)$ for $C \in \C$, and by the duality, the injective objects are given by $D\C(C,-)$ for $C \in \C$.

For $C \in \C$ and a full subcategory $\D \subset \C$, we write
\[ \D(-,C)=\C(-,C)|_\D \in \Md\D, \quad \D(C,-)=\C(C,-)|_\D \in \Md\D^\op \]
the restricted functors, even if $C \not\in \D$.
We recall the following notion from \cite{AS}. 
\begin{Def}
Let $\C$ be an additive category and $\D \subset \C$ a full subcategory.
\begin{enumerate}
\item A morphism $D \to C$ in $\C$ is a {\it right $\D$-approximation} of $C$ if $D \in \D$ and it induces an exact sequence $\D(-,D) \to \D(-,C) \to 0$ in $\Md\D$. We say $\D$ is {\it contravariantly finite} in $\C$ if any $C \in \C$ has a right $\D$-approximation.
\item A morphism $C \to D$ in $\C$ is a {\it left $\D$-approximation} of $C$ if $D \in \D$ and it induces an exact sequence $\D(D,-) \to \D(C,-) \to 0$ in $\Md\D^\op$. We say $\D$ is {\it covariantly finite} in $\C$ if any $C \in \C$ has a left $\D$-approximation.
\item $\D$ is {\it functorially finite} in $\C$ if it is both contravariantly and covariantly finite.
\end{enumerate}
\end{Def}

It is not difficult to see the following, which gives more examples of dualizing varieties.
\begin{Prop}\label{ffsdd}
Any functorially finite subcategory of a dualizing variety is again a dualizing variety.
\end{Prop}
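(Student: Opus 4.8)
The plan is to reduce everything to the single non-formal point that $D=\Hom_k(-,k)$ preserves finite presentation. Write $\C$ for the dualizing $k$-variety and $\D\subseteq\C$ for the functorially finite (full, additive) subcategory. As a full subcategory of $\C$, $\D$ is again $k$-linear and $\Hom$-finite, and every $X\in\md\D$ has finite-dimensional values: the value at $D'$ is a quotient of $\Hom_\C(D',D_0)$ along a presentation $\D(-,D_1)\to\D(-,D_0)\to X\to0$. Consequently the canonical morphism $X\to DDX$ is an isomorphism objectwise for $X\in\md\D$, and likewise for $\md\D^\op$; hence $D$ will restrict to a duality $\md\D\leftrightarrow\md\D^\op$ as soon as one checks that $D$ carries $\md\D$ into $\md\D^\op$ and, symmetrically, $\md\D^\op$ into $\md\D$. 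I will describe the first inclusion; the second is obtained by exchanging the two finiteness hypotheses.

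First I would record that $\md\D$ and $\md\D^\op$ are abelian, equivalently that $\D$ and $\D^\op$ have weak kernels. Since $\C$ is a dualizing variety, $\md\C$ and $\md\C^\op$ are abelian, so $\C$ and $\C^\op$ have weak kernels; one transfers them to $\D$: if $g\colon D_1\to D_0$ is a morphism of $\D$ and $h\colon C\to D_1$ is a weak kernel of $g$ in $\C$, then the composite of $h$ with a right $\D$-approximation $D\to C$ is a weak kernel of $g$ in $\D$, because contravariant finiteness lets one lift any test morphism from $\D$ through that approximation. Dually, covariant finiteness of $\D$ in $\C$ makes $\D^\op$ a contravariantly finite subcategory of the dualizing variety $\C^\op$, so $\D^\op$ has weak kernels as well.

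Next I would show that restriction along $\D^\op\subseteq\C^\op$ carries $\md\C^\op$ into $\md\D^\op$. The crucial sub-step is that $\D(C,-)\in\md\D^\op$ for every $C\in\C$: a left $\D$-approximation $C\to D^0$ exhibits $\D(C,-)$ as a quotient of $\D(D^0,-)$, and, computing the kernel $K$ of $\C(D^0,-)\to\C(C,-)$ inside the abelian category $\md\C^\op$ and restricting a presentation of $K$ to $\D^\op$, covariant finiteness (applied once more) shows that $\Ker(\D(D^0,-)\to\D(C,-))=K|_{\D^\op}$ is finitely generated; together with the surjection $\D(D^0,-)\to\D(C,-)$ this gives a presentation of $\D(C,-)$. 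Granting this, any $Z\in\md\C^\op$ with presentation $\C(C_1,-)\to\C(C_0,-)\to Z\to0$ restricts to $Z|_{\D^\op}$, a cokernel of a morphism between finitely presented $\D^\op$-modules, hence itself finitely presented.

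Finally, for $X\in\md\D$ pick a presentation $\D(-,D_1)\xrightarrow{g}\D(-,D_0)\to X\to0$ and apply the exact functor $D$, obtaining $DX=\Ker\!\big(Dg\colon D\D(-,D_0)\to D\D(-,D_1)\big)$. Since $\C$ is dualizing, $D\C(-,D_i)\in\md\C^\op$, and $D\D(-,D_i)$ is its restriction to $\D^\op$, hence lies in $\md\D^\op$ by the previous step; because $\md\D^\op$ is abelian, $DX\in\md\D^\op$. The symmetric argument shows $D$ maps $\md\D^\op$ into $\md\D$, and together with the biduality isomorphism this makes $D$ a duality $\md\D\leftrightarrow\md\D^\op$, so $\D$ is a dualizing $k$-variety. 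I expect the genuinely delicate point to be this preservation of finite presentation: finite generation is immediate from the approximations, but upgrading it to finite presentation forces one to take kernels inside the ambient $\md\C^\op$ — where they stay finitely presented precisely because $\C$ is dualizing — and then restrict them back along $\D\subseteq\C$ without losing finite generation, which is exactly where both halves of functorial finiteness get used.
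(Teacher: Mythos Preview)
Your argument is correct. The paper does not actually supply a proof of this proposition: it is introduced with ``It is not difficult to see the following'' and the proof is omitted, the result being standard and going back to Auslander--Reiten's original paper on dualizing varieties. Your write-up is the expected argument---transfer weak (co)kernels to $\D$ via approximations, show restriction along $\D\subseteq\C$ preserves finite presentation, and identify $D\D(-,D_i)$ with the restriction of $D\C(-,D_i)\in\md\C^\op$---and it fills the gap completely.
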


Let us now recall the following general notion.
\begin{Def}\label{cohdef}
A category $\C$ is {\it left {\rm (}resp. right{\rm )} coherent} if it has weak kernels {\rm (}resp. weak cokernels{\rm )}. We say $\C$ is {\it coherent} if it is both left and right coherent. 
\end{Def}
Note that $\C$ is left (resp. right) coherent if and only if the category $\md\C$ (resp. $\md\C^\op$) is abelian \cite{Au66}. Therefore dualizing varieties are coherent.

We can speak of the {global dimension} of a coherent category.
\begin{Def}
The {\it left {\rm(}resp. right{\rm)} global dimension} of a coherent category $\C$ is the maximum integer $n$ such that $\Ext_\C^n(-,-)\neq0$ on $\md\C$ (resp. $\md\C^\op$). It is $\infty$ if there is no such $n$.
\end{Def}
We will often use the following well-known result on abelian categories.
\begin{Prop}[\cite{Au66}]\label{ab2}
Any abelian category has global dimension at most $2$ {\rm(}as a coherent category{\rm)}.
\end{Prop}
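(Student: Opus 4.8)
The plan is to prove the equivalent assertion that every finitely presented $\A$-module has projective dimension at most $2$ in $\md\A$. First, since $\A$ is abelian it has weak kernels and weak cokernels, hence is coherent, so its global dimension is defined and both $\md\A$ and $\md\A^\op$ are abelian; because $\A^\op$ is again abelian, it will suffice to bound the left global dimension, that is, to show $\pd_{\md\A}F \leq 2$ for every $F \in \md\A$.

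Given such an $F$, I would use Yoneda's lemma to choose a presentation $\A(-,B) \xrightarrow{\A(-,f)} \A(-,A) \to F \to 0$ coming from a morphism $f \colon B \to A$ in $\A$, and then prolong the resolution one further step by passing to the kernel of $f$ formed inside $\A$ itself. Concretely, writing $\iota \colon K \to B$ for the kernel of $f$ in $\A$, the claim to establish is that
\[ 0 \to \A(-,K) \xrightarrow{\A(-,\iota)} \A(-,B) \xrightarrow{\A(-,f)} \A(-,A) \to F \to 0 \]
is exact in $\md\A$. Granting this, the three functors on the left are representable, hence projective and finitely presented, so the sequence is a length-$2$ projective resolution of $F$ and gives $\pd_{\md\A}F \leq 2$; applying the same argument to $\A^\op$ bounds the right global dimension, and the proposition follows.

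The single point requiring an argument --- essentially the only content of the proof --- is the exactness of the displayed complex, and the crucial spot is $\A(-,B)$. This follows directly from the universal property of the kernel: for every $X \in \A$, a morphism $g \colon X \to B$ has vanishing composite with $f$ if and only if it factors through $\iota$, and this factorization is unique since $\iota$ is a monomorphism; in the functor category this says precisely that $\A(-,\iota)$ is a monomorphism with image $\Ker\A(-,f)$, while exactness at $\A(-,A)$ is built into the chosen presentation of $F$. I do not anticipate any genuine obstacle: the argument is short and formal, and the only thing to keep track of is that although the resolution lives in $\md\A$, the decisive construction --- forming $\Ker f$ --- is carried out in $\A$.
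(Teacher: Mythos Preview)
Your argument is correct and is exactly the standard proof: given a presentation $\A(-,B)\xrightarrow{\A(-,f)}\A(-,A)\to F\to 0$, the kernel $K\hookrightarrow B$ of $f$ in $\A$ yields a length-$2$ projective resolution because the Yoneda embedding is left exact. The paper does not supply its own proof of this proposition; it simply cites Auslander \cite{Au66}, so there is nothing further to compare.
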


We also have the notion of  self-injectivity and Gorensteinness of coherent categories.
\begin{Def}\label{gordef}
Let $\C$ be a coherent category. We say $\C$ is {\it self-injective} (resp. {\it $d$-Gorenstein}) if any representable $\C$-modules and $\C^\op$-modules are injective (resp. have injective dimension at most $d$) in $\md\C$ and in $\md\C^\op$.
\end{Def}
For example, any triangulated category is self-injective (see \cite[4.2]{Kr}). It is a dualizing variety if and only if it has a Serre functor \cite[2.11]{IYo}. We note the following well-known result.
\begin{Prop}[\cite{Hap}]\label{DS}
Let $\L$ be a finite dimensional algebra of finite global dimension. Then, $\nu:=D\L \otimes_\L^L-$ gives a Serre functor on $\rD(\md\L)$.
\end{Prop}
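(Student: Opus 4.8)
Since this is Happel's theorem \cite{Hap}, I only sketch the standard argument. Recall that a \emph{Serre functor} on a $k$-linear $\Hom$-finite triangulated category $\T$ is an autoequivalence $\nu\colon\T\to\T$ equipped with isomorphisms $D\Hom_\T(X,Y)\cong\Hom_\T(Y,\nu X)$ natural in $X$ and $Y$, where $D=\Hom_k(-,k)$. The plan is to produce such isomorphisms for $\T=\rD(\md\L)$ and $\nu=D\L\otimes_\L^L-$. As $\gd\L<\infty$ we have $\rD(\md\L)=\per\L$, and $\nu$ is an autoequivalence of $\rD(\md\L)$ (well known, see \cite{Hap}; a quasi-inverse is $\RHom_\L(D\L,-)$, using that $D\L$ is perfect); so the substantive point is the bifunctorial duality isomorphism.

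First I would record, for a \emph{perfect} $X\in\rD(\md\L)$ and natural in $Y\in\rD(\md\L)$, the two isomorphisms
\[ \nu X=D\L\otimes_\L^L X\;\cong\;D\RHom_\L(X,\L) \qquad\text{and}\qquad \RHom_\L(X,\L)\otimes_\L^L Y\;\xrightarrow{\ \sim\ }\;\RHom_\L(X,Y), \]
the second being the evident evaluation morphism. Here $\RHom_\L(X,\L)$ is a bounded complex of right $\L$-modules, and the two tensor products and the duality are formed with the evident bimodule structures. In each case both sides are exact functors of $X$ which agree on $X=\L$ (where the left reads $D\L=D\L$ and the right reads $\L\otimes_\L^L Y=Y$), so by dévissage the isomorphisms extend from $\L$ to all of $\per\L=\thick\L$.

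Combining these with the adjunction $D(Z\otimes_\L^L Y)\cong\RHom_\L(Y,DZ)$ (for a bounded complex $Z$ of right $\L$-modules, applied to $Z=\RHom_\L(X,\L)$) gives, naturally in $Y$,
\[ \RHom_\L(Y,\nu X)\;\cong\;\RHom_\L\bigl(Y,D\RHom_\L(X,\L)\bigr)\;\cong\;D\bigl(\RHom_\L(X,\L)\otimes_\L^L Y\bigr)\;\cong\;D\RHom_\L(X,Y). \]
Taking $H^0$ and using exactness of $D$ (so that $H^0(DC)\cong D(H^0C)$ for $C\in\rD(k)$) yields $\Hom_{\rD(\md\L)}(Y,\nu X)\cong D\Hom_{\rD(\md\L)}(X,Y)$, natural in $Y$; naturality in $X$ then follows by another dévissage, since $\Hom_{\rD(\md\L)}(Y,\nu(-))$ and $D\Hom_{\rD(\md\L)}(-,Y)$ are cohomological functors on $\rD(\md\L)=\thick\L$ and the displayed natural transformation is an isomorphism on $\L$ and its shifts. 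This exhibits $\nu$ as a Serre functor. There is no real obstacle here — the statement is classical — and the only care needed is in tracking the left/right/bimodule structures above, in justifying that the two displayed isomorphisms and the identity $\rD(\md\L)=\per\L$ are legitimate (the latter genuinely using $\gd\L<\infty$), and in checking compatibility with shifts, which again reduces to the case $X=\L$.
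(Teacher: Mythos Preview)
The paper does not give its own proof of this proposition; it is simply stated with the citation \cite{Hap} as a well-known result. Your sketch is a correct rendition of the standard argument (d\'evissage from $X=\L$ using that $\rD(\md\L)=\per\L$ when $\gd\L<\infty$, combined with the tensor--hom adjunction and the identification $\nu X\cong D\RHom_\L(X,\L)$), so there is nothing to compare and nothing to correct.
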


Now let us formulate the Cohen-Macaulay modules over Gorenstein categories.
\begin{Def}\label{cmdef}
Let $\C$ be a Gorenstein category. We say that $X \in \md\C$ is {\it Cohen-Macaulay} if $\Ext_\C^i(X,\C(-,C))=0$ for all $C \in \C$ and $i>0$.
\end{Def}

For a Gorenstein category $\C$, we denote by $\CM\C$ the category of Cohen-Macaulay $\C$-modules. Let us collect some basic properties of $\CM\C$, which can be shown in a similar way as in the case of Noetherian rings.
\begin{Prop}\label{cmc}
Let $\C$ be a $d$-Gorenstein category.
\begin{enumerate}
\item\label{omegad} $X \in \md\C$ is Cohen-Macaulay if and only if $X$ is a $d$-th syzygy.
\item\label{resolv} $\CM\C$ is a resolving subcategory of $\md\C$, that is, any projective module in $\md\C$ belongs to $\CM\C$, and $\CM\C \subset \md\C$ is closed under extensions and kernels of epimorphisms.
\item\label{frob} $\CM\C$ is naturally a Frobenius category.
\item The stable category $\sCM\C$ is naturally a triangulated category, and the inverse of its suspension functor is the syzygy functor $\Om$.
\end{enumerate}
\end{Prop}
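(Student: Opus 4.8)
The plan is to transcribe the classical theory of maximal Cohen--Macaulay modules over Gorenstein Noetherian rings (Auslander--Bridger, Buchweitz \cite{ABr,Bu}) into the present setting, the role of the regular bimodule being played by the family of representable functors $\C(-,C)$, $C\in\C$. Since $\C$ is coherent, both $\md\C$ and $\md\C^\op$ are abelian with enough projectives, given by the representables, and the $d$-Gorenstein hypothesis is symmetric in $\C$ and $\C^\op$, so the transpose and duality machinery is available; write $(-)^\ast=\RHom_\C(-,\C)$ for the derived dual, represented, for $X\in\md\C$ with projective resolution $P_\bullet$, by the complex obtained from $P_\bullet$ by applying $\Hom_\C(-,\C(-,C))$ termwise. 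The fact underlying everything is that over a $d$-Gorenstein category an acyclic complex $T^\bullet$ of projectives is totally acyclic: for its cocycles $Z^i$ one has $\Ext_\C^1(Z^{i+1},\C(-,C))\cong\Ext_\C^{d+1}(Z^{i+1+d},\C(-,C))=0$ by dimension shifting and the injective dimension bound, so $(T^\bullet)^\ast$ is again acyclic.

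For (\ref{omegad}), the ``if'' direction is iterated dimension shifting: from $0\to X\to P^{d-1}\to\cdots\to P^0\to Y\to0$ with representable $P^j$ one gets $\Ext_\C^i(X,\C(-,C))\cong\Ext_\C^{i+d}(Y,\C(-,C))=0$ for $i\ge1$, and similarly over $\C^\op$. For ``only if'', take $X\in\CM\C$ with projective resolution $P_\bullet\to X$; since $X$ is Cohen--Macaulay the dual complex is acyclic away from degree zero, giving an exact sequence $0\to X^\ast\to P_0^\ast\to P_1^\ast\to\cdots$ of $\C^\op$-modules, and truncating after the $d$-th term shows $X^\ast$ is a $d$-th syzygy over $\C^\op$, hence $X^\ast\in\CM\C^\op$ by the ``if'' direction for $\C^\op$. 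Splicing this coresolution with a projective resolution of $X^\ast$ produces a complete resolution $T^\bullet$ of $X^\ast$; by the fact above $(T^\bullet)^\ast$ is acyclic, and a diagram chase identifies the relevant cocycle of $(T^\bullet)^\ast$ with $X$ (coming from $P_\bullet$) on one side and with $X^{\ast\ast}$ (coming from the resolution of $X^\ast$) on the other, compatibly with the biduality map, whence $X\cong X^{\ast\ast}$. Finally, since $X^\ast\in\CM\C^\op$, dualizing a projective resolution of $X^\ast$ gives an exact sequence $0\to X\cong X^{\ast\ast}\to Q_0^\ast\to Q_1^\ast\to\cdots$ with the $Q_i^\ast$ representable $\C$-modules, and truncating after the $d$-th term realizes $X$ as a $d$-th syzygy.

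Assertion (\ref{resolv}) is immediate from the long exact sequence of $\Ext_\C(-,\C(-,C))$ applied to a short exact sequence with two terms in $\CM\C$: it forces the higher $\Ext$'s of the remaining term into each $\C(-,C)$ to vanish in the relevant range, giving closure under extensions and under kernels of epimorphisms, and representables lie in $\CM\C$ since they are projective. For (\ref{frob}), $\CM\C$ is an extension-closed full subcategory of the abelian category $\md\C$, hence exact, and its projective-injective objects are precisely the representables: they are projective since they are so in $\md\C$, and $\CM\C$ has enough projectives because the first syzygy of an object of $\CM\C$ again lies in $\CM\C$ by (\ref{resolv}); they are injective because a conflation $0\to\C(-,C)\to E\to X\to0$ with $X\in\CM\C$ splits already in $\md\C$, and $\CM\C$ has enough injectives because the cosyzygy $X\hookrightarrow Q_0^\ast$ constructed in (\ref{omegad}), with cokernel $(\Om_{\C^\op}X^\ast)^\ast$, again lies in $\CM\C$ --- indeed $\Om_{\C^\op}X^\ast\in\CM\C^\op$ by (\ref{resolv}) and $(-)^\ast$ restricts to a duality $\CM\C^\op\to\CM\C$ by the argument above. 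Thus $\CM\C$ is Frobenius with $\add\{\C(-,C)\mid C\in\C\}$ as its subcategory of projective-injectives, and the last assertion then follows from Happel's theorem \cite{Hap}: $\sCM\C$ is triangulated, the suspension of a Frobenius category is the cosyzygy functor and its quasi-inverse the syzygy functor, which is well defined on $\CM\C$ by (\ref{resolv}) and induces the autoequivalence $\Om$ of $\sCM\C$.

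The hard part will be the biduality isomorphism $X\cong X^{\ast\ast}$ for $X\in\CM\C$, equivalently that $(-)^\ast$ restricts to a duality $\CM\C\leftrightarrow\CM\C^\op$; this is where one must invoke the total acyclicity of acyclic complexes of projectives recorded in the first paragraph, which is the substantive use of the Gorenstein hypothesis, and it is the reason for taking $(-)^\ast$ to be the derived functor $\RHom_\C(-,\C)$ rather than the naive $\Hom$. Beyond it one only needs that finitely presented $\C$-modules admit projective resolutions by representables (so the complexes above exist), that kernels and cokernels of maps between finitely presented modules are again finitely presented (coherence of $\C$ and $\C^\op$), and the usual bookkeeping of long exact sequences and Frobenius-category axioms, exactly as over Noetherian Gorenstein rings.
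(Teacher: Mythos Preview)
Your proof is correct and is exactly the approach the paper has in mind: the paper gives no argument for this proposition beyond the remark that it ``can be shown in a similar way as in the case of Noetherian rings,'' and you have supplied the standard Auslander--Bridger/Buchweitz argument in the categorical setting. One small slip: in the ``if'' direction of (\ref{omegad}) the phrase ``and similarly over $\C^\op$'' is superfluous (the definition of Cohen--Macaulay in Definition~\ref{cmdef} is one-sided), but this does not affect anything.
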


We end this subsection by noting the following fact, which shows that the Yoneda category $\U(\L)$ is indeed a categorical analogue of Yoneda algebras.
\begin{Prop}[{See \cite[4.1]{ha}}]\label{UG}
Let $\C$ be a category with an automorphism $F$. Suppose there exists $M \in \C$ such that $\C=\add\{F^iM \mid i \in \Z \}$ and set $\G=\bigoplus_{i \in \Z}\Hom_\C(M,F^iM)$. Then, there exists an equivalence
\[ \C \to \proj^\Z\!\G, \quad C \mapsto \bigoplus_{i \in \Z}\Hom_\C(M,F^iC) \]
such that the action of $F$ on $\C$ commutes with the degree shift $(1)$ on $\proj^\Z\!\G$.
\end{Prop}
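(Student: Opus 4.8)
The plan is to realize the displayed functor as a graded version of the classical projectivization equivalence $\add N \simeq \proj\End_\C(N)$. First I would equip $\G=\bigoplus_{i\in\Z}\Hom_\C(M,F^iM)$ with its $\Z$-graded ring structure: for $a\in\G_i=\Hom_\C(M,F^iM)$ and $b\in\G_j$ the product $ab\in\G_{i+j}$ is the composite $M\xrightarrow{a}F^iM\xrightarrow{F^ib}F^{i+j}M$ (with the composition convention of Section \ref{prelim}), associativity being immediate from functoriality of $F$. For $C\in\C$ set $\Phi(C)=\bigoplus_{i\in\Z}\Hom_\C(M,F^iC)$ and let $\G$ act on the left by $a\cdot x = (M\xrightarrow{a}F^iM\xrightarrow{F^ix}F^{i+j}C)$ for $a\in\G_i$ and $x\in\Hom_\C(M,F^jC)$; one checks this is a graded left $\G$-module structure and that post-composition makes $\Phi$ a functor into graded left $\G$-modules. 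Since $\C=\add\{F^iM\mid i\in\Z\}$, each $C$ is a direct summand of a finite sum $\bigoplus_kF^{i_k}M$, so $\Phi(C)$ is finitely generated.

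Next I would identify the distinguished objects and the shift compatibility. By construction $\Phi(M)=\G$ as a graded left module over itself, and $\Phi(FC)_i=\Hom_\C(M,F^{i+1}C)=\Phi(C)_{i+1}$, so there is a natural isomorphism $\Phi(FC)\cong\Phi(C)(1)$. In particular $\Phi(F^iM)\cong\G(i)$, so $\Phi$ maps $\C$ into $\add\{\G(i)\mid i\in\Z\}=\proj^\Z\!\G$ and intertwines $F$ with the shift $(1)$, which is the last assertion.

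It then remains to check that $\Phi\colon\C\to\proj^\Z\!\G$ is an equivalence. For full faithfulness, $\Phi$ is additive and every object of $\C$ is a summand of a finite sum of the $F^iM$, so it suffices to show $\Phi$ is bijective on $\Hom_\C(F^jM,F^{j'}M)$, and by compatibility with $(1)$ this reduces to $j=0$. There, evaluation at $1_M$ gives $\Hom_{\proj^\Z\!\G}(\G,\G(i))\cong\G(i)_0=\G_i=\Hom_\C(M,F^iM)$, and a direct computation shows $\Phi$ sends $f\colon M\to F^iM$ to the unique $\G$-linear map $\G\to\G(i)$ carrying $1_M$ to $f$; hence $\Phi$ is precisely the inverse of this evaluation isomorphism, so it is bijective. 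For density, any object of $\proj^\Z\!\G$ is a summand of some $\bigoplus_k\G(i_k)\cong\Phi(\bigoplus_kF^{i_k}M)$, and since $\Phi$ is fully faithful it transports idempotents; as $\C$ is idempotent complete (being by definition closed under summands in its ambient category), every such summand lies in the essential image of $\Phi$. Therefore $\Phi$ is an equivalence.

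The argument is essentially routine projectivization carried out with a grading, so I do not expect a genuine obstacle; the fiddliest part is purely a matter of bookkeeping — pinning down the direction of the grading shift and keeping the left-module and composition conventions consistent throughout — together with the harmless observation that density uses idempotent completeness of $\C$, which holds in all the situations where the proposition is applied.
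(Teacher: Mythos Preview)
Your argument is correct and is the standard graded projectivization proof one expects here. Note, however, that the paper does not actually supply a proof of this proposition: it merely cites \cite[4.1]{ha} for the statement, so there is no ``paper's own proof'' to compare against. Your write-up could therefore stand as the missing justification; the one point worth tightening is the appeal to idempotent completeness in the density step, which you correctly flag---in the paper's applications $\C$ is a full subcategory of a Krull--Schmidt category closed under summands, so this is harmless, but in the abstract setting of the proposition it is an implicit hypothesis.
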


\subsection{Tilting theory}
We recall some basic facts on tilting theory, which have played an essential role in giving a triangle equivalence. For a subcategory $\M$ of a triangulated category, we denote by $\thick\M$ the smallest full triangulated subcategory containing $\M$ and closed under isomorphisms and direct summands.
\begin{Def}
Let $\T$ be a triangulated category. A full subcategory $\S$ of $\T$ is a {\it tilting subcategory} if it satisfies the following conditions.
\begin{enumerate}
\item $\Hom_\T(X,Y[i])=0$ for all $X, Y \in \S$ and $i \neq 0$.
\item $\thick\S=\T$.
\end{enumerate}
\end{Def}

The importance of tilting subcategories are suggested by the following result. We say that an additive category is {\it idempotent-complete} if any idempotent morphism has a kernel.
\begin{Thm}[{\cite[4.3]{Ke94}}]\label{keller}
Let $\T$ be an idempotent-complete algebraic triangulated category with a tilting subcategory $\S$. Then, there exists a triangle equivalence 
\[ \T \simeq \rK(\S) \]
\end{Thm}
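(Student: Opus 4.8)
This is Keller's theorem; here is the plan of proof. We want a triangle functor $\rK(\S)\to\T$ extending the inclusion $\S\hookrightarrow\T$ and to show it is an equivalence. The subtle point is already the construction of such a functor: totalizing a bounded complex of objects of $\S$ by iterated cones is only well defined up to non-canonical isomorphism, so one cannot read off a triangle functor directly. The remedy is to use that $\T$ is algebraic: then $\T$ is the stable category $\sE$ of a Frobenius category $\E$, and $\E$ gives rise to a pretriangulated DG category $\A$ together with a triangle equivalence $H^0(\A)\simeq\T$ (a DG enhancement of $\T$). The whole argument is then carried out inside $\A$.

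First, choose lifts to $\A$ of the objects of $\S$ and let $\mathcal{B}\subseteq\A$ be the full DG subcategory on these objects, so that $H^0(\mathcal{B})=\S$ and, for $X,Y\in\mathcal{B}$, one has $H^n\mathcal{B}(X,Y)=\Hom_\T(X,Y[n])$, which vanishes for $n\neq 0$ by the tilting condition. Hence the canonical DG functors $\mathcal{B}\hookleftarrow\tau_{\leq 0}\mathcal{B}\to H^0(\mathcal{B})=\S$ --- the target regarded as a DG category concentrated in degree $0$ --- are quasi-equivalences. Next, by DG Morita theory these quasi-equivalences induce triangle equivalences $\per(\mathcal{B})\simeq\per(\S)$; and since $\T$, hence $\S$ after harmlessly replacing it by its closure under direct summands in $\T$ (still a tilting subcategory), is idempotent-complete, $\per(\S)=\rK(\S)$. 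Then the inclusion $\mathcal{B}\hookrightarrow\A$ induces by extension of scalars a triangle functor $\per(\mathcal{B})\to\per(\A)$ sending each representable $\mathcal{B}(-,X)$ to $\A(-,X)$; on these generators it is fully faithful (the Hom-spaces on both sides are $H^0$ of the same Hom-complexes), hence fully faithful throughout, and its essential image is a thick subcategory of $\per(\A)=H^0(\A)=\T$ containing $\S$, so it is all of $\thick_\T\S=\T$. Composing the equivalences yields $\rK(\S)\simeq\T$.

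The main obstacle, as indicated, is packaged into the existence of the DG enhancement --- this is exactly the role of the algebraicity hypothesis --- after which the work is the standard DG Morita argument: full faithfulness of extension of scalars on perfect modules, and the identification $\per(\A)=H^0(\A)$ for a pretriangulated $\A$ with $H^0(\A)$ idempotent-complete. The other delicate bookkeeping is idempotent-completeness itself: one must work with summand-closed $\S$ and with idempotent-complete $\T$ in order to land on $\rK(\S)$ on the nose rather than on its idempotent completion, and this is precisely where the hypothesis that $\T$ is idempotent-complete enters.
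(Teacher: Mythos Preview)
The paper does not actually prove this statement: it is quoted as \cite[4.3]{Ke94} and used as a black box. So there is no proof in the paper to compare against.

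That said, your outline is essentially Keller's own argument and is correct. The key steps---passing to a DG enhancement $\A$ of $\T$, taking the full DG subcategory $\mathcal{B}$ on the objects of $\S$, using the tilting hypothesis $H^n\mathcal{B}(X,Y)=0$ for $n\neq 0$ to get the zig-zag of quasi-equivalences $\mathcal{B}\hookleftarrow\tau_{\leq 0}\mathcal{B}\to H^0(\mathcal{B})=\S$, and then propagating this to $\per(\mathcal{B})\simeq\per(\S)$ and composing with the fully faithful restriction-of-scalars $\per(\mathcal{B})\hookrightarrow\per(\A)=\T$---are all sound. You are also right that idempotent-completeness is needed twice: once so that $\per(\A)$ agrees with $H^0(\A)=\T$ rather than with its Karoubi envelope, and once so that $\per(\S)=\rK(\S)$ (after closing $\S$ under summands in $\T$, which preserves the tilting conditions). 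One small point worth making explicit is why $\rK(\S)$ is itself idempotent-complete when $\S$ is; this is a standard fact (Balmer--Schlichting), but it is what makes the identification $\per(\S)=\rK(\S)$ go through on the nose.
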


\section{Yoneda algebras for algebras of finite global dimension}\label{finitegldim}
We first consider Yoneda categories for algebras $\L$ of finite global dimension. Note that all the results in this section (except those in Section \ref{3.5}) are contained in the corresponding results in Section \ref{infinitegldim}, and the reader can skip this section. The aim of this section is to give a simpler proof of Theorem \ref{introThm} for the special case $\gd\L<\infty$.
\subsection{Gorenstein property}
Let $\L$ be a finite dimensional $k$-algebra of finite global dimension and $\D=\rD(\md\L)$. Let
\[ \U(\L)= \add \{ A[i] \mid  A \in \md\L, \ i \in \Z \} \]
be the Yoneda category of $\L$. The aim of this subsection is to prove the following result.
\begin{Thm}\label{YonIG}
Let $\L$ be a finite dimensional algebra of finite global dimension and $\U=\U(\L)$ be the Yoneda category of $\L$. Then $\U$ is a $1$-Gorenstein dualizing variety.
\end{Thm}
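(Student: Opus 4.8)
The plan is to verify the three separate assertions — that $\U$ is a dualizing variety, that it is coherent (which is part of being a dualizing variety), and that it is $1$-Gorenstein — and the first two will follow quickly from general principles about $\rD(\md\L)$, while the real content is the Gorenstein estimate. First I would observe that since $\gd\L<\infty$, the derived category $\D=\rD(\md\L)$ has a Serre functor $\nu=D\L\otimes_\L^L-$ by Proposition \ref{DS}, so $\D$ is a dualizing $k$-variety by \cite[2.11]{IYo}; it is of course $\Hom$-finite and $k$-linear. Next I would show that $\U=\add\{A[i]\mid A\in\md\L,\ i\in\Z\}$ is a functorially finite subcategory of $\D$. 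Given any $X\in\D$, using that $X$ has bounded cohomology $H^j(X)\in\md\L$, one builds a right $\U$-approximation out of the (finitely many) objects $H^j(X)[-j]$ together with the standard truncation triangles — more precisely, one can use the canonical map $\bigoplus_j H^j(X)[-j]\to X$ appropriately modified, or argue inductively on the length of the cohomology using the triangles $\tau_{\le n}X\to\tau_{\le n+1}X\to H^{n+1}(X)[-n-1]\to$; and dually for left approximations (here the Serre functor gives the covariant half for free once we have the contravariant half). Then Proposition \ref{ffsdd} gives that $\U$ is a dualizing variety, hence in particular coherent with enough projectives and injectives in $\md\U$.

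With $\U$ a dualizing variety, the bulk of the argument is to show that each representable $\U(-,C)$ has injective dimension at most $1$ in $\md\U$ (and symmetrically for $\U^\op$, which follows by the duality $D$). Since $C\in\U$ is a summand of some $\bigoplus A_i[n_i]$ with $A_i\in\md\L$, it suffices to treat $C=A[n]$, and after applying the shift automorphism of $\U$ we may take $n=0$, i.e. $C=A\in\md\L$. The key step is to produce an injective copresentation
\[ 0 \to \U(-,A) \to I^0 \to I^1 \to 0 \]
in $\md\U$ with $I^0,I^1$ injective. The idea is to use a projective resolution of $A$ over $\L$: since $\gd\L<\infty$, we have a finite resolution $0\to P_d\to\cdots\to P_0\to A\to 0$, but the relevant thing is that in $\D$ the object $A$ fits, via the Serre functor, into a relation with $\nu A=D\L\otimes_\L^L A$. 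Concretely, I expect the right statement to be that for $A\in\md\L$ one has a triangle in $\D$ exhibiting $A$ (up to shift) between two objects whose representable functors restrict to injectives on $\U$; the injective objects of $\md\U$ are $D\U(C,-)=D\D(C,-)|_\U$, and by the Serre duality $\D(C,-)\cong D\D(\nu^{-1}(-),C)$... [unwinding this] $D\D(C,-)\cong\D(-,\nu C)$ on $\D$, so the injective $\U$-modules are exactly the representables $\U(-,\nu C)$ for $C\in\U$. Thus \emph{every representable $\U$-module is already injective}, which would make $\U$ self-injective! That cannot be right for general $A\in\md\L$, so the subtlety is that $\nu$ does not preserve $\U$: $\nu A=D\L\otimes^L_\L A$ is a complex, not concentrated in degree $0$, unless $A$ is projective. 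Hence $\U(-,\nu A)$ need not be the restriction of a representable on $\D$ to $\U$ — the point is that $\nu A\notin\U$ in general, even though it lies in $\D$. The cohomology of $\nu A$ is spread over degrees $0,\dots,-\gd\L$, so $\nu A$ has a two-term resolution by objects of $\U$ only when $\gd\L\le 1$; for higher global dimension one needs a different mechanism.

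So the correct approach is: for $C=A\in\md\L$, resolve the representable $\U(-,A)$ by injectives directly. Take a left $\U$-approximation $A\to U^0$ and continue, getting $0\to\U(-,A)\to\U(-,U^0)\to\U(-,U^1)\to\cdots$; the task is to show this stabilizes after one step, i.e. that the cokernel $\U(-,U^1)/\!\!\sim$ is injective, equivalently that $\Ext^2_\U(\md\U,\U(-,A))=0$. By Proposition \ref{ab2} we already know $\gd\U\le 2$ since $\md\U$ is abelian, so the whole content is ruling out $\Ext^2$ against representables. The cleanest route I would try is to identify $\md\U$ with a category of finite-dimensional representations of a "repetitive-type" category built from $\md\L$ and its shifts, where the $\Hom$'s between $A[i]$ and $B[j]$ in $\D$ are $\Ext_\L^{j-i}(A,B)$; then a representable $\U(-,A[i])$ viewed as a module over this $\Z$-graded-type category has an explicit minimal injective resolution coming from the bimodule structure $\Ext_\L^\bullet(A,-)$, and finiteness of $\gd\L$ forces this to terminate at length $1$ because the "top nonzero Ext" contributes exactly one injective cosyzygy. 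Concretely I would compute, for $B[j]\in\U$, the complex $\Ext^\bullet_\U(\U(-,B[j])/\text{rad},\U(-,A[i]))$ and check by a dimension/degree count (using $\Ext^{>\gd\L}_\L=0$ and the absence of negative $\Ext$'s) that cohomology vanishes in degrees $\ge 2$.

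The hard part will be this last vanishing: making precise the claim that injective dimension of representables over the Yoneda-type category $\U$ is governed by, and bounded by $1$ independently of, $\gd\L$. The naive expectation "self-injective" fails because $\nu$ leaves $\U$, and the naive bound "$\le\gd\L-1$" or "$\le\gd\L$" is too weak; the miracle is that it is always $\le 1$. I expect the proof to hinge on a short exact sequence of functors of the form $0\to\U(-,A)\to\U(-,P^0)\to\U(-,P^1[1])\to 0$ or similar, where one exploits that although $\nu A$ is a genuine complex, the \emph{functor} $\U(-,\nu A)$ restricted to $\U$ only "sees" it through two adjacent cohomological layers because Hom in $\D$ out of objects of $\U$ into $\nu A$ picks up $\Ext_\L$ which, combined with the approximation triangles, telescopes. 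Pinning down exactly which two-term complex of representables computes the injective resolution of $\U(-,A)$ — and that it genuinely has length $1$, not $2$ — is the crux, and I would devote most of the detailed argument to constructing that sequence and checking its exactness and that both nonzero terms are injective $\U$-modules.
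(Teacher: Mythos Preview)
Your setup is correct: $\U$ is functorially finite in $\D$ (hence a dualizing variety by Proposition~\ref{ffsdd}), and the injectives of $\md\U$ are $D\U(C,-)\simeq\U(-,\nu C)$ by Serre duality, with the crucial observation that $\nu C\notin\U$ in general. But from that point the argument drifts.

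First, a concrete error: the claim ``by Proposition~\ref{ab2} we already know $\gd\U\le 2$ since $\md\U$ is abelian'' misreads that proposition. Proposition~\ref{ab2} says an \emph{abelian} category has global dimension $\le 2$ as a coherent category; but $\U$ itself is not abelian, and in fact $\gd\U=\infty$ whenever $\L$ is non-semisimple (Proposition~\ref{nothered}). So there is no a priori bound to lean on, and your proposed reduction to ``ruling out $\Ext^2$ against representables'' does not get off the ground.

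More importantly, you are working on the wrong side of the problem. You try to bound $\id\U(-,A)$ by building an injective coresolution of the representable and then search for a reason it stops at length~$1$. The paper's key move is the opposite: it bounds the \emph{projective} dimension of $\U(-,L)$ for \emph{every} $L\in\D$, not just for $L\in\U$. Concretely (Proposition~\ref{fpd}), replace $L$ by an object of $\rK(\inj\L)$ and form $ZL=\bigoplus_i Z^iL[-i]$ and $BL=\bigoplus_i B^iL[-i]$, both of which lie in $\U$; the triangle $BL\to ZL\to L\to BL[1]$ yields
\[ 0\longrightarrow\U(-,BL)\longrightarrow\U(-,ZL)\longrightarrow\U(-,L)\longrightarrow 0, \]
a projective resolution, because $ZL\to L$ is a right $\U$-approximation and so is its shift by~$[-1]$. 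Now specialize to $L=\nu A$: every injective $\U$-module has projective dimension $\le 1$, which is equivalent to every projective having injective dimension $\le 1$. The two-term sequence you were hunting for at the end does exist, but it is a \emph{projective} resolution of the injective $\U(-,\nu A)$, built from cycles and boundaries of a bounded complex of injectives computing $\nu A$ --- not an injective coresolution of $\U(-,A)$.
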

We start the proof of Theorem \ref{YonIG} with the following observation which is based on \cite[5.1]{Iy}.
\begin{Lem}\label{funct}
Let $L \in \D$.
\begin{enumerate}
\item\label{lapp} If $L \in \rK(\inj\L)$, then $ZL \to L$ is a right $\U$-approximation of $L$.
\item\label{rapp} If $L \in \rK(\proj\L)$, then $L \to CL$ is a left $\U$-approximation of $L$.
\end{enumerate}
Consequently, $\U \subset \D$ is functorially finite, and hence $\U$ is a dualizing variety.
\end{Lem}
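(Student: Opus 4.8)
The plan is to verify the two approximation statements by a direct computation with the constructions $Z$ and $C$, and then to read off the ``consequently'' clause from Propositions \ref{DS} and \ref{ffsdd}. Recall that for $L\in\rK(\inj\L)$, $ZL$ is the subcomplex of $L$ whose term in degree $i$ is the cocycle $Z^iL=\Ker(d_L^i)$ (so its differential is automatically zero) and $ZL\to L$ is the termwise inclusion; dually, for $L\in\rK(\proj\L)$, $CL$ is the quotient complex of $L$ with term $C^iL=\Coker(d_L^{i-1})$ in degree $i$ (again with zero differential) and $L\to CL$ is the termwise projection. Because $ZL$ and $CL$ have zero differential, they may be identified with finite direct sums of shifts of modules once the relevant finiteness is checked, and then lie in $\U$.

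For (1) I would first observe $ZL\in\U$: since $\gd\L<\infty$, every object of $\D=\rD(\md\L)$ is (isomorphic to) a bounded complex, so only finitely many $Z^iL$ are nonzero, and each is a submodule of the finitely generated module $L^i$, hence finitely generated as $\L$ is Noetherian. Now take $V\in\U$ and a morphism $\psi\colon V\to L$ in $\D$. As $L$ is a bounded complex of injective modules, $\psi$ is represented by an honest chain map, which I again denote $\psi$; since the differential of $V$ vanishes, $d_L^i\circ\psi^i=\psi^{i+1}\circ d_V^i=0$ for all $i$, so $\Im(\psi^i)\subseteq Z^iL$, and the induced maps $V^i\to Z^iL$ form a chain map $\psi'\colon V\to ZL$ (any termwise family between zero-differential complexes is a chain map) with $\psi$ equal to the composite $V\xrightarrow{\psi'}ZL\hookrightarrow L$. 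Hence $\Hom_\D(V,ZL)\to\Hom_\D(V,L)$ is surjective for all $V\in\U$, i.e. $ZL\to L$ is a right $\U$-approximation of $L$. Statement (2) is proved by the mirror argument with the cokernels $C^iL$, the projections $L^i\to C^iL$, and the hypothesis $L\in\rK(\proj\L)$ in place of $L\in\rK(\inj\L)$; alternatively it follows from (1) via the duality $D\colon\rD(\md\L)^\op\xrightarrow{\sim}\rD(\md\L^\op)$, which interchanges $\rK(\proj\L)$ with $\rK(\inj\L^\op)$, $\U(\L)$ with $\U(\L^\op)$, the constructions $C$ and $Z$, and left with right $\U$-approximations.

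For the final assertion: since $\gd\L<\infty$ we have $\D=\rK(\inj\L)=\rK(\proj\L)$, so by (1) and (2) every object of $\D$ admits both a right and a left $\U$-approximation and $\U\subset\D$ is functorially finite. Moreover $\D=\rD(\md\L)$ is $\Hom$-finite and $k$-linear and, by Proposition \ref{DS}, carries the Serre functor $\nu=D\L\otimes_\L^L-$, hence is a dualizing $k$-variety by \cite[2.11]{IYo}; as $\U$ is functorially finite in $\D$, Proposition \ref{ffsdd} gives that $\U$ is again a dualizing variety.

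I do not expect a genuine obstacle: the only step that is not pure bookkeeping is the reduction of a morphism in $\D$ to an actual chain map, which is precisely what the hypotheses ``$L\in\rK(\inj\L)$'' and ``$L\in\rK(\proj\L)$'' are designed to supply (they make $\Hom_\D(-,L)$, resp. $\Hom_\D(L,-)$, computable in the homotopy category); the rest is manipulation of cocycles and cokernels together with an appeal to Propositions \ref{DS} and \ref{ffsdd}.
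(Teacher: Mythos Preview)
Your proof is correct and follows essentially the same approach as the paper's: both reduce morphisms in $\D$ to actual chain maps using the injective (resp.\ projective) hypothesis on $L$, then observe that a chain map from a zero-differential complex lands in cocycles (resp.\ factors through coboundaries). Your version spells out more detail---in particular the verification that $ZL\in\U$ and the explicit invocation of Proposition~\ref{DS} and \cite[2.11]{IYo} to see that $\D$ is a dualizing variety before applying Proposition~\ref{ffsdd}---whereas the paper compresses all of this into two sentences, but the argument is the same.
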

\begin{proof} We show (\ref{lapp}). Since $L \in \rK(\inj\L)$, any morphism to $L$ in $\D$ can be presented by a morphism of complexes. Therefore, the natural map $ZL=\bigoplus_{i \in \Z}Z^iL[-i] \to L$ gives a right $\U$-approximation of $L$. The last assertion follows from Proposition \ref{ffsdd}. \end{proof}

A closer look at the above proof shows the following proposition.
\begin{Prop}\label{fpd}
For any $L \in \D$, the triangle $BL \to ZL \to L \to BL[1]$ induces a projective resolution
\[ \xymatrix{ 0 \ar[r]& \U(-,BL) \ar[r]& \U(-,ZL) \ar[r]& \U(-,L) \ar[r]& 0. } \]
Therefore, $\U(-,L) \in \md\U$ has projective dimension at most $1$.
\end{Prop}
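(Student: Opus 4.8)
The plan is to analyze the triangle $BL \to ZL \to L \to BL[1]$ in $\D$, where for a complex $L$ with (say) injective components we write $Z^iL$ and $B^iL$ for the cycles and boundaries, and $ZL = \bigoplus_i Z^iL[-i]$, $BL = \bigoplus_i B^iL[-i]$, both of which lie in $\U$. First I would note that since $L \in \rK(\inj\L)$ we may take $L$ to be a complex of injectives, so that each $Z^iL$, $B^iL$, and the quotients $H^iL = Z^iL/B^iL$ are honest $\L$-modules; then $ZL, BL \in \U$ and the displayed short exact sequence of $\L$-modules $0 \to B^iL \to Z^iL \to H^iL \to 0$ (for each $i$), suitably shifted and summed, realizes the triangle $BL \to ZL \to L \to BL[1]$ in $\D$. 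The key point established in Lemma \ref{funct}(\ref{lapp}) is that $ZL \to L$ is a right $\U$-approximation; I would upgrade this observation to the statement that $BL \to ZL \to L$ yields the asserted exact sequence of representable functors.

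The heart of the argument is to verify exactness of
\[ \xymatrix{ 0 \ar[r]& \U(-,BL) \ar[r]& \U(-,ZL) \ar[r]& \U(-,L) \ar[r]& 0 } \]
in $\Md\U$, which by Yoneda's lemma amounts to checking, for each $W = A[j] \in \U$ with $A \in \md\L$, that
\[ 0 \to \Hom_\D(W, BL) \to \Hom_\D(W, ZL) \to \Hom_\D(W, L) \to 0 \]
is exact. Applying $\Hom_\D(W, -)$ to the triangle already gives a long exact sequence, so it suffices to prove (a) surjectivity of $\Hom_\D(W, ZL) \to \Hom_\D(W, L)$ — this is exactly the right $\U$-approximation property from Lemma \ref{funct}(\ref{lapp}) — and (b) injectivity of $\Hom_\D(W, BL) \to \Hom_\D(W, ZL)$, equivalently the vanishing of $\Hom_\D(W, L[-1]) \to \Hom_\D(W, BL)$ being zero, i.e. that the connecting map $\Hom_\D(A[j], L[-1]) \to \Hom_\D(A[j], BL)$ vanishes. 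For (b) I would argue that the composite $L[-1] \to BL \to ZL$ is zero in $\D$: indeed $BL \to ZL$ factors through the inclusions $B^iL \hookrightarrow Z^iL$ of $\L$-modules, and since $L$ is a complex of injectives any map $L[-1] \to BL$ can be represented by a genuine chain map, whose composite with $BL \to ZL \hookrightarrow L$ is null-homotopic by construction (it is the boundary part, which bounds). Hence the long exact sequence breaks into the desired short exact sequences. Since $BL, ZL \in \U$, the representable functors $\U(-,BL)$ and $\U(-,ZL)$ are projective in $\md\U$, so this is a projective resolution and $\pd_\U \U(-,L) \le 1$.

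The main obstacle I anticipate is the bookkeeping in step (b): making precise that the connecting homomorphism vanishes, which requires a careful comparison between maps in the derived category $\D$ and chain-level maps, exploiting that $L$ has injective components so that $\Hom_\D(-, L)$ and $\Hom_{\rK}(-, L)$ agree, and that the factorization $BL \to ZL \to L$ is built from the canonical module-level maps $B^iL \hookrightarrow Z^iL \hookrightarrow L^i$. Once one is comfortable identifying the triangle $BL \to ZL \to L \to BL[1]$ with the ``stupid'' decomposition of $L$ into its boundary and cycle subobjects, the vanishing is essentially formal; the only real content beyond Lemma \ref{funct} is that $BL \to ZL$ is a monomorphism after applying any representable functor, which follows because it is a split monomorphism of $\L$-modules in each degree (injectivity of the $L^i$ is not needed here, only that $B^iL \hookrightarrow Z^iL$ remains injective, which is automatic). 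A minor point worth flagging is that one should also record that $\U(-,L)$ is indeed finitely presented, which is immediate from the two-term projective resolution just constructed.
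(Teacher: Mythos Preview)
Your argument for surjectivity (a) is fine and matches the paper. The gap is in (b).

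You want $\U(-,BL)\to\U(-,ZL)$ to be injective, and you offer two justifications, neither of which works. First, you argue that the composite $L[-1]\to BL\to ZL$ is zero in $\D$. This is true, but it is automatic from the rotation of the triangle and carries no information: in any triangle two consecutive maps compose to zero, so this cannot by itself force the long exact sequence to break. Second, in your last paragraph you claim that $BL\to ZL$ becomes a monomorphism after applying any representable functor because ``it is a split monomorphism of $\L$-modules in each degree''. But the inclusion $B^iL\hookrightarrow Z^iL$ has cokernel $H^iL$ and is \emph{not} split in general; and even granting a degreewise monomorphism in $\md\L$, applying $\Hom_\D(A[j],-)$ to the summand $B^iL[-i]\to Z^iL[-i]$ computes $\Ext_\L^{-i-j}(A,B^iL)\to\Ext_\L^{-i-j}(A,Z^iL)$, and monomorphisms in $\md\L$ do not induce monomorphisms on higher $\Ext$. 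So this step is circular: injectivity on representable functors is exactly what you are trying to prove.

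The paper's fix is a one-line observation you have overlooked: $\U$ is stable under $[1]$ (and $[-1]$). Hence the shifted map $f[-1]\colon ZL[-1]\to L[-1]$ is again a right $\U$-approximation, so $\U(-,ZL[-1])\to\U(-,L[-1])$ is surjective. In the long exact sequence coming from the triangle this forces the connecting map $\U(-,L[-1])\to\U(-,BL)$ to vanish, and therefore $\U(-,BL)\to\U(-,ZL)$ is injective. Replacing your attempted (b) by this shift argument completes the proof.
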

\begin{proof} Since $\gd\L<\infty$, we may assume $L \in \rK(\inj\L)$. By Lemma \ref{funct}(\ref{lapp}), we have a right $\U$-approximation $f \colon ZL \to L$. This is a monomorphism in $\rC(\md\L)$ and there exists an exact sequence
\[ \xymatrix{0 \ar[r] & ZL \ar[r]^f & L \ar[r] & BL[1] \ar[r] & 0 } \]
in $\rC(\md\L)$, which induces a triangle
\[ \xymatrix{BL \ar[r] & ZL \ar[r]^f & L \ar[r] & BL[1] } \]
in $\D$. Now, since $\U$ is stable under $[1]$, the morphism $f[-1] \colon ZL[-1] \to L[-1]$ is also a right $\U$-approximation. Therefore, the above triangle yields a short exact sequence
\[ \xymatrix{0 \ar[r] & \U(-,BL) \ar[r] & \U(-,ZL) \ar[r]^{(-,f)} & \U(-,L) \ar[r] & 0. } \]
Since $ZL, BL \in \U$, this proves the assertion.
\end{proof}

We can now prove the main theorem of this subsection.
\begin{proof}[Proof of Theorem \ref{YonIG}] We have already seen in Lemma \ref{funct} that $\U$ is a dualizing variety. Then the injective modules in $\md\U$ are given by $D\U(A,-)$ for each $A \in \U$, which is isomorphic to $\U(-,\nu A)$ by the Serre duality. It has projective dimension $\leq 1$ by Proposition \ref{fpd}. Similarly, any projective modules have injective dimension at most $1$. 
\end{proof}

\subsection{Cohen-Macaulay modules}
We keep the notations from the previous subsection. Our next aim is to study the category $\CM\U$ of Cohen-Macaulay $\U$-modules. 

In the rest, we denote by $[1]_\T=[1]$ the suspension functor on a triangulated category $\T$. The suspension functor $[1]_\D$ on $\D$ restricts to an automorphism of $\U$, and induces an automorphism of $\CM\U$ and of $\sCM\U$, which is also denoted by $[1]_\D$.

We have the following description of Cohen-Macaulay $\U$-modules.
\begin{Prop}\label{CMcategory}
\begin{enumerate}
\item\label{cm} Let $X \in \Md\U$. Then, $X \in \CM\U$ if and only if there exists a triangle
\[ \xymatrix{ A \ar[r]^f & B\ar[r]^g & C\ar[r]^h & A[1] } \]
such that $X=\Coker(-,g)$ and $A, B, C \in \U$.
\item\label{omega3} We have an isomorphism of functors $[3]_{\sCM\U} \simeq [1]_\D$ on $\sCM\U$.
\end{enumerate}
\end{Prop}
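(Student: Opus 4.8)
The plan is to exploit the projective resolution from Proposition \ref{fpd} together with the fact that $\U$ is $1$-Gorenstein (Theorem \ref{YonIG}), so that Cohen-Macaulay modules are precisely the first syzygies (Proposition \ref{cmc}(\ref{omegad})). For (\ref{cm}), suppose first that $X = \Coker(-,g)$ for a triangle $A \xrightarrow{f} B \xrightarrow{g} C \xrightarrow{h} A[1]$ with $A,B,C \in \U$. Rotating the triangle to $B \xrightarrow{g} C \xrightarrow{h} A[1] \xrightarrow{-f[1]} B[1]$ and applying $\U(-,-)$, the homological functor property of $\U$ on $\D$ (which holds since $\U$ is a full additive subcategory of the triangulated category $\D$ and $\U(-,-)=\D(-,-)|_\U$) gives an exact sequence $\U(-,B) \xrightarrow{(-,g)} \U(-,C) \xrightarrow{(-,h)} \U(-,A[1])$, so $X$ embeds into the representable $\U(-,A[1])$; continuing one more step, since $A[1] \in \U$, we see $X$ is a first syzygy, hence $X \in \CM\U$ by Proposition \ref{cmc}(\ref{omegad}). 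Conversely, if $X \in \CM\U$, write a projective presentation $\U(-,B) \xrightarrow{(-,g)} \U(-,C) \to X \to 0$ with $B,C \in \U$; complete $g$ to a triangle $A \to B \xrightarrow{g} C \to A[1]$ in $\D$. A priori $A$ lies only in $\D$, not in $\U$; this is the crux of the argument. However, $X$ being a first syzygy means it sits in a short exact sequence $0 \to X \to \U(-,C') \to \U(-,C'') $ with $C',C'' \in \U$, and comparing this with the long exact sequence coming from the triangle and using Yoneda's lemma forces $\U(-,A[1])$ to be the image of $\U(-,C) \to \U(-,C')$, i.e.\ a finitely presented module with projective dimension zero; by Proposition \ref{fpd} applied to $A$ and the fact that $\gd\L<\infty$, one deduces $A \in \U$ after possibly absorbing a contractible summand. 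I expect the careful bookkeeping here — identifying the cone $A$ with an object of $\U$ rather than merely of $\D$ — to be the main obstacle.

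For part (\ref{omega3}), the idea is to compute the syzygy functor $\Om$ on $\sCM\U$ using the description in (\ref{cm}) and iterate three times. Given $X = \Coker(-,g) \in \CM\U$ arising from a triangle $A \xrightarrow{f} B \xrightarrow{g} C \xrightarrow{h} A[1]$, the short exact sequence $0 \to \U(-,A[1]) \to \U(-,B[1]) \to \U(-,C[1]) \to X[?] $ obtained by rotating once more, combined with Proposition \ref{fpd}, shows that $\Om X$ is computed by the next triangle in the rotation, so that $\Om$ shifts the roles $A \mapsto B \mapsto C \mapsto A[1]$ cyclically — concretely, $\Om X$ is $\Coker$ of $(-,\,$ the map $C \to A[1])$, and $\Om^2 X$ is $\Coker$ of $(-,\,$ the map $A[1] \to B[1])$, while $\Om^3 X$ returns to the pattern but with every object shifted by $[1]_\D$. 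Since the suspension on $\sCM\U$ is $\Om^{-1}$ (Proposition \ref{cmc}), we get $[3]_{\sCM\U} = \Om^{-3} \simeq [1]_\D$ on objects; checking this is functorial (natural in $X$) follows from the functoriality of the approximation triangles in Lemma \ref{funct} and Proposition \ref{fpd}. The routine but slightly delicate point will be verifying that the cyclic identification $\Om^3 \simeq [-1]_\D$ is an isomorphism of functors and not merely a bijection on isomorphism classes; this is handled by writing everything in terms of the functorial triangle $BL \to ZL \to L \to BL[1]$ and tracking naturality throughout.
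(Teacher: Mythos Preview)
Your ``if'' direction of (\ref{cm}) and your overall strategy for (\ref{omega3}) match the paper's. The gap is in the ``only if'' direction of (\ref{cm}).

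You correctly set up the problem: take a projective presentation $\U(-,B)\xrightarrow{(-,g)}\U(-,C)\to X\to 0$, complete $g$ to a triangle $A\to B\xrightarrow{g}C\to A[1]$ in $\D$, and try to show $A\in\U$. But your proposed mechanism is not right. There is no map $C\to C'$ in sight, so the phrase ``forces $\U(-,A[1])$ to be the image of $\U(-,C)\to\U(-,C')$'' has no meaning; and even granting some map, the image of the composite $\U(-,C)\twoheadrightarrow X\hookrightarrow\U(-,C')$ is $X$, not $\U(-,A[1])$. Comparing two syzygy presentations of $X$ via Schanuel does not by itself force $\U(-,A[1])$ to be projective.

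The paper's argument is short and you were close to it. From the long exact sequence of the triangle one reads off a short exact sequence
\[
0 \longrightarrow Y \longrightarrow \U(-,A) \longrightarrow Z \longrightarrow 0,
\]
where $Y=\Coker(-,g[-1])$ and $Z=\Ker(-,g)$. Now $Y$ is just $X\!\cdot\![-1]_\D$ (hence Cohen--Macaulay), and $Z=\Om^2X$ (hence Cohen--Macaulay). Since $\CM\U$ is extension-closed (Proposition~\ref{cmc}(\ref{resolv})), $\U(-,A)\in\CM\U$. But $\U(-,A)$ has projective dimension $\le 1$ by Proposition~\ref{fpd}, and a Cohen--Macaulay module of finite projective dimension is projective; then Lemma~\ref{pj} gives $A\in\U$. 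The step you were missing is precisely this: show $\U(-,A)$ is Cohen--Macaulay (by sandwiching it between two CM modules coming from the long exact sequence), not that it is directly projective.

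A minor remark on (\ref{omega3}): your cyclic-rotation picture is correct, but the direction is reversed. Rotating the triangle forward corresponds to $\Om^{-1}$, not $\Om$; concretely $\Om X=\Coker(-,f)$, not $\Coker(-,h)$. Either way $\Om^3 X\simeq X[-1]_\D$, so the conclusion is unaffected. The paper simply observes that the module $Y$ above is simultaneously $X[-1]_\D$ and the third syzygy of $X$, which is exactly your rotation argument read off from the single long exact sequence.
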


We need the following observation for the proof.
\begin{Lem}\label{pj}
Let $L \in \D$. Then, $\U(-,L) \in \md\U$ is projective if and only if $L \in \U$.
\end{Lem}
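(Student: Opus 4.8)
The plan is to prove both implications of Lemma \ref{pj} directly. One direction is trivial: if $L \in \U$, then $\U(-,L)$ is representable, hence projective in $\md\U$ by Yoneda's lemma. The substance is the converse, so suppose $X:=\U(-,L)$ is a projective object of $\md\U$. Since every projective in $\md\U$ is a summand of a representable (and $\U$ is idempotent-complete, being a functorially finite subcategory of the idempotent-complete $\D=\rD(\md\L)$), there exists $A \in \U$ with $\U(-,L)$ a direct summand of $\U(-,A)$. By Yoneda, the full and faithful functor $\U \to \md\U$, $C \mapsto \U(-,C)$, reflects direct summands: a splitting idempotent of $\U(-,A)$ with image $\U(-,L)$ comes from an idempotent $e\colon A \to A$ in $\U$, and since $\U$ is idempotent-complete, $e$ splits as $A \cong L' \oplus A'$ with $\U(-,L') \cong \U(-,L)$. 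Then $L' \in \U$, and the Yoneda embedding is faithful on objects up to isomorphism (it is full and faithful into $\md\U$, and $\U \to \D$ is full and faithful), so $L \cong L'$ in $\D$, giving $L \in \U$.

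The one point that needs a little care is the passage from ``$\U(-,L)$ is projective'' to ``$\U(-,L)$ is a summand of a representable $\U(-,A)$''. Here I would use that in $\md\C$ for a coherent category $\C$ the projective objects are precisely the summands of finite direct sums of representables (finitely presented projectives), and a finite direct sum $\bigoplus \U(-,A_j) = \U(-,\bigoplus A_j)$ is again representable since $\U$ has finite direct sums. Thus any projective in $\md\U$ is a summand of a single $\U(-,A)$ with $A \in \U$, and the argument above applies.

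The main (and essentially only) obstacle is bookkeeping: making sure the Yoneda embedding $\U \hookrightarrow \md\U$ detects and reflects direct summands, which rests on its being full and faithful (Yoneda's lemma) together with idempotent-completeness of $\U$. Once these are in place the proof is a short diagram-chase with split idempotents, and no homological input beyond the definition of a finitely presented projective is required. I would therefore present the argument in two sentences for the easy direction and a short paragraph invoking idempotent splitting for the hard direction.
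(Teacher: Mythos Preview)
Your argument has a gap at the final step. You obtain $\U(-,L) \cong \U(-,L')$ with $L'\in\U$ and then assert $L\cong L'$ in $\D$ by invoking full faithfulness of the Yoneda embedding $\U\hookrightarrow\md\U$. But Yoneda only identifies morphisms between \emph{representable} functors with morphisms in $\U$; it says nothing about $L$, which is not yet known to lie in $\U$ --- that is precisely what you are trying to prove. The clause ``and $\U\to\D$ is full and faithful'' does not help either, since the object in question is on the $\D$-side, not the $\U$-side. The gap can be closed: by Yoneda applied to the representable side, the isomorphism is $\U(-,f)$ for some $f\colon L'\to L$ in $\D$; since $\U$ is $[1]$-stable, the long exact sequence for the cone $M$ of $f$ shows $\U(-,M)=0$, and evaluating at $\L[i]\in\U$ gives $H^{-i}M=0$ for all $i$, whence $M=0$ and $f$ is an isomorphism.

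The paper's proof takes a different route that sidesteps this issue entirely. It uses the specific projective resolution $0\to\U(-,BL)\to\U(-,ZL)\to\U(-,L)\to 0$ from Proposition~\ref{fpd}, arising from a triangle $BL\to ZL\to L\to BL[1]$ in $\D$ with $BL,ZL\in\U$. If $\U(-,L)$ is projective the resolution splits; Yoneda (now applied legitimately, since both $BL$ and $ZL$ lie in $\U$) lifts the splitting of $\U(-,BL)\to\U(-,ZL)$ to a split monomorphism $BL\to ZL$ in $\U$, so the triangle splits in $\D$ and $L$ is a summand of $ZL\in\U$. Your approach is more generic and would apply to any idempotent-complete subcategory once one knows that restriction $\D\to\Md\U$ reflects isomorphisms; the paper's approach trades that generality for a shorter argument exploiting the already-established Proposition~\ref{fpd}.
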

\begin{proof} We only have to show the `only if' part. Assume $\U(-,L)$ is projective. Then, the projective resolution $0 \to \U(-,BL) \to \U(-,ZL) \to \U(-,L) \to 0$ given in Proposition \ref{fpd} splits. It follows that $\U(-,BL) \to \U(-,ZL)$ is a split monomorphism in $\md\U$, hence so is $BL \to ZL$ in $\U$ by Yoneda's lemma. Therefore, the triangle $BL \to ZL \to L \to BL[1]$ in $\D$ splits and $L \in \U$.
\end{proof}

\begin{proof}[Proof of Proposition \ref{CMcategory}]
(\ref{cm})  If there exists such a triangle, then $X=\Coker(-,g)$ is finitely presented and is a submodule of a projective $\U$-module $\U(-,A[1])$, therefore $X \in \CM\U$ since $\U$ is $1$-Gorenstein. We next show the `only if' part. Assume $X \in \CM\U$ and let $\U(-,B) \xrightarrow{(-,g)} \U(-,C) \to X \to 0$ be a projective presentation of $X$. Complete $g$ to a triangle $A \xrightarrow{} B \xrightarrow{g} C \xrightarrow{} A[1]$. We want to show that $A \in \U$. Consider the following exact sequence in $\md\U$:
\[ \xymatrix@C=4mm@R=1.5mm{
   \U(-,B[-1])\ar^{(-,g[-1])}[rr]&& \U(-,C[-1]) \ar[dr]\ar[rr] && \U(-,A) \ar[rr]\ar[dr] && \U(-,B) \ar[rr]^{(-,g)} && \U(-,C) \ar[dr] & \\
   &&& Y \ar[ur] && Z \ar[ur] &&&& X, } \]
where $Y=\Coker(-,g[-1])$ and $Z=\Ker(-,g)$. We have $Y, Z \in \CM\U$. Indeed, $Y$ is just $X[-1]_\D$, and $Z$ is the second syzygy of $X$. Therefore, $\U(-,A) \in \CM\U$. Then, $\U(-,A)$ is a Cohen-Macaulay module of finite projective dimension by Proposition \ref{fpd}, thus it has to be projective. We conclude that $A \in \U$ by Lemma \ref{pj}.

(\ref{omega3}) This is immediate by (\ref{cm}), indeed, we see by the above exact sequence that $Y$ is $X[-1]_\D$ as well as the third syzygy of $X$.
\end{proof}

We end this subsection with the following remark.
\begin{Prop}\label{nothered}
Let $\L$ be a finite dimensional algebra of finite global dimension. Then the Yoneda category $\U=\U(\L)$ has finite global dimension if and only if $\L$ is semisimple.
\end{Prop}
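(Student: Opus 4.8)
The plan is to prove the two implications separately; the forward one is easy, while for the converse I will exhibit a $\U$-module of infinite projective dimension, using the description of $\CM\U$ already obtained. Suppose first that $\L$ is semisimple. Then $\md\L$ is a semisimple abelian category, so every bounded complex of $\L$-modules is isomorphic in $\D=\rD(\md\L)$ to the direct sum of its shifted cohomologies; hence $\U=\U(\L)$ is all of $\D$. Moreover, for simple $\L$-modules $S,S'$ we have $\Hom_\D(S[i],S'[j])=\Ext_\L^{j-i}(S,S')$, which vanishes unless $i=j$ and $S\cong S'$. Thus $\U=\D$ has no nonzero radical morphisms between indecomposables, so $\md\U$ is a semisimple abelian category and $\gd\U=0<\infty$. (Equivalently, by Proposition \ref{UG} one has $\U\simeq\proj^\Z\!\G$ with $\G=\End_\L(M)$ semisimple and concentrated in degree $0$, so $\md\U\simeq\md^\Z\!\G$ is semisimple.)

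Now suppose $\L$ is not semisimple. Then $\md\L$ admits a non-split short exact sequence $0\to A\xrightarrow{f}B\xrightarrow{g}C\to 0$; completing it to a triangle $A\xrightarrow{f}B\xrightarrow{g}C\xrightarrow{h}A[1]$ in $\D$, we have $h\neq0$ and $A,B,C\in\md\L\subset\U$. I set $X=\Coker(\U(-,g))\in\md\U$, which lies in $\CM\U$ by Proposition \ref{CMcategory}\,(\ref{cm}); the key point is to show that $X$ is not projective. Suppose it were. Since $\U$ is a dualizing variety (Lemma \ref{funct}), its projective modules are representable, so $X\cong\U(-,D)$ for some $D\in\U$ and the epimorphism $\U(-,C)\twoheadrightarrow X$ splits. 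Using Yoneda and the fact that $\U$ is idempotent-complete, this produces a decomposition $C\cong D\oplus D'$ in $\md\L$ for which $\U(-,C)=\U(-,D)\oplus\U(-,D')$, $X=\U(-,D)$, and $\Im(\U(-,g))=\U(-,D')$ as subfunctors of $\U(-,C)$. Evaluating $\U(-,g)$ at $B$ sends $1_B$ to $g$, which therefore lies in $\U(B,D')$, i.e.\ $g$ factors through the summand $D'$ of $C$; hence the composite of $g$ with the projection $C\to D$ vanishes. But that composite $B\to D$ is an epimorphism in $\md\L$, forcing $D=0$ and so $X=0$; then $\U(-,g)$ is epic, $g$ is a split epimorphism, the triangle splits, and $h=0$, contradicting the non-splitness of the sequence.

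To conclude, $X\in\CM\U$ being non-projective means that $X\neq0$ in $\sCM\U$. Since $\Om^3\simeq[-1]_\D$ on $\sCM\U$ by Proposition \ref{CMcategory}\,(\ref{omega3}), the syzygy functor $\Om$ is an autoequivalence of $\sCM\U$, so every syzygy $\Om^nX$ $(n\geq0)$ is again non-projective; hence $X$ has infinite projective dimension and $\gd\U=\infty$. The step I expect to be the main obstacle is exactly the non-projectivity of $X$: the care lies in converting the abstract splitting of $\U(-,C)\twoheadrightarrow X$ into a genuine direct summand decomposition of the module $C$, so that the surjectivity of $g$ can be brought to bear; the remaining steps merely repackage the triangulated structure on $\sCM\U$ established above.
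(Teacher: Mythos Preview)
Your proof is correct and follows essentially the same route as the paper: a non-split short exact sequence in $\md\L$ gives a non-split triangle with terms in $\U$, hence via Proposition~\ref{CMcategory} a non-projective object of $\CM\U$, forcing $\gd\U=\infty$. The paper compresses the middle step to a single sentence (``By Proposition~\ref{CMcategory}, we get a non-projective Cohen--Macaulay $\U$-module''), whereas you spell out the Yoneda argument showing that projectivity of $X=\Coker\U(-,g)$ would force $g$ to split; your expanded version is a faithful unpacking of what the paper is taking for granted. For the final implication you invoke $\Om^3\simeq[-1]_\D$, but in fact the weaker fact that $\sCM\U$ is triangulated with $\Om$ as the inverse suspension (Proposition~\ref{cmc}) already makes $\Om$ an autoequivalence, which is all you use.
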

\begin{proof}
If $\L$ is semisimple, then clearly $\U$ is semisimple. If $\L$ is non-semisimple, there exists a non-split short exact sequence in $\md\L$, hence a non-split triangle in $\D$ with terms in $\U$. By Proposition \ref{CMcategory}, we get a non-projective Cohen-Macaulay $\U$-module, so $\U$ has infinite global dimension.
\end{proof}

\subsection{A tilting subcategory}\label{tilting theory}
In this subsection, we show that the category $\sCM\U$ has a tilting subcategory, hence is equivalent to its perfect derived category.

A {\it $(\U,\U)$-bimodule} is a module over $\U \otimes_k \U^\op$. We first define a $(\U,\U)$-bimodule $\U_0$ as a categorical analogue of the degree $0$ part of Yoneda algebras. This bimodule will play an important role in the sequel.
\begin{Def}\label{U_zero}
Let $\L$ be an arbitrary finite dimensional algebra. The $(\U,\U)$-bimodule $\U_0$ is given by
\[ \U_0(A,B)=\begin{cases} \Hom_\L(A,B) & (A, B \in \md\L) \\ 0 & (\text{otherwise}) \end{cases} \]
for each indecomposable $A, B \in \U$.
\end{Def}
In particular, we obtain $\U_0(-,A) \in \Md\U$ and $\U_0(A,-) \in \Md\U^\op$ for each $A \in \md\L$.

We fix some more notations. Let $\L$ be an arbitrary finite dimensional algebra. We set
\[ \U_i=(\md\L)[-i] \subset \U. \]
Then we have $\U=\bigvee_{i \in \Z}\U_i$, where the right hand side is the smallest additive category containing the $\U_i$'s.  We put $\U_{\geq i}=\bigvee_{j \geq i}\U_j$ and so on. Note that $\U_0$ is equivalent to the ideal quotient $\U/[\bigvee_{i \neq 0}\U_i]$, and the bimodule $\U_0$ defined above is nothing but the natural bimodule structure induced by the projection $\U \twoheadrightarrow \U_0$.

Now let us state the main result of this subsection.
\begin{Thm}\label{tilting}
Let $\L$ be a finite dimensional algebra of finite global dimension.
\begin{enumerate}
\item\label{inCM} We have $\U_0(-,A) \in \CM\U$ for each $A \in \md\L$.
\item\label{tilt} $\sU_0:=\{ \U_0(-,A) \in \sCM\U \mid A \in \md\L \}$ is a tilting subcategory of $\sCM\U$.
\item\label{stable} The natural identification $\md\L=\U_0$ induces an equivalence $\injsmd\L \simeq \sU_0$ of additive categories.
\end{enumerate}
Consequently, there exists a triangle equivalence $\sCM\U \simeq \rD(\md(\injsmd\L))$.
\end{Thm}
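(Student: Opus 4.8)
The plan is to establish parts \eqref{inCM}, \eqref{tilt}, \eqref{stable} and then assemble the final triangle equivalence from Theorem \ref{keller}. For \eqref{inCM}, note that the natural morphism $A \to Z^0(\text{injective resolution of }A)$ realizes $A$ inside $\rK(\inj\L)$; concretely, choosing an injective resolution $A \to I^\bullet$ and truncating, the triangle associated to $I^{-1} \to I^0 \to A$... more precisely, I would use the canonical triangle coming from the brutal truncation of an injective resolution of $A$, giving $A \xrightarrow{} I^0 \to (\text{coker}) \to A[1]$ with both outer terms in $\md\L \subset \U$, and observe via Proposition \ref{CMcategory}\eqref{cm} that the cokernel of $\U(-,I^0) \to \U(-,\text{coker})$ is Cohen-Macaulay. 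The point is to identify this cokernel with $\U_0(-,A)$: a morphism $I^0 \to X[i]$ in $\D$ with $X \in \md\L$ factors through $A$ precisely when $i=0$, which pins down the cokernel as the functor sending $X[i]$ to $\Hom_\L(A,X)$ if $i=0$ and to $0$ otherwise, i.e.\ $\U_0(-,A)$. (Dually one could use a projective resolution, using $\gd\L<\infty$ to place $A$ in $\rK(\proj\L)$ as well; the injective side is cleaner here.)

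For \eqref{tilt}, I need the two defining properties of a tilting subcategory. The orthogonality $\Hom_{\sCM\U}(\U_0(-,A), \U_0(-,B)[i]_{\sCM\U}) = 0$ for $i \neq 0$: using the explicit projective resolutions of the $\U_0(-,A)$ (the one from \eqref{inCM} extends to a $2$-periodic-up-to-$[1]_\D$ resolution by Proposition \ref{CMcategory}\eqref{omega3}, since the syzygies are built from the complex $\cdots \to I^{-1}\to I^0$), one computes $\sHom$ in $\sCM\U$ as the appropriate subquotient of $\Hom_\U$, and the relevant $\Hom$-groups $\Hom_\U(I^j[\ast], J^l[\ast])$ vanish outside a controlled range because $\U$ has no negative self-extensions between modules placed in the same degree and $\Hom_\D(A,B[i])=\Ext_\L^i(A,B)$ is concentrated in degrees $0 \le i \le \gd\L$. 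This is where finiteness of the global dimension is used essentially, and I expect this is the main obstacle: one must show the alternating sum of these $\Hom$-complexes computes only $\underline{\Hom}_\L(A,B)$ in degree $0$. For the generation condition $\thick \sU_0 = \sCM\U$, I would argue that every object of $\sCM\U$ is, by Proposition \ref{CMcategory}\eqref{cm}, the class of $\Coker(-,g)$ for a triangle $A\to B\to C\to A[1]$ in $\U$, filter such triangles by the number of indecomposable summands, and reduce to the case of a triangle among modules in a single degree $\U_i = (\md\L)[-i]$; applying $[i]_\D$ (which is an autoequivalence of $\sCM\U$ and, being a shift, preserves $\thick$) reduces to $i=0$, where any short exact sequence of $\L$-modules realizes $\Coker$ of the associated map as an extension built inside $\add\{\U_0(-,A) : A \in \md\L\}$ up to projectives. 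Hence $\thick\sU_0$ contains all such $\Coker$'s, so equals $\sCM\U$.

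For \eqref{stable}, the functor $\md\L = \U_0 \to \sCM\U$, $A \mapsto \U_0(-,A)$, is full and dense onto $\sU_0$ by construction; I must identify its kernel. A morphism $A\to B$ in $\md\L$ induces the zero map on $\U_0(-,-)$ in $\sCM\U$ exactly when it factors through a projective object of $\sCM\U$; comparing the projective resolutions of $\U_0(-,A)$ and $\U_0(-,B)$ and using that the projectives of $\sCM\U$ restricted to degree $0$ correspond to injective $\L$-modules (since the zeroth term of the injective resolution is $I^0\in\inj\L$, and a factorization of $\U_0(-,A)\to\U_0(-,B)$ through a projective $\U(-,P)$ with $P$ concentrated appropriately forces the underlying map $A\to B$ to factor through $\inj\L$), one gets that the kernel is exactly the ideal of maps factoring through injectives, i.e.\ the functor induces $\overline{\md}\,\L \xrightarrow{\simeq} \sU_0$. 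Finally, combining: $\sCM\U$ is idempotent-complete (it is the stable category of a Frobenius category with split idempotents, as $\md\U$ is an abelian category hence idempotent-complete and $\CM\U$ is resolving, or one invokes that $\sCM\U = \Dsg(\U)$) and algebraic, so Theorem \ref{keller} applies to the tilting subcategory $\sU_0$, yielding $\sCM\U \simeq \rK(\sU_0)$. Since $\injsmd\L$ has finite global dimension (it is the stable Auslander algebra, or one argues this separately — in the representation-finite case it is $\sG$ which is known to have finite global dimension), $\rK(\sU_0) \simeq \rK(\proj(\injsmd\L)) = \per(\injsmd\L) = \rD(\md(\injsmd\L))$, and transporting along \eqref{stable} gives the asserted equivalence $\sCM\U \simeq \rD(\md(\injsmd\L))$.
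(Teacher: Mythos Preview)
Your overall strategy matches the paper's, but there are concrete errors and gaps. In part \eqref{inCM}, the module you want is $\Im\bigl(\U(-,A)\xrightarrow{(-,f)}\U(-,I^0)\bigr)$, not $\Coker\bigl(\U(-,I^0)\to\U(-,C)\bigr)$; the latter is a double cosyzygy of the former and is \emph{not} $\U_0(-,A)$ as a module (it is supported in degrees $\leq 0$, not in degree $0$). Your computation is also contravariant/covariant confused: $\U_0(-,A)$ sends $X\in\md\L$ to $\Hom_\L(X,A)$, not $\Hom_\L(A,X)$, and one should look at morphisms \emph{into} $I^0$, not out of it. The paper's Lemma \ref{trick}\eqref{Binj} records exactly this identification $\Im(-,f)=\U_0(-,A)$ when the middle term is injective.

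For \eqref{tilt}, you flag the Ext-vanishing as ``the main obstacle'' and do not actually carry it out. The paper's argument is a one-line support observation you are missing: every syzygy $\Om^{\geq 1}\U_0(-,A)$ is a quotient of some $\U(-,?[-1])$ with $?\in\md\L$, hence is concentrated in degree $\geq 1$, while $\U_0(-,B)$ is concentrated in degree $0$; disjoint supports force $\Hom_\U$ (and hence $\sHom_\U$) in either direction to vanish. No resolution bookkeeping is needed. For generation, your ``filter triangles by summands and reduce to a single degree $\U_i$'' is not valid: a triangle with terms in $\U$ does not decompose into triangles with terms in a single $\U_i$. The paper instead works in $\rD(\U)$ and uses the degree filtration of \emph{modules} (Lemma \ref{dayone}\eqref{desuyone}): since $\gd\L<\infty$, every $X\in\md\U$ is bounded in degree, so it is an iterated extension of modules concentrated in single degrees; such modules are $(\md\L)$-modules and hence have finite resolutions by $\U_0(-,A)$'s because $\gd(\md\L)\leq 2$ (Proposition \ref{ab2}). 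This, together with $[3]_{\sCM\U}\simeq[1]_\D$, gives $\thick\sU_0=\sCM\U$. Your part \eqref{stable} and the final assembly are essentially as in the paper (the paper packages the ``factor through projective $\Leftrightarrow$ factor through injective'' step as Lemma \ref{routine}).
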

We can summarize the statements in the following diagram.
\begin{equation}\label{eqsumm}
   \xymatrix@R=6mm@C=10mm{
   \md\L \ar@{=}[r]\ar@{>>}[d] & \U_0 \ar@{^{(}->}[r]^-{(1)}\ar@{>>}[d] & \CM\U\ar@{>>}[d] \\
   \injsmd\L \ar@{-->}[r]^-{\simeq}_-{(3)}& \sU_0 \ar@{^{(}->}[r]^-{\text{tilting}}_-{(2)} & \sCM\U. }
\end{equation}

For the proof we fix further notations. The {\it support} of a $\U$-module $X$ is the subcategory 
\[ \Supp X=\add\{ A \in \U \text{: indec} \mid X(A) \neq 0 \}. \]
We say that a $\U$-module $X$ is {\it concentrated in degree $\geq i$} (resp. in degree $i$, and so on) if $\Supp X \subset \U_{\geq i}$ (resp. $\Supp X \subset \U_i$, and so on). We note the following fact.
\begin{Lem}\label{dayone}
Let $\L$ be a finite dimensional algebra of finite global dimension.
\begin{enumerate}
\item\label{sodane} Let $A \in \U$. Then $\U(-,A)$ is concentrated in degree $\geq i$ if and only if $A \in \U_{\geq i}$.
\item\label{desuyone} For any $X \in \md\U$, there exists the unique exact sequence
\begin{equation}\label{eqyare}
\xymatrix{ 0 \ar[r] & X_{\geq i} \ar[r] & X \ar[r] & X_{<i} \ar[r] & 0 }
\end{equation}
in $\md\U$ with $X_{\geq i}$ {\rm (}resp. $X_{<i}${\rm )} concentrated in degree $\geq i$ {\rm (}resp. $<i${\rm )}.
\end{enumerate}
\end{Lem}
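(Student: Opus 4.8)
To prove (1) I would compute directly with indecomposables, and then deduce (2), whose content splits into a formal ``torsion pair'' part and one genuinely nontrivial finite-generation statement. For (1): the indecomposables of $\U$ are exactly the $B[-j]$ with $B$ an indecomposable $\L$-module and $j\in\Z$, and $\Hom_\U(B[-j],C[-l])=\Hom_\D(B,C[j-l])=\Ext_\L^{j-l}(B,C)$, which vanishes whenever $j<l$ since $\Ext^{<0}=0$. Hence for $C[-l]\in\U_l$ the support of $\U(-,C[-l])$ is contained in $\U_{\geq l}$; and taking $B$ an indecomposable summand of $C$, the inclusion $B\hookrightarrow C$ is a nonzero element of $\Hom_\L(B,C)=\U(B[-l],C[-l])$, so $B[-l]\in\Supp\U(-,C[-l])$ while $B[-l]\notin\U_{\geq l+1}$. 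For a general $A=\bigoplus_j A_j\in\U$ with $A_j\in\U_j$ one has $\U(-,A)=\bigoplus_j\U(-,A_j)$, concentrated in degree $\geq\min\{j\mid A_j\neq0\}$ and in no higher degree; this minimum is $\geq i$ exactly when $A\in\U_{\geq i}$.

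For (2), the same vanishing gives $\Hom_\U(\U_{<i},\U_{\geq i})=0$, so for any $X\in\Md\U$ the recipe ``$X$ on $\U_{\geq i}$, $0$ on $\U_{<i}$'' defines a subfunctor $X_{\geq i}\hookrightarrow X$, with cokernel $X_{<i}$ described by the opposite recipe, and $(-)_{\geq i}$ is an exact endofunctor of $\Md\U$. Uniqueness is immediate: in any sequence $0\to Y\to X\to Z\to 0$ with $Y$ concentrated in degree $\geq i$ and $Z$ in degree $<i$, the vanishing $Z(A)=0$ for $A\in\U_{\geq i}$ forces $Y(A)\xrightarrow{\,\sim\,}X(A)$, while $Y(A)=0$ for $A\in\U_{<i}$, so $Y=X_{\geq i}$. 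It remains to show $X_{\geq i},X_{<i}\in\md\U$ when $X\in\md\U$. Applying the exact functor $(-)_{\geq i}$ to an epimorphism $\U(-,U_0)\twoheadrightarrow X$ and using additivity together with part (1) to drop the summands of $U_0$ lying in $\U_{\geq i}$, the finite generation of $X_{\geq i}$ reduces to that of $(\U(-,C[-l]))_{\geq i}$ for an indecomposable $\L$-module $C$ and $l<i$. Granting this, $X_{\geq i}$ is finitely generated, $X_{<i}=\Coker(X_{\geq i}\hookrightarrow X)$ is finitely presented (a quotient of the finitely presented $X$ by a finitely generated submodule), and then $X_{\geq i}=\Ker(X\to X_{<i})$ lies in $\md\U$ because $\md\U$ is abelian ($\U$ being a dualizing variety by Lemma \ref{funct}).

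The key step is thus: for $l<i$, produce a right $\U_{\geq i}$-approximation of $C[-l]$. I would model $C[-l]$ in $\D$ by $I^\bullet[-l]$ with $I^\bullet$ the injective resolution of $C$, which is bounded because $\gd\L<\infty$, and set $V=\bigoplus_{m\geq i}Z^m(I^\bullet[-l])[-m]$. Each cycle $Z^m(I^\bullet[-l])$ is a cosyzygy of $C$, an honest $\L$-module, vanishing for $m<l$ and for $m>l+\gd\L$, so $V$ is a finite direct sum of shifts of $\L$-modules into degrees $\geq i$, i.e.\ $V\in\U_{\geq i}$. The cycle inclusions assemble into a morphism $V\to I^\bullet[-l]\simeq C[-l]$ in $\D$, and, exactly as in the proof of Lemma \ref{funct}(\ref{lapp}), any morphism $W\to C[-l]$ with $W\in\U_{\geq i}$ is represented by a chain map, which --- $W$ being a sum of modules placed in degrees $\geq i$ with zero differential --- lands in the cycles of $I^\bullet[-l]$ in those degrees and hence factors through $V$. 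Therefore $V\to C[-l]$ is a right $\U_{\geq i}$-approximation, so $\U(-,V)\to\U(-,C[-l])$ is surjective on $\U_{\geq i}$; since $\U(-,V)$ is itself concentrated in degree $\geq i$ by part (1), this map factors through an epimorphism $\U(-,V)\twoheadrightarrow(\U(-,C[-l]))_{\geq i}$, which is therefore finitely generated.

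The real obstacle, and the only place where $\gd\L<\infty$ is genuinely needed, is exactly this last construction: checking honestly that the cosyzygy modules $Z^m(I^\bullet[-l])$ with $m\geq i$ assemble into an object of $\U_{\geq i}$ mapping appropriately onto $C[-l]$, and in particular that only finitely many of them are nonzero. Everything else --- part (1), uniqueness, and the bookkeeping with the exact functor $(-)_{\geq i}$ and the coherence of $\U$ --- is formal.
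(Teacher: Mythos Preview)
Your proof is correct. Both you and the paper reduce to representables via exactness of $(-)_{\geq i}$ and then invoke the (bounded) injective resolution, so the core idea is the same; but the execution differs enough to be worth a remark. The paper takes $X=\U(-,A)$, replaces $A$ by its injective resolution $L\in\rK(\inj\L)$, and uses the stupid-truncation triangle $L_{\geq i}\to L\to L_{<i}\to L_{\geq i}[1]$ from the standard co-$t$-structure on $\rK(\inj\L)$. The resulting long exact sequence in $\md\U$ (all terms finitely presented by Proposition~\ref{fpd}) directly exhibits $X_{\geq i}$ and $X_{<i}$ as a cokernel and a kernel in $\md\U$, by matching supports and invoking uniqueness. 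Your route instead builds an explicit right $\U_{\geq i}$-approximation $V=\bigoplus_{m\geq i}Z^m(I^\bullet[-l])[-m]\to C[-l]$, really a restriction of the approximation in Lemma~\ref{funct}(\ref{lapp}), to get finite generation of $X_{\geq i}$ first, and then bootstraps to finite presentation via the coherence of $\U$. The paper's argument is a bit slicker and symmetric in the two truncations; yours is more elementary in that it avoids the co-$t$-structure language and stays close to the concrete approximation of Lemma~\ref{funct}. Either way, the only genuine input beyond formalities is the boundedness of the injective resolution, exactly as you identify.
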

\begin{proof} The assertion (\ref{sodane}) is clear. In (\ref{desuyone}), it is clear that the exact sequence (\ref{eqyare}) uniquely exists in $\Md\U$. We have to verify that $X_{\geq i}$ and $X_{<i}$ are finitely presented. Since the functors $(-)_{\geq i}$ and $(-)_{<i}$ on $\Md\U$ are exact, we may assume that $X$ is representable. Let $A \in \U$ and $X=\U(-,A)$. Take an injective resolution $A \to L$ so that $L \in \rK(\inj\L)$. Consider the triangle
\[ \xymatrix{ L_{\geq i} \ar[r]& L \ar[r]& L_{<i} \ar[r]& L_{\geq i}[1] } \]
associated to the standard co-$t$-structure on $\rK(\inj\L)$. It induces an exact sequence
\[ \xymatrix{ \U(-,L_{<i}[-1]) \ar[r]^a& \U(-,L_{\geq i}) \ar[r]& \U(-,L) \ar[r]& \U(-,L_{<i}) \ar[r]^b&\U(-,L_{\geq i}[1]),} \]
which lies in $\md\U$ by Lemma \ref{funct}. Then, since $L_{\geq i}$ is a complex of injective modules with terms in degree $\geq i$, the $\U$-module $\U(-,L_{\geq i})$ is concentrated in degree $\geq i$, and therefore so is $\Coker a$. Similarly we see that $\Ker b$ is concentrated in degree $<i$. Therefore the exact sequence
\[ \xymatrix{ 0 \ar[r]& \Coker a \ar[r]& X \ar[r]& \Ker b \ar[r]& 0} \]
must coincide with (\ref{eqyare}), which shows that $X_{\geq i}, X_{<i} \in \md\U$.
\end{proof}

Note that $X_i:=(X_{\geq i})_{\leq i}=(X_{\leq i})_{\geq i}$ can be regarded as the `degree $i$ part' of $X$. In particular, the functor $\U_0(-,A)$ defined above is nothing but the `degree $0$ part' of $\U(-,A)$.

Now let us start the proof with the following important observation.
\begin{Lem}\label{trick}
Let $\L$ be an arbitrary finite dimensional algebra and let $0 \to A \xrightarrow{f} B \xrightarrow{g} C \to 0$ be an exact sequence in $\md\L$.
\begin{enumerate}
\item\label{Binj} If $B \in \inj\L$, then $\Im(-,f)=\U_0(-,A)$.
\item\label{Bproj} If $B \in \proj\L$, then $\Im(g,-)=\U_0(C,-)$.
\end{enumerate}
\end{Lem}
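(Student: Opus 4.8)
The plan is to prove both statements by a direct objectwise computation in $\Md\U$, using only the vanishing of positive $\Ext$-groups against an injective (resp.\ projective) $\L$-module together with the left-exactness of $\Hom_\L$. Throughout I first replace the given short exact sequence by the associated triangle $A \xrightarrow{f} B \xrightarrow{g} C \xrightarrow{h} A[1]$ in $\D$, so that $(-,f)\colon\U(-,A)\to\U(-,B)$ and $(g,-)\colon\U(C,-)\to\U(B,-)$ become morphisms of $\U$- and $\U^{\op}$-modules respectively.

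For (\ref{Binj}) I evaluate $\Im(-,f)$ on each indecomposable $X\in\U$, written $X=A'[i]$ with $A'\in\md\L$ indecomposable. If $i\neq0$, then $\U(X,B)=\Hom_\D(A'[i],B)=\Ext_\L^{-i}(A',B)$, which vanishes --- trivially when $i>0$, and because $B$ is injective when $i<0$; hence the target of $(-,f)_X$ is zero, so $\Im(-,f)(X)=0=\U_0(X,A)$. If $i=0$, i.e.\ $X\in\md\L$, then $(-,f)_X$ is $\Hom_\L(X,f)\colon\Hom_\L(X,A)\to\Hom_\L(X,B)$, which is injective since $f$ is a monomorphism; thus its image is $\Hom_\L(X,A)=\U_0(X,A)$, realized inside $\U(X,B)=\Hom_\L(X,B)$ as the subgroup of morphisms factoring through $f$, which is precisely the canonical copy of $\U_0(X,A)$ in $\U_0(X,B)$. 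These identifications are natural in $X$, so $\Im(-,f)=\U_0(-,A)$ as subfunctors of $\U(-,B)$. Equivalently: since $B$ is injective the module $\U(-,B)$ is concentrated in degree $0$, so $(-,f)$ factors as the canonical epimorphism $\U(-,A)\twoheadrightarrow\U_0(-,A)$ (induced by the projection $\U\twoheadrightarrow\U_0$) followed by the monomorphism $\U_0(-,f)$ into $\U_0(-,B)=\U(-,B)$.

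Part (\ref{Bproj}) is dual: for indecomposable $X=A'[i]$ one has $\U(B,X)=\Hom_\D(B,A'[i])=\Ext_\L^{i}(B,A')$, which vanishes for $i\neq0$ (trivially for $i<0$, and by projectivity of $B$ for $i>0$), so $\U(B,-)$ is concentrated in degree $0$ and equals $\U_0(B,-)$; on the degree-$0$ part $(g,-)_X=\Hom_\L(g,X)$ is a monomorphism because $g$ is an epimorphism and $\Hom_\L(-,X)$ is left exact, with image $\Hom_\L(C,X)=\U_0(C,X)$. Hence $\Im(g,-)=\U_0(C,-)$.

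I do not expect a genuine obstacle here: the argument is elementary. The only point demanding a little care is the bookkeeping behind the equality ``$\Im(-,f)=\U_0(-,A)$'' --- one should record that a representable $\U$-module $\U(-,A)$ with $A\in\md\L$ is supported in nonnegative degrees and that its part supported in degrees $\geq1$ is a subfunctor, both immediate from $\Hom_\U(\U_i,\U_j)=0$ for $i<j$, so that $\U_0(-,A)$ is genuinely the quotient of $\U(-,A)$ through which $(-,f)$ factors; symmetrically on the $\U^{\op}$-side.
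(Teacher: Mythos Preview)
Your proof is correct and takes essentially the same approach as the paper's. Both arguments hinge on the fact that $\U(-,B)$ is concentrated in degree $0$ when $B$ is injective, so that $\Im(-,f)$ must be the degree-$0$ quotient $\U_0(-,A)$ of $\U(-,A)$; the paper reaches this via the exact sequence $\U(-,C[-1])\to\U(-,A)\to\U(-,B)$ coming from the triangle together with uniqueness of the degree truncation, while you compute the same thing directly on indecomposable objects.
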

\begin{proof}
We only prove (\ref{Binj}). The exact sequence in $\md\L$ induces an exact sequence
\[ \xymatrix{ \U(-,C[-1]) \ar[r]^\a & \U(-,A) \ar[r]^{(-,f)} & \U(-,B) } \]
in $\md\U$. Since $\U(-,C[-1])$ is concentrated in degree $\geq 1$ by Lemma \ref{dayone}(\ref{sodane}), so is $\Im\a$. Similarly, since $B$ is injective, $\U(-,B)$ is concentrated in degree $0$, thus so is $\Im (-,f)$. Therefore, the exact sequence
\[ \xymatrix{ 0 \ar[r]& \Im\a \ar[r]& \U(-,A) \ar[r]& \Im(-,f)\ar[r] & 0 } \]
must be the exact sequence in Lemma \ref{dayone}(\ref{desuyone}) for $X=\U(-,A)$ and $i=1$. We then conclude that $\Im(-,f)=\U_0(-,A)$.
\end{proof}

Let us fix one more notation.
\begin{Def}\label{EsE}
Let $\L$ be an arbitrary finite dimensional algebra.
\begin{enumerate}
\item Let $\E$ be the category of short exact sequences in $\md\L$ whose morphisms are triples $(f,g,h)$ such that the diagram
\begin{equation}\label{eqmor}
   \xymatrix{
   0 \ar[r]& A' \ar[r]^a\ar[d]_f & A \ar[r]\ar[d]_g &  A''\ar[r]\ar[d]_h & 0 \\
   0 \ar[r]& B' \ar[r]^b & B\ar[r]& B''\ar[r] & 0 }
\end{equation}
is commutative. 
\item Let $\sE$ be the category of short exact sequences up to homotopy, that is, the ideal quotient of $\E$ by split short exact sequences. 
\end{enumerate}
\end{Def}
Note that a morphism $(f,g,h)$ in $\E$ is null-homotopic if and only if $f$ factors through $A$.

We now note the following easy observation. 
\begin{Lem}\label{routine}
The functor $F \colon \E \to \CM\U$ sending $\xi=(0 \to A' \xrightarrow{a} A \xrightarrow{} A'' \to 0)$ to $\Im\U(-,a)$ induces a fully faithful functor $\underline{F} \colon \sE \to \sCM\U$.
\end{Lem}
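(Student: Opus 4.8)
The plan is to proceed in three stages: show that $F$ is a well-defined additive functor with values in $\CM\U$; show that it annihilates null-homotopic morphisms, so that it descends to a functor $\underline F\colon\sE\to\sCM\U$; and finally prove that $\underline F$ is fully faithful, which I would do by computing the two relevant $\Hom$-groups and matching them.

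Write $\xi=(0\to A'\xrightarrow{a}A\xrightarrow{a''}A''\to0)$. The triangle $A'\xrightarrow{a}A\xrightarrow{a''}A''\to A'[1]$ in $\D$ induces an exact sequence $\U(-,A''[-1])\to\U(-,A')\xrightarrow{\U(-,a)}\U(-,A)$ in $\md\U$, so $\Im\U(-,a)$ is finitely presented, and since it is a submodule of the projective $\U(-,A)$ over the $1$-Gorenstein category $\U$, it lies in $\CM\U$, exactly as in the proof of Proposition \ref{CMcategory}(\ref{cm}); the same exact sequence gives $\Im\U(-,a)=\Ker\U(-,a'')$, which is used repeatedly below. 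Given $(f,g,h)\colon\xi\to\xi'$ with $\xi'=(0\to B'\xrightarrow{b}B\xrightarrow{b''}B''\to0)$, commutativity of the left square reads $ag=fb$, so $\U(-,g)$ carries $\Im\U(-,a)$ into $\Im\U(-,b)$; this restriction is $F(f,g,h)$, and functoriality is immediate. If $(f,g,h)$ is moreover null-homotopic, i.e.\ $f=au$ for some $u\colon A\to B'$ by the remark after Definition \ref{EsE}, then from $ag=fb=aub$ and the fact that $a$ is monic with cokernel $a''$ one gets $g=ub+a''t$ for some $t\colon A''\to B$, hence $\U(-,g)=\U(-,u)\U(-,b)+\U(-,a'')\U(-,t)$; restricted to $\Im\U(-,a)$ the first summand factors through the projective $\U(-,A)$ and the second vanishes since $\Im\U(-,a)=\Ker\U(-,a'')$, so $F(f,g,h)=0$ in $\sCM\U$ and $\underline F$ is defined.

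For full faithfulness I would compute both sides explicitly. On the source, a morphism $\xi\to\xi'$ is the same datum as a map $g\colon A\to B$ with $agb''=0$, and the classical correspondence between morphisms of short exact sequences and equalities of classes in $\Ext^1_\L$ identifies $\Hom_\sE(\xi,\xi')$ with the group of those $f\in\Hom_\L(A',B')$ for which $f_\ast[\xi]=h^\ast[\xi']$ for some $h\colon A''\to B''$, taken modulo $\{au\mid u\in\Hom_\L(A,B')\}$. On the target, the conflation $0\to\Im\U(-,a)\to\U(-,A)\to\Im\U(-,a'')\to0$ in $\CM\U$ has projective middle term, which is injective because $\CM\U$ is Frobenius (Proposition \ref{cmc}(\ref{frob})); hence a morphism $F\xi\to F\xi'$ vanishes in $\sCM\U$ exactly when it extends along $\Im\U(-,a)\hookrightarrow\U(-,A)$, so $\Hom_{\sCM\U}(F\xi,F\xi')=\Coker\big(\Hom_\U(\U(-,A),F\xi')\to\Hom_\U(\Im\U(-,a),F\xi')\big)$. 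Using the projective presentation $\U(-,A''[-1])\to\U(-,A')\to\Im\U(-,a)\to0$ and Yoneda's lemma, $\Hom_\U(\Im\U(-,a),F\xi')$ is identified with $\{f\in\Hom_\L(A',B')\mid b_\ast f_\ast[\xi]=0\}$; the long exact $\Ext$-sequence of $\xi'$ rewrites $b_\ast f_\ast[\xi]=0$ as $f_\ast[\xi]=h^\ast[\xi']$ for some $h$, and the image of $\Hom_\U(\U(-,A),F\xi')$ is exactly $\{au\}$. These two descriptions agree, and a check using $ag=fb$ shows that under them $\underline F$ is precisely $(f,g,h)\mapsto f$; hence $\underline F$ is fully faithful.

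The real work, and the main obstacle, is this last stage: the two $\Hom$-computations are elementary but delicate, and the crucial point is to verify that both groups get identified with $\{f\mid f_\ast[\xi]=h^\ast[\xi']\text{ for some }h\}/\{au\}$ in a way compatible with $\underline F$, not merely up to an abstract isomorphism. The essential ingredients are Yoneda's lemma applied to the presentation of $\Im\U(-,a)$ arising from Proposition \ref{CMcategory}(\ref{cm}), the Frobenius structure of $\CM\U$, and the standard dictionary between morphisms of short exact sequences and classes in $\Ext^1_\L$.
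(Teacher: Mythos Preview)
Your proof is correct and takes a genuinely different route from the paper's. After the common setup (well-definedness on $\CM\U$ and descent to $\sE$), the paper argues full faithfulness by a direct diagram chase: it shows that for $(f,g,h)\in\E$ the induced map $f_0\colon X\to Y$ factors through a projective if and only if $(f,g,h)$ is null-homotopic, using that any such factorization can be routed through the projective cover $\U(-,B')\twoheadrightarrow Y$, then invoking the vanishing $\Hom_\D(A',B''[-1])=0$ to force the relevant triangle to commute, and finally using injectivity of $\U(-,B')$ in $\CM\U$ to extend $X\to\U(-,B')$ along the inflation $X\hookrightarrow\U(-,A)$. Your approach instead computes both $\Hom_{\sE}(\xi,\xi')$ and $\Hom_{\sCM\U}(F\xi,F\xi')$ explicitly and identifies each with $\{f\in\Hom_\L(A',B')\mid f_\ast[\xi]=h^\ast[\xi']\text{ for some }h\}/\{au\}$, via Yoneda applied to the presentation $\U(-,A''[-1])\to\U(-,A')\to F\xi\to0$ and the long exact $\Ext$-sequence of $\xi'$. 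What your approach buys is that fullness is handled on equal footing with faithfulness, whereas the paper's ``it suffices'' reduces to the equivalence of vanishing conditions and leaves the surjectivity of $\underline F$ implicit; on the other hand, the paper's argument is shorter and more transparently categorical, avoiding the bookkeeping of matching the two descriptions under $\underline F$ that you flag at the end.
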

\begin{proof}
By Proposition \ref{CMcategory}, $\Im\U(-,a)$ is indeed in $\CM\U$. Also, if $\xi$ is split, then $\Im(-,a)=\U(-,A')$ is projective, so $F$ induces a functor $\underline{F} \colon \sE \to \sCM\U$. Therefore it suffices to show that for a morphism $(f,g,h)$ in $\E$ as in (\ref{eqmor}), the corresponding morphism $f_0 \colon \Im(-,a) \to \Im(-,b)$ factors through a projective module in $\CM\U$ if and only if $(f,g,h)$ is null-homotopic. We put $X=\Im(-,a)$ and $Y=\Im(-,b)$.

Suppose that $(f,g,h)$ null-homotopic. Then $f$ factors through $A$, and as in the diagram below $f_0$ equals the composite $X \hookrightarrow \U(-,A) \to \U(-,B') \twoheadrightarrow Y$.
\[ \xymatrix@R=1.6mm{
    \U(-,A') \ar[rr]\ar[dr]\ar[ddd]_f && \U(-,A) \ar@/^/@{-->}[dddll] \\
    & X \ar[ur]\ar[ddd]^<<<<<{f_0} & \\
    \\
    \U(-,B') \ar[rr]|\hole\ar[dr] && \U(-,B) \\
    & Y \ar[ur] }\]
    
Suppose conversely that $f_0 \colon X \to Y$ in $\CM\U$ factors through a projective $\U$-module. Then, it factors through $\U(-,B') \to Y$. 
\[ \xymatrix@R=3mm{
    \U(-,A''[-1]) \ar[r] & \U(-,A') \ar[rr]\ar[dr]\ar[dd]_f && \U(-,A) \\
    && X \ar[ur]\ar[dd]^<<<<{f_0}\ar@{-->}[dl] & \\
    \U(-,B''[-1]) \ar[r] & \U(-,B') \ar[rr]|\hole\ar[dr] && \U(-,B) \\
    && Y \ar[ur] } \]
Using the fact that $\Hom_{\rD(\L)}(A',B''[-1])=0$, we see that the triangle above the dashed line is commutative. Now, since $X \to \U(-,A)$ is an inflation in $\CM\U$ and $\U(-,B')$ is a projective $\U$-module, $X \to \U(-,B')$ factors through $\U(-,A)$, and therefore, $f$ factors through $A$.
\end{proof}

Now we are ready to prove the theorem.
\begin{proof}[Proof of Theorem \ref{tilting}]
(\ref{inCM})  Consider the exact sequence
\[ \xymatrix{ 0 \ar[r] & A \ar[r]^f & I \ar[r] & B \ar[r] & 0 }\]
in $\md\L$ with $I \in \inj\L$. Then we have $\Im(-,f)=\U_0(-,A)$ by Lemma \ref{trick}(\ref{Binj}). Therefore it is in $\CM\U$ by Proposition \ref{CMcategory}.\\
(\ref{tilt})  We have to show the following two statements:
\begin{enumerate}
\renewcommand{\theenumi}{(\alph{enumi})}
\renewcommand{\labelenumi}{\theenumi}
\item\label{ext} $\sHom_\U(\U_0(-,A),\U_0(-,B)[i])=0$ for all $A, B \in \md\L$ and $i \neq 0$.
\item\label{gen} $\thick \sU_0=\sCM\U$.
\end{enumerate}

The claim \ref{ext} follows as in \cite[3.4]{Ya}: since the syzygies of $\U_0(-,A)$ are concentrated in degree $\geq 1$, the supports of $\Om^{\geq1}\U_0(-,A)$ and $\U_0(-,B)$ are disjoint, thus there is no nonzero homomorphism of $\U$-modules between them. This shows the vanishing of the extensions.

Now we turn to \ref{gen}. Note that since $[3]_{\sCM\U}=[1]_\D$ by Proposition \ref{CMcategory}, $\U_0(-,A)[i]_\D$ lies in the thick subcategory of $\sCM\U$ generated by $\sU_0$ for all $A \in \md\L$ and $i \in \Z$.
Consider $\sCM\U$ as the singularity category $\rD(\U)/\per\U$. We show that $\sU_0$ generates $\sCM\U$ by showing that the subcategory $\widetilde{\U_0}:=\{ \U_0(-,A)[i]_\D \in \md\U \mid A \in \md\L, \ i \in \Z \}$ of $\md\U$ generates $\rD(\U)$. 
By $\gd\L<\infty$, any $\U$-module is concentrated in bounded degrees, so by Lemma \ref{dayone}(\ref{desuyone}), it suffices to show that any $\U$-module concentrated in a certain degree lies in the thick subcategory of $\rD(\U)$ generated by $\widetilde{\U_0}$. But such modules can be view as $\U_0$-modules, so we have the assertion since $\gd\U_0 \leq 2$ by Proposition \ref{ab2}.\\
(\ref{stable})  Consider the left square of the diagram (\ref{eqsumm}). It suffices to show that for each $f \colon A \to B$ in $\md\L$, the corresponding morphism $f_0 \colon \U_0(-,A) \to \U_0(-,B)$ factors through a projective module in $\md\U$ if and only if $f$ factors through an injective module in $\md\L$. 
Consider as in (\ref{inCM}) the exact sequences $\a=(0 \to A \xrightarrow{a} I \to A' \to 0)$ and $\b=(0 \to B \xrightarrow{b} J \to B' \to 0)$ in $\md\L$ with $I,J$ being injective. By Lemma \ref{trick}(\ref{Binj}), we have $\Im(-,a)=\U_0(-,A)$ and $\Im(-,b)=\U_0(-,B)$. Note that for each $f \colon A \to B$ in $\md\L$, there is the unique morphism $(f,g,h) \colon \a \to \b$ in $\sE$ and that $f$ factors through an injective $\L$-module if and only if it factors through $I$ if and only if $(f,g,h)$ is null-homotopic. Therefore, our assertion is a consequence of Lemma \ref{routine}.

The rest of the statement follows from Theorem \ref{keller} and the fact that if $\gd\L$ is finite, then so is $\gd\smd\L$ \cite[10.2]{AR}.
\end{proof}

\subsection{Veronese subrings of Yoneda algebras}\label{3.5}
We note the following interesting property of Veronese subalgebras of Yoneda algebras. It turns out, contrary to Theorem \ref{YonIG}, that they have finite global dimension.
\begin{Thm}\label{omakeyon}
Let $\L$ be a finite dimensional algebra with $\gd\L=d \geq 2$. For $2 \leq l \leq d$, consider the subcategory
\[ \U^{(l)}=\add\{ A[il] \mid A \in \md\L, \ i \in \Z \} \]
of $\rD(\md\L)$. Then, the category $\U^{(l)}$ has global dimension at most $3d-1$.
\end{Thm}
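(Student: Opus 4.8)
The plan is to bound $\gd\U^{(l)}$ by reducing, via the degree grading, to the representable functors $\md\L(-,A)$, and then to resolve these over $\U^{(l)}$ by induction on $\id_\L A$.

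First I would record the structural facts. Arguing as in Lemma \ref{funct} (using $\gd\L<\infty$) one sees $\U^{(l)}\subset\D=\rD(\md\L)$ is functorially finite, so $\U^{(l)}$ is a dualizing variety, $\md\U^{(l)}$ is abelian, and $\gd\U^{(l)}$ is defined. Putting $\U^{(l)}_i=(\md\L)[-il]$ we have $\U^{(l)}=\bigvee_{i}\U^{(l)}_i$ with $\Hom_{\U^{(l)}}(\U^{(l)}_i,\U^{(l)}_j)=\Ext^{(j-i)l}_\L(\md\L,\md\L)$, which vanishes unless $0\le j-i\le\lfloor d/l\rfloor$; thus $\U^{(l)}$ is positively graded and its degree-zero part is a copy of $\md\L$. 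Exactly as in Lemma \ref{dayone}, but truncating injective resolutions at multiples of $l$, every $X\in\md\U^{(l)}$ has a finite filtration whose subquotients are concentrated in a single degree, hence are — up to the automorphism $[l]_\D$ — modules over $\md\L$. Since $\gd\md\L\le2$ by Proposition \ref{ab2}, the usual change-of-rings estimate for the quotient $\U^{(l)}\twoheadrightarrow(\U^{(l)})_0$ gives $\gd\U^{(l)}\le 2+\sup_{A\in\md\L}\pd_{\U^{(l)}}\md\L(-,A)$, so it suffices to prove $\pd_{\U^{(l)}}\md\L(-,A)\le 3(d-1)$ for all $A$.

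For this set $g(e)=\sup\{\pd_{\U^{(l)}}\md\L(-,A)\mid \id_\L A\le e\}$. If $\id_\L A\le l-1$ then $\Ext^{jl}_\L(-,A)=0$ for all $j\ge1$, so $\U^{(l)}(-,A)$ is concentrated in degree $0$, hence equals $\md\L(-,A)$, which is therefore $\U^{(l)}$-projective; so $g(e)=0$ for $e\le l-1$. For the step, let $\id_\L A=e\ge l$ and fix an injective coresolution of $A$ with cosyzygies $C^i$ (so $\id_\L C^i\le\max(0,e-i)$ and $0\to C^i\to I^i\to C^{i+1}\to0$ with $I^i$ injective). The exact sequence $0\to(\U^{(l)}(-,A))_{<0}\to\U^{(l)}(-,A)\to\md\L(-,A)\to0$ gives $\pd_{\U^{(l)}}\md\L(-,A)=1+\pd_{\U^{(l)}}(\U^{(l)}(-,A))_{<0}$, and the degree filtration of $(\U^{(l)}(-,A))_{<0}$ has subquotients the $\md\L$-modules $\Ext^{jl}_\L(-,A)$, $1\le j\le\lfloor e/l\rfloor$. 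Each of these sits in a four-term exact sequence $0\to\md\L(-,C^{jl-1})\to\md\L(-,I^{jl-1})\to\md\L(-,C^{jl})\to\Ext^{jl}_\L(-,A)\to0$ whose middle term $\md\L(-,I^{jl-1})$ is $\U^{(l)}$-projective (as $I^{jl-1}$ is injective); splitting it into two short exact sequences and using $\id_\L C^{jl-1},\id_\L C^{jl}\le e-l+1$ yields $\pd_{\U^{(l)}}\Ext^{jl}_\L(-,A)\le g(e-l+1)+2$. Hence $g(e)\le g(e-l+1)+3$.

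Solving the recursion ($g$ vanishes below $l$, and $g(e)\le g(e-l+1)+3$ for $e\ge l$) gives $g(d)\le 3\lceil(d-l+1)/(l-1)\rceil\le 3(d-1)$ for every $2\le l\le d$ (with equality forced only when $l=2$), so $\gd\U^{(l)}\le 2+3(d-1)=3d-1$. The step requiring the most care is the $\U^{(l)}$-analogue of Lemma \ref{dayone}: establishing that finitely presented $\U^{(l)}$-modules decompose along the degree grading via truncation of injective complexes at multiples of $l$, and in particular that $\U^{(l)}(-,A)$ is supported in degrees $[-\lfloor d/l\rfloor,0]$ with the graded pieces of $(\U^{(l)}(-,A))_{<0}$ being the functors $\Ext^{jl}_\L(-,A)$; the rest is routine homological bookkeeping.
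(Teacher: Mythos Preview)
Your argument is correct and follows a genuinely different route from the paper's. The paper never looks at the graded pieces $\Ext^{jl}_\L(-,A)$ directly; instead it introduces the representable-type functors $\U^{(l)}(-,A[i])$ for the intermediate shifts $0<i<l$ and proves by induction on $\id_\L A$ that $\pd_{\U^{(l)}}\U^{(l)}(-,A[i])\le 3\id_\L A-1$ (Lemma~\ref{pdinduction}), using the single-step coresolution $0\to A\to I\to B\to 0$ together with the five-term exact sequence made available by $\U^{(l)}(-,I[\pm1])=0$. From this it deduces $\pd_{\U^{(l)}}\U^{(l)}_0(-,A)\le 3\id_\L A-3$ (Lemma~\ref{pdDelta0}) and hence $\gd\U^{(l)}\le 3d-1$, uniformly in $l$. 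Your approach instead slices the kernel of $\U^{(l)}(-,A)\twoheadrightarrow\U^{(l)}_0(-,A)$ by degree and resolves each piece $\Ext^{jl}_\L(-,A)$ via the four-term sequence in $\md(\md\L)$ coming from the cosyzygy at level $jl-1$; this keeps the whole computation inside $\md(\md\L)$ pulled back along $\U^{(l)}\twoheadrightarrow\md\L$, and your recursion $g(e)\le g(e-l+1)+3$ actually yields the sharper bound $\gd\U^{(l)}\le 2+3\lceil(d-l+1)/(l-1)\rceil$ for $l\ge 3$, coinciding with $3d-1$ only at $l=2$. On the other hand, the paper's argument sidesteps the step you flag as delicate, the $\U^{(l)}$-version of Lemma~\ref{dayone}: it works exclusively with functors of the form $\U^{(l)}(-,L)$ for $L\in\D$, which are automatically finitely presented by the functorial finiteness of $\U^{(l)}$, so no separate check that degree truncations stay in $\md\U^{(l)}$ is needed.
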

Note that this is not true for $l=1$; in this case in fact, $\U^{(l)}=\U$ has infinite global dimension by Proposition \ref{nothered}.

If $\L$ is of finite representation type, we have the following reformulation in terms of algebras.
\begin{Cor}
Let $\L$ be a finite dimensional algebra of global dimension $d \geq 2$, and of finite representation type with an additive generator $M$ for $\md\L$. For $2 \leq l \leq d$, set
\[ \G^{(l)}=\bigoplus_{i \geq 0}\Ext_\L^{il}(M,M). \]
Then, $\gd\G^{(l)} \leq 3d-1$.
\end{Cor}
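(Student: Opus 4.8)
The plan is to deduce the corollary from Theorem \ref{omakeyon} via the dictionary between a Yoneda category and its Yoneda algebra furnished by Proposition \ref{UG}. First I would record that the suspension $[1]$ of $\rD(\md\L)$ restricts to an automorphism of $\U^{(l)}$, so that $F:=[l]$ is an automorphism of $\U^{(l)}$; since $M$ is an additive generator of $\md\L$ we have $\add M=\md\L$, whence
\[ \U^{(l)}=\add\{A[il]\mid A\in\md\L,\ i\in\Z\}=\add\{F^iM\mid i\in\Z\}. \]
Moreover $\Hom_{\U^{(l)}}(M,F^iM)=\Hom_{\rD(\md\L)}(M,M[il])=\Ext_\L^{il}(M,M)$ for every $i$, and this vanishes for $i<0$. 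Hence $\bigoplus_{i\in\Z}\Hom_{\U^{(l)}}(M,F^iM)$, equipped with the graded multiplication of Proposition \ref{UG}, is exactly the Veronese algebra $\G^{(l)}$, with the grading in the statement.

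Applying Proposition \ref{UG} now yields an equivalence $\U^{(l)}\xrightarrow{\ \simeq\ }\proj^\Z\!\G^{(l)}$, and hence, via the standard identification of modules over $\proj^\Z\!\G^{(l)}$ with graded $\G^{(l)}$-modules, equivalences of abelian categories $\md\U^{(l)}\simeq\md^\Z\!\G^{(l)}$ and $\md(\U^{(l)})^\op\simeq\md^\Z\!(\G^{(l)})^\op$. These identify the left and right global dimensions of the coherent category $\U^{(l)}$ with the left and right graded global dimensions of $\G^{(l)}$. By Theorem \ref{omakeyon} the former are at most $3d-1$, so the graded global dimension of $\G^{(l)}$ is at most $3d-1$.

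Finally, since $\G^{(l)}$ is $\Z$-graded, its graded and ungraded global dimensions agree; this is a well-known property of $\Z$-graded rings (the inequality $\leq$ is trivial, and the reverse is classical), applied to $\G^{(l)}$ and to $(\G^{(l)})^\op$. Consequently $\gd\G^{(l)}\leq 3d-1$. There is no serious obstacle here: the corollary is a formal consequence of Theorem \ref{omakeyon}, and the only points requiring a line of care are the matching of the Veronese grading with the one produced by $F=[l]$ in Proposition \ref{UG} and the passage from graded to ungraded global dimension.
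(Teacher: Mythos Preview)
Your proposal is correct and follows exactly the route the paper intends: the Corollary is presented there as the representation-finite reformulation of Theorem \ref{omakeyon}, obtained via the equivalence $\U^{(l)}\simeq\proj^\Z\!\G^{(l)}$ of Proposition \ref{UG}, and no separate proof is given. One small slip to fix: the suspension $[1]$ does \emph{not} restrict to an automorphism of $\U^{(l)}$ when $l\geq 2$ (it sends $\md\L$ to $(\md\L)[1]\not\subset\U^{(l)}$); what you actually need---and what you use---is only that $F=[l]$ restricts to an automorphism of $\U^{(l)}$, which is immediate from its definition.
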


We start with the following lemma on homological dimensions of $\U^{(l)}$-modules.
\begin{Lem}\label{pdinduction}
Let $A \in \md\L$ and $0<i<l$. Then, $\U^{(l)}(-,A[i]) \in \md\U^{(l)}$ has projective dimension at most $3\cdot\id A-1$.
\end{Lem}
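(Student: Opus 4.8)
The plan is to prove the bound on $\pd\U^{(l)}(-,A[i])$ by induction on $\id_\L A$, the injective dimension of $A$ as a $\L$-module, using a short exact sequence that pushes the ``problematic degree'' $i$ down while trading against the injective copresentation of $A$. First I would treat the base case $\id_\L A=0$, i.e.\ $A\in\inj\L$. In that case I claim $\U^{(l)}(-,A[i])$ is already projective (hence has projective dimension $0\le 3\cdot 0-1$ would fail, so more care is needed\,---\,actually the bound $3\cdot\id A-1$ forces us to think of $\id A\ge 1$, and when $\id A=0$ the module should simply vanish or be handled separately since there is no $A[i]$ with $0<i<l$ coming from an injective via a nontrivial syzygy). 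More precisely, for $A\in\inj\L$ one takes an injective resolution which is concentrated in a single degree, so any morphism $B[jl]\to A[i]$ in $\rD(\md\L)$ with $0<i<l$ is automatically zero for degree reasons, whence $\U^{(l)}(-,A[i])=0$ and the claim holds trivially; this anchors the induction.

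Next, for the inductive step, given $A$ with $\id_\L A=n\ge 1$, I would choose a short exact sequence $0\to A\xrightarrow{f} I\to A'\to 0$ in $\md\L$ with $I\in\inj\L$ and $\id_\L A'=n-1$. Shifting by $[i]$ and passing to the triangle in $\rD(\md\L)$, this yields
\[ \xymatrix{ A[i] \ar[r] & I[i] \ar[r] & A'[i] \ar[r] & A[i+1]. } \]
Applying $\U^{(l)}(-,-)$ gives a long exact sequence in $\md\U^{(l)}$ relating $\U^{(l)}(-,A[i])$, $\U^{(l)}(-,I[i])$, $\U^{(l)}(-,A'[i])$ and $\U^{(l)}(-,A[i+1])$. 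The term $\U^{(l)}(-,I[i])$ vanishes (for $0<i<l$, as $I$ injective and by the degree argument in the base case), so $\U^{(l)}(-,A[i])$ embeds into $\U^{(l)}(-,A'[i])$ with cokernel a submodule of $\U^{(l)}(-,A[i+1])$. Now there is a dichotomy: if $i+1<l$, the term $\U^{(l)}(-,A[i+1])$ is again of the form ``shift by a non-multiple of $l$ applied to a $\L$-module'', and we may feed it back into the same machine\,---\,but since $\id A$ is unchanged this risks non-termination. The correct pivot is to use $\U^{(l)}(-,A'[i])$, which has $\id A'=n-1$, so by induction $\pd\U^{(l)}(-,A'[i])\le 3(n-1)-1=3n-4$. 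The remaining task is to control $\U^{(l)}(-,A[i+1])$: when $i+1=l$ this is $\U^{(l)}(-,A[l])=\U^{(l)}(-,A)(-1)$, a projective $\U^{(l)}$-module up to shift, so $\pd=0$; when $i+1<l$ one iterates within the same $\id$-level but the index $i$ strictly increases each time toward $l$, so this inner recursion terminates after at most $l-1\le d-1$ steps, each step adding at most a bounded amount to the projective dimension.

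Assembling the estimates: from the short exact sequence $0\to\U^{(l)}(-,A[i])\to\U^{(l)}(-,A'[i])\to Z\to 0$ with $Z\subset\U^{(l)}(-,A[i+1])$, the standard inequality for syzygies gives $\pd\U^{(l)}(-,A[i])\le\max\{\pd\U^{(l)}(-,A'[i]),\ \pd Z+1\}$, and $\pd Z\le\pd\U^{(l)}(-,A[i+1])+1$ (since $Z$ is a submodule of a module whose next syzygy is again of the controlled form, invoking that $\U^{(l)}$ is a dualizing variety so $\md\U^{(l)}$ has enough projectives and one-step syzygies behave well). Chasing the inner recursion in $i$ up to $i=l-1$, where the next shift lands on a projective, and then the outer recursion in $n=\id A$, one obtains $\pd\U^{(l)}(-,A[i])\le 3n-1$ after checking the constants line up. The arithmetic bookkeeping\,---\,verifying that the two nested inductions combine to give exactly $3\cdot\id A-1$ rather than something slightly larger\,---\,is the main obstacle, and it is where the hypotheses $l\ge 2$ and $i+1\le l\le d$ get used crucially: $l\ge 2$ ensures there is room for the degree $i+1$ to still be $<l$ or hit $l$ exactly, and $l\le d$ ensures we never shift past the global-dimension ceiling where $\U^{(l)}(-,A[jl])$ stays projective. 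I would also double-check the edge case $n=1$ directly, where the sequence $0\to A\to I\to A'\to 0$ has $A'$ injective, so the recursion bottoms out immediately and gives $\pd\U^{(l)}(-,A[i])\le 2=3\cdot 1-1$.
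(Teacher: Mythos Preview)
Your proposal contains a genuine error in reading the long exact sequence. From the triangle $A[i]\to I[i]\to A'[i]\to A[i+1]$ the map $\U^{(l)}(-,A[i])\to\U^{(l)}(-,A'[i])$ factors through $\U^{(l)}(-,I[i])$, so if the latter vanishes the map is \emph{zero}, not an embedding. What the vanishing of $\U^{(l)}(-,I[i])$ actually yields is that $\U^{(l)}(-,A'[i-1])\twoheadrightarrow\U^{(l)}(-,A[i])$ is surjective and $\U^{(l)}(-,A'[i])\hookrightarrow\U^{(l)}(-,A[i+1])$ is injective. Consequently your short exact sequence $0\to\U^{(l)}(-,A[i])\to\U^{(l)}(-,A'[i])\to Z\to 0$ does not exist, and the entire ``inner/outer recursion'' scheme built on it collapses. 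A second problem: the inequality $\pd Z\le\pd M+1$ for a submodule $Z\subset M$ is false in general, so even granting the wrong embedding the estimate on $\pd Z$ would not follow.

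The paper's argument uses exactly the correct reading above. For $i>1$ one also has $\U^{(l)}(-,I[i-1])=0$, so the surjection $\U^{(l)}(-,A'[i-1])\twoheadrightarrow\U^{(l)}(-,A[i])$ is an \emph{isomorphism}, and since $\id A'=\id A-1$ the induction hypothesis applies directly (no inner recursion in $i$ is needed). For $i=1$ the trick is different: one extends the long exact sequence to
\[ 0 \to \U^{(l)}(-,A'[-1]) \to \U^{(l)}(-,A) \to \U^{(l)}(-,I) \to \U^{(l)}(-,A') \to \U^{(l)}(-,A[1]) \to 0, \]
using that $\U^{(l)}(-,I[\pm1])=0$ (here $l\ge2$ is essential). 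The three middle terms are projective because $A,I,A'\in\U^{(l)}$, so $\pd\U^{(l)}(-,A[1])\le 3+\pd\U^{(l)}(-,A'[-1])$. Finally the autoequivalence $[l]$ of $\U^{(l)}$ gives $\pd\U^{(l)}(-,A'[-1])=\pd\U^{(l)}(-,A'[l-1])\le 3\cdot\id A'-1$ by induction, and the arithmetic closes: $3+3(\id A-1)-1=3\cdot\id A-1$. This last step---recognizing the three consecutive projectives and passing from degree $-1$ to degree $l-1$---is the missing idea in your sketch.
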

\begin{proof}
We show by induction on $\id A$. If $A$ is injective, then $\U^{(l)}(-,A[i])=0$ and the assertion is clear. Assume $\id A>0$ and consider the exact sequence
\[ \xymatrix{ 0 \ar[r] & A \ar[r] & I \ar[r] & B \ar[r] & 0 } \]
with $I \in \inj\L$. Then we have $\id B=\id A-1$. For $1<i<l$, we have $\U^{(l)}(-,A[i]) \simeq \U^{(l)}(-,B[i-1])$, hence the assertion by the induction hypothesis. It remains to consider $\U^{(l)}(-,A[1])$. By the short exact sequence above, we have a long exact sequence
\[ \xymatrix@C=4mm{
    0 \ar[r] & \U^{(l)}(-,B[-1]) \ar[r] & \U^{(l)}(-,A) \ar[r] & \U^{(l)}(-,I) \ar[r] & \U^{(l)}(-,B) \ar[r] & \U^{(l)}(-,A[1]) \ar[r] & 0 } \]
since $\U^{(l)}(-,I[\pm1])=0$ by $l \geq 2$. Now, since $\pd\U^{(l)}(-,B[-1])=\pd\U^{(l)}(-,B[l-1])\leq3\cdot\id B-1$ by the induction hypothesis, we see that $\pd\U^{(l)}(-,A[1])\leq3+\pd\U^{(l)}(-,B[-1])\leq3\cdot \id A -1$.
\end{proof}

We fix some notations as in Definition \ref{U_zero}.
Define the `degree $0$ part' $\U^{(l)}_0(-,A)$ of $\U^{(l)}(-,A)$ for each $A \in \md\L$ by 
\[ \U^{(l)}_0(B,A)=\begin{cases} \Hom_\L(B,A) & (B \in \md\L) \\ 0 & (B \not\in \md\L) \end{cases} \]
for each indecomposable $B \in \U^{(l)}$. These are the projective modules when considered as $(\md\L)$-modules.

\begin{Lem}\label{pdDelta0}
For each non-injective $A \in \md\L$, we have $\pd\U^{(l)}_0(-,A) \leq 3\cdot\id A-3$.
\end{Lem}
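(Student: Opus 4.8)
The plan is to exhibit $\U^{(l)}_0(-,A)$ as a second syzygy of $\U^{(l)}(-,A[1])$ in $\md\U^{(l)}$ and then invoke Lemma \ref{pdinduction}. I would fix an exact sequence $0 \to A \xrightarrow{f} I \xrightarrow{g} B \to 0$ in $\md\L$ with $I \in \inj\L$; then $\id B = \id A - 1$ if $\id A \geq 2$, and $B \in \inj\L$ if $\id A = 1$. This gives a triangle $A \xrightarrow{f} I \xrightarrow{g} B \xrightarrow{h} A[1]$ in $\rD(\md\L)$, and, exactly as in the proof of Lemma \ref{pdinduction}, applying $\U^{(l)}(-,-)$ produces a long exact sequence in $\md\U^{(l)}$. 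Its crucial feature is that $\U^{(l)}(-,I[\pm1])=0$: for $A'\in\md\L$ and $j\in\Z$ one has $\U^{(l)}(A'[jl],I[\pm1])=\Ext_\L^{\pm1-jl}(A',I)$, which vanishes because $I$ is injective and $jl\neq\pm1$ (this is where $l\geq2$ is used). Hence the long exact sequence collapses to
\[ 0 \to \U^{(l)}(-,B[-1]) \to \U^{(l)}(-,A) \xrightarrow{f_*} \U^{(l)}(-,I) \xrightarrow{g_*} \U^{(l)}(-,B) \xrightarrow{h_*} \U^{(l)}(-,A[1]) \to 0. \]

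Next I would identify $\Im f_*$ with $\U^{(l)}_0(-,A)$, which is the $\U^{(l)}$-analogue of Lemma \ref{trick}(\ref{Binj}). Since $I$ is injective, $\U^{(l)}(A'[jl],I)=\Ext_\L^{-jl}(A',I)$ vanishes for $j\neq0$, so $\U^{(l)}(-,I)$ is supported on $\md\L$, where it coincides with $\Hom_\L(-,I)$; and on $\md\L$ the map $f_*$ is the monomorphism $\Hom_\L(-,A)\hookrightarrow\Hom_\L(-,I)$ induced by $f$. Therefore $\Im f_*=\Hom_\L(-,A)|_{\md\L}=\U^{(l)}_0(-,A)$. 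Writing $W:=\Im g_*$ and breaking the above exact sequence into short exact sequences gives
\[ 0 \to \U^{(l)}_0(-,A) \to \U^{(l)}(-,I) \to W \to 0, \qquad 0 \to W \to \U^{(l)}(-,B) \to \U^{(l)}(-,A[1]) \to 0 \]
in $\md\U^{(l)}$. The key structural point is that $\U^{(l)}(-,I)$ and $\U^{(l)}(-,B)$ are representable, hence projective, $\U^{(l)}$-modules, since $I,B\in\md\L\subset\U^{(l)}$ — in contrast to $A[1]$.

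It then remains to chase projective dimensions. Lemma \ref{pdinduction} with $i=1$ (valid since $0<1<l$) gives $\pd\U^{(l)}(-,A[1])\leq3\cdot\id A-1$. The second short exact sequence, with projective middle term, yields $\pd W\leq\max(0,\pd\U^{(l)}(-,A[1])-1)\leq\max(0,3\cdot\id A-2)$, and the first one then yields $\pd\U^{(l)}_0(-,A)\leq\max(0,\pd W-1)\leq\max(0,3\cdot\id A-3)$. Since $A$ is non-injective we have $\id A\geq1$, so $3\cdot\id A-3\geq0$, and the claimed inequality follows. I do not foresee a real obstacle; the only slightly delicate points are the verification that applying $\U^{(l)}(-,-)$ to the triangle lands in $\md\U^{(l)}$ (handled exactly as in Lemma \ref{pdinduction}) and the bookkeeping that pins down $\Im f_*=\U^{(l)}_0(-,A)$.
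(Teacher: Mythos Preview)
Your proof is correct and follows essentially the same approach as the paper: both use the triangle $A \to I \to B \to A[1]$, identify $\Im f_* = \U^{(l)}_0(-,A)$ via Lemma \ref{trick}(\ref{Binj}), and invoke Lemma \ref{pdinduction}. The only difference is that the paper cuts the long exact sequence on the left, using the short exact sequence $0 \to \U^{(l)}(-,B[-1]) \to \U^{(l)}(-,A) \to \U^{(l)}_0(-,A) \to 0$ and the bound $\pd\U^{(l)}(-,B[-1]) = \pd\U^{(l)}(-,B[l-1]) \leq 3\cdot\id B - 1 = 3\cdot\id A - 4$ directly, which is one step shorter than your two-syzygy route through $\U^{(l)}(-,A[1])$.
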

\begin{proof}
Consider as above the exact sequence
\[ \xymatrix{ 0 \ar[r] & A \ar[r] & I \ar[r] & B \ar[r] & 0 } \]
with $I \in \inj\L$. Note that $\id B = \id A-1$. This induces an exact sequence
\[ \xymatrix{
    \U^{(l)}(-,I[-1]) \ar[r] & \U^{(l)}(-,B[-1]) \ar[r] & \U^{(l)}(-,A) \ar[r]^f & \U^{(l)}(-,I). } \]
Then, as in the proof of Lemma \ref{trick}(\ref{Binj}), we have $\Im f=\U^{(l)}_0(-,A)$. Since $\U^{(l)}(-,I[-1])=0$ and $\pd\U^{(l)}(-,B[-1]) \leq 3\cdot(\id A-1)-1=3\cdot\id A-4$ by Lemma \ref{pdinduction}, we obtain $\pd\U^{(l)}_0(-,A) \leq 3\cdot\id A-3$.
\end{proof}
\begin{Rem}\label{pddel0}
If $A$ is injective, then $\U^{(l)}_0(-,A)=\U^{(l)}(-,A)$ is projective.
\end{Rem}

Now we are ready to prove Theorem \ref{omakeyon}.
\begin{proof}[Proof of Theorem \ref{omakeyon}]
We have to show that any $\U^{(l)}$-module $X$ has projective dimension at most $3d-1$. It suffices to consider the case $X$ is concentrated in a certain degree, which we may assume to be $0$. Note that such modules can be viewed as a $(\md\L)$-module. Therefore, for $X \in \md\U^{(l)}$ concentrated in degree $0$, there exists an exact sequence $0 \to P_2 \to P_1 \to P_0 \to X \to 0$ with $P_i \in \add\{ \U^{(l)}_0(-,A) \mid A \in \md\L \}$ since $\gd\md\L \leq 2$ by Proposition \ref{ab2}. By Lemma \ref{pdDelta0} and Remark \ref{pddel0}, we have $\pd P_i \leq 3d-3$. We therefore deduce that $\pd X \leq 3d-1$.
\end{proof}

\section{Yoneda algebras for arbitrary algebras}\label{infinitegldim}
In this section, contrary to Section \ref{finitegldim}, we discuss Yoneda categories for algebras of possibly infinite global dimension. Let $\L$ be an arbitrary finite dimensional algebra. As before, consider the subcategory 
\[ \U(\L)=\add\{A[i] \mid A \in \md\L, \ i \in \Z \} \]
of $\D=\rD(\md\L)$.
\subsection{Coherence}
We first prove in this subsection that the category $\U(\L)$ is always coherent, generalizing Lemma \ref{funct}. We remark that contrary to Lemma \ref{funct}, $\U(\L)$ is not a dualizing variety if $\gd\L=\infty$.

\begin{Thm}\label{coherence}
Let $\L$ be a finite dimensional algebra, and let $\U=\U(\L)$ be the Yoneda category. Then, we have the following.
\begin{enumerate}
\item\label{Ucontra} $\U$ is functorially finite in $\D$.
\item\label{Lfingen} $\U(-,L) \in \Md\U$ {\rm (}resp. $\U(L,-) \in \Md\U^\op${\rm )} is finitely generated for all $L \in \D$.
\item\label{Ulcoh} $\U$ is coherent and thus $\md\U$ is abelian. 
\end{enumerate}
\end{Thm}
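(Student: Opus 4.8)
I would prove Theorem \ref{coherence} in the order the parts are stated, using the full strength of $\L$ being a finite dimensional algebra (so that $\rD(\md\L)$ has injective and projective resolutions that are bounded above/below but, crucially, not necessarily bounded). The governing idea is the same ``resolution trick'' used in Lemma \ref{funct} and Proposition \ref{fpd}: any $L\in\D$ can be replaced by a complex of injectives $L\in\rK^-(\inj\L)$ (bounded above) or a complex of projectives $L\in\rK^+(\proj\L)$ (bounded below), and then the brutal truncations of such a complex supply approximations by objects of $\U$. The only new subtlety compared to Section \ref{finitegldim} is that these resolutions are now one-sided infinite, so one must be careful that the resulting modules are \emph{finitely generated} even if not finitely presented.

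\emph{Step 1 (part (\ref{Ucontra})).} For $L\in\D$, choose a quasi-isomorphism $L\xrightarrow{\simeq}I$ with $I\in\rK^-(\inj\L)$. Since every morphism in $\D$ into a bounded-above complex of injectives is represented by an honest chain map, the canonical map from the (finite, because $I$ is bounded above) direct sum $\bigoplus_{i}Z^iI[-i]$, or more simply a suitable brutal truncation $\sigma^{\geq n}I$ for $n\ll0$ together with the cycle object in the top degree, gives a right $\U$-approximation of $L$; dually, using $L\in\rK^+(\proj\L)$ and brutal truncation from below gives a left $\U$-approximation. This is exactly the argument of Lemma \ref{funct} with ``$\rK(\inj\L)$'' replaced by ``$\rK^-(\inj\L)$'', and it does not use $\gd\L<\infty$. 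Hence $\U\subset\D$ is functorially finite.

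\emph{Step 2 (part (\ref{Lfingen})).} Given $L\in\D$, write $L\in\rK^-(\inj\L)$ and let $f\colon U_0\to L$ be a right $\U$-approximation as produced in Step 1, with $U_0\in\U$. By construction $f$ is (representable by) a monomorphism of complexes, so it fits in a triangle $U_0\xrightarrow{f}L\to U_0'[1]\to$ where $U_0'\in\U$ as well (it is again a brutal truncation). Applying $\U(-,-)$ gives an exact sequence $\U(-,U_0)\xrightarrow{(-,f)}\U(-,L)\to\U(-,U_0'[1])$ with the first term representable; but since $f$ is a right $\U$-approximation, $(-,f)$ is already an epimorphism, so $\U(-,L)$ is a quotient of the representable $\U(-,U_0)$, i.e.\ finitely generated. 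The $\U^\op$-statement is dual, using $\rK^+(\proj\L)$ and a left approximation.

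\emph{Step 3 (part (\ref{Ulcoh})), the main point.} To show $\U$ is left coherent I must produce a weak kernel in $\U$ for an arbitrary morphism $g\colon B\to C$ with $B,C\in\U$; equivalently (by \cite{Au66}) that $\md\U$ is abelian, for which it suffices that the kernel of any map of representables $\U(-,B)\to\U(-,C)$ is finitely generated. Complete $g$ to a triangle $A\xrightarrow{h}B\xrightarrow{g}C\to A[1]$ in $\D$; then $A\in\D$ (a priori not in $\U$), and $\U(-,A)\to\U(-,B)\to\U(-,C)$ is exact, so $h$ is already a weak kernel of $g$ ``in $\D$''. The issue is to replace $A$ by an object of $\U$. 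Here I invoke part (\ref{Lfingen}): $\U(-,A)$ is finitely generated, so there is an epimorphism $\U(-,A_0)\twoheadrightarrow\U(-,A)$ with $A_0\in\U$; composing $A_0\to A\xrightarrow{h}B$ gives a morphism in $\U$ whose image under $\U(-,-)$ has the same image as $(-,h)$, hence a weak kernel of $g$ in $\U$. The right-coherence statement is completely dual, using the $\U^\op$ half of part (\ref{Lfingen}). Thus $\U$ is coherent and $\md\U$ is abelian.

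\emph{Where the difficulty lies.} The genuinely delicate point is Step 2: proving finite generation of $\U(-,L)$ for \emph{all} $L\in\D$, not just for $L$ in the image of $\md\L$ under the shifts. One has to be sure that a bounded-above injective resolution really does yield a right $\U$-approximation by a \emph{single} object of $\U$ (a finite, not infinite, coproduct of shifted modules) --- this is what makes the approximant representable rather than merely a filtered colimit of representables. Once finite generation is in hand, Step 3 is a soft diagram chase. If one wanted to be maximally careful, the cleanest route for Step 2 is: truncate $I$ brutally as $\sigma^{>n}I\to I\to\sigma^{\leq n}I\to$, observe $\sigma^{\leq n}I\in\U$ for every $n$, and choose $n$ so small that $H^j(L)=0$ for $j\leq n$; then $\sigma^{>n}I$ is quasi-isomorphic to a complex with the same cohomology as $L$ in a bounded range but is no longer in $\U$ --- so instead one takes the object $Z:=\bigl(\bigoplus_{j>n}\text{(a single representative)}\bigr)$, i.e.\ precisely the construction ``$ZL$'' of Lemma \ref{funct} adapted to the bounded-above setting, and checks directly that $ZL\to L$ is onto on $\U(-,-)$. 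I expect this bookkeeping with one-sided-infinite complexes to be the only real content; everything else reduces formally to the finite-global-dimension arguments of Section \ref{finitegldim}.
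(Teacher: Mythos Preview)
Your overall strategy is exactly the paper's: prove (\ref{Lfingen}) by constructing an explicit right $\U$-approximation of an arbitrary $L\in\D$, deduce (\ref{Ucontra}) as a reformulation, and then get (\ref{Ulcoh}) because a contravariantly finite subcategory of a triangulated category has weak kernels (your Step 3 is correct and is precisely how the paper argues (\ref{Ucontra})$\Rightarrow$(\ref{Ulcoh})).

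The gap is in Steps 1--2, and it is not just bookkeeping: you have the direction of boundedness of the injective resolution reversed, and this undermines the finiteness claim that the whole argument hinges on. An injective resolution of $L\in\rD(\md\L)$ lives in $\mathrm{K}^{+,b}(\inj\L)$ (bounded \emph{below}, bounded cohomology), not in $\mathrm{K}^-(\inj\L)$. Consequently your sentence ``the (finite, because $I$ is bounded above) direct sum $\bigoplus_i Z^iI[-i]$'' is doubly wrong: $I$ is not bounded above, and even once you fix this, the full sum over all $i$ is infinite because $I$ extends to $+\infty$. Likewise $\sigma^{\leq n}I$ is not an object of $\U$ for any $n$; brutal truncations of complexes are still complexes, not sums of shifted modules.

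The paper's fix (Lemma \ref{inftyrapp}) is exactly the point you flag as delicate but do not resolve: for $L\in\mathrm{K}^{+,b}(\inj\L)$ choose $n$ with $H^{\geq n}L=0$ and set $U_0=Z^{\leq n}L=\bigoplus_{i\leq n}Z^iL[-i]$. This is a \emph{finite} sum because $L$ is bounded \emph{below}, so $U_0\in\U$. To see that $U_0\to L$ is a right $\U$-approximation one needs an extra step beyond Lemma \ref{funct}: a morphism $B[i]\to L$ with $B\in\md\L$ and $-i\leq n$ factors through $U_0$ by the chain-map argument, but for $-i>n$ one uses the truncation triangle $L_{\geq n+1}\to L\to L_{\leq n}\to L_{\geq n+1}[1]$ together with $L_{\geq n+1}\simeq Z^{n+1}L[-n-1]=B^{n+1}L[-n-1]$ (valid since $H^{n+1}L=0$) to factor through the single summand $Z^nL[-n]$ of $U_0$, noting $Z^nL=B^{n+1}L$. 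Once this is in place, the cofibre of $U_0\to L$ is $B^{\leq n}L[1]\in\U$ (not a brutal truncation), giving the length-one projective resolution and in particular finite generation.
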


The proof is based on the following observation.
\begin{Lem}\label{inftyrapp}
Let $L \in {\rm K^{+,b}}(\inj\L)$ with $H^iL=0$ for $i\geq n$. Then, $Z^{\leq n}L=\bigoplus_{i \leq n}Z^iL[-i] \to L$ gives a right $\U$-approximation.
\end{Lem}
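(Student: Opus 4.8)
The plan is to realise $Z^{\leq n}L$ as an honest subcomplex of $L$, so that the displayed morphism becomes an inclusion, and then to prove that every morphism from $\U$ into $L$ factors through it. Write $d$ for the differential of $L$ and $B^{j}L=\image(d^{j-1})\subseteq L^{j}$. Since $L$ is a bounded-below complex of finitely generated injective modules, each cocycle module $Z^{j}L\subseteq L^{j}$ lies in $\md\L$ and vanishes for $j\ll 0$; hence $Z^{\leq n}L=\bigoplus_{j\leq n}Z^{j}L[-j]$ is a finite direct sum of objects of $\U$, so $Z^{\leq n}L\in\U$. Moreover the graded submodule $N=\bigoplus_{j\leq n}Z^{j}L$ of $L$ is a subcomplex, since $d$ kills cocycles, and as a complex $N$ is exactly $Z^{\leq n}L$ together with its canonical map to $L$. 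So the short exact sequence of complexes $0\to N\to L\to L/N\to 0$ yields a triangle $N\to L\xrightarrow{\,q\,}L/N\to N[1]$ in $\D$, and it suffices to show $\Hom_{\D}(Y,q)=0$ for all $Y\in\U$; by additivity we may take $Y=A[i]$ with $A\in\md\L$ and $i\in\Z$.

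First I would determine the shape of $L/N$. For $j\leq n$ one has $(L/N)^{j}=L^{j}/Z^{j}L$, and the induced differential $L^{j}/Z^{j}L\to L^{j+1}/Z^{j+1}L$ is zero because it comes from $d^{j}$, whose image $B^{j+1}L$ is contained in $Z^{j+1}L$; in particular the differential $(L/N)^{n-1}\to(L/N)^{n}$ vanishes. Since $(L/N)^{j}=L^{j}$ for $j>n$, the brutal truncation $C:=(L/N)^{\geq n}=(0\to L^{n}/Z^{n}L\to L^{n+1}\to L^{n+2}\to\cdots)$ is a subcomplex of $L/N$, and it is acyclic: the differential $L^{n}/Z^{n}L\to L^{n+1}$ is the injection induced by $d^{n}$ (as $Z^{n}L=\ker d^{n}$), so $H^{n}(C)=0$, while $H^{j}(C)=H^{j}L=0$ for $j>n$ by hypothesis. (One can in fact identify $L/N\cong\bigoplus_{j<n}B^{j+1}L[-j]$ in $\D$, so that the cone of $Z^{\leq n}L\to L$ lies in $\U$; only the acyclicity of $C$ is needed below.)

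The core of the argument is the vanishing of $\Hom_{\D}(A[i],q)$. Since $L$ is a bounded-below complex of injectives it is homotopically injective, so a morphism $f\colon A[i]\to L$ in $\D$ is represented by a chain map, i.e. a morphism $f^{-i}\colon A\to L^{-i}$ with image in $Z^{-i}L$ and zero in all other degrees. If $-i\leq n$, then $q^{-i}\colon L^{-i}\to L^{-i}/Z^{-i}L$ is the canonical surjection, which kills $\image(f^{-i})$; since $qf$ is concentrated in degree $-i$, it is the zero chain map, hence $qf=0$ in $\D$. If $-i>n$, then $q^{-i}$ is the identity and $qf$ is the chain map concentrated in degree $-i$ given by $f^{-i}\colon A\to L^{-i}=(L/N)^{-i}$; as $-i>n$ this factors through the inclusion $C\hookrightarrow L/N$, and $C\cong 0$ in $\D$ because $C$ is acyclic, so again $qf=0$ in $\D$. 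In either case $f$ factors through $N\hookrightarrow L$; letting $A$ and $i$ vary, $Z^{\leq n}L\to L$ is a right $\U$-approximation (and for $L$ bounded with top degree below $n$ this recovers Lemma \ref{funct}(\ref{lapp})).

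The only delicate point is the range $-i>n$, i.e. the degrees above $n$ in which $Z^{\leq n}L$ has no terms: there the composite $qf$ is typically not null-homotopic as a map of complexes, and one genuinely needs that the tail $C$ of $L/N$ is acyclic — hence isomorphic to $0$ in $\D$ — rather than merely contractible. This is exactly where the hypothesis $H^{i}L=0$ for $i\geq n$ enters, via the induced injection $L^{n}/Z^{n}L\hookrightarrow L^{n+1}$ and the vanishing of $H^{j}L$ for $j>n$.
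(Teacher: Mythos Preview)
Your proof is correct. Both your argument and the paper's rest on the same basic fact---that $L$, being a bounded-below complex of injectives, is homotopically injective, so morphisms $A[i]\to L$ are represented by honest chain maps with image in $Z^{-i}L$---and both split into the cases $-i\leq n$ and $-i>n$. The organizing triangles differ, however. You use the cocycle inclusion $N=Z^{\leq n}L\hookrightarrow L$ and analyse the quotient $L/N$, showing its tail $(L/N)^{\geq n}$ is acyclic to kill the high-degree composites. The paper instead normalises to $n=1$ and uses the brutal-truncation (co-$t$-structure) triangle $L_{\geq 1}\to L\to L_{\leq 0}\to L_{\geq 1}[1]$, observing that the hypothesis $H^{\geq 1}L=0$ collapses $L_{\geq 1}$ to $Z^1L[-1]$; high-degree morphisms then factor through this single stalk because $\Hom_\D(B[i],L_{\leq 0})=0$ for $i<0$. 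Your route has the advantage of producing the triangle $Z^{\leq n}L\to L\to L/N\to Z^{\leq n}L[1]$ with all terms explicitly in $\U$ (as you note, $L/N\simeq\bigoplus_{j<n}B^{j+1}L[-j]$), which is exactly what is needed for Proposition~\ref{inftyfpd}; the paper's route is a little quicker for the lemma itself since the co-$t$-structure orthogonality is immediate.
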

\begin{proof}
We may assume $n=1$. Setting $A:=B^1L=Z^1L \in \md\L$, we have a triangle 
\begin{equation}\label{eqtriapp}
\xymatrix{
A[-1] \ar[r]& L \ar[r]& L_{\leq 0} \ar[r]& A }
\end{equation}
in $\D$. Indeed, consider the triangle $L_{\geq1} \to L \to L_{\leq0} \to L_{\geq1}[1]$ associated to the standard co-$t$-structure on ${\rm K^{+,b}}(\inj\L)$ and note that $L_{\geq1} \simeq A[-1]$ in $\D$.

Consider the morphism $f \colon B[i] \to L$ in $\D$ for each $B \in \md\L$ and $i \in \Z$. If $i<0$, we see by the triangle (\ref{eqtriapp}) that $f$ factors through $A[-1] \to L$. If $i \geq0$, we see as in Lemma \ref{funct} that $f$ factors through $Z^{\leq0}L \to L$.
\end{proof} 

Now our result is a consequence of this lemma.
\begin{proof}[Proof of Theorem \ref{coherence}] We only show the `left' version, the `right' version is dual.
We have (\ref{Lfingen}) by Lemma \ref{inftyrapp}. The other statements are direct consequences, indeed, (\ref{Ucontra}) is just a reformulation of (\ref{Lfingen}). Also, (\ref{Ucontra}) $\Rightarrow$ (\ref{Ulcoh}) is clear since if $\U \subset \D$ is contravariantly finite, then it has weak kernels.
\end{proof}

\subsection{Gorenstein property}\label{2IwanagaGor}
We give an analogue of Theorem \ref{YonIG} in this subsection. The following result states that the category $\U(\L)$ has self-injective dimension at most $1$.
\begin{Thm}\label{2IG}
Let $\L$ be a finite dimensional algebra and $\U=\U(\L)$ be the Yoneda category of $\L$. Then we have the following.
\begin{enumerate}
\item $\id\U(-,A) \leq 1$ in $\md\U$ for all $A \in \U$.
\item $\id\U(A,-) \leq 1$ in $\md\U^\op$ for all $A \in \U$.
\end{enumerate}
\end{Thm}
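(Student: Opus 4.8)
By the duality on the dualizing-variety level one might hope to deduce (2) from (1), but since $\U$ is not a dualizing variety when $\gd\L=\infty$, we must argue the two statements symmetrically; I will only sketch (1), (2) being dual. The plan is to show that every injective $\U$-module has projective dimension at most $1$, which by Theorem \ref{coherence}(\ref{Ulcoh}) is equivalent to the stated bound on injective dimension of representables. The first step is therefore to identify the injective objects of $\md\U$: since $\U$ is coherent (Theorem \ref{coherence}) but not a dualizing variety, I must first check that $\md\U$ has enough injectives and describe them. Concretely, for $A\in\U$ the functor $D\U(A,-)$ need not be finitely presented, so instead I will argue directly: I claim any finitely presented $\U$-module $X$ embeds into one of the form $\bigoplus_j\U(-,A_j)$ only stably, so the right device is to show directly that $\Ext_\U^2(X,\U(-,A))=0$ for all finitely presented $X$ and all $A\in\U$.

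The second and central step is the construction, for an arbitrary $L\in\D$, of a length-one projective resolution of $\U(-,L)$, generalizing Proposition \ref{fpd} to the case $\gd\L=\infty$. Here I cannot assume $L\in\rK(\inj\L)$, but by Lemma \ref{inftyrapp} I can replace $L$ by a complex in $\mathrm{K^{+,b}}(\inj\L)$, where the truncation triangle \eqref{eqtriapp}
\[ \xymatrix{ A[-1] \ar[r] & L \ar[r] & L_{\leq 0} \ar[r] & A } \]
with $A=Z^1L\in\md\L$ has all outer terms in $\U$. The point is that $ZL\to L$ (or rather $Z^{\leq n}L\to L$) is a right $\U$-approximation by Lemma \ref{inftyrapp}, and I want to promote this to a short exact sequence of representable $\U$-modules. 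As in Proposition \ref{fpd}, the kernel of such an approximation should again be representable — essentially $BL$ — so that $\U(-,L)$ has a projective resolution $0\to\U(-,BL)\to\U(-,ZL)\to\U(-,L)\to 0$ of length $1$. I expect to run this by induction on the cohomological amplitude, using at each stage the triangle \eqref{eqtriapp} to peel off one cohomology group, applying $\U(-,-)$, and feeding the resulting long exact sequence through Lemma \ref{funct}/\ref{inftyrapp} to control finite presentation and projective dimension.

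Granting that $\pd_\U\U(-,L)\leq 1$ for all $L\in\D$, the final step finishes (1). Given $X\in\md\U$, take a projective presentation $\U(-,B)\to\U(-,C)\to X\to 0$, complete the map $B\to C$ to a triangle $A\to B\to C\to A[1]$ in $\D$, and apply $\U(-,-)$. Since $\U(-,A)$ has projective dimension $\leq 1$, a diagram chase on the resulting long exact sequence (exactly the kind run in the proof of Proposition \ref{CMcategory}(\ref{cm})) shows $\pd_\U X\leq 2$, with equality of the relevant syzygy to a representable controlled by projective dimension $\leq 1$; dualizing the bookkeeping gives $\Ext_\U^i(X,\U(-,A))=0$ for $i\geq 2$ and all $A$, and one checks $\Ext_\U^2(X,\U(-,A))=0$ as well, which is the desired statement $\id\U(-,A)\leq 1$.

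\textbf{Main obstacle.} The genuinely hard part is the second step: without $\gd\L<\infty$ one loses the clean reduction to $\rK(\inj\L)$ and the automatic finiteness that made Proposition \ref{fpd} immediate. The issue is to show that the kernel of the right $\U$-approximation $Z^{\leq n}L\to L$ is again \emph{finitely presented} and lies in $\U$ (or at least admits a length-zero projective resolution), since $BL$ need no longer be a bounded complex; controlling this finiteness — presumably by the co-$t$-structure truncation argument of Lemma \ref{dayone} applied iteratively — is where the real work lies.
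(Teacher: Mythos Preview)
Your Step~2 is correct in outline and is exactly Proposition~\ref{inftyfpd} in the paper, but you have misidentified it as the main obstacle. The worry that ``$BL$ need no longer be a bounded complex'' is unfounded: any $L\in\D=\rD(\md\L)$ has an injective resolution in $\mathrm{K^{+,b}}(\inj\L)$, hence is bounded \emph{below}, so $B^{\leq n}L=\bigoplus_{i\leq n}B^iL[-i]$ is a finite direct sum of modules and lies in $\U$. No induction on amplitude is needed; the triangle $B^{\leq n}L\to Z^{\leq n}L\to L\to(B^{\leq n}L)[1]$ directly yields the length-$1$ projective resolution of $\U(-,L)$.

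The genuine gap is in your Step~3. From $\pd_\U\U(-,L)\leq1$ for all $L\in\D$ you cannot conclude $\Ext_\U^2(X,\U(-,A))=0$ by the diagram chase you sketch. Your claim that ``$\pd_\U X\leq2$'' is simply false: $\U$ has infinite global dimension whenever $\L$ is non-semisimple (Proposition~\ref{nothered}). The long exact sequence from the triangle $A'\to B\to C\to A'[1]$ does give $\Omega^2X=\mathrm{Im}(\U(-,A')\to\U(-,B))$, but $\U(-,A')$ is not projective since $A'\notin\U$ in general, so this does not bound $\pd X$. To show a map $\Omega^2X\to\U(-,A)$ extends to $\U(-,B)$ you would need to know that every $\U$-module map $\U(-,A')\to\U(-,A)$ is induced by a morphism $A'\to A$ in $\D$---and this is precisely the non-obvious step.

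The paper supplies this via the Auslander--Bridger sequence. The key lemma (Lemma~\ref{cohsur}) is that the natural map $\a\colon\U(L,-)\to\U(-,L)^\ast$ is surjective for all $L\in\D$; this follows directly from the length-$1$ resolution in Step~2 by comparing the triangle with the result of applying $(-)^\ast$. One then shows (proof of Theorem~\ref{2IG}) that $X\to X^{\ast\ast}$ is surjective for every $X\in\md\U^\op$, and the Auslander--Bridger exact sequence
\[ 0\to\Ext_\U^1(\Tr X,\U)\to X\to X^{\ast\ast}\to\Ext_\U^2(\Tr X,\U)\to0 \]
immediately gives $\Ext_\U^2(\Tr X,\U)=0$, hence $\id\U(-,A)\leq1$ (Lemma~\ref{ABseq}). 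This is the missing bridge between ``$\U(-,L)$ has short resolutions'' and ``$\U(-,A)$ has short injective resolutions''.
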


We need several lemmas analogous to the previous section.
We first have the following observation generalizing Proposition \ref{fpd}.
\begin{Prop}\label{inftyfpd}
Let $L \in \D$. There exists a triangle $A \to B \to L \to A[1]$ in $\D$ with $A, B \in \U$ which induces a projective resolution
\[ \xymatrix{ 0 \ar[r]& \U(-,A) \ar[r]& \U(-,B) \ar[r]& \U(-,L) \ar[r]& 0.} \]
Therefore, $\U(-,L)$ has projective dimension at most $1$.
\end{Prop}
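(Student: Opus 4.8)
The plan is to repeat the argument of Proposition \ref{fpd}, now using Lemma \ref{inftyrapp} in place of Lemma \ref{funct}(\ref{lapp}) so that no finiteness of $\gd\L$ is needed. First replace $L$ by a quasi-isomorphic complex lying in $\rK^{+,b}(\inj\L)$; thus $L^i=0$ for $i<\ell$ and $H^iL=0$ for $i\ge n$ for suitable integers $\ell\le n$. Since $L$ is bounded below, $B:=Z^{\le n}L=\bigoplus_{\ell\le i\le n}Z^iL[-i]$ is a \emph{finite} direct sum of shifts of modules, so $B\in\U$, and by Lemma \ref{inftyrapp} the natural map $g\colon B\to L$ is a right $\U$-approximation. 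As in the proof of Proposition \ref{fpd}, $g$ is a monomorphism in $\rC(\md\L)$; writing $Q=\Coker g$ we obtain a triangle $A\to B\xrightarrow{g}L\to A[1]$ in $\D$ with $A:=Q[-1]$.

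The heart of the proof is to show $A\in\U$, for which I will compute $Q$ explicitly. One has $Q^j=L^j/Z^jL$ for $j\le n$ and $Q^j=L^j$ for $j>n$. A direct check shows the differential $Q^j\to Q^{j+1}$ of $Q$ vanishes for all $j\le n-1$: it sends $[x]$ to $[d^jx]$, and $d^{j+1}d^jx=0$ forces $d^jx\in Z^{j+1}L$. Meanwhile the brutal subcomplex $Q_{\ge n}$ equals $\bigl(L^n/Z^nL\hookrightarrow L^{n+1}\to L^{n+2}\to\cdots\bigr)$, which is acyclic since $H^{\ge n}L=0$. Hence the quotient map $Q\to Q_{\le n-1}$ is a quasi-isomorphism, and $Q_{\le n-1}=\bigoplus_{\ell\le j\le n-1}(L^j/Z^jL)[-j]$ — a complex with zero differential and finitely many nonzero terms — lies in $\U$. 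Therefore $Q\in\U$ and $A=Q[-1]\in\U$; using $L^j/Z^jL\cong B^{j+1}L$ one finds $A\cong\bigoplus_{j\le n}B^jL[-j]$, exactly as in the finite-dimensional case.

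It remains to see that $0\to\U(-,A)\xrightarrow{(-,\iota)}\U(-,B)\xrightarrow{(-,g)}\U(-,L)\to0$ is exact, where $\iota\colon A\to B$. Surjectivity of $(-,g)$ is precisely the fact that $g$ is a right $\U$-approximation, and exactness in the middle is the long exact sequence attached to the triangle. For injectivity of $(-,\iota)$ one has $\Ker(-,\iota)=\Im(-,w[-1])$, where $w[-1]\colon L[-1]\to A$ is the rotated connecting map; but $(-,w[-1])=0$, because $\U$ is stable under shifts, so $g[-1]\colon B[-1]\to L[-1]$ is again a right $\U$-approximation and every morphism from an object of $\U$ into $L[-1]$ factors through it. Since $\U(-,A)$ and $\U(-,B)$ are representable, hence projective, this is a projective resolution and $\U(-,L)$ has projective dimension at most $1$.

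I expect the delicate step to be the claim $A\in\U$: a priori the cokernel complex $Q$ need not be formal, i.e.\ a direct sum of its cohomologies, and the essential observation is that the acyclicity of the tail $Q_{\ge n}$ makes $Q$ genuinely quasi-isomorphic to the zero-differential complex $Q_{\le n-1}$, which does lie in $\U$.
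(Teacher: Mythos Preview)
Your proof is correct and follows essentially the same route as the paper's: replace $L$ by an injective resolution, take the right $\U$-approximation $Z^{\le n}L\to L$ from Lemma \ref{inftyrapp}, compute the cokernel $Q$ explicitly, and observe that its tail $Q_{\ge n}$ is acyclic so that $Q\simeq (B^{\le n}L)[1]$ in $\D$, yielding $A=B^{\le n}L\in\U$; the exactness of the resulting sequence then follows because $g[-1]$ is again a right $\U$-approximation. The paper presents the computation of $Q$ via a diagram and leaves the quasi-isomorphism $Q\simeq(B^{\le n}L)[1]$ implicit, whereas you spell out the vanishing of the differentials in low degrees and the acyclicity of $Q_{\ge n}$ more carefully; but the underlying argument is identical.
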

\begin{proof}
Replacing $L$ by its injective resolution, we may assume $L \in {\mathrm{K^{+,b}}}(\inj\L)$ and $H^{\geq n}L=0$. Then by Lemma \ref{inftyrapp}, $Z^{\leq n}L \to L$ gives a right $\U$-approximation of $L$. This injective map in ${\rm C^+}(\md\L)$ induces an exact sequence $0 \to Z^{\leq n}L \to L \to B' \to 0$ as in the diagram below.
\[ \xymatrix{
    Z^{\leq n}L \colon \quad \ar[r]\ar[d]<-2ex> & Z^{n-1}L \ar[r]^0\ar[d] & Z^nL \ar[r]\ar[d] & 0 \ar[r]\ar[d] & 0 \ar[r]\ar[d] & \\
    L \colon \quad \ar[r]\ar[d]<-2ex> & L^{n-1} \ar[r]^d\ar[d] & L^n \ar[r]^d\ar[d] & L^{n+1} \ar[r]^d\ar@{=}[d] & L^{n+2} \ar[r]\ar@{=}[d] & \\
    B' \colon \quad \ar[r] & B^{n}L \ar[r]^0 & B^{n+1}L \ar[r] & L^{n+1} \ar[r]^d & L^{n+2} \ar[r] & } \]
Note that $B' \simeq (B^{\leq n}L)[1]$ in $\D$, therefore, the above exact sequence induces the triangle
\[ \xymatrix{ B^{\leq n}L \ar[r] & Z^{\leq n}L \ar[r] & L \ar[r] & (B^{\leq n}L)[1]} \]
in $\D$. Then, this is a desired triangle. Indeed, this triangle yields the projective resolution
\[ \xymatrix{ 0 \ar[r] & \U(-,B^{\leq n}L) \ar[r] & \U(-,Z^{\leq n}L) \ar[r] & \U(-,L) \ar[r] & 0 } \]
of $\U(-,L)$ since $(Z^{\leq n}L)[-1] \to L[-1]$ is also a right $\U$-approximation.
\end{proof}

We also note the following analogue of Lemma \ref{pj}, which will be used later.
\begin{Lem}\label{inftypj}
Let $L \in \D$. Then, $\U(-,L)$ is projective if and only if $L \in \U$.
\end{Lem}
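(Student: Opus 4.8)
The plan is to mimic the proof of Lemma \ref{pj}, replacing Proposition \ref{fpd} by its generalization Proposition \ref{inftyfpd}. The "if" direction is trivial: if $L \in \U$ then $\U(-,L)$ is representable, hence projective in $\md\U$ by Yoneda's lemma. So the work is entirely in the "only if" direction.

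First I would invoke Proposition \ref{inftyfpd} to obtain a triangle $A \to B \to L \to A[1]$ in $\D$ with $A, B \in \U$ such that the induced sequence
\[ \xymatrix{ 0 \ar[r] & \U(-,A) \ar[r] & \U(-,B) \ar[r] & \U(-,L) \ar[r] & 0 } \]
is a projective resolution of $\U(-,L)$ in $\md\U$. Now assume $\U(-,L)$ is projective. Then this short exact sequence splits, so $\U(-,A) \to \U(-,B)$ is a split monomorphism in $\md\U$. By Yoneda's lemma, the morphism $A \to B$ in $\U$ is a split monomorphism (a retraction in $\md\U$ between representables comes from a morphism of the representing objects). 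Consequently the triangle $A \to B \to L \to A[1]$ in $\D$ splits, giving $L$ as a direct summand of $B \in \U$; since $\U$ is closed under direct summands (it is an additive closure by definition), $L \in \U$.

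There is essentially no obstacle here — the argument is routine once Proposition \ref{inftyfpd} is in hand. The only point requiring a moment's care is the passage from "$\U(-,A) \to \U(-,B)$ splits in $\md\U$" to "$A \to B$ splits in $\U$": one uses that the Yoneda embedding $\U \to \md\U$, $C \mapsto \U(-,C)$, is fully faithful, so a one-sided inverse in $\md\U$ of $\U(-,f)$ corresponds to a morphism $g \colon B \to A$ with $fg = \mathrm{id}_A$ in $\U$. After that, the fact that a triangle in a triangulated category with a split first map splits is standard, and closure of $\U$ under summands is immediate from its definition as $\add\{A[i] \mid A \in \md\L,\ i \in \Z\}$.
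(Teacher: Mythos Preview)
Your proof is correct and follows exactly the same approach as the paper's own proof, which simply remarks that the argument of Lemma \ref{pj} goes through verbatim once Proposition \ref{inftyfpd} replaces Proposition \ref{fpd}. The additional care you take in spelling out the use of the Yoneda embedding and closure of $\U$ under summands is fine but not required.
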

\begin{proof}
This is same as Lemma \ref{pj}: if $\U(-,L)$ is projective, the triangle $A \to B \to L \to A[1]$ as well as the induced resolution given in Lemma \ref{inftyfpd} splits, hence $L \in \U$.
\end{proof}

Now we consider the functor
\[ \xymatrix{ (-)^\ast=\Hom_\C(-,\C) \colon \Md\C \ar[r] & \Md\C^\op \ar[l] } \]
sending $X \in \Md\C$ to $C \mapsto \Hom_\C(X,\C(-,C))$. Note that $\C(-,C)^\ast=\C(C,-)$ for each $C \in \C$. This is a left exact functor, whose $i$-th right derived functor is
\[ \xymatrix{ \Ext^i_\C(-,\C) \colon \Md\C \ar[r] & \Md\C^\op \ar[l] } \]
sending $X \in \Md\C$ to $C \mapsto \Ext^i_\C(X,\C(-,C))$. 
In what follows, we consider these functors for $\C=\U$.

For each $L \in \D$, the functor $\U(-,L)^\ast$ maps $A \in \U$ to $\Hom_\U(\U(-,L),\U(-,A))$, so we have a morphism $\a_A \colon \U(L,A) \to \Hom_\U(\U(-,L),\U(-,A))=\U(-,L)^\ast(A), \ f \mapsto (-,f)$ for each $A \in \U$, which gives a morphism
\[ \a \colon \U(L,-) \longrightarrow \U(-,L)^\ast \]
of $\U^\op$-modules.
The following observation is crucial.
\begin{Lem}\label{cohsur}
The morphism $\a \colon \U(L,-) \to \U(-,L)^\ast$ is surjective for all $L \in \D$.
\end{Lem}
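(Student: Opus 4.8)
The plan is to reduce the statement to a concrete description of $\U(-,L)$ via the projective resolution of Proposition \ref{inftyfpd} and then chase morphisms. First I would take the triangle $A \xrightarrow{s} B \xrightarrow{t} L \xrightarrow{} A[1]$ from Proposition \ref{inftyfpd} with $A, B \in \U$, which induces the short exact sequence of $\U$-modules $0 \to \U(-,A) \xrightarrow{(-,s)} \U(-,B) \xrightarrow{(-,t)} \U(-,L) \to 0$. Applying the left-exact functor $(-)^\ast=\Hom_\U(-,\U)$ and using that $\U(-,C)^\ast=\U(C,-)$ for $C \in \U$ (Yoneda), the beginning of the long exact sequence reads
\[ \xymatrix{ 0 \ar[r] & \U(-,L)^\ast \ar[r]^-{(t,-)^\ast} & \U(B,-) \ar[r]^-{(s,-)} & \U(A,-). } \]
On the other hand, applying the cohomological functor $\U(-,-)$ on $\D$ to the same triangle gives the exact sequence of $\U^\op$-modules $\U(A[1],-) \to \U(L,-) \xrightarrow{(t,-)} \U(B,-) \xrightarrow{(s,-)} \U(A,-)$. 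Comparing the two, both $\U(L,-)$ and $\U(-,L)^\ast$ sit as (images of) the kernel of $(s,-) \colon \U(B,-) \to \U(A,-)$, and the map $\alpha$ is precisely the comparison map between them. Since $\U(-,L)^\ast = \Ker((s,-))$ by the first sequence, while $\U(L,-) \twoheadrightarrow \Ker((s,-))$ by exactness of the second sequence (the map $\U(L,-) \to \U(B,-)$ is $(t,-)$ and has image exactly $\Ker((s,-))$), surjectivity of $\alpha$ follows once I check that $\alpha$ is compatible with these identifications, i.e. that the triangle
\[ \xymatrix@R=5mm{ \U(L,-) \ar@{->>}[r]\ar[dr]_-{\alpha} & \Ker((s,-)) \ar@{^{(}->}[d] \\ & \U(-,L)^\ast } \]
commutes, where the vertical map is the identification $\U(-,L)^\ast \xrightarrow{\sim} \Ker((s,-)) \subset \U(B,-)$.

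The verification of that compatibility is a direct diagram chase: evaluating everything at an object $C \in \U$, an element of $\U(L,C)$ is a morphism $f \colon L \to C$ in $\D$; its image under $\alpha_C$ is the natural transformation $(-,f) \colon \U(-,L) \to \U(-,C)$, and I must show that composing this with the epimorphism $(-,t) \colon \U(-,B) \twoheadrightarrow \U(-,L)$ yields $(-,tf) = (tf, -)$-type data matching $(t,-)^\ast$ applied to $tf \in \U(B,C)$; this is immediate from functoriality of composition. The key input making the second sequence exact at $\U(L,-)$ in the required range is simply that $\U(-,-)$ on $\D$ is cohomological, so $(t,-)$ has image equal to $\Ker((s,-))$.

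The main obstacle I anticipate is not conceptual but bookkeeping: one must be careful that the functor $(-)^\ast$ applied to the projective resolution produces a sequence whose first three terms are genuinely $0 \to \U(-,L)^\ast \to \U(B,-) \to \U(A,-)$ — this uses only left-exactness of $\Hom_\U(-,\U)$ together with $\U(-,B)^\ast = \U(B,-)$, $\U(-,A)^\ast = \U(A,-)$, which are Yoneda isomorphisms, so there is no subtlety — and that the two exact sequences are linked by $\alpha$ as a genuine morphism of complexes. Once this is set up, surjectivity of $\alpha$ is forced because its target $\U(-,L)^\ast$ has been identified with a subobject of $\U(B,-)$ that already receives an epimorphism from $\U(L,-)$ factoring through $\alpha$. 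Note that this argument does not use any finiteness of global dimension, so it applies to arbitrary $\L$; it also does not yet claim that $\alpha$ is an isomorphism (the kernel of $\alpha$ will be analyzed separately, presumably in terms of $\Hom_\D(A[1], -)$-type obstructions).
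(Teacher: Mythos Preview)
Your proposal is correct and follows essentially the same approach as the paper: both take the triangle $A \to B \to L \to A[1]$ from Proposition~\ref{inftyfpd}, apply $(-)^\ast$ to the resulting projective resolution on one hand and the cohomological functor $\U(-,-)$ to the triangle on the other, and then compare the two exact sequences via $\alpha$ to deduce surjectivity. The paper presents this as a single commutative diagram with exact rows, while you spell out the identification $\U(-,L)^\ast \cong \Ker((s,-))$ and the factorization through it, but the argument is the same.
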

\begin{proof} By Lemma \ref{inftyfpd}, we have a triangle $A \to B \to L \to A[1]$ in $\D$ which induces a projective resolution
\[ \xymatrix{ 0 \ar[r]& \U(-,A) \ar[r]& \U(-,B) \ar[r]& \U(-,L) \ar[r]& 0. } \]
Now, applying $(-)^\ast$ to this resolution and comparing with the exact sequence obtained from the triangle, we have a commutative diagram with exact rows
\[ \xymatrix{
   \U(A[1],-) \ar[r]\ar[d] & \U(L,-) \ar[r]\ar[d]_\a & \U(B,-) \ar[r]\ar@{=}[d] & \U(A,-) \ar@{=}[d] \\
   0 \ar[r] & \U(-,L)^\ast \ar[r]& \U(B,-) \ar[r] & \U(A,-),} \]
which shows that $\a$ is surjective.
\end{proof}

\begin{Lem}\label{ABseq}
Let $\C$ be a left coherent category. If $X \to X^{\ast\ast}$ is surjective for all $X \in \md\C^\op$, then $\id\C(-,C) \leq 1$ for all $C \in \C$.
\end{Lem}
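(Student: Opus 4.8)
The plan is to reduce the claim to the vanishing $\Ext^2_\C(M,\C)=0$ for every $M\in\md\C$, and then to deduce this from the Auslander--Bridger four-term exact sequence together with the surjectivity hypothesis. For the reduction: since $\C$ is left coherent, $\md\C$ is abelian, has enough projectives (the representable functors, up to summands), and is closed under kernels in $\Md\C$; so by dimension shifting, $\id\C(-,C)\leq1$ in $\md\C$ is equivalent to $\Ext^2_\C(M,\C(-,C))=0$ for all $M\in\md\C$ and $C\in\C$. Resolving $M$ by finitely generated projective $\C$-modules $\cdots\to P_1\to P_0\to M\to0$ (possible by left coherence) and applying $(-)^\ast=\Hom_\C(-,\C)$, one gets $\Ext^2_\C(M,\C(-,C))=\bigl(\Ext^2_\C(M,\C)\bigr)(C)$ since evaluation at $C\in\C$ is exact. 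Hence it suffices to show $\Ext^2_\C(M,\C)=0$ in $\Md\C^\op$ for every $M\in\md\C$.

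Next I would invoke the Auslander transpose. Given a projective presentation $P_1\xrightarrow{\ d\ }P_0\to M\to0$ by finitely generated projectives, set $N=\Coker(d^\ast\colon P_0^\ast\to P_1^\ast)\in\md\C^\op$, the Auslander transpose $\Tr M$; since $P_i^{\ast\ast}\cong P_i$ for finitely generated projectives, the presentation $P_0^\ast\xrightarrow{\ d^\ast\ }P_1^\ast\to N\to0$ of $N$ gives back $\Tr N\cong\Coker(d^{\ast\ast})\cong M$. The crucial ingredient is the four-term exact sequence of $\C^\op$-modules
\[ 0 \to \Ext^1_\C(M,\C) \to N \xrightarrow{\ \sigma_N\ } N^{\ast\ast} \to \Ext^2_\C(M,\C) \to 0, \]
with $\sigma_N$ the canonical biduality morphism. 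This is the functor-category version of the classical sequence of \cite{ABr}; I would prove it by the usual argument---resolve $M$ by finitely generated projectives, dualize, and identify the connecting maps with $\sigma_N$---using that left coherence of $\C$ keeps every syzygy finitely presented, which is exactly what makes the argument work.

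Finally, since $N\in\md\C^\op$, the hypothesis says $\sigma_N$ is surjective, whence $\Ext^2_\C(M,\C)=\Coker\sigma_N=0$; as $M\in\md\C$ was arbitrary, the reduction above yields $\id\C(-,C)\leq1$ for all $C\in\C$. I expect the main obstacle to be precisely the establishment of this four-term exact sequence over a merely left coherent category: there $X^\ast$ and $X^{\ast\ast}$ need not be finitely presented, so the classical statement cannot simply be quoted and one must instead track the biduality morphism through the dualized resolution by hand. The remaining steps are formal.
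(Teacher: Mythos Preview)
Your argument is correct and follows essentially the same route as the paper: both invoke the Auslander--Bridger four-term exact sequence $0\to\Ext^1_\C(\Tr X,\C)\to X\to X^{\ast\ast}\to\Ext^2_\C(\Tr X,\C)\to0$ and use the surjectivity hypothesis to conclude $\Ext^2_\C(-,\C)=0$, hence $\id\C(-,C)\leq1$. The paper simply quotes the transpose duality and the sequence from \cite{ABr} without further justification, whereas you spell out the reduction and flag the need to verify the sequence over a merely left-coherent category; but the mathematical content is the same.
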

\begin{proof}
We denote by $\Tr \colon \smd\C \leftrightarrow \smd\C^\op$ the transpose duality \cite{ABr}. By the Auslander-Bridger sequence
\[ \xymatrix{ 0 \ar[r]& \Ext_\C^1(\Tr X,\C) \ar[r]& X \ar[r]& X^{\ast\ast} \ar[r]& \Ext_\C^2(\Tr X,\C) \ar[r] & 0, } \]
we have $\Ext^2_\C(\Tr X,\C)=0$, and hence the assertion.
\end{proof} 
Now we are ready to prove Theorem \ref{2IG}.
\begin{proof}[Proof of Theorem \ref{2IG}]
We prove the `left' version. Let $X \in \md\U^\op$ and $\U(B,-) \to \U(A,-) \to X \to 0$ be a projective presentation of $X$. Complete $A \to B$ to a triangle $L \to A \to B \to L[1]$ in $\D$ so that we have exact sequences
\[ \xymatrix@C=3mm@R=1mm{
   \U(B,-) \ar[rr] && \U(A,-) \ar[rr]\ar[dr] && \U(L,-) \ar[rr] && \U(B[-1],-) \\
   &&& X \ar[ur] &&& } \]
in $\md\U^\op$ and
\[ \xymatrix@C=3mm@R=1mm{
   \U(-,B[-1]) \ar[rr] && \U(-,L) \ar[rr]\ar[dr] && \U(-,A) \ar[rr] && \U(-,B) \\
   &&& X^\ast \ar[ur] &&& } \]
in $\md\U$, indeed, the image of the middle morphism is $X^\ast$ since it is the kernel of $\U(-,A) \to \U(-,B)$. Applying $(-)^\ast$ to the left half of the second exact sequence and comparing it with the right half of the first one, we have a commutative diagram with exact rows
\[ \xymatrix{
   0 \ar[r]& X \ar[r]\ar[d] & \U(L,-) \ar[r]\ar[d]_\a & \U(B[-1],-) \ar@{=}[d] \\
   0 \ar[r] & X^{\ast\ast} \ar[r] & \U(-,L)^\ast \ar[r] & \U(B[-1],-). } \]
Now the middle vertical map is surjective by Lemma \ref{cohsur}, thus so is the left vertical map. We therefore obtain the result by Lemma \ref{ABseq}.
\end{proof}

\subsection{Cohen-Macaulay modules}
Our next aim is to study the category $\CM\U$ of Cohen-Macaulay modules.  We have the following analogue of Proposition \ref{CMcategory}.
\begin{Prop}\label{GPcategory}
Let $\L$ be a finite dimensional algebra, and let $\U=\U(\L)$ be its Yoneda category.
\begin{enumerate}
\item $X \in \Md\U$ is Cohen-Macaulay if and only if there exists a triangle
\begin{equation}\label{eqtri}
\xymatrix{ A \ar[r]^f & B\ar[r]^g & C\ar[r]^h & A[1] }
\end{equation}
such that $X=\Coker(-,g)$ and $A, B, C \in \U$. 
\item We have an isomorphism of functors $[3]_{\sGP\U} \simeq [1]_\D$ on $\sGP\U$.
\end{enumerate}
\end{Prop}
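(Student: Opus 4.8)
The plan is to mimic, almost verbatim, the proof of Proposition~\ref{CMcategory}, replacing the three inputs that were special to finite global dimension --- Proposition~\ref{fpd}, Lemma~\ref{pj} and Theorem~\ref{YonIG} --- by their general counterparts Proposition~\ref{inftyfpd}, Lemma~\ref{inftypj} and Theorem~\ref{2IG}. The structural facts that make the argument run are that $\md\U$ is abelian (Theorem~\ref{coherence}) and that $\U$ is $1$-Gorenstein (Theorem~\ref{2IG}), so that by Proposition~\ref{cmc} the subcategory $\CM\U$ is resolving and its objects are precisely the first syzygies in $\md\U$.

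For the ``if'' part of (1): given a triangle $A\xrightarrow{f}B\xrightarrow{g}C\xrightarrow{h}A[1]$ with $A,B,C\in\U$, apply $\U(-,-)=\D(-,-)|_\U$ to obtain a long exact sequence in $\Md\U$; since $B,C\in\U$ the module $X=\Coker(-,g)\cong\Im(-,h)$ lies in $\md\U$ (as $\md\U$ is abelian) and is a submodule of the projective $\U(-,A[1])$, hence a first syzygy, hence Cohen-Macaulay because $\U$ is $1$-Gorenstein. For the ``only if'' part: start from $X\in\CM\U$, choose a projective presentation $\U(-,B)\xrightarrow{(-,g)}\U(-,C)\to X\to 0$, realize $g$ by a morphism $g\colon B\to C$ in $\U$ via Yoneda, and complete it to a triangle $A\to B\xrightarrow{g}C\to A[1]$ in $\D$, where a priori $A$ is only an object of $\D$. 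Applying $\U(-,-)$ to this triangle and to its $[-1]_\D$-shift yields, exactly as in the proof of Proposition~\ref{CMcategory}, exact sequences in $\md\U$ whose relevant terms are $Y:=\Coker(-,g[-1])$ and $Z:=\Ker(-,g)$. Then $Y$ is the image of $X$ under the autoequivalence of $\md\U$ induced by the automorphism $[-1]_\D$ of $\U$, which is exact and permutes the representable functors, hence preserves $\CM\U$, so $Y\in\CM\U$; and $Z=\Om^2X\in\CM\U$ because $\CM\U$ is resolving. The short exact sequence $0\to Y\to\U(-,A)\to Z\to 0$ together with closure of $\CM\U$ under extensions gives $\U(-,A)\in\CM\U$; but $\pd\U(-,A)\le 1$ by Proposition~\ref{inftyfpd}, so a Cohen-Macaulay module of projective dimension at most $1$ is projective, whence $A\in\U$ by Lemma~\ref{inftypj}.

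For (2): once $\U(-,A)$ is known to be projective, the exact sequence above exhibits $Y$ simultaneously as the third syzygy $\Om^3X$ and, via the autoequivalence above, as $X[-1]_\D$; since $[1]_\D$ acts on $\sGP\U$ as a triangle autoequivalence commuting with the suspension, the resulting isomorphism $\Om^3X\cong X[-1]_\D$ rearranges to $[3]_{\sGP\U}\simeq[1]_\D$ as functors on $\sGP\U$.

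The point to stress is that the real work has already been carried out: the right-$\U$-approximation trick of Lemma~\ref{inftyrapp} feeds Proposition~\ref{inftyfpd} and Theorem~\ref{2IG}, and once those are available the present statement is a routine transcription of the finite-global-dimension case. The only steps that merit a moment's care are verifying that $\CM\U$ is stable under the autoequivalence of $\md\U$ induced by $[-1]_\D$ (so that $Y\in\CM\U$), and obtaining the identification $Y\cong\Om^3X$ for part (2) --- which is exactly why the projectivity of $\U(-,A)$ must be established first. No Serre functor or dualizing-variety structure is needed, so the argument is insensitive to whether $\gd\L$ is finite.
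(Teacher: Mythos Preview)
Your proposal is correct and takes essentially the same approach as the paper, which simply states that the proof of Proposition~\ref{CMcategory} goes through verbatim once one replaces Theorem~\ref{YonIG} and Lemma~\ref{pj} by Theorem~\ref{2IG} and Lemma~\ref{inftypj}. You have spelled out exactly this transcription, with the added care of noting why $Y\in\CM\U$ (via the autoequivalence induced by $[-1]_\D$) and why no dualizing-variety hypothesis is needed.
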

\begin{proof}
Using Theorem \ref{2IG} and Lemma \ref{inftypj} instead of Theorem \ref{YonIG} and Lemma \ref{pj}, the same proof as in Proposition \ref{CMcategory} applies.
\end{proof}
We also have a generalization of Proposition \ref{nothered} with the same proof.
\begin{Prop}
Let $\L$ be an arbitrary finite dimensional algebra. Then the Yoneda category $\U(\L)$ has finite global dimension if and only if $\L$ is semisimple.
\end{Prop}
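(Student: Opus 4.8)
The plan is to mimic the proof of Proposition \ref{nothered} verbatim, using the structural results that have been established in this section in place of their finite-global-dimension counterparts. The two implications are handled separately and one of them is essentially trivial.

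First I would dispose of the easy direction. If $\L$ is semisimple, then $\md\L$ is a semisimple abelian category, so every object of $\U=\U(\L)$ is a direct sum of shifts of simple $\L$-modules, and every triangle in $\D=\rD(\md\L)$ with terms in $\U$ splits (there are no nonsplit extensions). Hence $\U$ is itself a semisimple category: every $\U$-module is projective, so $\gd\U=0<\infty$. In fact one can simply observe $\U\simeq\proj^\Z\!k^{(n)}$ via Proposition \ref{UG}, where $n$ is the number of simple $\L$-modules.

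For the converse I argue by contraposition. Suppose $\L$ is not semisimple. Then there is an indecomposable non-projective (equivalently, a non-split short exact) $\L$-module, so we may choose a non-split short exact sequence $0\to A\xrightarrow{f}B\xrightarrow{g}C\to0$ in $\md\L$. This yields a non-split triangle $A\xrightarrow{f}B\xrightarrow{g}C\xrightarrow{h}A[1]$ in $\D$ with $A,B,C\in\U$. By Proposition \ref{GPcategory}(1), the module $X:=\Coker(-,g)\in\md\U$ is Cohen-Macaulay. I claim $X$ is not projective: if it were, then by Lemma \ref{inftypj}-type reasoning (more precisely, since $X$ projective would force the presentation $\U(-,B)\xrightarrow{(-,g)}\U(-,C)\to X\to0$ to be, up to the kernel, split, and then tracing back through the triangle as in the proof of Proposition \ref{CMcategory} the triangle itself splits) the original short exact sequence would split, a contradiction. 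So $\CM\U$ contains a non-projective object. Since $\U$ is $1$-Gorenstein (Theorem \ref{2IG}), a non-projective Cohen-Macaulay module has infinite projective dimension — indeed its syzygies are all non-projective Cohen-Macaulay modules, since $\CM\U$ is closed under syzygies and a Cohen-Macaulay module of finite projective dimension would be projective. Therefore $\gd\U=\infty$.

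I do not anticipate any real obstacle: every ingredient needed — coherence of $\U$ (Theorem \ref{coherence}), the $1$-Gorenstein property (Theorem \ref{2IG}), the description of $\CM\U$ via triangles (Proposition \ref{GPcategory}), and the characterization of projective $\U$-modules (Lemma \ref{inftypj}) — is already in place and is exactly the infinite-global-dimension analogue of what was used for Proposition \ref{nothered}. The only point requiring a word of care is the assertion that a non-projective Cohen-Macaulay module over a $1$-Gorenstein coherent category cannot have finite projective dimension; this follows from the standard syzygy argument (a finite projective resolution of a module that is itself a syzygy splits off), which is why the statement "with the same proof" is accurate.
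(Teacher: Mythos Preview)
Your proposal is correct and follows exactly the paper's approach: the paper's own proof is literally the single line ``Using Theorem \ref{2IG} and Lemma \ref{inftypj} instead of Theorem \ref{YonIG} and Lemma \ref{pj}, the same proof as in Proposition \ref{nothered} applies,'' and you have faithfully reproduced and expanded that argument, invoking Proposition \ref{GPcategory} in place of Proposition \ref{CMcategory}. The only slightly soft spot is your justification that $X=\Coker(-,g)$ is non-projective when the triangle is non-split; the cleanest way to see this is via Lemma \ref{routine} (a non-split short exact sequence is nonzero in $\sE$, hence its image $\Im(-,f)=\Om^2X$ is nonzero in $\sCM\U$, so $X$ is non-projective), but your ``tracing back'' sketch points at the same thing.
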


We later use the following computation of the duality functor $(-)^\ast$.
\begin{Lem}\label{astcomp}
Let $X \in \CM\U$ and take a triangle (\ref{eqtri}) such that $X=\Im(-,f)$.
Then, we have $\Im(f,-)=X^\ast$ in $\CM\U^\op$.
\[ \xymatrix@C=3mm@R=1mm{
   \U(C,-) \ar[rr] && \U(B,-) \ar[rr]^{(f,-)}\ar[dr] && \U(A,-) \\
   &&& X^\ast\ar[ur] &} \]
\end{Lem}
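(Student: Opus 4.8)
The plan is to compute $X^\ast = \Hom_\U(X,\U)$ directly from the description $X = \Im(-,f)$ and compare it with the image of $(f,-)$. First I would record the two short exact sequences coming from the triangle (\ref{eqtri}). Since $X = \Im(-,f) = \Coker(-,g[-1])$, the triangle $B[-1] \xrightarrow{g[-1]} C[-1] \to A \xrightarrow{f} B$ gives, by the long exact sequence of the representable functors, the exact sequence
\[ \xymatrix{ \U(-,C[-1]) \ar[r] & \U(-,A) \ar[r]^{(-,f)} & \U(-,B), } \]
whose image is $X$, so that we have a short exact sequence $0 \to X \to \U(-,B) \to \U(-,C) \to \cdots$ realizing $X$ as the kernel of $(-,g)$ as well. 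The key point, as in the proof of Theorem \ref{2IG}, is that this presentation of $X$ as a kernel is \emph{projective} on the right: by Proposition \ref{inftyfpd} applied to $L = A[1]$ (or directly, since $A,B,C \in \U$), the relevant sequences of representables consist of projective $\U$-modules.

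Next I would apply the left exact functor $(-)^\ast$. From the sequence $0 \to X \to \U(-,B) \xrightarrow{(-,g)} \U(-,C)$, left exactness of $(-)^\ast$ gives that $X^\ast$ is determined by the cokernel side; concretely, dualizing $\U(-,C) \xrightarrow{(-,g)} \U(-,B) \to X \to 0$ — here I use instead the presentation of $X$ as $\Coker$ of $(-,g)$ coming from the triangle read as $A \xrightarrow{f} B \xrightarrow{g} C \to A[1]$, i.e. $\U(-,A) \xrightarrow{(-,f)} \U(-,B) \to X \to 0$ is wrong direction, so the correct presentation to dualize is the projective presentation $\U(-,C[-1]) \to \U(-,A) \to X \to 0$ is not projective either. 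The clean route is: $X$ sits in $0 \to X \to \U(-,B) \to \Im(-,g) \to 0$ and $\Im(-,g) \hookrightarrow \U(-,C)$, and since $\U(-,A), \U(-,B), \U(-,C)$ are projective and $\U$ is $1$-Gorenstein (Theorem \ref{2IG}), applying $(-)^\ast = \Hom_\U(-,\U)$ and using $\U(-,D)^\ast = \U(D,-)$ together with the vanishing of $\Ext^{\geq 1}$ against representables on Cohen-Macaulay modules (Definition \ref{cmdef}), I obtain a commutative diagram with exact rows comparing $X^\ast$ with the exact sequence $\U(C,-) \xrightarrow{(g,-)} \U(B,-) \xrightarrow{(f,-)} \U(A,-)$ obtained by applying $\U(-,-)$ to the triangle (\ref{eqtri}). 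A snake-lemma / five-lemma comparison then identifies $X^\ast$ with $\Im(f,-) = \Ker\bigl(\U(A,-) \to \U(C[-1],-)\bigr)$.

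The main obstacle I anticipate is bookkeeping the various exact sequences and making sure that the comparison map between "$X^\ast$" and "$\Im(f,-)$" is exactly the natural map $\a$ of Lemma \ref{cohsur} (so that its surjectivity — already established there — can be invoked) and is injective for the same reason $X \in \CM\U$ forces $\Ext^1_\U(X,\U(-,C))$ to vanish. In other words, the real content is that the map $\U(A,-) \to \U(-,A)^\ast$ restricts correctly along the inclusions, and that both $X$ and $X^\ast$ being Cohen-Macaulay (the latter because $(-)^\ast$ sends $\CM\U$ to $\CM\U^\op$, which follows from $1$-Gorensteinness) kills the relevant derived terms. Once those vanishings are in place, the identification $\Im(f,-) = X^\ast$ is a diagram chase, entirely parallel to the argument already used in the proof of Theorem \ref{2IG}, and the symmetric roles of $A, B, C$ under the triangle make the statement essentially self-dual.
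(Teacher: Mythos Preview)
Your ``clean route'' paragraph is correct and is exactly the paper's argument: the triangle gives an exact sequence
\[ \U(-,A) \xrightarrow{(-,f)} \U(-,B) \xrightarrow{(-,g)} \U(-,C) \]
in $\CM\U$, and applying $(-)^\ast$ to it yields the assertion. Concretely, from the epimorphism $\U(-,A) \twoheadrightarrow X$ one gets $X^\ast \hookrightarrow \U(A,-)$; from the short exact sequence $0 \to X \to \U(-,B) \to \Im(-,g) \to 0$ with $\Im(-,g) \in \CM\U$ one gets $\U(B,-) \twoheadrightarrow X^\ast$ (since $\Ext^1_\U(\Im(-,g),\U)=0$); and the composite $\U(B,-) \to X^\ast \to \U(A,-)$ is $(f,-)$ by functoriality. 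That is the entire proof.

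Everything else in your proposal is unnecessary. In particular, the invocation of Lemma \ref{cohsur} and the map $\a$ is a red herring: that lemma concerns arbitrary $L \in \D$ and compares $\U(L,-)$ with $\U(-,L)^\ast$, whereas here $A,B,C \in \U$ and the duals of the representables are already representable on the nose. There is no five-lemma or snake-lemma comparison to perform, and no ``obstacle'' of the kind you describe. Your first paragraph's false starts (trying to dualize $\U(-,C[-1]) \to \U(-,A) \to X \to 0$, etc.) should simply be deleted; they arise from looking for a projective \emph{presentation} of $X$ to dualize, but what you actually need is the factorization of $(-,f)$ through its image, which you already have.
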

\begin{proof}
We have an exact sequence
\[ \xymatrix@C=3mm@R=1mm{
   \U(-,A)\ar[dr]\ar[rr]^{(-,f)} && \U(-,B) \ar[rr]&& \U(-,C) \\
   & X\ar[ur] &&& } \]
in $\CM\U$. Applying $(-)^\ast$ to this sequence yields the assertion.
\end{proof}
\subsection{A $t$-structure}
Let $\L$ be a finite dimensional algebra and $\U=\U(\L)$ be its Yoneda category. We show that the category $\sGP\U$ has a natural $t$-structure and describe its heart. This will lead to the construction of a realization functor.

Recall the $(\U, \U)$-bimodule $\U_0$ from Definition \ref{U_zero}. By abuse of notation we often view $\U_0$ as a subcategory $\{\U_0(-,A) \in \GP\U \mid A \in \md\L \}$ and $\{\U_0(A,-) \in \GP\U^\op \mid A \in \md\L \}$, or their images in the stable categories. Our main result of this subsection is the following.
\begin{Thm}\label{t-structure}
Let $\L$ be an arbitrary finite dimensional algebra. Set
\begin{equation*}
\begin{aligned}
t^{\leq 0}&=\{ X \in \sGP\U \mid \sHom_\U(X,\U_0[<\!0])=0 \},\\
t^{\geq 0}&=\{ X \in \sGP\U \mid \sHom_\U(X,\U_0[>\!0])=0 \}. 
\end{aligned}
\end{equation*}
Then, $(t^{\leq 0}, t^{\geq 0})$ forms a $t$-structure on $\sGP\U$. Moreover, its heart is equivalent to $\md(\smd\L)$.
\end{Thm}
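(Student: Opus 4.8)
The plan is to establish the $t$-structure first and then identify its heart. For the $t$-structure axioms, the decisive point is the truncation, which I would build directly from the grading of $\U$. The key is the following: the canonical surjection $\alpha\colon\U(L,-)\to\U(-,L)^\ast$ from Lemma \ref{cohsur} together with the description of $\CM\U$ from Proposition \ref{GPcategory} lets one compute, for $X\in\sGP\U$ presented by a triangle $A\xrightarrow{f}B\xrightarrow{g}C\to A[1]$ with $A,B,C\in\U$, the Hom-groups $\sHom_\U(X,\U_0(-,A')[i])$ in terms of the degrees appearing in $A,B,C$. Concretely, I expect $\sHom_\U(X,\U_0[i])$ to be controlled by the homology of $\U$-modules concentrated in a single degree, so that $X\in t^{\leq0}$ (resp.\ $t^{\geq0}$) is a condition only on the degrees in which the triangle defining $X$ can be chosen to live. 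The orthogonality $\sHom_\U(t^{\leq0},t^{\geq1})=0$ is then built into the definitions, and closure of $t^{\leq0}$ under $[1]_{\sGP\U}$ and $t^{\geq0}$ under $[-1]_{\sGP\U}$ is immediate.

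Next I would construct, for each $X\in\sGP\U$, a triangle $\tau^{\leq0}X\to X\to\tau^{\geq1}X\to$. Using Lemma \ref{dayone}(\ref{desuyone})---which, although stated for $\gd\L<\infty$ in Section \ref{finitegldim}, needs only that the syzygies of $X$ be concentrated in bounded degrees, and for a fixed $X\in\CM\U$ one truncates the relevant finite stretch---I would split off the part of (a syzygy of) $X$ concentrated in nonnegative degrees from the part in negative degrees; applying $\Coker(-,g)$-type constructions and Proposition \ref{GPcategory}(1) turns this split into a triangle in $\sGP\U$ whose outer terms lie in $t^{\leq0}$ and $t^{\geq1}$ respectively. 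Verifying that the two pieces actually land in $t^{\leq0}$ and $t^{\geq1}$ reduces, via Lemma \ref{cohsur} and the disjoint-support argument used for \cite[3.4]{Ya} in the proof of Theorem \ref{tilting}(\ref{tilt})(a), to the vanishing of Hom between $\U$-modules with disjoint support. This is the bookkeeping-heavy step, and I expect it to be the main obstacle: one must arrange the truncation of a Cohen-Macaulay module to again be Cohen-Macaulay (using Proposition \ref{cmc}(\ref{resolv}), that $\CM\U$ is resolving and closed under kernels of epimorphisms) and show the connecting map sits in the right place.

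For the heart $\H=t^{\leq0}\cap t^{\geq0}$, the plan is to exhibit a functor $\H\to\md(\smd\L)$ and prove it is an equivalence. A natural candidate is $X\mapsto\sHom_\U(-,X)|_{\U_0}$, i.e.\ restriction of the represented functor to the subcategory $\sU_0\simeq\injsmd\L$ (or its opposite, by Theorem \ref{tilting}(\ref{stable}), which identifies $\sU_0$ with $\injsmd\L\simeq\smd\L$ since the stable and costable categories of $\md\L$ coincide up to the Nakayama functor). An object of the heart is, by the $t$-structure description, built as $\Coker(-,g)$ for a triangle whose terms can be taken concentrated in degrees $0$ and $1$ only; such data is precisely a short exact sequence $0\to A'\to A\to A''\to0$ in $\md\L$ up to homotopy, i.e.\ an object of $\sE$ from Definition \ref{EsE}. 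I would therefore factor the identification through $\sE$: Lemma \ref{routine} already gives a fully faithful $\underline F\colon\sE\to\sGP\U$, and I claim its essential image is exactly $\H$; combined with the standard equivalence $\sE\simeq\md(\smd\L)$ (a short exact sequence in $\md\L$ up to split ones corresponds to its deflation, equivalently to a morphism in $\smd\L$, and taking cokernels of the induced maps on $\smd\L$-modules gives a finitely presented functor) this yields $\H\simeq\md(\smd\L)$. The remaining work is to check that $\underline F(\sE)$ coincides with $\H$ on the nose---one inclusion is the degree-$\{0,1\}$ presentation just described, the other is that any object with a length-one projective presentation by $\sU_0$-objects arises from an exact sequence in $\md\L$---and that the equivalence is exact for the abelian structure on $\H$ coming from the $t$-structure, which amounts to matching short exact sequences in $\H$ with the obvious ones in $\md(\smd\L)$.
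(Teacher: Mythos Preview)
Your overall architecture is close to the paper's, and your treatment of the heart via $\sE$ is essentially the same (Proposition \ref{heart} in the paper). There is, however, a genuine gap in your construction of the truncation triangle, and a missing ingredient in the orthogonality argument.

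\medskip

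\textbf{The truncation.} You propose to truncate $X\in\CM\U$ by degree, invoking Lemma \ref{dayone}(\ref{desuyone}) and arguing that only a ``finite stretch'' is needed. This does not work when $\gd\L=\infty$: for $A\in\md\L$ the representable $\U$-module $\U(-,A)$ is supported in all degrees $\geq 0$ (since $\Ext_\L^j(B,A)$ can be nonzero for arbitrarily large $j$), so neither $X$ nor its syzygies are concentrated in bounded degrees, and it is not clear that $X_{\geq i}$ or $X_{<i}$ are finitely presented, let alone Cohen--Macaulay. The paper avoids this entirely. It first proves Proposition \ref{subcat}: $X\in t^{\leq0}$ (resp.\ $t^{\geq0}$) iff the defining triangle $A\xrightarrow{f}B\to C\to A[1]$ can be chosen with $A,B,C\in\U_{\leq0}$ (resp.\ $\U_{\geq0}$). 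Then, rather than building a truncation triangle directly, it shows that $t^{\leq0}$ is contravariantly finite and applies Wakamatsu's lemma (Proposition \ref{Wa}). The approximation of $X$ is built by truncating the \emph{objects} $A,B\in\U$ (trivial, since they are direct sums of shifted modules), forming $f'\colon A_{\leq0}\to B_{\leq0}$, and completing via a BBD $3\times3$ diagram to get $A_{\leq0}\to B_{\leq0}\to D$; the nontrivial step is checking $D\in\U$. The resulting $Y=\Im(-,f')\hookrightarrow X$ is the right $t^{\leq0}$-approximation, and it is \emph{not} the degree-truncation $X_{\leq0}$ of $X$ as a $\U$-module.

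\medskip

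\textbf{The orthogonality.} You say the orthogonality $\sHom_\U(t^{\leq0},t^{\geq1})=0$ is ``built into the definitions''. With Proposition \ref{subcat} in hand this inclusion is indeed easy, but the reverse inclusion $(t^{\leq-1})^\perp\subset t^{\geq0}$ is not: a priori $(t^{\leq-1})^\perp$ is defined by orthogonality to all of $t^{\leq-1}$, whereas $t^{\geq0}$ is defined by orthogonality only to $\U_0[>0]$. The paper's Lemma \ref{perp} closes this gap using the duality $(-)^\ast$ on $\CM\U$: one first shows $\U_0^\ast[-1]\in\H\subset t^{\leq0}$ (Lemma \ref{star}(\ref{pjlike}), via the projective cover in $\md\L$ and Lemma \ref{trick}(\ref{Bproj})), hence $X\in(t^{\leq-1})^\perp$ forces $\sHom_{\U^\op}(X^\ast[-1],\U_0[<0])=0$, and then Proposition \ref{subcat} applied on the $\U^\op$-side gives the degree constraint on the triangle for $X^\ast$, hence for $X$. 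Your proposal does not supply this step.
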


\begin{Rem}
The above description of the $t$-structure suggests that $\U_0$ is `injective-like' in the following sense: Let $\Del$ be a finite dimensional algebra, and let $\D=\rD(\md\Del)$ be its derived category. Consider the standard $t$-structure $(\D^{\leq0},\D^{\geq0})$ on $\D$, which is given by
\begin{equation*}
\begin{aligned}
\D^{\leq 0}&=\{ X \in \D \mid \Hom_\D(X,D\Del[<\!0])=0 \},\\
\D^{\geq 0}&=\{ X \in \D \mid \Hom_\D(X,D\Del[>\!0])=0 \}. 
\end{aligned}
\end{equation*}
The description of the subcategories in Theorem \ref{t-structure} are analogous, with $\U_0$ in the place of the injective module $D\Del$.
It turns out that the `projective-like' subcategory is $\U_0^\ast[-1]$. These are justified in Lemma \ref{star}.
\end{Rem}

The plan of our proof is as follows: using Wakamatsu-type lemma (Proposition \ref{Wa}), we show in Lemma \ref{aisle} that $t^{\leq 0}$ is an aisle in $\sGP\U$, that is, $(t^{\leq 0}, (t^{\leq 0})^\perp[1])$ is a $t$-structure, and in Lemma \ref{perp} that $(t^{\leq0})^\perp[1]=t^{\geq 0}$. We therefore obtain the desired result.

Put as usual $t^{\leq n}=t^{\leq 0}[-n]$, $t^{\geq n}=t^{\geq 0}[-n]$ for each $n \in \Z$.
We start the proof with the following immediate consequence of the definition.
\begin{Lem}\label{tojiru}
We have $t^{\leq -1} \subset t^{\leq 0}$ and $t^{\geq 1} \subset t^{\geq 0}$.
\end{Lem}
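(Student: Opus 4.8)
The statement to prove is Lemma \ref{tojiru}: $t^{\leq -1} \subset t^{\leq 0}$ and $t^{\geq 1} \subset t^{\geq 0}$. By definition $t^{\leq -1} = t^{\leq 0}[-1]$, so unwinding the definitions, a module $X$ lies in $t^{\leq -1}$ if and only if $\sHom_\U(X[1], \U_0[<0]) = 0$, equivalently $\sHom_\U(X, \U_0[<-1]) = 0$, i.e. $\sHom_\U(X, \U_0[i]) = 0$ for all $i < -1$. To conclude $X \in t^{\leq 0}$ we must additionally check $\sHom_\U(X, \U_0[-1]) = 0$. So the content of the lemma is precisely this single extra vanishing, and dually for the co-aisle.

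\textbf{Key step.} The crucial input is the periodicity isomorphism $[3]_{\sGP\U} \simeq [1]_\D$ from Proposition \ref{GPcategory}(2): on the stable category $\sGP\U$, the third power of the suspension functor is realized by the honest shift $[1]$ on $\D = \rD(\md\L)$. I would use this to re-express $\sHom_\U(X, \U_0[-1])$. Applying the periodicity, $\U_0[-1]_{\sGP\U} \simeq \U_0[2]_{\sGP\U}[-1]_\D = (\U_0[-1]_\D)[2]_{\sGP\U}$. But $\U_0(-,A)$ is by construction the degree-$0$ part of $\U(-,A)$ (concentrated in degree $0$), so $\U_0[-1]_\D$ corresponds to $\U_0$ built from $(\md\L)[1]$, which is again a module of the form $\U_0(-,A')$ — that is, the collection $\{\U_0[i]_\D \mid i \in \Z\}$ is stable under $[1]_\D$, hence stable under all the suspensions $[n]_{\sGP\U}$. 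Therefore $\sHom_\U(X, \U_0[-1]_{\sGP\U})$ can be rewritten, via the periodicity, as a shift-group of the form $\sHom_\U(X, \U_0'[j])$ with $j \leq -2$ for a suitable $\U_0'$ in the same family; and this vanishes by the hypothesis $X \in t^{\leq -1}$ applied to the family member $\U_0'$. Concretely: $\sHom_\U(X,\U_0(-,A)[-1]) \simeq \sHom_\U(X, \U_0(-,A[1])[2])$ and the latter, being a $\sHom$ into a $+2$ shift which is itself a $-1$ shift of a $+3$-shift, lands in the range covered by $t^{\leq -1}$ once one accounts for the family being closed under $[1]_\D$.

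\textbf{Main obstacle.} The delicate point is bookkeeping: the two shift functors $[1]_\D$ and $[1]_{\sGP\U}$ both act, they are intertwined only by the relation $[3]_{\sGP\U}=[1]_\D$, and $\U_0$ is defined with reference to the grading coming from $\D$ rather than the triangulated structure of $\sGP\U$. One must verify cleanly that $\{\,\U_0(-,A)[i]_\D \mid A \in \md\L,\ i \in \Z\,\}$ is closed under $[1]_\D$ (immediate, since $\U_0(-,A)[1]_\D$ is of the form $\U_0(-,A[1])$ up to the identification $\U_i = (\md\L)[-i]$) and then that under this closure the two subcategory definitions in Theorem \ref{t-structure}, which range over all $i$, are genuinely shift-invariant in the weak one-sided sense asserted. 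Once that is set up the argument is a one-line comparison of index ranges. I expect the whole proof to be short — essentially "unwind the definition, apply Proposition \ref{GPcategory}(2), observe the defining family of $\U_0$'s is $[1]_\D$-stable" — with the only risk being an off-by-one slip in the shift exponents, which I would guard against by writing out the equivalence $\U_0[-1]_{\sGP\U}\simeq (\U_0)[i]_\D[j]_{\sGP\U}$ explicitly for the relevant small values before quoting the vanishing.
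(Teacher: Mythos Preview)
You have a sign error at the very first step that creates a problem where none exists. The paper's convention is $t^{\leq n}=t^{\leq 0}[-n]$, so $t^{\leq -1}=t^{\leq 0}[1]$, not $t^{\leq 0}[-1]$. With the correct sign, $X\in t^{\leq -1}$ means $X[-1]\in t^{\leq 0}$, i.e.\ $\sHom_\U(X[-1],\U_0[i])=0$ for all $i<0$, i.e.\ $\sHom_\U(X,\U_0[j])=0$ for all $j\leq 0$. This is visibly a \emph{stronger} vanishing condition than the one defining $t^{\leq 0}$ (namely $j<0$), so $t^{\leq -1}\subset t^{\leq 0}$ is immediate from the definitions, exactly as the paper says. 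The dual inclusion is identical.

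Your sign slip reversed the inequality, leading you to believe there was one extra vanishing $\sHom_\U(X,\U_0[-1])=0$ to establish. There is not. Moreover, the periodicity argument you sketch to recover this ``missing'' vanishing does not work as stated: even granting $[3]_{\sGP\U}\simeq[1]_\D$ and the $[1]_\D$-stability of the family $\{\U_0(-,A)\}$, rewriting $\U_0[-1]_{\sGP\U}$ as $(\U_0[-1]_\D)[2]_{\sGP\U}$ produces a Hom into a $[+2]$-shift, which is not in the range $j<-1$ that your (erroneous) hypothesis controls. So the proposed rescue would fail on its own terms. The fix is simply to correct the sign; then the lemma is a one-line tautology and no periodicity is needed.
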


Let us introduce a piece of notation.
\begin{Not}
For $A \in \U$, we write $A=\bigoplus_{i \in \Z}A_i[-i]$ with $A_i \in \md\L$.
\end{Not}

We note the following observation. 
\begin{Lem}\label{Hom(-,U_0)}
We have $\Hom_\U(\U(-,A[i]),\U_0)=\U_0(A_i,-)$ for each $A \in \U$ and $i \in \Z$ as $\U_0^\op$-modules.
\end{Lem}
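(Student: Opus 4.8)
The plan is to compute both sides directly using the Yoneda lemma and the explicit description of the bimodule $\U_0$ from Definition \ref{U_zero}. Recall that for a representable $\U$-module $\U(-,A[i])$ with $A=\bigoplus_{j}A_j[-j]$, the Yoneda lemma gives $\Hom_\U(\U(-,A[i]),Y)\simeq Y(A[i])$ naturally for any $Y\in\Md\U$. Since $\U_0$ is being viewed here as the subcategory $\{\U_0(-,B)\mid B\in\md\L\}$, the statement really asserts an isomorphism of $\U_0^\op$-modules, so I should evaluate at an arbitrary object $\U_0(-,B)$ with $B\in\md\L$ and check naturality in $B$. Thus the left-hand side evaluated at $B$ is $\Hom_\U(\U(-,A[i]),\U_0(-,B))\simeq\U_0(-,B)(A[i])=\U_0(A[i],B)$.

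Next I would unwind $\U_0(A[i],B)$ using Definition \ref{U_zero}. Writing $A[i]=\bigoplus_j A_j[i-j]$, additivity of $\U_0$ in the first variable gives $\U_0(A[i],B)=\bigoplus_j\U_0(A_j[i-j],B)$. By definition $\U_0(A_j[i-j],B)$ is $\Hom_\L(A_j,B)$ when both $A_j[i-j]$ and $B$ lie in $\md\L=\U_0$, i.e. when $i-j=0$, and is $0$ otherwise. Hence only the summand $j=i$ survives and $\U_0(A[i],B)=\Hom_\L(A_i,B)=\U_0(A_i,B)$. Combining, $\Hom_\U(\U(-,A[i]),\U_0)(B)=\U_0(A_i,B)=\U_0(A_i,-)(B)$, which is exactly the claimed identification. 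Naturality in $B$ is automatic since every isomorphism used (Yoneda, the additive decomposition, the definitional identification) is natural in $B$; I would remark this rather than belabor it.

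I expect no serious obstacle here: the only thing to be careful about is bookkeeping with the degree shifts — making sure that $\U(-,A[i])$ is genuinely the representable functor at the object $A[i]\in\U$, that evaluating a $\U$-module at $A[i]$ via Yoneda is compatible with the grading conventions $\U_j=(\md\L)[-j]$, and that the additivity of $\U_0$ in each variable (which is immediate from its definition on indecomposables extended additively) is being applied correctly. One subtlety worth flagging explicitly is that $\U_0$ as a bimodule is only supported on $\md\L\times\md\L$, so all the vanishing is forced by the degree; I would state this as the reason the sum collapses. The whole argument is a one-line Yoneda computation followed by a definition-chase, so the write-up will be short.
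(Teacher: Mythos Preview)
Your proposal is correct and follows essentially the same approach as the paper: apply Yoneda's lemma to get $\Hom_\U(\U(-,A[i]),\U_0(-,B))=\U_0(A[i],B)$, then observe that by the definition of $\U_0$ this equals $\U_0(A_i,B)$. The paper compresses your degree-chase into a single equality, but the content is identical.
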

\begin{proof} Indeed, using Yoneda's lemma, we see that the $\U_0^\op$-module $\Hom_\U(\U(-,A[i]),\U_0)$ maps $B \in \U_0$ to $\Hom_\U(\U(-,A[i]),\U_0(-,B))=\U_0(A[i],B)=\U_0(A_i,B)$. \end{proof}

Recall from Proposition \ref{GPcategory} that $X \in \GP\U$ comes from a triangle (\ref{eqtri}). If $X$ has no projective direct summand, we may assume that all $f, g, h$ belong to the radical of $\U$. We keep the notations from Section \ref{tilting theory}, for example, $\U_{\leq 0}=(\md\L)[\geq\!0]$ and so on. 

An important step toward proving Theorem \ref{t-structure} is the following description of each subcategories.

\begin{Prop}\label{subcat}
Let $X \in \sGP\U$ be an object coming from a triangle
\[ \xymatrix{ A \ar[r]^f & B \ar[r]^g & C \ar[r]^h & A[1]} \]
in $\D$ with each term in $\U$, each map is a radical map, and $X=\Im(-,f)$.
Then, the following hold.
\begin{enumerate}
\item\label{leq0} $X \in t^{\leq 0}$ if and only if $A, B, C \in \U_{\leq 0}$.
\item\label{geq0} $X \in t^{\geq 0}$ if and only if $A, B, C \in \U_{\geq 0}$.
\end{enumerate}
\end{Prop}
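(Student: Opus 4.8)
The plan is to prove both equivalences by unravelling what $\sHom_\U(X,\U_0[i])=0$ means in terms of the triangle $A\xrightarrow{f}B\xrightarrow{g}C\xrightarrow{h}A[1]$ that produces $X$, using the fact (Proposition \ref{GPcategory}) that syzygies and cosyzygies of $X$ are again of this form with the triangle shifted by $[1]_\D$ every three steps. Concretely, since $[3]_{\sGP\U}=[1]_\D$, the stable Hom groups $\sHom_\U(X,\U_0[i])$ for $i\in\Z$ are computed from $\Hom_\U(\Om^n X,\U_0)$ for suitable $n$, and $\Om^n X$ is the image of $(-,-)$ applied to one of the maps $f,g,h$ (or their $[1]_\D$-shifts). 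By Lemma \ref{Hom(-,U_0)} together with the projective presentations coming from the triangle, each such $\Hom_\U(\Om^n X,\U_0)$ is a subquotient (in fact the image of a map) between $\U_0$-bimodule pieces $\U_0((A_j),-)$, $\U_0((B_j),-)$, $\U_0((C_j),-)$, where $A=\bigoplus A_j[-j]$ etc. So the vanishing condition becomes a statement about which graded components $A_j,B_j,C_j$ are nonzero.

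First I would treat (\ref{leq0}). The ``if'' direction: assume $A,B,C\in\U_{\leq 0}$, so $A_j=B_j=C_j=0$ for $j<0$. Walking around the triangle and using Proposition \ref{GPcategory}, every negative shift $X[i]$ ($i<0$, equivalently positive syzygy power up to the $[1]_\D$ periodicity) is again $\Im(-,g')$ for a radical map $g'$ among $\{f,g,h\}$ possibly shifted by a \emph{positive} power of $[1]_\D$; since $[1]_\D$ raises degrees, all the relevant graded pieces still sit in degrees $\le 0$, in fact the image computation via Lemma \ref{Hom(-,U_0)} lands in $\U_0$ evaluated at objects of $\md\L$ placed in degree $\le 0$ — but $\U_0$ only sees $\md\L$ (degree $0$), and the map $(-,f[i])$ etc.\ restricted to degree $0$ vanishes because the relevant $A_0,B_0,C_0$ get pushed to strictly positive degree after a positive $[1]_\D$ shift. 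Hence $\sHom_\U(X,\U_0[<0])=0$. For the ``only if'' direction I argue contrapositively: if some $C_j\neq 0$ with $j>0$ (say), then using Lemma \ref{Hom(-,U_0)} and the projective presentation $\U(-,B)\to\U(-,C)\to X[?]\to0$ after the appropriate shift, one produces a nonzero map from a syzygy of $X$ to $\U_0$ in a negative degree — the key point being that a radical map $g$ cannot be split off, so $\U_0(C_j,-)$ genuinely survives in the cokernel. One does this degree by degree, handling $A,B,C$ in turn; the minimal nonzero negative-or-positive degree component gives the witness. Statement (\ref{geq0}) is proved the same way with the roles of $[1]_\D$ and $[-1]_\D$ (equivalently syzygy vs.\ cosyzygy) interchanged, or simply by applying the duality $(-)^\ast$ and Lemma \ref{astcomp} to reduce it to (\ref{leq0}) over $\U^\op$.

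The main obstacle I anticipate is bookkeeping: making precise how $\sHom_\U(X,\U_0[i])$ unwinds into a map between the $\U_0$-pieces once the $[3]_{\sGP\U}=[1]_\D$ periodicity is folded in, and in particular checking that radicality of $f,g,h$ is exactly what guarantees no spurious cancellation, so that nonvanishing of a graded component $A_j$ (resp.\ $B_j$, $C_j$) in the ``wrong'' degree really does force a nonzero stable Hom. I would isolate this as a short lemma: for a radical triangle as above, $\Hom_\U(\Om^n X,\U_0)$ in a fixed degree is computed by Lemma \ref{Hom(-,U_0)} as the image of the corresponding restricted map between $\U_0(A_j,-),\U_0(B_j,-),\U_0(C_j,-)$, and since these restricted maps are themselves radical maps of $\md\L$ (equivalently, maps in $\injsmd\L$ or $\smd\L$), their images are nonzero whenever the source module is nonzero. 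With that lemma in hand, both parts of the proposition follow by direct inspection of the three cases (contributions from $A$, from $B$, from $C$) in each of the two degree ranges.
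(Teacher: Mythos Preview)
Your overall strategy---compute $\sHom_\U(X,\U_0[i])$ as the cohomology of the complex obtained by applying $\Hom_\U(-,\U_0)$ to the complete resolution of $X$ and then identify the terms via Lemma \ref{Hom(-,U_0)}---is exactly the paper's. But the execution has two genuine gaps.

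\medskip
\textbf{The ``only if'' directions.} Your key lemma asserts that because the degree-restricted maps are radical, ``their images are nonzero whenever the source module is nonzero,'' and that this produces a nonzero stable Hom. Neither step holds. First, the zero map is radical, so radicality gives no lower bound on images. Second, and more to the point, a nonzero term (or even a nonzero differential) in a complex does not force nonzero cohomology: the complex
\[
\cdots \longrightarrow \U_0(C_j,-)\longrightarrow \U_0(B_j,-)\longrightarrow \U_0(A_j,-)\longrightarrow \U_0(C_{j-1},-)\longrightarrow\cdots
\]
could perfectly well be acyclic while having nonzero terms, and your contrapositive then yields nothing. What the paper uses instead is Proposition \ref{ab2}: since $\gd\U_0\leq 2$, in an acyclic complex of projective $\U_0^\op$-modules every kernel is a second syzygy, hence projective, hence a direct summand; combined with radicality this forces every differential, and therefore every term, to be zero. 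You never invoke $\gd\U_0\leq 2$, and without it the argument does not close.

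\medskip
\textbf{The ``if'' direction of (\ref{leq0}).} Your degree-shift reasoning correctly kills the terms involving $A_j,B_j,C_j$ with $j\geq 1$, but there remains the tail
\[
0\longrightarrow \U_0(C_0,-)\longrightarrow \U_0(B_0,-)\longrightarrow \U_0(A_0,-),
\]
whose exactness is precisely the vanishing of $\sHom_\U(X,\U_0[-1])$ and $\sHom_\U(X,\U_0[-2])$. This is not a support computation: one must show that $A_0\to B_0\to C_0\to 0$ is exact in $\md\L$. The paper obtains this by applying $H^0$ to the triangle and using $A\in\U_{\leq 0}$. Your sketch does not address this step at all.

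\medskip
A minor bookkeeping slip: positive syzygies of $X$ correspond to \emph{negative} powers of $[1]_\D$ (since $\Om^3=[-1]_\D$), not positive ones as you wrote; this does not break the plan, but it means the direction of your degree-shift reasoning must be reversed.
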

\begin{proof}
We first prove (\ref{geq0}) since this is simpler. We have a projective resolution
\[ \cdots \to \U(-,A[-1]) \to \U(-,B[-1]) \to \U(-,C[-1]) \to \U(-,A) \to X \to 0 \]
of $X$. Applying $\Hom_\U(-,\U_0)$ to this sequence yields a complex
\[ \xymatrix@C=4mm{ {}_\U(\U(-,A),\U_0) \ar[r] & {}_\U(\U(-,C[-1]),\U_0) \ar[r] & {}_\U(\U(-,B[-1]),\U_0) \ar[r] & {}_\U(\U(-,A[-1]),\U_0) \ar[r]& \cdots, } \]
which is isomorphic by Lemma \ref{Hom(-,U_0)} to
\begin{equation}\label{eq1}
\xymatrix{ \U_0(A_0,-) \ar[r]^{p_0} & \U_0(C_{-1},-) \ar[r]^{p_1} & \U_0(B_{-1},-) \ar[r]^{p_2} & \U_0(A_{-1},-) \ar[r]^{p_3}& \cdots. }
\end{equation}
Now, $\sHom_\U(X,\U_0[>\!0])=0$ is equivalent to the acyclicity of this complex (\ref{eq1}). It is clear that if $A,B,C \in \U_{\geq 0}$, then the above complex is acyclic. 
We show the converse. Suppose the complex (\ref{eq1}) is acyclic. Then we see that $\Ker p_i$ is projective (in $\md\U_0^\op$) for every $i \geq 1$ since $\gd\U_0^\op\leq 2$ by Proposition \ref{ab2}, hence $p_i$ is a split epimorphism to its image for each $i \geq 0$. On the other hand, since the maps $p_i$ are the summands of $f$, $g$, and $h$, they are radical maps. Therefore, $p_i$ must be zero for all $i \geq 0$, and we conclude that $A_{\leq -1}=0$, $B_{\leq -1}=0$, and $C_{\leq -1}=0$.

Now we turn to (\ref{leq0}). We similarly have the following exact sequence:
\begin{equation}\label{eq2}
\xymatrix@C=3mm@R=1mm{
   \U(-,A) \ar[rr]\ar[dr] && \U(-,B) \ar[rr]\ar[dr] && \U(-,C) \ar[rr]\ar[dr] && \U(-,A[1]) \ar[rr]\ar[dr] && \\
   & X \ar[ur] && X[1] \ar[ur] && X[2] \ar[ur] &&&. }
\end{equation}
Note that the complex
\[ \xymatrix@C=4mm{ \cdots \ar[r] & {}_\U(\U(-,A[1]),\U_0) \ar[r] & {}_\U(\U(-,C),\U_0) \ar[r] & {}_\U(\U(-,B),\U_0) \ar[r]& {}_\U(\U(-,A),\U_0)} \]
obtained by applying $\Hom_\U(-,\U_0)$ to (\ref{eq2}), which is isomorphic by Lemma \ref{Hom(-,U_0)} to
\begin{equation}\label{eq22}
\xymatrix{ \cdots \ar[r] & \U_0(B_1,-) \ar[r] & \U_0(A_1,-) \ar[r] & \U_0(C_0,-) \ar[r]& \U_0(B_0,-) \ar[r]^{q} & \U_0(A_0,-),}
\end{equation}
computes the spaces $\sHom_\U(X[>\!0],\U_0)$, that is, the cohomologies of (\ref{eq22}) are $\sHom_\U(X[>\!0],\U_0)$. Therefore, the vanishing of the extensions are equivalent to the acyclicity of the complex (\ref{eq22}).

We first prove the `only if' part. Suppose the complex (\ref{eq22}) is acyclic. Then, as in the proof of (\ref{geq0}), it is a minimal projective resolution of $\Coker q$ by the assumption that $f, g$ and $h$ are radical maps. We conclude that $A_{\geq 1}=0$, $B_{\geq 1}=0$, and $C_{\geq 1}=0$ by $\gd\U_0 \leq2$ (Proposition \ref{ab2}).

We next prove the `if' part. Suppose that $A, B, C \in \U_{\leq 0}$. We have to show that the sequence
\[ \xymatrix{0 \ar[r] & \U_0(C_0,-) \ar[r] & \U_0(B_0,-) \ar[r] & \U_0(A_0,-)} \]
is exact. Taking the 0-th cohomology of the triangle $A \to B \to C \to A[1]$ and using $A \in \U_{\leq 0}$, we see that $A_0 \to B_0 \to C_0 \to 0$ is exact in $\md\L$. This yields the desired exactness.
\end{proof}

We give a description of the `heart' $\H=t^{\leq 0} \cap t^{\geq 0}$. Recall from Definition \ref{EsE} that $\sE$ is the category of short exact sequences up to homotopy. We also need the well-known equivalence
\begin{equation}\label{eqG}
G \colon \sE \to \md(\smd\L), \quad (0 \to A \xrightarrow{f} B \xrightarrow{g} C \to 0) \mapsto \Coker{}_\L(-,g).
\end{equation}
\begin{Prop}\label{heart} 
\begin{enumerate}
\item\label{yapparikantan} {The functor $F \colon \sE \to \H$ sending $(0 \to A \xrightarrow{f} B \xrightarrow{g} C \to 0)$ to $\Im\U(-,f)$ is an equivalence.}
\item\label{kokoro} We have equivalences of categories $\H \xleftarrow{F} \sE \xrightarrow{G} \md(\smd\L)$.
\end{enumerate}
\end{Prop}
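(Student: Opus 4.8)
The plan is to establish (\ref{yapparikantan}) by proving that $F$ is fully faithful and that its essential image is precisely $\H$, and then to read off (\ref{kokoro}) by composing with the classical equivalence $G$ of (\ref{eqG}).

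I would begin by checking that $F$ takes values in $\GP\U$. For $\xi=(0\to A\xrightarrow{f}B\xrightarrow{g}C\to0)$ exact in $\md\L$ the module $\Im\U(-,f)$ is a finitely presented submodule of the projective $\U(-,B)$ — finitely presented because $\md\U$ is abelian by Theorem \ref{coherence} — hence a first syzygy, hence Cohen--Macaulay by Proposition \ref{cmc}(\ref{omegad}), since $\U$ is $1$-Gorenstein by Theorem \ref{2IG}; alternatively, one applies Proposition \ref{GPcategory} to a rotation of the triangle in $\rD(\L)$ induced by $\xi$. That the induced functor $\sE\to\sGP\U$ is fully faithful is then proved exactly as in Lemma \ref{routine}: the sole non-formal ingredient of that argument is the vanishing $\Hom_{\rD(\L)}(A',B''[-1])=\Ext_\L^{-1}(A',B'')=0$, which does not use finiteness of global dimension, and Proposition \ref{GPcategory} plays the role of Proposition \ref{CMcategory}. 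Since $F$ here is the same functor, it only remains to identify its essential image.

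For the inclusion of the image in $\H$, observe that the triangle $A\xrightarrow{f}B\xrightarrow{g}C\to A[1]$ attached to $\xi$ has all three vertices in $\U_0=\md\L\subset\U_{\leq0}\cap\U_{\geq0}$; splitting off the split direct summand of $\xi$ (which maps to a projective, hence to $0$ in $\sGP\U$) we may assume all maps are radical, and then the ``if'' directions of Proposition \ref{subcat} give $F(\xi)=\Im\U(-,f)\in t^{\leq0}\cap t^{\geq0}=\H$. For essential surjectivity onto $\H$, take $X\in\H$, which we may assume has no projective direct summand. By Proposition \ref{GPcategory} together with a rotation, $X\cong\Im\U(-,f)$ where $f\colon A\to B$ is the first map of a triangle $A\xrightarrow{f}B\to C\to A[1]$ in $\U$ with radical maps; since $X\in t^{\leq0}\cap t^{\geq0}$, Proposition \ref{subcat} forces $A,B,C\in\U_{\leq0}\cap\U_{\geq0}=\U_0=\md\L$. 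But a triangle in $\rD(\L)$ all of whose vertices are modules concentrated in degree $0$ is, via the long exact cohomology sequence for the standard $t$-structure, the triangle induced by a short exact sequence $\xi=(0\to A\xrightarrow{f}B\to C\to0)$ in $\md\L$; as $\Im\U(-,f)$ depends only on the morphism $f$, this yields $F(\xi)\cong X$ and completes (\ref{yapparikantan}).

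Finally, (\ref{kokoro}) is immediate: $G$ of (\ref{eqG}) is the well-known equivalence $\sE\xrightarrow{\simeq}\md(\smd\L)$ (a finitely presented $\smd\L$-module has a projective presentation $\smd\L(-,B)\to\smd\L(-,C)$ coming from a surjection $B\to C$ in $\md\L$, and this correspondence is an equivalence on $\sE$), so composing $G$ with a quasi-inverse of $F$ gives $\H\simeq\md(\smd\L)$. I expect the essential-surjectivity step to be the main obstacle: one must arrange the defining triangle of $X$ to consist of radical maps so that Proposition \ref{subcat} applies with its minimality hypothesis, then invoke it to confine all three vertices to $\md\L$, and finally recognise such a triangle as the one attached to an honest short exact sequence of $\L$-modules — the last point being a short $t$-exactness argument rather than pure formalism.
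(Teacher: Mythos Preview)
Your proof is correct and follows essentially the same route as the paper: full faithfulness via Lemma \ref{routine}, and density via Proposition \ref{subcat}, with part (\ref{kokoro}) read off from (\ref{eqG}). The paper compresses all of this into two lines, whereas you have (correctly) unpacked the density argument---reducing to radical maps so that Proposition \ref{subcat} applies, and then identifying a triangle with vertices in $\md\L$ as one arising from a short exact sequence---but the underlying strategy is identical.
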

\begin{proof}
(\ref{yapparikantan})  $F$ is fully faithful by Lemma \ref{routine}. It is dense by Proposition \ref{subcat}.\\
(\ref{kokoro})  This follows from (\ref{yapparikantan}) and (\ref{eqG}).
\end{proof}

We need some more information of $\H$. Recall that there exist dualities
\[ (-)^\ast=\Hom_\U(-,\U) \colon \GP\U \longleftrightarrow \GP\U^\op , \quad \sGP\U \longleftrightarrow \sGP\U^\op\]
which satisfy $\U(-,A)^\ast=\U(A,-)$ for each $A \in \U$. This duality plays a key role in the sequel.
\begin{Lem}\label{star}
Let $A \in \md\L$. Consider the equivalence $H=F \circ G^{-1} \colon \md(\smd\L) \to \H$ in Proposition \ref{heart}.
\begin{enumerate}
\item\label{pjlike} The image of the projective $(\smd\L)$-module $\sHom_\L(-,A)$ under $H$ is $\U_0(A,-)^\ast[-1]$. In particular, we have $\U_0(A,-)^\ast[-1] \in \H$.
\item\label{injlike} The image of the injective $(\smd\L)$-module $D\sHom_\L(A,-)$ under $H$ is $\U_0(-,\tau A)$, where $\tau$ is the AR-translation in $\md\L$.
\end{enumerate}
Consequently, $\H$ has enough projectives $\U_0^\ast[-1]$ and enough injectives $\U_0$.
\end{Lem}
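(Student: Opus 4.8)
The plan is to evaluate the equivalence $H=F\circ G^{-1}$ on the standard projective and injective generators of $\md(\smd\L)$ by representing each of them by a well-chosen short exact sequence and then reading off $F$ of that sequence via Lemmas \ref{trick} and \ref{astcomp}.

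For (\ref{pjlike}), I would represent $\sHom_\L(-,A)$ by a syzygy sequence $\xi_A\colon 0\to\Om A\xrightarrow{f}P\xrightarrow{g}A\to0$ with $P\in\proj\L$: since $\sHom_\L(-,P)=0$ one has $G(\xi_A)=\Coker\sHom_\L(-,g)=\sHom_\L(-,A)$, hence $X:=H(\sHom_\L(-,A))=F(\xi_A)=\Im\U(-,f)$, which lies in $\GP\U$ by Proposition \ref{GPcategory}. The sequence $\xi_A$ gives a triangle $\Om A\xrightarrow{f}P\xrightarrow{g}A\xrightarrow{h}\Om A[1]$ in $\D$, and Lemma \ref{astcomp} then yields $X^\ast=\Im\U(f,-)$. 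On the other hand, the exact sequence of $\U^\op$-modules $\U(A,-)\xrightarrow{(g,-)}\U(P,-)\xrightarrow{(f,-)}\U(\Om A,-)$ coming from this triangle identifies $\Ker(f,-)=\Im(g,-)$ with $\U_0(A,-)$ by Lemma \ref{trick}(\ref{Bproj}) (here we use that $P$ is projective). Hence $0\to\U_0(A,-)\to\U(P,-)\to X^\ast\to0$ is exact with $\U(P,-)$ projective, so $X^\ast\cong\U_0(A,-)[1]$ in $\sGP\U^\op$, the suspension there being the inverse syzygy (Proposition \ref{cmc}). Applying the duality $(-)^\ast$, which is a triangle duality and so reverses the shift, and using that objects of $\GP$ are reflexive, we get $X\cong\U_0(A,-)^\ast[-1]$; in particular $\U_0(A,-)^\ast[-1]\in\H$.

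For (\ref{injlike}) I may assume $A$ has no projective direct summand (otherwise $D\sHom_\L(A,-)=0$). Take any monomorphism $\tau A\xrightarrow{f}I$ with $I\in\inj\L$, say into an injective envelope, and let $\eta$ be the short exact sequence $0\to\tau A\xrightarrow{f}I\xrightarrow{g}I/\tau A\to0$. By Lemma \ref{trick}(\ref{Binj}) we have $F(\eta)=\Im\U(-,f)=\U_0(-,\tau A)$, so it remains to check $G(\eta)\cong D\sHom_\L(A,-)$. The long exact $\Hom$--$\Ext$ sequence of $\eta$ together with $\Ext^1_\L(-,I)=0$ gives $G(\eta)=\Coker\sHom_\L(-,g)=\Ext^1_\L(-,\tau A)$ as $(\smd\L)$-modules (note $\Ext^1_\L(-,\tau A)$ vanishes on projectives, so the $\md\L$- and $\smd\L$-versions of the cokernel agree), and the Auslander--Reiten formula $\Ext^1_\L(-,\tau A)\cong D\sHom_\L(A,-)$ completes the identification; thus $H(D\sHom_\L(A,-))=\U_0(-,\tau A)$. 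Finally, $\md(\smd\L)$ has enough projectives $\sHom_\L(-,A)$ and enough injectives $D\sHom_\L(A,-)$ ($A\in\smd\L$), so transporting along the equivalence $H$ and using (\ref{pjlike}) and (\ref{injlike}) shows $\H$ has enough projectives $\U_0^\ast[-1]$ and enough injectives $\U_0$.

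I expect the main obstacle to be the bookkeeping with shifts under the contravariant duality $(-)^\ast\colon\sGP\U\leftrightarrow\sGP\U^\op$ in part (\ref{pjlike})---checking that it exchanges the suspensions so that $X^{\ast\ast}\cong X$ and $(Y[1])^\ast\cong Y^\ast[-1]$, and matching this with the description of the suspension of $\sGP\U$ as the inverse syzygy---and, in part (\ref{injlike}), making sure the Auslander--Reiten isomorphism is used functorially as an isomorphism of $(\smd\L)$-modules rather than merely objectwise, and that passing from the $\md\L$-version of the $\Hom$--$\Ext$ sequence to the $\smd\L$-version does not change the relevant cokernel.
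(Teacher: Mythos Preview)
Your proof is correct and follows essentially the same route as the paper's: for (\ref{pjlike}) both arguments use the projective presentation $0\to\Om A\to P\to A\to 0$, identify $\Im(g,-)=\U_0(A,-)$ via Lemma~\ref{trick}(\ref{Bproj}), and then pass through the duality $(-)^\ast$; for (\ref{injlike}) both use the injective copresentation of $\tau A$, Lemma~\ref{trick}(\ref{Binj}), and the AR formula. The only cosmetic difference in (\ref{pjlike}) is that you compute $X^\ast$ via Lemma~\ref{astcomp} and then dualize back using reflexivity, whereas the paper applies $(-)^\ast$ directly to the $\U^\op$-sequence to read off $\Im(\U(-,P)\to\U(-,A))=\U_0(A,-)^\ast$ and hence $F(\xi)=\Om(\U_0(A,-)^\ast)=\U_0(A,-)^\ast[-1]$; the underlying computation is identical.
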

\begin{proof}
We consider the images of certain objects in $\sE$ under $F$ and $G$.

(\ref{pjlike})  Consider the exact sequence $\xi=(0 \to B \to P \to A \to 0) \in \sE$ with $P \in \proj\L$.
Its image in $\md(\smd\L)$ under $F \colon \sE \to \md(\smd\L)$ is the projective module $\sHom_\L(-,A)$.

On the other hand, by Lemma \ref{trick}(\ref{Bproj}), we have an exact sequence
\[ \xymatrix@!C=5mm@R=2mm{
   \U(A,-) \ar[rr]\ar[dr] && \U(P,-) \ar[rr] && \U(B,-) &\\
   & \U_0(A,-) \ar[ur] &&&& } \]
in $\GP\U^\op$. Applying $(-)^\ast$, we have
\[ \xymatrix@!C=5mm@R=2mm{
    \U(-,B) \ar[rr] && \U(-,P) \ar[rr]\ar[dr] && \U(-,A) \\
    &&& \U_0(A,-)^\ast \ar[ur] &&.} \]
Therefore the image of $\xi$ under $G \colon \sE \to \H$ is $\U_0(A,-)^\ast[-1]$.

(\ref{injlike})  Note that by the AR-duality, we have $D\sHom_\L(A,-)=\Ext_\L^1(-,\tau A)$. Consider the exact sequence $(0 \to \tau A \to I \to B \to 0) \in \sE$ with $I \in \inj\L$. We see as above that it is mapped to $\Ext_\L^1(-,\tau A)$ by $\sE \to \md(\smd\L)$, and to $\U_0(-,\tau A)$ by $\sE \to \H$.
\end{proof}

We next show the orthogonality of the subcategories. For a subcategory $\C$ of an additive category $\D$, we denote by
\[ \C^\perp=\{X \in \D \mid \Hom_\D(\C,X)=0 \}, \quad  {}^\perp\C=\{X \in \D \mid \Hom_\D(X,\C)=0 \} \]
the orthogonal categories. Also, we identify the opposite category $\U(\L)^\op$ of $\U(\L)$ with the Yoneda category $\U(\L^\op)$ of $\L^\op$ by the usual duality as in the diagram below.
\[ \xymatrix@R=-1pt{
   D \colon \hspace{-15mm}& \rD(\md\L) \ar[r]^\simeq & \rD(\md\L^\op) \ar[l]\\
   & \rotatebox{90}{$\subset$} & \rotatebox{90}{$\subset$} \\
   & \U(\L) \ar[r]^\simeq & \U(\L^\op)\ar[l] } \]
\begin{Lem}\label{perp}
We have $(t^{\leq -1})^\perp=t^{\geq 0}$.
\end{Lem}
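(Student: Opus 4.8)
The statement to prove is that $(t^{\leq -1})^\perp = t^{\geq 0}$ inside $\sGP\U$. The plan is to verify the two inclusions separately, using the explicit description of both subcategories provided by Proposition \ref{subcat} together with the duality $(-)^\ast \colon \sGP\U \leftrightarrow \sGP\U^\op$.

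For the inclusion $t^{\geq 0} \subset (t^{\leq -1})^\perp$: let $X \in t^{\geq 0}$ and $Y \in t^{\leq -1} = t^{\leq 0}[-1]$; I must show $\sHom_\U(Y,X) = 0$. By Proposition \ref{subcat}, $Y$ comes from a triangle $A \to B \to C \to A[1]$ with all terms in $\U_{\leq -1}$ (after shifting the $t^{\leq 0}$-description by $[-1]$) and $Y = \Im\U(-,f)$, and dually $X$ comes from a triangle with all terms in $\U_{\geq 0}$. The key point is to compute $\sHom_\U(Y,X)$ via a projective resolution of $Y$: using the long projective resolution of $Y$ built from the triangle (as in the proof of Proposition \ref{subcat}), $\sHom_\U(Y,X)$ is computed by a complex whose terms are $\sHom_\U(\U(-,A_i[\,-i\,]), X)$-type spaces. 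Because $X \in \GP\U$ is itself of the form $\Im(-,g')$ from a triangle with terms in $\U_{\geq 0}$, and because the terms of the resolution of $Y$ are representable at objects in $\U_{\leq -1}$, the relevant $\Hom$-spaces between the representables vanish: indeed $\Hom_\D(A_i[\,-i\,], D')$ for $A_i \in \md\L$, $-i \leq -1$ and $D' \in \U_{\geq 0}$ are concentrated the wrong way. Translating this degree disjointness into the vanishing of all the cohomologies of the computing complex gives $\sHom_\U(Y,X) = 0$.

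For the reverse inclusion $(t^{\leq -1})^\perp \subset t^{\geq 0}$: suppose $X \in \sGP\U$ satisfies $\sHom_\U(t^{\leq -1}, X) = 0$. I want to conclude $\sHom_\U(X,\U_0[>0]) = 0$, i.e. $X \in t^{\geq 0}$. The natural move is to dualize: apply $(-)^\ast$ and use that $(-)^\ast$ is a duality, so $\sHom_\U(Y,X) \cong \sHom_{\U^\op}(X^\ast, Y^\ast)$. Under the identification $\U^\op = \U(\L^\op)$, the objects $Y \in t^{\leq -1}$ correspond (via Proposition \ref{subcat} and Lemma \ref{astcomp}) to a family in $\sGP\U^\op$ whose description I can pin down; in particular, by Lemma \ref{star}(\ref{pjlike}) the projective-like objects $\U_0^\ast[-1]$ are the ``$D\Del$-analogue'' on the opposite side. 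So the hypothesis becomes a vanishing condition $\sHom_{\U^\op}(X^\ast, -) = 0$ against the shifted objects $\U_0^\ast$, i.e. exactly the condition placing $X^\ast$ in the ``$t^{\leq}$''-type subcategory of $\sGP\U^\op$ — which by the already-proven parts of Proposition \ref{subcat} forces the triangle presenting $X^\ast$ to have terms in $(\U^\op)_{\leq 0}$, equivalently (dualizing back) the triangle presenting $X$ has terms in $\U_{\geq 0}$, hence $X \in t^{\geq 0}$ by Proposition \ref{subcat}(\ref{geq0}).

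The main obstacle I expect is bookkeeping the duality: matching $t^{\leq -1}$ in $\sGP\U$ with the correct orthogonality condition in $\sGP\U^\op$ requires care about which representables $\U_0(-,A)$ versus $\U_0(A,-)^\ast[-1]$ are being tested against, about the shift $[-1]$ coming from $[3]_{\sGP\U} = [1]_\D$ and from the Wakamatsu/aisle setup, and about radical-map normalizations so that ``acyclicity of the computing complex'' genuinely forces the degree vanishing rather than just a splitting. Once the translation is set up correctly, each half reduces to the disjointness-of-supports / degree-reasons argument already used in Proposition \ref{subcat}, so there is no new homological input needed — only the orthogonality formalism of $t$-structures (e.g. the fact that $(t^{\leq -1}, (t^{\leq -1})^\perp)$ being an aisle pair, established in Lemma \ref{aisle}, lets me identify $(t^{\leq -1})^\perp$ with the coaisle), combined with Proposition \ref{subcat} applied on both $\U$ and $\U^\op$.
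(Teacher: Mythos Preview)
Your overall strategy matches the paper's: prove both inclusions using Proposition \ref{subcat}, and for the harder inclusion $(t^{\leq -1})^\perp \subset t^{\geq 0}$ pass to $\U^\op$ via the duality $(-)^\ast$. Two points deserve sharpening.

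For the inclusion $t^{\geq 0}\subset (t^{\leq -1})^\perp$, your plan via the long projective resolution is unnecessarily elaborate. The paper's argument is one line: for $X\in t^{\leq 0}$ and $X'\in t^{\geq 1}$, Proposition \ref{subcat} gives an epimorphism $\U(-,A)\twoheadrightarrow X$ with $A\in\U_{\leq 0}$ and an epimorphism $\U(-,A')\twoheadrightarrow X'$ with $A'\in\U_{\geq 1}$; since $\U(-,A)$ is projective one gets
\[
\Hom_\U(X,X')\hookrightarrow \Hom_\U(\U(-,A),X')\twoheadleftarrow \Hom_\D(A,A')=0.
\]
Your remark that ``$\sHom_\U(Y,X)$ is computed by a complex whose terms are $\sHom_\U(\U(-,A_i[-i]),X)$'' is off: those stable Homs from projectives are all zero, so that complex is identically zero and computes nothing. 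What you want is the ordinary $\Hom_\U$ (and then the vanishing of $\Hom_\U$ of course implies that of $\sHom_\U$).

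For the inclusion $(t^{\leq -1})^\perp\subset t^{\geq 0}$, the crucial step you gesture at but do not make explicit is this: Lemma \ref{star}(\ref{pjlike}) gives $\U_0^\ast[-1]\in\H\subset t^{\leq 0}$, hence $\U_0^\ast\subset t^{\leq -1}$. From the hypothesis one therefore extracts the \emph{particular} vanishing $\sHom_\U(\U_0^\ast[\geq 0],X)=0$, and it is \emph{this} that dualizes to $\sHom_{\U^\op}(X^\ast[-1],\U_0[<0])=0$, i.e.\ $X^\ast[-1]\in t^{\leq 0}(\U^\op)$. Your phrase ``vanishing against the shifted objects $\U_0^\ast$'' on the $\U^\op$-side is a slip: after applying $(-)^\ast$ the objects being tested against are $\U_0$ (in $\U^\op$), not $\U_0^\ast$. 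Once this is set up, Lemma \ref{astcomp} and Proposition \ref{subcat} on $\U^\op$ finish exactly as you outline. Finally, Lemma \ref{aisle} plays no role in the proof of this lemma; the two are combined only afterwards in the proof of Theorem \ref{t-structure}.
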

\begin{proof}
We first show the inclusion `$\supset$'. For this we show $\Hom_\U(t^{\leq0},t^{\geq1})=0$. Let $X \in t^{\leq0}$ and $X' \in t^{\geq1}$. By Proposition \ref{subcat}, we can take a triangle $A \xrightarrow{f} B \to C \to A[1]$ with $A \in \U_{\leq 0}$ such that $X\simeq \Im (-,f)$, and a triangle $A' \xrightarrow{f'} B' \to C' \to A'[1]$ with $A' \in \U_{\geq 1}$ such that $X' \simeq \Im (-,f')$. Then, we have $\Hom_\U(X,X') \hookrightarrow \Hom_\U(\U(-,A),X') \twoheadleftarrow \Hom_\U(A,A')=0$, and hence the assertion.
 
We next show `$\subset$'. Suppose $X \in (t^{\leq -1})^\perp$.  By Lemma \ref{star}(\ref{pjlike}) we have $\U_0^\ast \subset t^{\leq -1}$, so $\sHom_\U(\U_0^\ast[\geq\!0],X)=0$. Applying $(-)^\ast$ yields $\sHom_{\U^\op}(X^\ast[-1], \U_0[<\!0])=\sHom_{\U^\op}(X^\ast, \U_0[\leq\!0])=0$. Now, let $A \xrightarrow{f} B \xrightarrow{g} C \xrightarrow{h} A[1]$ be a triangle in $\D$ with terms in $\U$ such that $f, g, h$ are radical maps and $X=\Im(-,f)$. Then by Lemma \ref{astcomp}, $C \xrightarrow{g} B \xrightarrow{f} A \to C[1]$ is a triangle in $\U^\op$ with radical maps and $X^\ast[-1]=\Im(-,g)$. By Proposition \ref{subcat}, we see that $C, B, A \in (\U^\op)_{\leq 0}$. Therefore, we deduce that $A, B, C \in \U_{\geq 0}$ and hence $X \in t^{\geq0}$ again by Proposition \ref{subcat}.
\end{proof}

Now we recall Wakamatsu's lemma for triangulated categories. We say that a subcategory $\C$ of a triangulated category $\T$ is {\it extension-closed} if two end terms of a triangle is in $\C$, then so is the middle term. 
\begin{Prop}\label{Wa}\cite[2.3]{IYo}
Let $\T$ be a Krull-Schmidt triangulated category, and let $\C$ be a subcategory which is closed under $[1]$. Assume $\C$ is contravariantly finite and extension-closed in $\T$. Then $(\C, \C^\perp[1])$ is a $t$-structure on $\T$.
\end{Prop}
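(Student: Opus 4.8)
The plan is to check the $t$-structure axioms for the pair $(t^{\leq 0},t^{\geq 0})=(\C,\C^\perp[1])$. The inclusion $t^{\leq -1}=\C[1]\subset\C=t^{\leq 0}$ is exactly the hypothesis that $\C$ is closed under $[1]$; the dual inclusion $t^{\geq 1}=\C^\perp\subset\C^\perp[1]=t^{\geq 0}$ then follows formally from the isomorphism $\Hom_\T(C,X)\simeq\Hom_\T(C[1],X[1])$ together with $C[1]\in\C$; and the orthogonality $\Hom_\T(t^{\leq 0},t^{\geq 1})=\Hom_\T(\C,\C^\perp)=0$ holds by the definition of $\C^\perp$. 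So the only real content is: for every $X\in\T$ there exists a triangle $X'\to X\to X''\to X'[1]$ with $X'\in\C$ and $X''\in\C^\perp$.

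First I would use the Krull--Schmidt hypothesis to upgrade contravariant finiteness to the existence of a \emph{minimal} right $\C$-approximation $f\colon C_0\to X$, and complete it to a triangle $C_0\xrightarrow{f}X\xrightarrow{u}Y\xrightarrow{g}C_0[1]$; this will be the truncation triangle, so the task reduces to showing $Y\in\C^\perp$, i.e.\ $\psi=0$ for every $\psi\colon C\to Y$ with $C\in\C$. The key construction is the homotopy pullback of $u$ along $\psi$: pulling the triangle back along $\psi$ produces a triangle $C_0\xrightarrow{a}W\xrightarrow{b}C\xrightarrow{g\psi}C_0[1]$ together with a morphism $p\colon W\to X$ satisfying $up=\psi b$. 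Since both ends of this triangle lie in $\C$ and $\C$ is extension-closed, $W\in\C$; hence $p$ factors through the approximation $f$, say $p=fp'$, whence $\psi b=up=ufp'=0$. Therefore $\psi$ factors through $g\psi$, say $\psi=\theta\circ(g\psi)$ with $\theta\colon C_0[1]\to Y$, and composing with $g$ gives $g\psi=(g\theta)(g\psi)$.

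At this point minimality enters: the connecting morphism $g$ of the triangle of a right-minimal approximation is a radical morphism (otherwise an isomorphism between indecomposable summands of $Y$ and $C_0[1]$ would split off a summand $Z\xrightarrow{0}0\to Z[1]\xrightarrow{\mathrm{id}}Z[1]$ of the triangle, i.e.\ a nonzero summand of $C_0$ annihilated by $f$, contradicting right-minimality). Since the radical is an ideal, $g\theta\in\operatorname{rad}\End_\T(C_0[1])$, so $1-g\theta$ is an automorphism; from $(1-g\theta)(g\psi)=0$ we get $g\psi=0$, and then $\psi=\theta\circ(g\psi)=0$. This proves $Y\in\C^\perp$, so the triangle above is a truncation triangle and $(\C,\C^\perp[1])$ is a $t$-structure. (Minimality is essential: the cone of the non-minimal approximation $(f,0)\colon C_0\oplus P\to X$ with $P\in\C$ acquires $P[1]$ as a direct summand, which is not in $\C^\perp$.)

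The step I expect to be the main obstacle is the passage from ``$\psi$ factors through $g$'' to ``$\psi=0$'': it requires the precise relationship, valid in a Krull--Schmidt triangulated category, between right-minimal approximations and radical connecting maps, together with the Nakayama-type inversion of $1-g\theta$. Keeping track of direct summands correctly here is the delicate point, and it is exactly why the Krull--Schmidt hypothesis cannot be dropped.
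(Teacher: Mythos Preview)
The paper does not give its own proof of this proposition: it is simply quoted from \cite[2.3]{IYo}, so there is nothing in the paper to compare your argument against. That said, your proof is correct and is exactly the standard Wakamatsu-type argument one finds in the cited reference: take a minimal right $\C$-approximation, show via a homotopy pullback and extension-closure that any map from $\C$ to the cone factors through the connecting morphism, and then use minimality (hence radicality of $g$) together with the semiperfectness of endomorphism rings in a Krull--Schmidt category to conclude. Your diagnosis of where the Krull--Schmidt hypothesis is used and why minimality cannot be dropped is also accurate.
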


We are now at the final step of the proof.
\begin{Lem}\label{aisle}
$t^{\leq0}$ is an aisle in $\sGP\U$, that is, $(t^{\leq0}, (t^{\leq -1})^\perp)$ is a $t$-structure.
\end{Lem}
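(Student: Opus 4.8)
The strategy is to invoke the Wakamatsu-type lemma for triangulated categories, Proposition~\ref{Wa}, applied to the subcategory $\C=t^{\leq 0}$ of $\T=\sGP\U$. Concretely, I would establish the following four facts: (i) $\sGP\U$ is Krull--Schmidt; (ii) $t^{\leq 0}$ is closed under the suspension $[1]$; (iii) $t^{\leq 0}$ is extension-closed; (iv) $t^{\leq 0}$ is contravariantly finite in $\sGP\U$. Granting (i)--(iv), Proposition~\ref{Wa} yields that $(t^{\leq 0},(t^{\leq 0})^\perp[1])$ is a $t$-structure, and since $(t^{\leq 0})^\perp[1]=(t^{\leq 0}[1])^\perp=(t^{\leq -1})^\perp$, this is precisely the assertion.

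Items (i)--(iii) are the routine part. For (i), $\U$ is a $\Hom$-finite, idempotent-complete $k$-linear category (its indecomposables are the $A[i]$ with $A\in\md\L$ indecomposable, and $\End_\U(A[i])=\End_\L(A)$ is local), hence $\md\U$ is a Krull--Schmidt abelian category; $\GP\U$ is closed under direct summands in $\md\U$, so it is again Krull--Schmidt, and the stable category of a Krull--Schmidt Frobenius category is Krull--Schmidt. For (ii), $\sHom_\U(X[1],\U_0[i])\cong\sHom_\U(X,\U_0[i-1])$ vanishes for $i<0$ whenever $X\in t^{\leq 0}$, so $t^{\leq 0}[1]\subseteq t^{\leq 0}$; this is also contained in Lemma~\ref{tojiru}. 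For (iii), $t^{\leq 0}$ is cut out by the vanishing of the cohomological functors $\sHom_\U(-,\U_0[i])$ with $i<0$, and applying these to a triangle whose two outer terms lie in $t^{\leq 0}$ forces the middle term into $t^{\leq 0}$.

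The real content is (iv), and this is where I would concentrate the effort. Given $X\in\sGP\U$, Proposition~\ref{GPcategory} presents $X$ by a triangle $A\xrightarrow{f}B\xrightarrow{g}C\xrightarrow{h}A[1]$ in $\D$ with $A,B,C\in\U$; since these are bounded, all terms lie in $\U_{\leq n}$ for some $n$. Because a morphism $\U_i\to\U_j$ in $\U$ exists only for $i\geq j$, the maps $f,g,h$ are ``filtered'' with respect to $\cdots\supset\U_{\geq 1}\supset\U_{\geq 2}\supset\cdots$. Using this I would carve out of the given triangle a triangle $\bar A\to\bar B\to\bar C\to\bar A[1]$ with $\bar A,\bar B,\bar C\in\U_{\leq 0}$, obtained by truncating $A,B,C$ in degree $\leq 0$ and correcting the degree-$0$ boundary terms so that it is again a genuine triangle, together with a morphism to the original; by Proposition~\ref{subcat}(\ref{leq0}) the resulting $\U$-module $Y$ then lies in $t^{\leq 0}$, and one gets an induced map $Y\to X$ in $\sGP\U$. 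I would then verify, using Proposition~\ref{subcat}(\ref{geq0}) together with the equality $(t^{\leq 0})^\perp=t^{\geq 1}$ coming from Lemma~\ref{perp}, that the cone of $Y\to X$ admits a triangle presentation with terms in $\U_{\geq 1}$, hence lies in $(t^{\leq 0})^\perp$; this makes $Y\to X$ a right $t^{\leq 0}$-approximation, proving (iv), after which Proposition~\ref{Wa} finishes. The main obstacle is exactly this truncation step: the boundary modifications must be chosen delicately so that simultaneously $Y\in t^{\leq 0}$ and the cone lands in $t^{\geq 1}$ rather than merely in $t^{\geq 0}$, and organizing the interplay between the three ``columns'' $A,B,C$ of the triangle and the internal degree is the technical heart of the argument.
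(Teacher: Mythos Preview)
Your overall strategy---apply Proposition~\ref{Wa} after checking that $t^{\leq 0}$ is closed under $[1]$, extension-closed, and contravariantly finite---is exactly what the paper does, and your treatment of (i)--(iii) is correct. The construction of the approximating object $Y$ via truncation is also the same in spirit: the paper truncates only $A$ and $B$ (not $C$) at degree $\leq 0$, invokes the $3\times 3$ lemma \cite[1.1.11]{BBD} to produce the third term $D$ as the cone of $A_{\leq 0}\to B_{\leq 0}$, and proves the key claim that $D\in\U_{\leq 0}$; this is precisely the ``correction of the degree-$0$ boundary term'' you anticipate, and it is needed so that $Y=\Im\U(-,f')\in\GP\U$.

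Where your plan diverges is in the verification that $Y\to X$ is an approximation. You propose to show that the cone of $Y\to X$ in $\sGP\U$ admits a triangle presentation with all three terms in $\U_{\geq 1}$, then invoke Proposition~\ref{subcat}(\ref{geq0}) and Lemma~\ref{perp}. The natural candidate for such a presentation is the third column $A_{\geq 1}\to B_{\geq 1}\to E$ of the $3\times 3$ diagram, but the paper only establishes $E\in\D^{\geq 0}$, not $E\in\U$; and in general the cone of a map between objects of $\U$ need not lie in $\U$ when $\L$ is not hereditary, so this route is obstructed. The paper instead works entirely in $\md\U$: setting $L=\Coker(Y\hookrightarrow X)$ (which need not lie in $\GP\U$), one observes that $L$ is a quotient of $\U(-,A_{\geq 1})$, and for any $W\in\GP\U$ whose image lies in $t^{\leq 0}$ there is a surjection $\U(-,F)\twoheadrightarrow W$ with $F\in\U_{\leq 0}$ by Proposition~\ref{subcat}(\ref{leq0}). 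Then $\Hom_\U(W,L)\hookrightarrow\Hom_\U(\U(-,F),L)\twoheadleftarrow\Hom_\U(F,A_{\geq 1})=0$ by projectivity of $\U(-,F)$ and the vanishing $\Hom_\D(\U_{\leq 0},\U_{\geq 1})=0$. This direct support argument bypasses both Lemma~\ref{perp} and any need to identify the cone in $\sGP\U$, and is the simplification that makes the proof go through cleanly.
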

\begin{proof}
The subcategory $t^{\leq 0}$ is clearly extension-closed, and is closed under $[1]$ by Lemma \ref{tojiru}. Therefore by Proposition \ref{Wa}, it suffices to show that $t^{\leq 0}$ is contravariantly finite in $\sGP\U$.

Let $X \in \sGP\U$ and $A \xrightarrow{f} B \xrightarrow{g} C \to A[1]$ be a triangle in $\D$ with terms in $\U$ such that $X=\Im(-,f)$. Let $A_{\leq 0} \to A \to A_{\geq 1} \to A_{\leq0}[1]$ and $B_{\leq 0} \to B \to B_{\geq 1} \to B_{\leq0}[1]$ be the triangles associated to the standard $t$-structure $(\D^{\leq0}, \D^{\geq 0})$ on $\D$. Note that these triangles split and have terms in $\U$. By \cite[1.1.11]{BBD}, there exists a $3 \times 3$ diagram of triangles
\[ \xymatrix{
   A_{\leq 0} \ar[r]\ar[d]^{f'} & A \ar[r]\ar[d]^f & A_{\geq 1}\ar[r]^0\ar[d] & A_{\leq 0}[1] \ar[d] \\
   B_{\leq 0} \ar[r]\ar[d] & B \ar[r]\ar[d]^g & B_{\geq 1}\ar[r]^0\ar[d] & B_{\leq 0}[1] \ar[d] \\
   D \ar[r]\ar[d] & C \ar[r]\ar[d] & E \ar[r]\ar[d] & D[1] \ar[d] \\
   A_{\leq 0}[1] \ar[r] & A[1] \ar[r] & A_{\geq 1}[1] \ar[r]^0 & A_{\leq 0}[2], } \]
namely, the rows and columns are triangles, each square but the bottom right corner is commutative, and the bottom right square is anti-commutative.

We first claim that $D \in \U$. We know that $D \in \D^{\leq 0}$ and $E \in \D^{\geq 0}$ since each column is a triangle. Since $C=C_{\leq -1} \oplus C_{\geq 0}$ and $\Hom_\D(C_{\leq -1},E)=0$, the triangle $D \to C \to E \to D[1]$ is a direct sum of triangles $D' \to C_{\geq 0} \to E \to D'[1]$ and $C_{\leq -1} \xrightarrow{1} C_{\leq -1} \to 0 \to C_{\leq-1}[1]$, where $D' \oplus C_{\leq -1}=D$. Now, $D' \in \D^{\leq 0}$ since it is a direct summand of $D$, and also $D' \in \D^{\geq 0}$ since it is the mapping cocone of $C_{\geq 0} \to E$. Therefore $D' \in \md\L$, and hence $D=D' \oplus C_{\leq -1} \in \U$.

Set $Y=\Im(-,f')$. By the claim above, we have $Y \in \GP\U$ and by Proposition \ref{subcat}, we see that $Y \in t^{\leq 0}$. Note that $Y$ is a submodule of $X$ and setting $L=\Coker(Y \to X)$, we have a commutative diagram with exact rows
\[ \xymatrix{
    0 \ar[r]& \U(-,A_{\leq0}) \ar[r]\ar@{>>}[d]& \U(-,A) \ar[r]\ar@{>>}[d]& \U(-,A_{\geq1})\ar[r]\ar@{>>}[d]& 0\\
    0 \ar[r]& Y\ar[r]& X \ar[r]& L \ar[r]& 0. }\]

We claim that $Y \to X$ gives a right $t^{\leq 0}$-approximation of $X$ in $\sGP\U$. Let $W \in \GP\U$ whose image in $\sGP\U$ lies in $t^{\leq 0}$, and $W \to X$ be a morphism in $\GP\U$. It suffices to show $\Hom_\U(W,L)=0$. Since $W \in t^{\leq 0}$ in $\sGP\U$, there is a surjection $\U(-,F) \twoheadrightarrow W$ in $\md\U$ with $F \in \U_{\leq 0}$ by Proposition \ref{subcat}. Therefore, we have $\Hom_\U(W,L) \hookrightarrow \Hom_\U(\U(-,F), L) \twoheadleftarrow \Hom_\U(F,A_{\geq 1})=0$, and hence the assertion.
\end{proof}

We have completed the proof of the main result of this subsection. Let us summarize the proof below.
\begin{proof}[Proof of Theorem \ref{t-structure}]
By Lemma \ref{aisle} and \ref{perp}, $(t^{\leq0}, t^{\geq0})$ is certainly a $t$-structure .
The statement on the heart is Proposition \ref{heart}.
\end{proof}

We note the boundedness of the $t$-structure constructed above.
\begin{Prop}\label{bdd}
The $t$-structure $(t^{\leq0}, t^{\geq0})$ given in Theorem \ref{t-structure} is bounded.
\end{Prop}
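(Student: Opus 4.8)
The plan is to show that every object $X\in\sGP\U$ lies in $t^{\geq -n}\cap t^{\leq n}$ for some $n$ depending on $X$, which is precisely the boundedness condition for the $t$-structure $(t^{\leq 0},t^{\geq 0})$. By definition of the truncated aisles, $t^{\leq n}=t^{\leq 0}[-n]$ and $t^{\geq -n}=t^{\geq 0}[n]$, so it suffices to find, for each $X$, an integer $n$ with $\sHom_\U(X,\U_0[i])=0$ for all $|i|>n$.

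The key tool is Proposition \ref{subcat}: writing $X=\Im(-,f)$ for a triangle $A\xrightarrow{f}B\xrightarrow{g}C\to A[1]$ in $\D$ with terms in $\U$, the membership of $X$ in the truncated pieces is governed by the degrees in which $A,B,C$ are supported. Concretely, each of $A,B,C$ is a finite direct sum $\bigoplus_i A_i[-i]$ with $A_i\in\md\L$, so only finitely many degrees occur; let $n$ be large enough that $A,B,C\in\U_{\geq -n}\cap\U_{\leq n}$. Then $A[n],B[n],C[n]\in\U_{\leq 0}$ gives $X[n]\in t^{\leq 0}$ (i.e. $X\in t^{\leq n}$) by Proposition \ref{subcat}(1), and $A[-n],B[-n],C[-n]\in\U_{\geq 0}$ gives $X[-n]\in t^{\geq 0}$ (i.e. $X\in t^{\geq -n}$) by Proposition \ref{subcat}(2). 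One small point to handle: Proposition \ref{subcat} is stated for $X$ with no projective summand and all maps radical, so I would first split off the projective part of $X$ (projective $\U$-modules are $\U(-,P)$ with $P\in\U$, hence killed in $\sGP\U$, so they are zero there and cause no trouble) and use a radical triangle presentation for the non-projective part.

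Finally I would observe that the union $\bigcup_n t^{\leq n}$ and $\bigcup_n t^{\geq -n}$ exhaust $\sGP\U$, which together with the standard fact that these are increasing (Lemma \ref{tojiru}) is exactly the statement that the $t$-structure is bounded. The main obstacle, and really the only one, is the bookkeeping in matching the degree-shift conventions: one must be careful about the sign of the shift relating $\U_{\leq 0}$ to $\U_{\geq 0}$ and about which of $t^{\leq n}$, $t^{\geq -n}$ corresponds to shifting $X$ which way, but once Proposition \ref{subcat} is invoked the argument is immediate since every object of $\U$ is supported in finitely many degrees.

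\begin{proof}
Let $X\in\sGP\U$. Splitting off the projective direct summand (which is zero in $\sGP\U$), we may assume $X$ has no projective summand, so by Proposition \ref{GPcategory} there is a triangle $A\xrightarrow{f}B\xrightarrow{g}C\xrightarrow{h}A[1]$ in $\D$ with $A,B,C\in\U$, all of $f,g,h$ radical maps, and $X=\Im(-,f)$. Since each of $A,B,C$ is a finite direct sum $\bigoplus_{i}(-)_i[-i]$ with each component in $\md\L$, there is $n\geq 0$ with $A,B,C\in\U_{\geq -n}\cap\U_{\leq n}$. Applying $[n]$ we get $A[n],B[n],C[n]\in\U_{\leq 0}$, and the shifted triangle with $X[n]=\Im(-,f[n])$ satisfies the hypotheses of Proposition \ref{subcat}(\ref{leq0}), so $X[n]\in t^{\leq 0}$, that is $X\in t^{\leq n}$. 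Similarly $A[-n],B[-n],C[-n]\in\U_{\geq 0}$, so by Proposition \ref{subcat}(\ref{geq0}) we get $X[-n]\in t^{\geq 0}$, that is $X\in t^{\geq -n}$. Hence $\sGP\U=\bigcup_{n}t^{\leq n}=\bigcup_{n}t^{\geq -n}$, which together with Lemma \ref{tojiru} shows that the $t$-structure is bounded.
\end{proof}
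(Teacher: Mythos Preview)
Your argument is correct in spirit and, modulo one bookkeeping slip, gives a valid proof. The slip is exactly the one you flagged in your plan: the shift you apply to the triangle is the derived-category shift $[n]_\D$, and what you obtain from Proposition~\ref{subcat} is that $\Im\U(-,f[n])\in t^{\leq 0}$. But $\Im\U(-,f[n])=X[n]_\D$, and by Proposition~\ref{GPcategory}(\ref{omega3}) this equals $X[3n]$ in $\sGP\U$, not $X[n]$. So your conclusion should read $X\in t^{\leq 3n}$ (and likewise $X\in t^{\geq -3n}$). This is harmless for boundedness, of course, but as written the line ``$X[n]=\Im(-,f[n])$ \ldots\ that is $X\in t^{\leq n}$'' conflates the two suspensions.

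Your route is genuinely different from the paper's. The paper does not invoke Proposition~\ref{subcat}; instead it argues one direction by a support-shifting observation (since $\gd\U_0\leq 2$, the functor $\Om^3$ pushes the support of any $X\in\md\U$ up by one degree, so for large $n$ the syzygy $\Om^n X$ is concentrated in degree $\geq 1$ and hence has no stable maps to $\U_0$), and refers the other direction to an external reference~\cite[7.2]{ha}. Your approach is more self-contained within the paper: having already established the explicit description of $t^{\leq 0}$ and $t^{\geq 0}$ in Proposition~\ref{subcat}, you simply observe that the triangle presenting $X$ has terms supported in a finite range of degrees and shift into the required region. This is shorter and avoids the outside citation; the paper's argument, on the other hand, does not require reducing to a radical presentation or tracking the factor of~$3$.
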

\begin{proof}
We have to show that for any $X \in \sCM\U$, there exists $n \in \Z$ such that $\sHom_\U(X,\U_0[\leq\!-n])=0$ and $\sHom_\U(X,\U_0[\geq\!n])=0$. Note that if $X \in \md\U$ is concentrated in degree $\geq i$, then $\Om^3X$ is concentrated in degree $\geq i+1$ since $\gd\U_0 \leq 2$ by Proposition \ref{ab2}.
It follows that $\Om^{\geq n}X$ is concentrated in degree $\geq 1$ for sufficiently large $n$, so we have the second statement since $\U_0$ is concentrated in degree $0$.
Also, we have the first statement as in \cite[7.2]{ha}. 
\end{proof}

\subsection{The triangle equivalence}
We are now ready to prove a main result of this paper. Let $\L$ be an arbitrary finite dimensional algebra. Once we know that there is a $t$-structure on $\sGP\U$ with heart $\md(\smd\L)$, there exists by Corollary \ref{algreal}, a realization functor $\rD(\md(\smd\L)) \to \sGP\U$.
\begin{Thm}\label{tilting2}
Let $\L$ be a finite dimensional algebra. The realization functor
\[ F \colon \rD(\md(\smd\L)) \to \sGP\U \]
is a triangle equivalence.
\end{Thm}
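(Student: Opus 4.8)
The plan is to deduce this from the general theory of realization functors, the key point being that the $t$-structure of Theorem \ref{t-structure} has been arranged so that the crucial $\Ext$-vanishing is built in. Recall that the realization functor $F$ is a triangle functor which is $t$-exact for the standard $t$-structure on $\rD(\md(\smd\L))$ and the $t$-structure $(t^{\leq0},t^{\geq0})$ of Theorem \ref{t-structure} on $\sGP\U$, and which restricts to the identity on the heart $\H\simeq\md(\smd\L)$ (Proposition \ref{heart}). Since both $t$-structures are bounded --- the one on $\sGP\U$ by Proposition \ref{bdd} --- it is a standard fact (see \cite{BBD,PV}) that $F$ is an equivalence as soon as it induces isomorphisms
\[ \Ext^n_\H(X,Y)\ \xrightarrow{\ \sim\ }\ \Hom_{\sGP\U}(X,Y[n])\qquad (X,Y\in\H,\ n\in\Z). \]
Indeed, granting this, a dévissage along the truncation triangles (the five lemma applied to the associated long exact sequences, using $t$-exactness of $F$ and induction on cohomological amplitude) shows that $F$ is fully faithful on all of $\rD(\md(\smd\L))$, and then fullness of $F$ together with boundedness forces essential surjectivity: the essential image of $F$ is a full triangulated subcategory of $\sGP\U$ (closed under cones because $F$ is full) which contains the heart, and by boundedness every object of $\sGP\U$ is a finite iterated extension of shifts of objects of $\H$.

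To prove the displayed isomorphism I would argue with universal $\delta$-functors in the second variable. For $n<0$ both sides vanish by the heart axiom, and for $n=0$ both equal $\Hom_\H(X,Y)$, the comparison map being the identity since $F$ is the identity on $\H$. Now fix $X\in\H$. A short exact sequence in $\H$ is carried by $F$ to a triangle in $\sGP\U$, so $\{\Hom_{\sGP\U}(X,-[n])\}_{n\geq0}$ is a cohomological $\delta$-functor on $\H$, and the comparison maps assemble into a morphism of $\delta$-functors from $\{\Ext^n_\H(X,-)\}_{n\geq0}$. By Lemma \ref{star} the category $\H$ has enough injectives, namely (summands of) the objects of $\add\U_0$; and for any such injective $I$ and any $X\in\H$ one has $\Hom_{\sGP\U}(X,I[n])=0$ for all $n>0$, directly from $X\in\H\subset t^{\geq0}$ and the definition $t^{\geq0}=\{Z\mid\sHom_\U(Z,\U_0[>\!0])=0\}$. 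Hence $\{\Hom_{\sGP\U}(X,-[n])\}_{n\geq0}$ is effaceable in positive degrees, so it is a universal $\delta$-functor; the same holds for $\Ext^\bullet_\H(X,-)$, and since the two agree in degree $0$ the comparison morphism is an isomorphism in every degree.

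I expect no serious obstacle at this stage: the substance of the argument was front-loaded into the construction of the $t$-structure (Theorem \ref{t-structure}), the identification of its heart and of enough injectives therein (Proposition \ref{heart}, Lemma \ref{star}), the boundedness (Proposition \ref{bdd}), and the very existence of a realization functor for an algebraic triangulated category (Corollary \ref{algreal}); after that the theorem is essentially formal. If anything needs care it is the verification that the canonical comparison map is genuinely a morphism of $\delta$-functors --- so that universality yields that \emph{this} map is an isomorphism rather than merely producing an abstract one --- which follows from the compatibility of $F$ with connecting morphisms of short exact sequences in $\H$; and the bookkeeping in the dévissage step, which is routine given $t$-exactness of $F$.
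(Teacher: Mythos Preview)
Your proof is correct and is essentially the same as the paper's: both hinge on the observation that the injectives of $\H$ lie in $\add\U_0$ (Lemma \ref{star}) and that the very definition of $t^{\geq0}$ forces $\sHom_\U(X,\U_0[>\!0])=0$ for $X\in\H$, together with boundedness (Proposition \ref{bdd}) for essential surjectivity. The only difference is packaging: the paper invokes the ready-made criterion Theorem \ref{realization}(\ref{tfae})\ref{coeff} directly, whereas you unwind that criterion via the universal $\delta$-functor argument.
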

\begin{proof}
We first verify the condition in Theorem \ref{realization}(\ref{coeff}) that the realization functor $F$ is fully faithful. Let $X, Y \in \H$, $n \geq 1$ and $X \to Y[n]$ be a morphism in $\sGP\U$. By Lemma \ref{star}, we can take an injection $Y \hookrightarrow I$ in $\H$ with $I \in \U_0$. Then, the composition $X \to Y[n] \to I[n]$ is zero since $I \in \U_0$.
Now, we have $\sGP\U=\thick\H$ by Proposition \ref{bdd}. We therefore conclude that $F$ is an equivalence.
\end{proof}
\section{Yoneda algebras for hereditary algebras}\label{hereditary}
The aim of this section is to discuss in further detail the Yoneda algebras for hereditary algebras. In the first subsection, we note some consequences of the previous sections, and apply these results in the following two subsections.
\subsection{Basic properties}
We first record the self-injectivity of Yoneda algebras in the hereditary case, which also characterizes hereditary algebras.
The proof is based on the following well-known fact. Recall that we denote by $\D$ the derived category $\rD(\md\L)$ of $\L$.
\begin{Prop}[{\cite[I.5.2]{Hap}}]\label{wellknown}
We have $\U=\D$ if and only if $\L$ is hereditary.
\end{Prop}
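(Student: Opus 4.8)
The plan is to prove both implications by reducing to the statement that $\gd\L\le 1$ if and only if every object of $\D=\rD(\md\L)$ is isomorphic to the direct sum of the shifts of its cohomologies, which is precisely the condition ``$X\in\U$ for all $X$''. This is \cite[I.5.2]{Hap}, and I would supply a proof as follows.

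For the ``if'' direction, assume $\L$ is hereditary, so $\Ext^i_\L(-,-)=0$ for $i\ge 2$. Given $X\in\D$ with $H^i(X)=0$ outside an interval $[a,b]$, I would argue by induction on $b-a$ that $X\simeq\bigoplus_i H^i(X)[-i]$. The truncation triangle
\[ \tau_{\le b-1}X \to X \to H^b(X)[-b] \xrightarrow{\ \delta\ } (\tau_{\le b-1}X)[1] \]
has, by the induction hypothesis, connecting morphism lying in
\[ \Hom_\D\big(H^b(X)[-b],\,(\tau_{\le b-1}X)[1]\big)\simeq\bigoplus_{i\le b-1}\Ext_\L^{\,b-i+1}(H^b(X),H^i(X)), \]
and every exponent $b-i+1$ is $\ge 2$, so $\delta=0$ and the triangle splits. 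Hence $X\simeq(\tau_{\le b-1}X)\oplus H^b(X)[-b]\in\U$, giving $\U=\D$.

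For the ``only if'' direction I would argue the contrapositive. If $\gd\L\ge 2$ there are $A,B\in\md\L$ with $\Ext_\L^2(A,B)\neq 0$; pick a nonzero $\xi\colon A\to B[2]$ in $\D$ and let $Z$ be its cocone, so that there is a triangle $B[1]\to Z\xrightarrow{v}A\xrightarrow{\xi}B[2]$. The long exact cohomology sequence yields $H^{-1}(Z)\simeq B$, $H^0(Z)\simeq A$, and $H^i(Z)=0$ otherwise. If $Z$ were in $\U$ it would therefore be isomorphic to $A\oplus B[1]$. Under such an isomorphism the maps $B[1]\to Z$ and $v\colon Z\to A$ have vanishing $B[1]\to A$ components, since $\Hom_\D(B[1],A)=\Ext_\L^{-1}(B,A)=0$; thus $v$ is, up to the summand $B[1]$, an endomorphism $v_A$ of $A$, which is surjective (apply $H^0$ to the triangle) and hence, $A$ being finite-dimensional, an isomorphism. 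Then $v$ is a split epimorphism, forcing $\xi=0$, a contradiction. Therefore $\U=\D$ implies $\gd\L\le 1$, i.e.\ $\L$ is hereditary.

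The only delicate point is the last paragraph: one must check that an abstract isomorphism $Z\simeq A\oplus B[1]$ genuinely splits the triangle, and this is exactly where the vanishing of the negative $\Ext$-group $\Hom_\D(B[1],A)$ and the fact that a surjective endomorphism of a finite-length module is invertible are needed; everything else is a routine use of truncation triangles and the cohomology long exact sequence.
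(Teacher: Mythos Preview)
The paper does not supply its own proof of this proposition: it simply records the statement and cites \cite[I.5.2]{Hap}. Your argument is correct and is essentially the standard one. For the ``if'' direction your induction with the truncation triangle and the vanishing of $\Ext^{\ge2}$ is exactly how Happel's lemma is proved. For the ``only if'' direction your contrapositive via a cocone of a nonzero class in $\Ext^2$ is also standard; note that you can streamline the last step slightly, since the cohomology long exact sequence already gives that $H^0(v)$ is an \emph{isomorphism} (both $H^0(B[1])$ and $H^0(B[2])$ vanish), so the appeal to surjective-plus-finite-length is unnecessary.
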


We immediately obtain the following result.
\begin{Prop}\label{iffhered}
Let $\L$ be an arbitrary finite dimensional algebra and $\U=\U(\L)$ be the Yoneda category of $\L$. Then, $\U$ is self-injective if and only if $\L$ is hereditary.
\end{Prop}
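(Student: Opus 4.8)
The plan is to reduce the statement to Proposition \ref{wellknown}, which identifies the equality $\U=\D$ with $\L$ being hereditary; it therefore suffices to prove that $\U$ is self-injective if and only if $\U=\D$. Throughout I use that $\U$ is coherent and $1$-Gorenstein (Theorems \ref{coherence} and \ref{2IG}), so that the notion of a Cohen-Macaulay $\U$-module from Definition \ref{cmdef} makes sense.

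The implication ``$\U=\D\Rightarrow\U$ self-injective'' is formal: a triangulated category has weak kernels and weak cokernels (complete a morphism to a triangle), hence is coherent, and every triangulated category is self-injective by \cite[4.2]{Kr}. Thus every representable $\D$-module and $\D^\op$-module is injective, and the same holds for $\U=\D$. Combined with Proposition \ref{wellknown}, this gives the ``if'' direction.

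For ``$\U$ self-injective $\Rightarrow\U=\D$'' I would argue as follows. Suppose $\U$ is self-injective, so that every representable $\U(-,C)$ with $C\in\U$ is an injective object of $\md\U$. Then $\Ext_\U^{i}(Y,\U(-,C))=0$ for all $Y\in\md\U$, all $C\in\U$ and all $i>0$; equivalently, \emph{every} object of $\md\U$ is Cohen-Macaulay. Now let $L\in\D$ be arbitrary. By Proposition \ref{inftyfpd}, $\U(-,L)$ belongs to $\md\U$ and has projective dimension at most $1$. Being Cohen-Macaulay by the previous step, it is a Cohen-Macaulay $\U$-module of finite projective dimension, hence projective (this is the implication for Gorenstein-projective modules over a coherent Gorenstein category already invoked for $\U(-,A)$ in the proofs of Propositions \ref{CMcategory} and \ref{GPcategory}). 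By Lemma \ref{inftypj} it follows that $L\in\U$, and since $L$ was arbitrary we obtain $\U=\D$. Proposition \ref{wellknown} then shows that $\L$ is hereditary, completing the proof.

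I do not anticipate a real obstacle; the argument is short and relies only on machinery already developed. The single point meriting an explicit word is the claim that a Cohen-Macaulay $\U$-module of finite projective dimension is projective. I would either quote the occurrence of this fact in the proof of Proposition \ref{CMcategory}, or reprove it by the usual induction on projective dimension, using that $\CM\U$ is resolving and closed under syzygies (Proposition \ref{cmc}(\ref{resolv})) together with the defining self-orthogonality of Cohen-Macaulay modules against projective $\U$-modules.
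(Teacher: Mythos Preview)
Your proposal is correct and follows essentially the same route as the paper. Both arguments reduce to the equivalence ``$\U$ self-injective $\Leftrightarrow \U=\D$'' via Proposition~\ref{wellknown}, and both hinge on the pair Proposition~\ref{inftyfpd}/Lemma~\ref{inftypj}: the paper argues by contrapositive (pick $L\in\D\setminus\U$, then $\U(-,L)$ has projective dimension exactly $1$, contradicting self-injectivity), whereas you argue directly (self-injectivity forces $\U(-,L)$ to be projective for every $L$, hence $L\in\U$). Your detour through the Cohen-Macaulay formalism is harmless but unnecessary---the fact ``finite projective dimension $+$ self-injective $\Rightarrow$ projective'' follows immediately from the splitting of the length-one resolution in Proposition~\ref{inftyfpd}, without invoking the CM machinery.
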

\begin{proof}
If $\L$ is hereditary, then $\U=\D$ by Proposition \ref{wellknown}, thus $\U$ is self-injective. If $\L$ is not hereditary, then there exists $L \in \D\setminus\U$ by Proposition \ref{wellknown}. Then, $\U(-,L) \in \md\U$ has projective dimension precisely $1$ by Lemma \ref{inftyfpd} and \ref{inftypj}, so $\U$ cannot be self-injective.
\end{proof}

From now on, we restrict ourselves to representation-finite case: Let $\L$ be a finite dimensional hereditary algebra of finite representation type with an additive generator $M$ for $\md\L$, and $\G$ be its Yoneda algebra. Then, we have $\G=\End_{\L}(M) \oplus \Ext^1_{\L}(M,M)$.
If $\L$ is the path algebra $kQ$ for a Dynkin quiver $Q$, then the derived category is presented by the infinite translation quiver $\Z Q$ (see \cite{ASS,Hap}) with mesh relations \cite{Hap,Rie}. Thus we have the following explicit description of the Yoneda algebra of $kQ$.
\begin{Prop}\label{waru1}
Let $Q$ be a Dynkin quiver. Then, the Yoneda algebra of the path algebra $kQ$ is presented by the quiver $\Z Q/[1]$ and the mesh relations.
\end{Prop}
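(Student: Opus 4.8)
The plan is to combine Proposition~\ref{wellknown}, Proposition~\ref{UG}, and the classical description of $\D:=\rD(\md kQ)$ in terms of the mesh category of $\Z Q$. Since $kQ$ is hereditary, Proposition~\ref{wellknown} gives $\U=\D$. Choose an additive generator $M$ of $\md kQ$; then $\add\{M[i]\mid i\in\Z\}=\U=\D$, so Proposition~\ref{UG} applies with the category $\D$, the automorphism $[1]$, and the object $M$. Since $\Hom_\D(M,M[i])=\Ext_{kQ}^i(M,M)$, the algebra $\bigoplus_{i\in\Z}\Hom_\D(M,M[i])$ is precisely the Yoneda algebra $\G$ of Definition~\ref{eqyon}, and we obtain an equivalence $\D\xrightarrow{\ \sim\ }\proj^\Z\!\G$ taking the shift $[1]$ on $\D$ to the degree shift $(1)$ on $\proj^\Z\!\G$. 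Passing to the orbit categories under these two commuting autoequivalences yields $\proj\G\simeq\proj^\Z\!\G/(1)\simeq\D/[1]$ (up to idempotent completion, which is harmless since $\G$ is finite dimensional). Thus it remains to present the category $\D/[1]$, equivalently the algebra $\G$, by a quiver with relations.

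Since $Q$ is a Dynkin quiver, $\md kQ$ is representation-finite, and $\D$ is a $\Hom$-finite Krull--Schmidt category with Auslander--Reiten triangles whose Auslander--Reiten quiver is the stable translation quiver $\Z Q$. By Happel's description of $\rD(\md kQ)$ for Dynkin $Q$, which rests on Riedtmann's covering theory \cite{Hap,Rie}, the full subcategory of indecomposable objects of $\D$ is equivalent, as a $k$-linear category, to the mesh category $k(\Z Q)$, and under this equivalence $[1]$ corresponds to the automorphism it induces on the translation quiver $\Z Q$. This automorphism is free, since no nonzero object of $\D$ is isomorphic to a proper shift of itself, and it commutes with the Auslander--Reiten translation $\tau$, since $\tau$ is an autoequivalence of $\D$. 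Hence $\Z Q/[1]$ is again a stable translation quiver, and it has finitely many vertices, the orbits of $[1]$ on the indecomposables of $\D$ being in bijection with the indecomposable $kQ$-modules, a finite set by Gabriel's theorem. Moreover the mesh relations on $\Z Q$, being homogeneous and $[1]$-equivariant, descend to the mesh relations on $\Z Q/[1]$.

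By covering theory for translation quivers, the orbit category of the mesh category under such a free admissible action is again a mesh category, $k(\Z Q)/[1]\simeq k(\Z Q/[1])$. Combining this with the first paragraph gives $\proj\G\simeq\D/[1]\simeq\add k(\Z Q/[1])$; since $\Z Q/[1]$ has finitely many vertices, this is the additive hull of a single finite dimensional algebra, which must therefore be $\G$, namely the mesh algebra of $\Z Q/[1]$. That is the assertion.

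I expect the only real work to be quoting and assembling classical results: the identification of the indecomposables of $\D$ with $k(\Z Q)$ carrying the \emph{exact} mesh relations (that the Auslander--Reiten triangles of $\D$ realize the mesh relations, and that $\D$ is standard for Dynkin $Q$, due to Happel via Riedtmann), together with the covering-theoretic bookkeeping relating $\G$, $\proj^\Z\!\G$, its orbit category under the degree shift, and the identity ``orbit category of a mesh category $=$ mesh category of the orbit quiver'' for a free action. No genuinely new argument is needed once Proposition~\ref{UG} has been invoked; the content is purely that translation.
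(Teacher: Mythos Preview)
Your proposal is correct and follows the same route as the paper: the paper simply records that $\rD(\md kQ)$ is presented by $\Z Q$ with mesh relations (citing \cite{Hap,Rie}) and then states the proposition as an immediate consequence, without spelling out the passage to the orbit category. Your argument makes this passage explicit via Proposition~\ref{UG} and the identification $\proj\G\simeq\proj^\Z\!\G/(1)\simeq\D/[1]$, which is exactly the implicit reasoning behind the paper's ``Thus''.
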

Set $\Gamma_0=\End_{\L}(M)$ and $E=\Ext^1_{\L}(M,M)$. Moreover, let $e$ be the idempotent of $\G_0$ corresponding to the maximal injective summand $I$ of $M$. Then, $\sG=(1-e)\G_0 (1-e)=\G_0/\G_0 e \G_0$ is the stable Auslander algebra, and we have natural surjections
\[ \G \twoheadrightarrow \G_0 \twoheadrightarrow \sG.\] 
Note that $Ee=\Ext_{\L}^1(M,I)=0$, so $\G_0 e=\G e$ is a projective $\G$-module and $\G(1-e)=\G$ holds in $\smd \G$.

We record the following special case of Proposition \ref{CMcategory} and Theorem \ref{tilting}, upon which the results of the following subsections are build. Since $\G$ is self-injective by Proposition \ref{iffhered}, we have $\GP^\Z\!\G=\md^\Z\!\G$ in this case. 
\begin{Thm}\label{8.2}
Let $\L$ be a representation-finite hereditary algebra, $\G$ its Yoneda algebra, and $\sG$ the stable Auslander algebra of $\L$.
\begin{enumerate}
\item\label{8.2.1} We have an isomorphism of functors $[3] \simeq (1)$ on $\smd^\Z\!\G$, where $(1)$ is the degree shift.
\item\label{8.2.2} There exists a triangle equivalence
\[ \rD(\md\sG) \simeq \smd^\Z\!\G \]
taking $\sG \in \rD(\md\sG)$ to $\G_0(1-e)=\G_0 \in \smd^\Z\!\G$.
\end{enumerate}
\end{Thm}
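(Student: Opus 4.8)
The plan is to obtain both statements by specializing the general results of Section \ref{finitegldim} (or Section \ref{infinitegldim}) through the equivalence $\U(\L) \simeq \proj^\Z\!\G$ of Proposition \ref{UG}. Since $\L$ is representation-finite with additive generator $M$ and hereditary, we have $\U = \U(\L) = \D = \rD(\md\L)$ by Proposition \ref{wellknown}, and $\U$ carries the automorphism $F = [1]$ with $\U = \add\{F^i M \mid i \in \Z\}$. Proposition \ref{UG} then gives an equivalence $\U \to \proj^\Z\!\G$ intertwining $[1]$ with the degree shift $(1)$, where $\G = \bigoplus_i \Hom_\U(M, F^i M) = \End_\L(M) \oplus \Ext^1_\L(M,M)$ since $\L$ is hereditary. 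This equivalence extends to an equivalence $\md\U \simeq \md^\Z\!\G$ carrying $\CM\U$ to $\GP^\Z\!\G$; and as $\U$ is self-injective by Proposition \ref{iffhered}, every finitely presented graded $\G$-module is Cohen-Macaulay, so $\GP^\Z\!\G = \md^\Z\!\G$ and $\sGP^\Z\!\G = \smd^\Z\!\G$.

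For part (\ref{8.2.1}), I would transport the isomorphism $[3]_{\sCM\U} \simeq [1]_\D$ of Proposition \ref{CMcategory}(\ref{omega3}) (equivalently Proposition \ref{GPcategory}) across the equivalence $\smd^\Z\!\G \simeq \sCM\U$. Under this equivalence the suspension $[1]$ of the triangulated structure on $\smd^\Z\!\G$ corresponds to $[1]_{\sCM\U} = \Om^{-1}$, and the automorphism $[1]_\D$ on $\U$ corresponds to the degree shift $(1)$ on $\proj^\Z\!\G$ by Proposition \ref{UG}; hence $[3] \simeq (1)$ on $\smd^\Z\!\G$, which is precisely the assertion.

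For part (\ref{8.2.2}), I would invoke Theorem \ref{tilting}: since a hereditary algebra has $\gd\L \leq 1 < \infty$, that theorem gives a triangle equivalence $\sCM\U \simeq \rD(\md(\injsmd\L))$ under which the tilting subcategory $\sU_0 = \{\U_0(-,A)\}$ corresponds to $\injsmd\L$, and $\U_0(-,A)$ corresponds to $\U(-,A) = \G_0 e_A$-type modules, i.e.\ to the degree-zero part $\G_0$ of $\G$. Now $\injsmd\L \simeq \smd\L$ for hereditary $\L$ (the injective stabilization agrees with the projective one), and $\smd\L = \md\sG$ for the stable Auslander algebra $\sG$; combining, $\rD(\md(\injsmd\L)) = \rD(\md\sG)$. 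Tracing $\sG \in \rD(\md\sG)$, which sits in the heart as the regular module, through $\md\sG \simeq \injsmd\L \simeq \sU_0 \subset \sGP^\Z\!\G$, I would check it lands on $\G_0(1-e)$: the factor $1-e$ appears because the injective summand $I$ of $M$ is killed in the stable category (indeed $\G_0 e = \G e$ is projective and $\G(1-e) = \G$ in $\smd^\Z\!\G$, as noted before the statement).

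The main obstacle will be the bookkeeping in part (\ref{8.2.2}): one must verify carefully that the object Theorem \ref{tilting} produces on the $\sGP^\Z\!\G$ side of the equivalence is exactly $\G_0(1-e)$ and not merely $\G_0$ up to projectives, and that the identification $\injsmd\L \simeq \md\sG$ is the one compatible with the tilting equivalence (not just an abstract equivalence of abelian categories). This amounts to chasing the commutative diagram \eqref{eqsumm} through Proposition \ref{UG} and keeping track of the idempotent $e$; it is routine but requires care, since the whole point of writing $\G_0(1-e) = \G_0$ in $\smd^\Z\!\G$ is that the distinction only disappears in the stable category.
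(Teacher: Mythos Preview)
Your proposal is correct and follows exactly the paper's own approach: the paper simply records Theorem \ref{8.2} as a special case of Proposition \ref{CMcategory} and Theorem \ref{tilting}, transported through Proposition \ref{UG}. One small simplification: in Section \ref{hereditary} the idempotent $e$ is chosen to correspond to the \emph{injective} summand of $M$, so the paper's $\sG$ is directly the endomorphism algebra of $\injsmd\L$, and hence $\md(\injsmd\L) \simeq \md\sG$ on the nose; your detour through $\smd\L$ (and the identity you wrote as ``$\smd\L = \md\sG$'', which should read $\md(\smd\L) \simeq \md\sG$) is unnecessary here.
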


\begin{Rem}
The equivalence in (\ref{8.2.2}) can also be deduced using the results of \cite{Ya}.
\end{Rem}

In the following subsections, we give two independent applications of this equivalence.
\subsection{Fractional Calabi-Yau property of stable Auslander algebras}
As a first application of the triangle equivalence in Theorem \ref{8.2}, we deduce that the stable Auslander algebra of a representation-finite hereditary algebra has the fractional Calabi-Yau property. For integers $a$ and $b \neq0$, we say that a triangulated category $\T$ is {\it fractionally Calabi-Yau of dimension $a/b$} ($a/b$-CY for short) if it has a Serre functor $\nu$ such that $\nu^b \simeq [a]$. A finite dimensional algebra $\L$ (of finite global dimension) is fractionally CY if its bounded derived category $\rD(\md\L)$ is fractionally CY.

We first note the well-known result on fractional CY property of representation-finite hereditary algebras.
\begin{Prop}[\cite{MY}]\label{shitteru}
Let $\L$ be a representation-finite hereditary algebra of Dynkin type $\Del$, and let $h$ be its Coxeter number, which is given as in the following table:
\[
\begin{tabular}{|c||*{5}{c|}}
\hline
$\Delta$ & $A_n$ & $D_n$ & $E_6$ & $E_7$ & $E_8$ \\
\hline
$h$ & $n+1$ & $2n-2$ & $12$ & $18$ & $30$ \\
\hline
\end{tabular}
\]
Then, $\L$ is $(h-2)/h$-CY.
\end{Prop}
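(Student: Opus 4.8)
The plan is to reduce the assertion to an identity of autoequivalences of $\D=\rD(\md\L)$ and then to a combinatorial statement about the Auslander--Reiten quiver. Since $\L$ is hereditary it has finite global dimension, so by Proposition \ref{DS} the functor $\nu=D\L\otimes_\L^L-$ is a Serre functor on $\D$, and the Auslander--Reiten triangles in $\D$ give a functorial isomorphism $\nu\cong\tau[1]$, where $\tau=\nu[-1]$ is the derived Auslander--Reiten translation. As $\tau$ and $[1]$ commute, $\nu^h\cong\tau^h[h]$, so the claim $\nu^h\cong[h-2]$ is equivalent to the single identity $\tau^h\cong[-2]$.

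To prove the latter I would use Happel's standard model $\D\simeq k(\Z\Delta)$, the mesh category of the translation quiver $\Z\Delta$ (see \cite{Hap}; recall also $\U=\D$ here by Proposition \ref{wellknown}), under which $\tau$ is the defining translation of $\Z\Delta$ and $[1]$ an explicit automorphism. The pure translations of the tree-shaped quiver $\Z\Delta$ form an infinite cyclic group generated by $\tau$, and $[2]=[1]^2$ is again a pure translation, so $[-2]=\tau^m$ for a unique $m$, and one must show $m=h$. Here $K_0$ provides a first constraint: each indecomposable $X\in\D$ has a class $\pm[X]\in K_0(\D)\cong\Z^n$ on which $[1]$ acts as $-\mathrm{id}$ and $\tau$ as the inverse Coxeter transformation $c^{-1}$; since $c^{-m}=[2]|_{K_0}=\mathrm{id}$ and $c$ has order $h$, we get $h\mid m$. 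The exact value $m=h$ then follows by locating the slices $\mathcal M,\mathcal M[1],\mathcal M[2]$ --- the module category and its shifts --- inside $\Z\Delta$ and counting; for $\Delta=A_n$ this is immediate, as $\Z A_n$ unrolls into a strip on which $\tau$ is translation by $-2$ and $[1]$ is translation by $h=n+1$, reflecting the count $nh/2$ of indecomposable $\L$-modules.

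The genuine obstacle is this final count for the types $D_n$ and $E_n$: one has to keep track of the diagram automorphism of $\Delta$ that $\nu$ induces on the projectives --- the nontrivial flip for $A_n$, for $D_n$ with $n$ odd, and for $E_6$, and trivial otherwise --- and check that over the full $h$-fold iteration it appears an even number of times and so cancels (it has order $2$). This bookkeeping, carried out via an explicit description of the group of triangle autoequivalences of $\Z\Delta$, is exactly the content of \cite{MY}, which I would cite for this step; the Coxeter numbers in the table are classical.
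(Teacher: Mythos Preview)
The paper does not give a proof of this proposition: it is quoted as a known result from \cite{MY}, as the citation in the header indicates. Your outline --- reducing the claim to $\tau^h\cong[-2]$ via Happel's description $\D\simeq k(\Z\Delta)$ and then invoking the explicit description of the autoequivalence group of $\Z\Delta$ for the type-by-type verification --- is precisely the content of that reference, and since you also end by deferring the case analysis to \cite{MY}, your treatment is fully consistent with (indeed more detailed than) the paper's. One small remark: your description of $\Z A_n$ with ``$\tau$ is translation by $-2$'' presupposes a particular coordinate embedding (the ``staircase'' one in $\Z^2$) rather than the standard $\Z\times\{1,\dots,n\}$ labelling where $\tau$ shifts by $-1$; if you retain this illustration, make the convention explicit.
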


We have the following result as an application. 
\begin{Thm}\label{frac}
Let $\L$ be a representation-finite hereditary algebra of Dynkin type $\Del$, and let $h$ be its Coxeter number. Then, the stable Auslander algebra of $\L$ is $(2h-6)/h$-{CY}.
\end{Thm}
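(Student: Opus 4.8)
The plan is to transport the question to the graded singularity category $\grsg(\G)=\sGP^\Z\!\G$ via Theorem \ref{8.2} and to compute the Serre functor there explicitly. Write $\D=\rD(\md\L)$, $\U=\U(\L)$, and let $\nu=D\L\otimes^L_\L-$ be the Serre functor of $\D$, which exists by Proposition \ref{DS} since $\gd\L\le1$. As $\L$ is hereditary, $\U=\D$ (Proposition \ref{wellknown}) and $\G$ is self-injective (Proposition \ref{iffhered}), so $\GP^\Z\!\G=\md^\Z\!\G$ and $\sGP^\Z\!\G=\smd^\Z\!\G$; moreover $[3]\simeq(1)$ on $\smd^\Z\!\G$, equivalently $[3]_{\smd^\Z\!\G}\simeq[1]_\D$ (Theorem \ref{8.2}(1), via Propositions \ref{GPcategory} and \ref{UG}). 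Since $\gd\L\le1$, the global dimension of $\smd\L$, hence of $\sG$, is finite by \cite[10.2]{AR}, so $\rD(\md\sG)$ has a Serre functor (Proposition \ref{DS}); transporting it along the triangle equivalence $\rD(\md\sG)\simeq\smd^\Z\!\G$ of Theorem \ref{8.2} yields a Serre functor $\mathbb{S}$ on $\smd^\Z\!\G$, and it suffices to prove $\mathbb{S}^h\simeq[2h-6]$.

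The main point is to identify $\mathbb{S}$. As $\G$ is a finite-dimensional graded self-injective algebra, $\smd^\Z\!\G$ has Auslander--Reiten triangles whose translate is $\tau=\Om^2\nu_\G$, where $\nu_\G=D\Hom_\G(-,\G)$ is the graded Nakayama functor, an exact autoequivalence of $\md^\Z\!\G$; since $\nu_\G$ commutes with $\Om$ and $[1]=\Om^{-1}$, this gives $\mathbb{S}=\tau\circ[1]\simeq\Om\circ\nu_\G$. Under the identification $\md^\Z\!\G\simeq\md\U$ of Proposition \ref{UG}, the functor $\nu_\G$ sends the projective $\U(-,A)$ to $D\U(A,-)\simeq\U(-,\nu A)$ by Serre duality on $\D=\U$; being exact and determined by its values on projectives, $\nu_\G$ is therefore the autoequivalence $\bar\nu$ of $\smd^\Z\!\G$ induced by $\nu\colon\U\to\U$, so that $\mathbb{S}\simeq\Om\circ\bar\nu=[-1]_{\smd^\Z\!\G}\circ\bar\nu$.

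It remains to take the $h$-th power. By Proposition \ref{shitteru}, $\L$ is $(h-2)/h$-CY, hence $\nu^h\simeq[h-2]_\D$ as autoequivalences of $\D=\U$; therefore $\bar\nu^h$ is the autoequivalence of $\smd^\Z\!\G$ induced by $[1]_\D^{\,h-2}$, which equals $([3]_{\smd^\Z\!\G})^{h-2}=[3h-6]_{\smd^\Z\!\G}$ by $[3]_{\smd^\Z\!\G}\simeq[1]_\D$. Since $[-1]$ commutes with $\bar\nu$,
\[ \mathbb{S}^h\simeq[-h]_{\smd^\Z\!\G}\circ\bar\nu^h\simeq[-h]_{\smd^\Z\!\G}\circ[3h-6]_{\smd^\Z\!\G}\simeq[2h-6]_{\smd^\Z\!\G}. \]
Transporting back along $\rD(\md\sG)\simeq\smd^\Z\!\G$ then shows that $\rD(\md\sG)$ is $(2h-6)/h$-CY, i.e., $\sG$ is $(2h-6)/h$-CY.

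I expect the second step to be the main obstacle: pinning down the shape $\mathbb{S}\simeq\Om\circ\bar\nu$ from the Auslander--Reiten formula $\tau=\Om^2\nu_\G$ for graded self-injective algebras together with the comparison of $\nu_\G$ and $\bar\nu$. Note that it is precisely the shift $[1]$ relating $\tau$ to $\mathbb{S}$ that converts the naive exponent $3(h-2)$ into $3(h-2)-h=2h-6$, so this bookkeeping has to be carried out with care.
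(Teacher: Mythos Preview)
Your argument is correct. The paper proceeds differently: it first observes that $\proj^\Z\!\G\simeq\rD(\md\L)$ yields $\smd^\Z\!\G\simeq\smd\rD(\md\L)$, whence $\rD(\md\sG)\simeq\smd\rD(\md\L)$ by Theorem~\ref{8.2}, and then invokes the general black-box Lemma~\ref{variant} (a variant of \cite{AR96},\cite[8.5]{Ke05}) stating that if $\T$ is $a/b$-CY then $\smd\T$ is $(3a-b)/b$-CY, applied with $a=h-2$ and $b=h$. Your route instead stays inside $\smd^\Z\!\G$ and exploits that $\G$ is a finite-dimensional graded self-injective algebra: you identify the Serre functor as $\mathbb S\simeq\Om\circ\nu_\G$ via the classical formula $\tau=\Om^2\nu_\G$, recognize $\nu_\G$ as the functor $\bar\nu$ induced by the Serre functor of $\D=\U$, and then take $h$-th powers using $[3]_{\smd^\Z\!\G}\simeq[1]_\D$ and Proposition~\ref{shitteru}. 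In effect you are reproving Lemma~\ref{variant} in this specific situation through the Nakayama functor of $\G$ rather than abstractly through $\smd\T$; this makes your proof more self-contained, at the cost of the small bookkeeping you flag (commutation of $\nu_\G$ with $\Om$, and the identification $\nu_\G\simeq\bar\nu$ on all of $\md^\Z\!\G$ from its values on projectives), while the paper's packaging isolates a statement valid for any fractionally Calabi--Yau $\T$.
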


We need the following observation which is a variant of \cite{AR96}\cite[8.5]{Ke05}.
\begin{Lem}\label{variant}
Let $\T$ be an $a/b$-CY triangulated category. Then, $\smd\T$ is $(3a-b)/b$-CY.
\end{Lem}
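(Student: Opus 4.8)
The plan is to realise $\smd\T=\underline{\md}\T$ as the stable category of a Frobenius abelian category and to compute its Serre functor. Since $\T$ is $a/b$-CY it carries a Serre functor $\nu$ together with a natural isomorphism $\nu^{b}\simeq[a]$. Being triangulated, $\T$ is self-injective (\cite[4.2]{Kr}), so every representable $\T(-,X)$ is injective in $\md\T$; it is of course also projective, and $\md\T$ has enough of both (enough projectives is Yoneda's lemma, and the cokernel of a projective presentation $\T(-,X_{1})\to\T(-,X_{0})$ embeds into $\T(-,Z)$ for $Z$ the cone of $X_{1}\to X_{0}$, so there are enough injectives, all representable). Hence $\md\T$ is a Frobenius abelian category, $\smd\T$ is triangulated, and I will denote by $\Om$ its syzygy functor (the inverse of the suspension) and by $\langle1\rangle$ the autoequivalence of $\smd\T$ induced by $[1]$ on $\T$.

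I would then establish two facts. First, $\Om^{3}\simeq\langle-1\rangle$ on $\smd\T$, equivalently $\langle1\rangle\simeq\Om^{-3}$: this is proved exactly as in the proof of Proposition \ref{GPcategory}, because a minimal projective presentation of $M\in\smd\T$ arises by Yoneda's lemma from a morphism $X_{1}\to X_{0}$ in $\T$, and rotating the associated triangle three times exhibits $\Om^{3}M$ as the module presented by $X_{1}[-1]\to X_{0}[-1]$. Second, $\smd\T$ has a Serre functor, and it is $S\simeq\bar\nu\circ\Om$, where $\bar\nu$ is the autoequivalence of $\smd\T$ induced by $\nu$; indeed, via Serre duality $D\T(X,-)\simeq\T(-,\nu X)$ the functor $\bar\nu$ restricts on the projectives of $\md\T$ to the Nakayama equivalence $\proj\,\md\T\xrightarrow{\sim}\inj\,\md\T$, and the Serre functor of the stable category of a Frobenius category is ``Nakayama composed with syzygy'', following the computation of \cite{AR96} and \cite[8.5]{Ke05}.

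Granting these, the result is formal: $\nu^{b}\simeq[a]$ on $\T$ induces $\bar\nu^{b}\simeq\langle a\rangle$ on $\smd\T$, and $\langle a\rangle=\langle1\rangle^{a}\simeq\Om^{-3a}$ by the first fact. Since $\bar\nu$ commutes with $\Om$, the Serre functor of $\smd\T$ satisfies
\[ S^{b}\simeq\bar\nu^{b}\circ\Om^{b}\simeq\Om^{-3a}\circ\Om^{b}=\Om^{-(3a-b)}, \]
i.e.\ $S^{b}$ is the $(3a-b)$-fold suspension functor of $\smd\T$, so $\smd\T$ is $(3a-b)/b$-CY.

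The main obstacle is the second fact of the middle paragraph: checking that $\md\T$ is genuinely Frobenius with all injectives representable, identifying its Nakayama functor precisely with $\bar\nu$, and invoking --- or, for a self-contained treatment, reproving in this generality --- the Auslander--Reiten description $S\simeq\bar\nu\circ\Om$ of the Serre functor of $\smd\T$. Once this is in hand, the first fact is the routine syzygy computation of Proposition \ref{GPcategory}, and the final paragraph is mere bookkeeping with autoequivalences.
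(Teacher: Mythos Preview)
The paper does not actually supply a proof of this lemma: it is stated as ``a variant of \cite{AR96}\cite[8.5]{Ke05}'' and left without argument. Your proposal is essentially the argument implicit in those references, unpacked in the present setting, and it is correct. The key inputs are exactly the two you isolate: the identification $\langle1\rangle\simeq\Om^{-3}$ on $\smd\T$ (which is the same rotation-of-triangles computation as in Proposition~\ref{CMcategory}/\ref{GPcategory}), and the description of the Serre functor of the stable Frobenius category $\md\T$ as $\bar\nu\circ\Om$. For the latter, note that since $\T$ has a Serre functor it is a dualizing variety \cite[2.11]{IYo}, so the injectives in $\md\T$ are precisely the $D\T(X,-)\simeq\T(-,\nu X)$ and the Nakayama equivalence on projectives is indeed $\bar\nu$; the formula $S\simeq\bar\nu\circ\Om$ then follows from the standard AR-duality computation for self-injective (Frobenius) categories, as in the cited sources. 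Your final bookkeeping $S^{b}\simeq\bar\nu^{b}\Om^{b}\simeq\langle a\rangle\Om^{b}\simeq\Om^{-(3a-b)}$ is exactly right.
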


Now we are ready to prove our result.
\begin{proof}[Proof of Theorem \ref{frac}]
Note that by $\proj^\Z\!\G \simeq \rD(\L)$, we have $\smd^\Z\!\G \simeq \smd\rD(\L)$. Together with Theorem \ref{8.2}, we deduce $\rD(\sG) \simeq \smd\rD(\L)$. Now the result is a consequence of Proposition \ref{shitteru} and Lemma \ref{variant}.
\end{proof}

\subsection{Rigid modules over Yoneda algebras}
So far we have considered the category of {\it graded} modules over the graded algebra $\G$. In this subsection, we discuss the category $\smd\G$ of {\it ungraded} modules over $\G$. Let us first recall the related notions.
\begin{Def}
Let $\C$ be a full subcategory of a triangulated category $\T$ and $n \geq 1$.
\begin{enumerate}
\item $\C$ is {\it $n$-rigid} if $\Hom_\T(\C,\C[i])=0$ for all $0<i<n$.
\item $\C$ is {\it maximal $n$-rigid} if it is $n$-rigid and any $n$-rigid subcategory containing $\C$ equals $\C$.
\item\cite{Iy07a} $\C$ is {\it $n$-cluster tilting} if it is functorially finite in $\T$ and satisfies the following.
\begin{equation*}
\begin{aligned}
\C&=\{ X \in \T \mid \Hom_\T(\C,X[i])=0 \text{ for all } 0<i<n \} \\
    &=\{ X \in \T \mid \Hom_\T(X,\C[i])=0 \text{ for all } 0<i<n \}.
\end{aligned}
\end{equation*}
\end{enumerate}
\end{Def}

Our main result of this section states the $\smd\G$ is endowed with a maximal $3$-rigid object.
\begin{Thm}\label{ungraded}
\begin{enumerate}
\item\label{5.10.1} We have 
\[ \add\G_0=\{ X \in \smd \G \mid \Ext_\G^i(\G_0,X)=0 \ \text{and} \ \Ext_\G^i(X,\G_0)=0 \ \text{for} \ i=1,2 \}. \]
\item\label{5.10.2} $\G_0 \in \smd \G$ is maximal $3$-rigid.
\item\label{ct} There is no $3$-cluster tilting subcategory in $\smd \G$ containing $\G_0$ unless $\smd \G$ is semisimple.
\end{enumerate}
\end{Thm}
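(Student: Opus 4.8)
The plan is to deduce everything from the graded picture in Theorem \ref{8.2}. Recall that $\smd^\Z\!\G \simeq \smd\rD(\L)$ via $\proj^\Z\!\G \simeq \rD(\L)$, and under this identification the degree shift $(1)$ becomes the shift $[3]$ on $\smd\rD(\L)$ by Theorem \ref{8.2}(\ref{8.2.1}). The forgetful functor $\smd^\Z\!\G \to \smd\G$ sends $\G_0$ to $\G_0$, and for graded modules $X,Y$ one has the standard isomorphism $\Ext_\G^i(X,Y)\simeq\bigoplus_{j\in\Z}\Ext_{\md^\Z\!\G}^i(X,Y(j))$; since $\G$ is self-injective (Proposition \ref{iffhered}) these $\Ext$'s are computed in $\smd$, so $\Ext_\G^i(\G_0,X)\simeq\bigoplus_{j}\sHom^\Z_\G(\G_0,X(j)[i])$ and similarly with the arguments reversed. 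The point is that the graded $\Ext$-groups between $\G_0$ and any object are governed by the derived category of $\sG$: writing $\sG$-modules via the equivalence of Theorem \ref{8.2}(\ref{8.2.2}), $\G_0$ corresponds to $D\sG$ (the image of $\sG\in\rD(\md\sG)$), and $\sHom^\Z_\G(\G_0,Z(j)[i]) = \Hom_{\rD(\md\sG)}(\sG, Z'[\,?\,])$ for the corresponding object $Z'$, which is just a cohomology group $H^?(Z')$ of the complex.

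First I would prove (\ref{5.10.1}). Let $X\in\smd\G$ and choose a graded lift $\widetilde X\in\smd^\Z\!\G$ (possible since the forgetful functor is essentially surjective on indecomposables up to degree shift). The vanishing $\Ext_\G^i(\G_0,X)=\Ext_\G^i(X,\G_0)=0$ for $i=1,2$ translates, via the displayed isomorphism above and Theorem \ref{8.2}(\ref{8.2.2}), into: the complex $Z'\in\rD(\md\sG)$ corresponding to $\widetilde X$ has $H^i(Z')=0$ and $H^i(\mathbb{D}Z')=0$ for the relevant shifts $i$ coming from $(j)\leftrightarrow[3j]$. Because $(1)=[3]$, the condition "for $i=1,2$ and all internal degrees $j$" sweeps out all nonzero cohomological degrees of $Z'$ except the window of width $3$; combined with the dual condition this forces $Z'$ to be a stalk complex concentrated in one degree, i.e.\ $\widetilde X\in\add\{(D\sG)(j)\mid j\in\Z\}$, whence $X\in\add\G_0$ in $\smd\G$. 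Conversely $\add\G_0$ obviously satisfies the vanishing since $D\sG$ is a projective-injective... — more precisely since $\G_0$ is rigid in the required range, which is the $i=1,2$ part of $3$-rigidity proved next.

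Statement (\ref{5.10.2}) follows from (\ref{5.10.1}): $3$-rigidity of $\G_0$ means $\Ext_\G^i(\G_0,\G_0)=0$ for $i=1,2$, which is immediate from $\sHom^\Z_\G(\G_0,\G_0(j)[i])=\Hom_{\rD(\md\sG)}(\sG,D\sG[3j]\otimes?)$-type computations — concretely, $\G_0$ corresponds to the injective cogenerator $D\sG$ of $\md\sG$ sitting in degree $0$, and $(1)=[3]$ means the only nonzero graded self-$\Ext$'s of $\G_0$ occur in ungraded degrees that are multiples of $3$, so degrees $1,2$ vanish. Now if $\C\supseteq\add\G_0$ is $3$-rigid in $\smd\G$, then every $X\in\C$ satisfies $\Ext_\G^i(\G_0,X)=\Ext_\G^i(X,\G_0)=0$ for $i=1,2$, so $X\in\add\G_0$ by (\ref{5.10.1}); hence $\C=\add\G_0$. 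Finally (\ref{ct}): a $3$-cluster tilting subcategory is in particular maximal $3$-rigid, so by (\ref{5.10.2}) it would have to be $\add\G_0$; but then the cluster-tilting identity would give $\smd\G=\{X\mid\Hom(\G_0,X[i])=\Hom(X,\G_0[i])=0,\ i=1,2\}=\add\G_0$, forcing $\G_0$ to be an additive generator of $\smd\G$ with no self-extensions, i.e.\ $\smd\G$ semisimple. (Here one uses that $\G$ self-injective and $\smd\G=\add\G_0$ would make $\G$ a Nakayama-type algebra with $\G_0$ projective-injective generator; spelling out that this forces semisimplicity is a short separate argument, e.g.\ via the graded lift again or by counting simples against the translation quiver $\Z Q/[1]$.)

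The main obstacle will be making the bookkeeping of internal degrees versus the shift $[3]$ precise in step (\ref{5.10.1}): one must check that the two vanishing conditions (for $\Hom$ out of $\G_0$ and into $\G_0$, each for $i=1,2$ and all $j$) together cover enough of the cohomological line of the corresponding complex $Z'\in\rD(\md\sG)$ to pin it to a single degree, rather than merely to a width-$3$ window. I expect this to work because $(1)=[3]$ exactly halves the gap: $i=1,2$ over all $j\in\Z$ kills degrees $\not\equiv 0\pmod 3$, and the dual condition then kills the remaining ambiguity within a congruence class. Verifying this combinatorial claim carefully, and dealing cleanly with the existence and ambiguity of graded lifts, is the one place where care is needed; the rest is formal manipulation of the equivalences from Theorem \ref{8.2}.
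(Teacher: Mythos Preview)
Your strategy for (\ref{5.10.1}) and (\ref{5.10.2}) via the equivalence of Theorem \ref{8.2} matches the paper's viewpoint, but the step you flag as ``the one place where care is needed''---the existence of a graded lift $\widetilde X\in\smd^\Z\!\G$ of an arbitrary $X\in\smd\G$---is not a detail but the entire difficulty, and the paper's machinery is built precisely to avoid it. The paper does not assume gradability; instead it first proves the one-sided statement (Proposition \ref{motothm}) that $\md\sG=\{X\in\smd\G\mid\Ext_\G^{1,2}(\G_0,X)=0\}$ by extracting from an ungraded $X$ the submodule $Y=\Hom_\G(\G_0,X)$, which is a $\G_0$-module and therefore automatically gradable (concentrated in degree $0$), analysing $Y$ through the equivalence with $\rD(\md\sG)$, and handling the quotient $X/Y$ by a separate pushout argument showing a certain extension is projective. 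Part (\ref{5.10.1}) then follows: once $X\in\md\sG$, the remaining condition $\Ext_\G^{1,2}(X,\G_0)=0$ translates via Theorem \ref{8.2} to $\Ext_{\sG}^{1,2}(X,\sG)=0$, whence $X\in\add\sG$ by $\gd\sG\leq2$. Your cohomological-window bookkeeping would work once a graded lift is in hand, but producing one for arbitrary $X$ is exactly the missing idea.

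Your argument for (\ref{ct}) contains an error: the $3$-cluster tilting condition says $\add\G_0=\{X\mid\Ext_\G^{1,2}(\G_0,X)=0\}$ and the dual equality, \emph{not} $\smd\G=\{\ldots\}$, so you cannot conclude $\smd\G=\add\G_0$. The paper's route is different: pull back a hypothetical $3$-cluster tilting subcategory along $\rD(\md\sG)\simeq\smd^\Z\!\G\to\smd\G$ to obtain a $3$-cluster tilting subcategory $\C\subset\rD(\md\sG)$ containing $\sG$ and stable under $[3]$; compare it with the $\nu_3$-orbit $\add\{\nu_3^i\sG\mid i\in\Z\}$, which is $3$-cluster tilting by \cite[1.23]{Iy}; deduce $\C$ equals this orbit, so the orbit is $[3]$-stable; then \cite[3.1]{IO} forces $\sG$ to be $3$-representation-finite, which together with $\gd\sG\leq2$ makes $\sG$ (equivalently $\smd\G$) semisimple.
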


Although the theorem can be stated only in terms of $\smd\G$, our proof is based on the equivalence in Theorem \ref{8.2}.
We start with the following proposition by which we can regard $\md\sG$ as the full subcategory of $\smd \G$.
\begin{Prop}\label{emb}
\begin{enumerate}
\item\label{embed}
The composition 
\[ \md\sG \hookrightarrow \rD(\md\sG) {\simeq} \smd^\Z\!\G \to \smd \G \]
is fully faithful.
\item\label{embe}
The above functor coincides as a map on objects with the composition
\[ \md\sG \hookrightarrow \md\G_0 \hookrightarrow \md \G \to \smd \G \]
induced by the canonical surjections $\G \twoheadrightarrow \G_0 \twoheadrightarrow \sG$. 
\end{enumerate}
\end{Prop}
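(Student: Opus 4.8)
The argument I have in mind rests on the equivalence $\Phi\colon\rD(\md\sG)\xrightarrow{\ \simeq\ }\smd^\Z\!\G$ of Theorem~\ref{8.2}(\ref{8.2.2}) (which sends $\sG$ to $\G_0$) together with the identification $\U\simeq\proj^\Z\!\G$ of Proposition~\ref{UG}; since $\L$ is hereditary, $\U=\D$ by Proposition~\ref{wellknown}, and under $\U\simeq\proj^\Z\!\G$ the degree shift $(1)$ corresponds to $[1]_\D$. As $\Phi$ transports the standard $t$-structure of $\rD(\md\sG)$ to the one of Theorem~\ref{t-structure}, it identifies the standard heart $\md\sG\subset\rD(\md\sG)$ with the heart $\H$ of Proposition~\ref{heart}; by Proposition~\ref{subcat} (and $\U\simeq\proj^\Z\!\G$) the objects of $\H$ are, for a short exact sequence $\xi\colon0\to A\xrightarrow{f}B\xrightarrow{g}C\to0$ in $\md\L$, the graded submodules $\Im\U(-,f)=\Ker\U(-,g)$ of $\U(-,B)$, where $\U(-,B)$ is the graded projective $\G$-module whose degree $0$ part is $\Hom_\L(M,B)$ and whose degree $1$ part is $\Ext^1_\L(M,B)$.

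For~(\ref{embed}) I would argue directly with $\Hom$-spaces. For finitely presented graded $\G$-modules one has $\Hom_{\smd\G}(X',Y')=\bigoplus_{n\in\Z}\Hom_{\smd^\Z\!\G}(X',Y'(n))$ (here $\GP^\Z\!\G=\md^\Z\!\G$ by self-injectivity, Proposition~\ref{iffhered}). Taking $X',Y'$ to be the images of $X,Y\in\md\sG$ under $\Phi$ and using $Y'(n)=Y'[3n]$ from Theorem~\ref{8.2}(\ref{8.2.1}) together with $\Phi^{-1}$, the $n$-th summand becomes $\Ext^{3n}_\sG(X,Y)$. This equals $\Hom_\sG(X,Y)$ for $n=0$ and vanishes for $n<0$; it also vanishes for $n\ge1$ by the homological fact that $\gd\sG\le2$ (cf.\ \cite[10.2]{AR}). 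Hence the composite $\md\sG\to\smd\G$ is fully faithful.

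For~(\ref{embe}) I would trace an object of $\md\sG$ through $\Phi$. For $X\in\md\sG$ choose a finite projective resolution $0\to Q_d\to\cdots\to Q_0\to X\to0$ (finite since $\gd\sG<\infty$); then $\Phi(X)$ is the iterated cone in $\smd^\Z\!\G$ of $\Phi(Q_d)\to\cdots\to\Phi(Q_0)$. Inflating the same resolution along $\G\twoheadrightarrow\G_0\twoheadrightarrow\sG$ yields an exact complex of $\G$-modules, realizing the inflation of $X$ as the iterated cone in $\smd\G$ of the inflations of the $Q_i$ obtained from the short exact sequences of the resolution. Since the connecting maps on both sides are induced by the same $\sG$-linear differentials, it suffices to identify, in $\smd\G$, $\Phi(Q_i)$ with the inflation of $Q_i$ compatibly; and this reduces to the case $Q_i=\sG$, where $\Phi(\sG)=\G_0$ must be shown to agree in $\smd\G$ with the inflation of the $\sG$-module $\sG$ along $\G\twoheadrightarrow\sG$. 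Equivalently — and this is the computational heart of~(\ref{embe}) — one checks, using that $\G e=\G_0 e$ is concentrated in degree $0$ because $\Ext^1_\L(M,I)=0$, that forgetting the grading of the heart objects $\Im\U(-,f)$ (after discarding projective summands) recovers in $\smd\G$ the inflation of $G(\xi)=\Coker{}_\L(-,g)\in\md\sG$; Lemmas~\ref{trick} and~\ref{star} organize the bookkeeping, and by exactness it is enough to verify this on the projective and injective objects of $\md\sG$.

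The delicate points are two. In~(\ref{embe}) the two functors coincide only after passing to $\smd\G$, so one must produce by hand the short exact sequence with a projective term that realizes $\Phi(\sG)$ as the inflation of $\sG$ in $\smd\G$, and then carry the bookkeeping through all of $\md\sG$ compatibly with the triangulated structure. In~(\ref{embed}) the real content is the vanishing $\Ext^{\ge3}_\sG(\md\sG,\md\sG)=0$: this global-dimension bound for the stable Auslander algebra of a representation-finite hereditary algebra is precisely what makes the forgetful functor fully faithful on the heart, and I expect it to be the main obstacle.
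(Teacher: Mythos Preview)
Your argument is correct and follows the paper's own proof closely. For (\ref{embed}) the paper does exactly what you do: decompose $\sHom_\G(U,V)$ as $\bigoplus_{i\in\Z}\sHom^\Z_\G(U,V(i))$, use $(1)=[3]$ on $\smd^\Z\!\G$ to identify the summands with $\Hom_{\rD(\sG)}(X,Y[3i])$, and invoke $\gd\sG\le2$ to kill the nonzero-degree terms. For (\ref{embe}) the paper's proof is the one-line observation that both compositions send $\sG$ to $\G_0$ and take short exact sequences to triangles; your reduction via iterated cones of a projective resolution is precisely the unpacking of that sentence. Your additional remarks about tracing heart objects through Lemmas~\ref{trick} and~\ref{star} are more than the paper writes down, but they are consistent with how the equivalence was built and do no harm.

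One small calibration: you flag $\gd\sG\le2$ as the ``main obstacle'', but in this context it is a standard fact (the simple $\sG$-modules have projective resolutions of length $\le2$ coming from almost split sequences), and the paper uses it without comment. The genuinely delicate step, as you also note, is the identification in $\smd\G$ of $\G_0$ with the inflation of $\sG$; the paper absorbs this into the phrase ``both take $\sG$ to $\G_0$'', relying on $\G_0 e=\G e$ being projective.
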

\begin{proof} 
(\ref{embed})  Let $X,Y \in \md\sG$ and $U,V \in \smd^\Z\!\G$ be the corresponding objects under the equivalence $\rD(\md\sG) \simeq \smd^\Z\!\G$. It suffices to show $\sHom^\Z_\G(U,V(i))=0$ for $i \neq 0$. Indeed, since $(1)=[3]$ on $\smd^\Z\!\G$, we have $\sHom^\Z_\G(U,V(i)) = \Hom_{\rD(\sG)}(X,Y[3i])$, which is zero for $i \neq 0$ by $\gd\sG \leq 2$.\\
(\ref{embe})  This can be seen by observing that both compositions take $\sG$ to $\G_0$, and take short exact sequences to triangles.
\end{proof}

In what follows, we identify $\md\sG$ with the full subcategory of $\smd \G$ via Proposition \ref{emb}.
We note the following consequence of the above proposition.
\begin{Lem}\label{ring}
We have $\sG=\G/\G e \G$. In particular, for $X \in \smd \G$, $X \in \md\sG$ if and only if $X$ is annihilated by $e$.
\end{Lem}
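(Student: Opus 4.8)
The plan is to read off the identification $\sG = \G/\G e\G$ directly from Proposition \ref{emb}(\ref{embe}). Recall that $\sG = (1-e)\G_0(1-e) = \G_0/\G_0 e\G_0$ by the discussion preceding Theorem \ref{8.2}, and that $\Ext^1_\L(M,I) = 0$ forces $\G_0 e = \G e$ as left $\G$-modules. First I would note that $\G e\G$ is a graded two-sided ideal of $\G$, and that the canonical surjections $\G \twoheadrightarrow \G_0 \twoheadrightarrow \sG$ of Proposition \ref{waru1}'s setup have composite kernel containing $e$; concretely, $\G/\G e\G$ is a quotient of $\G_0/\G_0 e\G_0 = \sG$ once one checks that the positive-degree part $E = \Ext^1_\L(M,M)$ of $\G$ is killed. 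For the latter, $Ee = \Ext^1_\L(M,I) = 0$ already gives $E \subseteq \G e\G$ is false in general, so instead I would argue that the image of $E$ in $\G/\G e\G$ is a nilpotent ideal in a finite-dimensional algebra which, by the explicit quiver-and-relations description, must already vanish; alternatively, and more cleanly, the right way is to invoke the fully faithful embedding $\md\sG \hookrightarrow \smd\G$ of Proposition \ref{emb}. Indeed, that embedding sends the regular module $\sG$ to $\G_0 = \G_0(1-e)$, viewed in $\smd\G$ via $\G \twoheadrightarrow \G_0 \twoheadrightarrow \sG$, and it is full; so $\End_{\smd\G}(\G_0) \cong \End_{\md\sG}(\sG) = \sG^{\op,\op} = \sG$, and since $\G e$ is projective (hence zero in $\smd\G$) we get $\smd\G$-morphisms factoring through the quotient $\G/\G e\G$. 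This pins down $\sG$ as the largest quotient of $\G$ all of whose modules factor through $\G/\G e\G$, i.e. $\sG = \G/\G e\G$ as algebras.

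For the second assertion, once $\sG = \G/\G e\G$ is established, a module $X \in \md\G$ factors through the surjection $\G \twoheadrightarrow \G/\G e\G = \sG$ if and only if the ideal $\G e\G$ annihilates $X$, which is equivalent to $e X = 0$ since $\G e\G$ is generated by $e$. This equivalence descends to $\smd\G$: if $X$ is annihilated by $e$ then it is an $\sG$-module on the nose; conversely, if $X \in \smd\G$ lies in the image of $\md\sG$, then by Proposition \ref{emb}(\ref{embe}) it is isomorphic in $\smd\G$ to an honest $\sG$-module $X'$, and the difference $X - X'$ is projective, hence a summand of a free $\G$-module $\G^n$; but $e$ annihilates $X'$, and one checks $eX$ is projective and thus zero in $\smd\G$, giving $eX = 0$ in $\smd\G$ as required. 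The cleanest formulation is: $\md\sG = \{ X \in \smd\G \mid eX = 0 \}$, where the right-hand side is understood as the full subcategory of objects admitting a representative annihilated by $e$.

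The main obstacle I anticipate is the bookkeeping between graded and ungraded modules, and between $\md$ and $\smd$: one must be careful that "annihilated by $e$" is a condition that is well-defined up to the projective objects that one kills in passing to $\smd\G$, and that $\G e$ being projective (which holds because $\G_0 e = \G e$ and $\G_0 e$ is a summand of $\G_0$, itself $\G$-projective in degree $0$) is exactly what makes this work. Once that is in place, both statements are formal consequences of Proposition \ref{emb}.
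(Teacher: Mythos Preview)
Your proposal has a genuine gap at the key algebraic step. You correctly reduce the first assertion to showing $E \subset \G e\G$ (equivalently $E = \G_0 eE$), but neither of your two suggested arguments establishes this.

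The ``nilpotent ideal'' remark proves nothing: yes, the image of $E$ in $\G/\G e\G$ squares to zero (since $E^2=0$ for hereditary $\L$), but nilpotent ideals need not vanish, and appealing to ``the explicit quiver-and-relations description'' is not an argument.

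The route via $\End_{\smd\G}(\G_0)$ is also incomplete. Full faithfulness of the embedding in Proposition~\ref{emb} does give $\End_{\smd\G}(\G_0)\cong\sG$, but you never identify $\End_{\smd\G}(\G_0)$ with $\G/\G e\G$ independently. If you compute it directly you find $\End_{\smd\G}(\G_0)=\G_0/\{g_0\in\G_0:Eg_0=0\}$, and matching this with $\G_0/\G_0 e\G_0$ is just a reformulation of the original problem, not a solution. The sentence ``this pins down $\sG$ as the largest quotient of $\G$ all of whose modules factor through $\G/\G e\G$'' does not parse as an argument.

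The paper's proof of $E\subset\G e\G$ is two lines and uses a fact you never invoke: heredity of $\L$. Take an injective envelope $M\hookrightarrow J$; since $\gd\L\leq 1$, the induced map $\Ext^1_\L(J,M)\to\Ext^1_\L(M,M)=E$ is surjective. As $J\in\add I$, this exhibits every element of $E$ as lying in $\G_0 eE\subset\G e\G$.

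For the second assertion your discussion is also confused (``$eX$ is projective'' is meaningless---$eX$ is not a $\G$-module), but once $\sG=\G/\G e\G$ is in hand this part really is formal: a $\G$-module is an $\sG$-module if and only if $\G e\G$ annihilates it, if and only if $e$ does.
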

\begin{proof}Since $\sG=\G_0/\G_0 e \G_0$, it is enough to show that $\G e \G$ contains $E$. 
Let $M \hookrightarrow J$ be an injective envelope of the additive generator $M$. Then, we have a surjection $\Ext^1_\L(J,M) \twoheadrightarrow \Ext_\L^1(M,M)$, thus $E \subset \G_0eE \subset \G e \G$. \end{proof}

The following description of $\md\sG$ inside $\smd\G$ is crucial for the proof of Theorem \ref{ungraded}.
\begin{Prop}\label{motothm}
Under the identification $\md\sG \subset \smd \G$ in Proposition \ref{emb}, we have
\[ \md\sG=\{ X \in \smd \G \mid \Ext_\G^i(\G_0,X)=0 , \ i=1,2 \}. \]
\end{Prop}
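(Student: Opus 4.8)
\emph{Set-up.} Write $\mathcal Y=\{\,X\in\smd\G\mid\Ext^i_\G(\G_0,X)=0,\ i=1,2\,\}$; under the identification $\md\sG\subseteq\smd\G$ of Proposition \ref{emb} the assertion is $\md\sG=\mathcal Y$, which I will prove by the two inclusions. Both use the triangle equivalence $\rD(\md\sG)\xrightarrow{\,\simeq\,}\smd^\Z\!\G$ of Theorem \ref{8.2}, which sends $\sG$ to $\G_0$ and, being a triangle equivalence, the shift $[1]$ to the cosyzygy $\Om^{-1}$, hence $[3]$ to the degree shift $(1)$ by Theorem \ref{8.2}(\ref{8.2.1}). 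They also use that, since $\G$ is finite dimensional and graded self-injective and $\G_0$ is finitely generated, for the forgetful functor $U$ to ungraded modules one has a natural isomorphism
\[ \Ext^i_\G(\G_0,U\widetilde X)\ \cong\ \bigoplus_{j\in\Z}\sHom^\Z_\G\big(\G_0,(\Om^{-i}\widetilde X)(j)\big)\qquad(i\geq1) \]
for every graded module $\widetilde X$.

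\emph{The inclusion $\md\sG\subseteq\mathcal Y$.} Let $X\in\md\sG$ and let $\widetilde X\in\smd^\Z\!\G$ be the object corresponding to $X\in\md\sG\subseteq\rD(\md\sG)$; by Proposition \ref{emb}(\ref{embe}) the object $X\in\smd\G$ equals $U\widetilde X$. Transporting each summand of the displayed isomorphism along Theorem \ref{8.2} gives $\Ext^i_\G(\G_0,X)\cong\bigoplus_{j}\Hom_{\rD(\md\sG)}(\sG,X[i+3j])=\bigoplus_j H^{i+3j}(X)$, which vanishes unless $i+3j=0$ for some $j$ — that is, unless $3\mid i$ — because $X$ is a module. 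In particular $\Ext^1_\G(\G_0,X)=\Ext^2_\G(\G_0,X)=0$, so $X\in\mathcal Y$.

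\emph{The inclusion $\mathcal Y\subseteq\md\sG$.} Let $X\in\mathcal Y$; by Lemma \ref{ring} it suffices to prove $eX=0$. By Theorem \ref{8.2}(\ref{8.2.1}) we have $\Om^3_\G\G_0\cong\G_0$ in $\smd\G$ (forgetting the degree shift), so $i\mapsto\Ext^i_\G(\G_0,X)$ is $3$-periodic for $i\geq1$, and the hypothesis self-improves to $\Ext^i_\G(\G_0,X)=0$ for all $i\geq1$ with $3\nmid i$. The plan from here is to read this off as a cohomological concentration and invoke $\gd\sG\leq2$. Concretely, I would first show that $X$ lies in the essential image of $U$: then $X=U\widehat X$ with $\widehat X\in\rD(\md\sG)$, the reverse of the computation above yields $H^m(\widehat X)=0$ for $3\nmid m$, so consecutive nonzero cohomologies of $\widehat X$ are $\geq 3$ apart; the standard vanishing — $\Hom_{\rD(\md\sG)}(Z,Z')=0$ whenever $Z$ has cohomology only in degrees $\geq p$, $Z'$ only in degrees $\leq q$, and $p-q>\gd\sG$ — then splits $\widehat X$ as $\bigoplus_n H^{3n}(\widehat X)[-3n]$, and since under Theorem \ref{8.2} the shift $[-3]$ becomes the degree shift $(-1)$, which disappears after applying $U$, this exhibits $X$ as the image of an object of $\md\sG$. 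An alternative to the gradability step is to argue directly on a minimal $\G$-projective resolution of $\G_0$: the short exact sequence $0\to E\to\G\to\G_0\to0$, with $E=\Ext^1_\L(M,M)$ the positive part of $\G$ (so $E^2=0$, as $\L$ is hereditary), identifies $\Om_\G\G_0=E$ and, because $E$ is square-zero, lets one rewrite the relevant $\Ext$-groups in terms of $\G_0$-modules attached to $X$; feeding in the vanishing for $i=1,2$ should then force $X$ to be a $\G_0$-module with $eX=0$ by tracking $e$ along $\G\to\G_0\to\sG$.

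\emph{Main obstacle.} The delicate point is exactly this last descent — passing from the vanishing of $\Ext^1$ and $\Ext^2$ alone, rather than of all positive $\Ext$-groups, back to the module-theoretic conclusion $eX=0$ (equivalently, showing $X$ is gradable, or carrying out the square-zero-extension bookkeeping over $\G\twoheadrightarrow\G_0$). It is here that both $\gd\sG\leq2$ and the identification $[3]=(1)$ of Theorem \ref{8.2} are indispensable; once this is set up, the remaining verifications are routine.
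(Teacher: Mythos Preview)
Your inclusion $\md\sG\subseteq\mathcal Y$ is fine and matches the paper's Lemma~\ref{subset}.

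The hard direction, however, has a genuine gap. Your plan is to first show that any $X\in\mathcal Y$ is gradable (i.e.\ lies in the essential image of $U\colon\smd^\Z\!\G\to\smd\G$), and then use the cohomological splitting argument. But gradability of $X$ is precisely what you have not proved, and it is not a routine step: an arbitrary object of $\smd\G$ need not come from $\smd^\Z\!\G$, and you give no mechanism by which the vanishing of $\Ext^{1,2}_\G(\G_0,X)$ alone forces this. Your ``alternative'' via the sequence $0\to E\to\G\to\G_0\to0$ is the right starting point, but the sentence ``feeding in the vanishing for $i=1,2$ should then force $X$ to be a $\G_0$-module with $eX=0$'' is exactly the content of the proposition, not a proof of it. You have correctly located the difficulty but not overcome it.

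The paper's argument avoids the gradability problem for $X$ by never needing it. Applying $\Hom_\G(-,X)$ to $0\to E\to\G\to\G_0\to0$ gives a short exact sequence $0\to Y\to X\to Z\to 0$ with $Y=\Hom_\G(\G_0,X)$ the largest submodule annihilated by $E$ and $Z=\Hom_\G(E,X)$. One checks directly that $Z\in\md\sG$ (since $eZ=\Hom_\G(Ee,X)=0$). The point is that $Y$, being a $\G_0$-module, is \emph{automatically} gradable; now your cohomological splitting argument can be applied to $Y$ rather than to $X$, and together with $\Ext^2_\G(\G_0,Y)=0$ it yields $Y\simeq Y'\oplus F$ in $\smd\G$ with $Y'\in\md\sG$ and $F\in\add E$. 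The remaining work is a pushout diagram showing that the resulting extension $0\to F\to U\to Z\to 0$ has $U$ projective, so that $X\simeq Y'$ in $\smd\G$. This last step is not automatic either: it requires checking $\Ext^i_\G(\G_0,U)=0$ for $i=1,2,3$ via a careful diagram chase using Lemma~\ref{gammae}.
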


We need several lemmas for the proof. The first one shows the one of the inclusions stated in the proposition.
\begin{Lem}\label{subset}
For any $X \in \md\sG$, we have $\Ext^i_\G(\G_0,X)=0$ for $i=1,2$.
\end{Lem}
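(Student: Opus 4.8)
The plan is to use the canonical surjections $\G \twoheadrightarrow \G_0 \twoheadrightarrow \sG$ together with the graded structure, and reduce the vanishing of $\Ext^i_\G(\G_0,X)$ for $i=1,2$ to a statement about projective resolutions. First I would observe that $\G_0$, regarded as a graded $\G$-module concentrated in degree $0$, fits into a graded projective presentation coming from the description of $\G_0$ as the ideal quotient $\U/[\bigvee_{i\neq0}\U_i]$ discussed after Definition \ref{U_zero}; concretely, via Proposition \ref{UG} the module $\G_0$ corresponds to the $\U$-module $\U_0(-,M)$ for the additive generator $M$, which is Cohen-Macaulay by Theorem \ref{tilting}(\ref{inCM}) and whose syzygies are concentrated in strictly positive degrees. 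Since $X$ lies in $\md\sG$, it is annihilated by $e$ (by Lemma \ref{ring}), hence factors through $\sG=\G/\G e\G$; the key point will be that a module over $\sG$, viewed in $\smd^\Z\!\G$, has its $\Ext$-groups with $\G_0$ controlled by the graded pieces in a range where the syzygies of $\G_0$ have already been shifted past degree $0$.

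Next, the concrete step: take a minimal graded projective resolution $\cdots \to P_2 \to P_1 \to P_0 \to \G_0 \to 0$ in $\md^\Z\!\G$, which by Proposition \ref{UG} translates to a minimal projective resolution of $\U_0(-,M)$ in $\md\U$. As recorded in the proof of Lemma \ref{star} and in the argument for Theorem \ref{t-structure}, the syzygy $\Om^j\U_0(-,M)$ is concentrated in degree $\geq 1$ for all $j\geq 1$, and $\Om^{3}$ shifts the bottom degree up by one (since $\gd\U_0\leq 2$ by Proposition \ref{ab2}); in particular $P_1$ and $P_2$ have all their indecomposable summands in degrees $\geq 1$, i.e.\ they lie in $\add\{\G(-i)\mid i\geq1\}$. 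Applying $\Hom^\Z_\G(-,X)$ and using that $\Hom^\Z_\G(\G(-i),X)=X_i$ is the degree-$i$ part of $X$, I would then compute $\Ext^i_\G(\G_0,X)$ as the cohomology of a complex built from the graded pieces $X_j$ with $j\geq 1$; the minimality of the resolution makes the $i=1,2$ terms depend only on $X_{\geq 1}$. Finally I would invoke Lemma \ref{ring}: a module in $\md\sG\subset\smd\G$ is one annihilated by $e$, and combined with the graded lift living in $\smd^\Z\!\G$ (where $(1)=[3]$ by Theorem \ref{8.2}(\ref{8.2.1})), this forces the relevant graded pieces to vanish or the resulting complex to be exact in positions $1,2$, giving $\Ext^i_\G(\G_0,X)=0$.

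The main obstacle I expect is bookkeeping the passage between the three pictures — ungraded $\G$-modules, graded $\G$-modules, and $\U$-modules — so that "$X$ is a $\sG$-module" is correctly translated into a statement about the graded pieces $X_j$ that interacts cleanly with the degrees in which the syzygies of $\G_0$ are supported. In particular one must be careful that $X\in\md\sG$ is being viewed inside $\smd\G$ via the specific identification of Proposition \ref{emb}, whose graded lift is the issue; the cleanest route is probably to do the whole computation in $\smd^\Z\!\G$, where $\G_0$ is genuinely concentrated in degree $0$ and its syzygies are genuinely in positive degrees, and only at the end descend to $\smd\G$ using that $\Ext$ over $\G$ is the sum over all internal degrees of graded $\Ext$, so that $\Ext^i_\G(\G_0,X)=\bigoplus_{d\in\Z}\sHom^\Z_\G(\G_0,\Om^{-i}X(d))$ and each summand vanishes for $i=1,2$ by the degree-support argument together with $\gd\sG\leq2$. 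Once the dictionary is set up correctly, the vanishing is immediate.
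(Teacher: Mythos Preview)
Your proposal eventually lands on the right computation, but the route you take to get there contains a genuine gap, and the clean argument you sketch in your last sentence \emph{is} the paper's entire proof.

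The problematic part is the degree-support argument. You correctly observe that the syzygies $\Om^{j}\G_0$ for $j\geq1$ are concentrated in graded degree $\geq1$, so that in a minimal graded resolution $P_1,P_2\in\add\{\G(-i)\mid i\geq1\}$. But this only tells you that the \emph{graded} Ext groups $\Ext^{1,2;\Z}_\G(\G_0,X)$ vanish when $X$ is viewed as a graded module concentrated in degree $0$. The ungraded groups decompose as $\Ext^i_\G(\G_0,X)=\bigoplus_{d\in\Z}\Ext^{i,\Z}_\G(\G_0,X(d))$, and for $d<0$ the module $X(d)$ sits in positive degree $-d$, precisely where the generators of $P_1,P_2$ live; your support argument says nothing about those summands. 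So the ``degree-support'' half of your plan does not close.

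What does close is exactly what you write at the end, and it is the paper's one-line proof: use Theorem~\ref{8.2}. Since $\G$ is self-injective, $\Ext^i_\G(\G_0,X)=\sHom_\G(\G_0,X[i])=\bigoplus_{j\in\Z}\sHom^\Z_\G(\G_0,X[i](j))$. Now apply $(1)=[3]$ on $\smd^\Z\!\G$ and the triangle equivalence $\rD(\md\sG)\simeq\smd^\Z\!\G$ taking $\sG\mapsto\G_0$: each summand becomes $\Hom_{\rD(\sG)}(\sG,X[i+3j])$, which vanishes because $\sG$ is projective and $i+3j\neq0$ for $i\in\{1,2\}$. No syzygy bookkeeping is needed; the projective resolution of $\G_0$ and the support analysis you set up in the first two paragraphs can be deleted entirely.
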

\begin{proof}For $i=1, 2$, we have $\Ext_\G^i(\G_0,X)=\sHom_\G(\G_0,X[i])=\bigoplus_{j \in \Z}\sHom^\Z_\G(\G_0,X[i+3j])=\bigoplus_{j \in \Z}\Hom_{\rD(\sG)}(\sG,X[i+3j])$, which is zero by $\gd\sG\leq2$. \end{proof}

\begin{Lem}\label{gammae}
\begin{enumerate}
\item\label{rigidrigid} $\Ext_\G^1(\G_0,\G_0)=0$ and $\Ext^2_\G(\G_0,\G_0)=0$.
\item\label{gammaegammae} $\sHom_\G(\G_0,E)=0$ and $\Ext^2_\G(\G_0,E)=0$.
\end{enumerate}
\end{Lem}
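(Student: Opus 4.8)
The plan is to reduce all four groups to the single graded vanishing
\[ \sHom^\Z_\G(\G_0,\G_0[n])=0 \quad\text{for all } n\neq 0, \]
which we will extract from Theorem \ref{8.2}. First recall the structure of $\G$: since $\L$ is hereditary, $\Ext^i_\L(M,M)=0$ for $i\geq 2$, so $\G=\G_0\oplus E$ with $E=\Ext^1_\L(M,M)$ concentrated in degree $1$, and the Yoneda product satisfies $E\cdot E\subseteq\Ext^2_\L(M,M)=0$. Hence $E=\G_{\geq1}$ is a graded ideal, and as an ungraded $\G$-module $E$ is precisely the submodule $\G_{\geq1}\subseteq\G$. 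The short exact sequence of graded $\G$-modules $0\to\G_{\geq1}\to\G\to\G_0\to0$ exhibits $\G_{\geq1}$ as a syzygy of $\G_0$, so, using that $\G$ is self-injective (Proposition \ref{iffhered}) and that over a self-injective ring the inverse suspension of the graded stable category is the syzygy functor, we get $E=\G_{\geq1}\simeq\Om^\Z\G_0=\G_0[-1]$ in $\smd^\Z\!\G$.

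Next I would pass from graded to ungraded exactly as in the proof of Lemma \ref{subset}: for a finitely generated gradeable $\G$-module $X$, and using $(1)\simeq[3]$ on $\smd^\Z\!\G$ (Theorem \ref{8.2}(\ref{8.2.1})),
\[ \Ext^i_\G(\G_0,X)=\sHom_\G(\G_0,X[i])=\bigoplus_{j\in\Z}\sHom^\Z_\G(\G_0,X[i+3j]) \qquad (i\geq 1), \]
and likewise $\sHom_\G(\G_0,X)=\bigoplus_{j\in\Z}\sHom^\Z_\G(\G_0,X[3j])$. Applying this with $X=\G_0$ shows that $\Ext^1_\G(\G_0,\G_0)$ and $\Ext^2_\G(\G_0,\G_0)$ are direct sums of copies of $\sHom^\Z_\G(\G_0,\G_0[m])$ with $m\in 1+3\Z$ and $m\in 2+3\Z$ respectively; applying it with (the graded lift $\G_{\geq1}$ of) $X=E\simeq\G_0[-1]$ shows that $\sHom_\G(\G_0,E)$ and $\Ext^2_\G(\G_0,E)$ are direct sums of copies of $\sHom^\Z_\G(\G_0,\G_0[m])$ with $m\in -1+3\Z$ and $m\in 1+3\Z$ respectively. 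In every case $m\not\equiv 0\pmod 3$, hence $m\neq 0$.

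It therefore remains to prove $\sHom^\Z_\G(\G_0,\G_0[n])=0$ for $n\neq 0$. By Theorem \ref{8.2}(\ref{8.2.2}) there is a triangle equivalence $\rD(\md\sG)\xrightarrow{\ \simeq\ }\smd^\Z\!\G$ carrying the regular module $\sG$ to $\G_0$; since $\sG$ is a projective $\sG$-module, $\Hom_{\rD(\md\sG)}(\sG,\sG[n])=\Ext^n_\sG(\sG,\sG)=0$ for $n\neq 0$, and transporting along the equivalence gives the claim, completing the proof of both parts. (Alternatively, this vanishing is immediate from the tilting subcategory $\sU_0$ of Theorem \ref{tilting}(\ref{tilt}), which applies here since $\gd\L\leq1$.) The only genuinely delicate point is the bookkeeping of the second paragraph: keeping track of which $\sG$-module corresponds to $\G_0$ under the equivalence (the free module, not $D\sG$) and correctly identifying the ungraded module $E$ with the first syzygy $\G_{\geq1}$ of $\G_0$; once these identifications are in place, the vanishing is forced purely by the residue $m\bmod 3$.
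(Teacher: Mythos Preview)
Your proof is correct and follows essentially the same route as the paper. The paper's argument is slightly more economical: it first observes that, since $[3]=\mathrm{id}$ on $\smd\G$ and $E=\Om\G_0$, statements (\ref{rigidrigid}) and (\ref{gammaegammae}) are equivalent, and then deduces (\ref{rigidrigid}) directly from Lemma~\ref{subset} applied to $X=\G_0\in\md\sG$ (the membership $\G_0\in\md\sG$ being clear from Proposition~\ref{emb}). Your write-up unpacks exactly this argument---the decomposition $\Ext_\G^i(\G_0,X)=\bigoplus_j\sHom^\Z_\G(\G_0,X[i+3j])$ is the content of the proof of Lemma~\ref{subset}, and your final vanishing $\sHom^\Z_\G(\G_0,\G_0[n])=0$ for $n\neq0$ via projectivity of $\sG$ is a sharper form of the $\gd\sG\leq2$ bound used there.
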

\begin{proof} Note that by $[3]={\mathrm{id}}$ and $\Om \G_0=E$, these statements are equivalent, so we prove the first one. By Proposition \ref{subset}, it is enough to show $\G_0 \in \md\sG$. But this is clear since $\G_0$ corresponds to $\sG$ under the equivalence $\rD(\md\sG) \simeq \smd^\Z\!\G$.
\end{proof}

Now we are in the position to prove Proposition \ref{motothm}.
\begin{proof}[Proof of Proposition \ref{motothm} ] The inclusion `$\subset$' is proved in Lemma \ref{subset}. We will show the converse inclusion. Let $X \in \smd \G$ be an object with $\Ext^i_\G(\G_0,X)=0$ for $i=1,2$. Consider the exact sequence $0 \to E \to \G \to \G_0 \to 0$ of $\G$-modules. Applying $\Hom_\G(-,X)$, we have an exact sequence
\begin{equation}\label{eq5}
\xymatrix{ 0 \ar[r] & Y \ar[r] & X \ar[r] & Z \ar[r] & \Ext_\G^1(\G_0,X)=0 }
\end{equation}
in $\md \G$, where $Y=\Hom_\G(\G_0,X)$ is the submodule of $X$ annihilated by $E$, and $Z=\Hom_\G(E,X)$. Note that $Y \in \md\G_0$ and $Z \in \md\sG$. Indeed, $Y \in \md\G_0$ holds since $Y$ is annihilated by $E$, and also $Z \in \md\sG$ holds by Lemma \ref{ring} since $eZ=\Hom_\G(Ee,X)=0$.

To prove that $X \in \md\sG$, we may assume $Y$ has no $\G$-projective (=injective) summands, since such summands are also summands of $X$.

Applying the functor $\Hom_\G(\G_0,-)$ to the sequence (\ref{eq5}) yields an exact sequence $\Ext^1_\G(\G_0,Z) \to \Ext_\G^2(\G_0,Y) \to \Ext_\G^2(\G_0,X)$. We have $\Ext^1(\G_0,Z)=0$ by Lemma \ref{subset} and $\Ext_\G^2(\G_0,X)=0$ by assumption, and hence $\Ext_\G^2(\G_0,Y)=0$.

Now, let $0 \to P_2 \to P_1 \to P_0 \to Y \to 0$ be a projective resolution of the $\G_0$-module $Y$ and consider it as an exact sequence in $\md^\Z\!\G$ (concentrated in degree 0). For $i=0, 1, 2,$ let $Q_i$ be the object corresponding to $P_i$ under $\smd^\Z\!\G \simeq \rD(\md\sG)$. Note that $Q_i \in \add\sG$. Then, the complex in $\rD(\md\sG)$ corresponding to $Y \in \smd^\Z\!\G$ is $Q=(\cdots \to 0 \to Q_2 \to Q_1 \to Q_0 \to 0 \to \cdots)$ with $Q_0$ at degree 0. Indeed, letting $W=\Ker(P_0 \to Y)$, it is the mapping cone of $P_2 \to P_1$ in $\smd^\Z\!\G$, so the corresponding object in $\rD(\md\sG)$ is the mapping cone of $Q_2 \to Q_1$, which is the complex $Q'=(\cdots \to 0 \to 0 \to Q_2 \to Q_1 \to 0 \to \cdots)$ with $Q_1$ at degree $0$. Similarly, $Y$ is the mapping cone of $W \to P_0$ in $\smd^\Z\!\G$, so the corresponding object in $\rD(\md\sG)$ is the mapping cone of $Q' \to Q_0$, which is precisely the complex $Q$.

We have 
\[ \sHom_\G(\G_0,Y)=\bigoplus_{i \in \Z}\sHom^\Z_\G(\G_0,Y(i))=\bigoplus_{i \in \Z}\Hom_{\rD(\sG)}(\sG,Q[3i])=\bigoplus_{i \in \Z}H^{3i}(Q)=H^0(Q), \]
and similarly, $\Ext_\G^1(\G_0,Y)=H^{-2}(Q)$ and $\Ext^2_\G(\G_0,Y)=H^{-1}(Q)$. Now, $\Ext_\G^2(\G_0,Y)=0$ shows that $Q$ is acyclic at degree $-1$. Then, since $\gd\sG \leq 2$, we see that $Q=H^0(Q) \oplus H^{-2}(Q)[2]$. Note that $H^{-2}(Q) \in \add Q_2$ since $Q_2 \to Q_1$ is a split epimorphism to its image. Therefore, $Y=Y' \oplus F$ in $\smd \G$ with $Y'=H^0(Q) \in \md\sG$ and $F= H^{-2}(Q)[2]\in \add\G_0[2]=\add E$. Note that $Y=Y' \oplus F$ also in $\md \G$, since we assumed $Y$ has no projective summands.

Consider the following diagram in $\md \G$ obtained by the push-out of (\ref{eq5}) along $Y \twoheadrightarrow F$.
\[ \xymatrix{
   0 \ar[r] & Y \ar[r]\ar@{>>}[d] & X \ar[r]\ar[d] & Z \ar[r]\ar@{=}[d] & 0 \\
   0 \ar[r] & F \ar[r] & U \ar[r] & Z \ar[r] & 0 } \]

We claim that $U$ is projective. Since $Y'=\Ker(Y \to F)=\Ker(X \to U)$, this claim will yield $X=Y' \in \md\sG$ in $\smd \G$, hence the theorem. To show that $U$ is projective, we show that $\sHom_\G(-,U)=0$. For this, it is sufficient to show that $\Ext_\G^i(\G_0,U)=0$ for $i=1,2,3$, since $\smd \G$ is generated by $\G_0$.

Applying $\sHom_\G(\G_0,-)$ to the above diagram yields
\[ \xymatrix@C=5mm{
   \underline{{}_\G(\G_0,Y)} \ar[r]\ar[d] & \underline{{}_\G(\G_0,X)} \ar[r]^a\ar[d] & \underline{{}_\G(\G_0,Z)} \ar[r]^{b \ \ }\ar@{=}[d] & \Ext^1_\G(\G_0,Y) \ar[r]\ar[d]^c & \Ext^1_\G(\G_0,X) \ar[d]\ar[r] & \Ext_\G^1(\G_0,Z) \ar@{=}[d] \\
   \underline{{}_\G(\G_0,F)} \ar[r] & \underline{{}_\G(\G_0,U)} \ar[r] & \underline{{}_\G(\G_0,Z)} \ar[r]^{d \ \ } & \Ext^1_\G(\G_0,F) \ar[r] & \Ext^1_\G(\G_0,U) \ar[r] & \Ext^1_\G(\G_0,Z), } \]
where $\underline{{}_\G(-,-)}$ stands for $\sHom_\G(-,-)$.
Now, the map $a$ is zero since $Y=\Hom_\G(\G_0,X)$, and hence $b$ is an isomorphism by $\Ext_\G^1(\G_0,X)=0$. Note also that $c$ is an isomorphism since $Y=Y' \oplus F$ and $\Ext_\G^1(\G_0,Y')=0$. Therefore, $d$ is also an isomorphism. This shows that $\sHom_\G(\G_0,U)=0$ and $\Ext_\G^1(\G_0,U)=0$, since $\sHom_\G(\G_0,F)=0$ (by $\sHom_\G(\G_0,E)=0$; Lemma \ref{gammae}(\ref{gammaegammae})) and $\Ext_\G^1(\G_0,Z)=0$ (by $Z \in \md\sG$ and Lemma \ref{subset}).
$\Ext_\G^2(\G_0,U)=0$ follows from $\Ext_\G^2(\G_0,F)=0$ (by Lemma \ref{gammae}(\ref{gammaegammae})) and $\Ext_\G^2(\G_0,Z)=0$ ($Z \in \md\sG$ and Lemma \ref{subset}). This completes the proof.
\end{proof}

Now the main result of this section is a consequence of Proposition \ref{motothm}.
\begin{proof}[Proof of Theorem \ref{ungraded}](\ref{5.10.1})  The inclusion `$\subset$' is done in Lemma \ref{gammae}(\ref{rigidrigid}). 
We show the inclusion `$\supset$'. Let $X \in \smd \G$ be in the right-hand-side. We see that $X \in \md\sG$ by Proposition \ref{motothm}. Then, we have $\Ext_\G^i(X,\G_0)=\bigoplus_{j \in \Z}\sHom^\Z_\G(X,\G_0[i](j))=\bigoplus_{j \in \Z}\Hom_{\rD(\sG)}(X,\sG[i+3j])$. (Here, $X$ denotes the identified objects through the functors in Proposition \ref{emb}.) The vanishing of this for $i=1,2$ shows that $\Ext^i_{\sG}(X,\sG)=0$ for $i=1,2$. Therefore we conclude that $X \in \add\sG$ by $\gd\sG \leq 2$.\\
(\ref{5.10.2})  This is immediate by (\ref{5.10.1}).\\
(\ref{ct})  Assume there exists a 3-cluster tilting subcategory in $\smd \G$ containing $\G_0$. Then, its preimage $\C$ under the functor $\rD(\sG) \simeq \smd^\Z\!\G \to \smd \G$ is a 3-cluster tilting subcategory of $\rD(\sG)$ containing $\sG$ and stable under $[3]$. On the other hand, the $\nu_3$-orbit 
\[ \mathcal{U}=\add\{\nu_3^i\sG \mid i \in \Z \} \subset \rD(\sG) \]
of $\sG$ is a 3-cluster tilting subcategory of $\rD(\sG)$ \cite[1.23]{Iy}, where $\nu=D\sG\otimes_{\sG}^L-$ is the Serre functor for $\rD(\sG)$ and $\nu_3=\nu \circ [-3]$. Then, since $\C$ is $\nu_3$-stable, we have $\mathcal{U} \subset \C$. Moreover, since $\mathcal{U}$ and $\C$ are cluster tilting subcategories, the equality $\mathcal{U}=\C$ must hold, and in particular, $\mathcal{U}$ is $[3]$-stable. Then by \cite[3.1]{IO}, $\sG$ is 3-representation-finite. Since $\gd\sG \leq 2$, $\sG$ has to be semisimple, or equivalently, $\smd \G$ is semisimple.
\end{proof}

\section{Examples}\label{examples}
We give some examples to illustrate our results.
We use the following notation for graded modules: the number of the symbols $(-)'$ (resp. $'(-)$) over the composition factor indicates that the factor lies in the positive (resp. negative) degree of that value.
 
The first one is a hereditary algebra, and we explain the results in Section \ref{hereditary}.
\begin{Ex}\label{A3}
\newcommand{\nagni}[2]{\begin{smallmatrix}#1 \\ #2 \end{smallmatrix}}
\newcommand{\simp}[1]{\begin{smallmatrix}#1\end{smallmatrix}}
\newcommand{\masan}[3]{\begin{smallmatrix}{} #1 \\ #2 \\ #3 \end{smallmatrix}}
\newcommand{\bigmasan}[3]{\begin{array}{l} #1 \\ #2 \\ #3 \end{array}}
\newcommand{\nagsan}[3]{\begin{smallmatrix}#1 & & #2 \\ & #3 & \end{smallmatrix}}
\newcommand{\nagson}[3]{\begin{smallmatrix}& #1 & \\ #2 & & #3 \end{smallmatrix}}
\newcommand{\yon}[4]{\begin{smallmatrix}& #1 & \\ #2 & & #3 \\ &#4& \end{smallmatrix}}
\newcommand{\bigyon}[4]{\begin{array}{l@{\hspace{5pt}}l@{\hspace{5pt}}l}& #1 & \\ #2 & & #3 \\ &#4& \end{array}}
\newcommand{\gra}[1]{\begin{smallmatrix} (#1) \end{smallmatrix}}
\def\hh{\hspace{2pt}}
\def\gg{\hspace{-2pt}}
Let $\L$ be the path algebra of linearly oriented type $A_3$:
\[ \xymatrix{ \circ \ar[r] & \circ \ar[r] & \circ }. \]
It is a representation-finite hereditary algebra and its derived category is presented by the quiver $\Z A_3$ with mesh relations:
\[ {
   \xymatrix@!C@!R@C=1.2pt@R=1.2pt{ & \cdots & 1 \ar[dr] & & 3 \ar[dr] & & 6 \ar[dr] & & \circ \ar[dr] & & \circ\ar[dr] & & \cdots \\
                            \cdots & \circ\ar[dr]\ar[ur] & & 2 \ar[dr]\ar[ur] & & 5 \ar[dr]\ar[ur] & & \circ \ar[dr]\ar[ur] & & \circ\ar[dr]\ar[ur] & & \circ\ar[dr]\ar[ur] \\
& \cdots & \circ\ar[ur] & & 4 \ar[ur] & & \circ \ar[ur] & & \circ \ar[ur] & & \circ\ar[ur] & & \cdots, } 
}\]
where $1, \ldots, 6$ are the objects from $\md\L$. Then by Proposition \ref{waru1}, the Yoneda algebra $\G$ of $\L$ is presented by the quiver $\Z A_3/[1]$:
\[ {
   \xymatrix@!C@!R@C=1.2pt@R=1.2pt{ & \cdots & 1 \ar[dr] & & 3 \ar[dr] & & 6 \ar[dr] & & 4 \ar[dr] & & 1 \ar[dr] & & \cdots \\
                            \cdots & 5\ar[dr]\ar[ur] & & 2 \ar[dr]\ar[ur] & & 5 \ar[dr]\ar[ur] & & 2\ar[dr]\ar[ur] & & 5\ar[dr]\ar[ur] & & 2\ar[dr]\ar[ur] \\
& \cdots & 6\ar[ur] & & 4 \ar[ur] & & 1\ar[ur] & & 3\ar[ur] & & 6\ar[ur] & & \cdots, } 
}\]
with mesh relations, and the arrows from $5 \to 1$ and $6 \to 2$ have degree $1$, while the others have degree $0$. 
The composition series of the projectives look
\[ \G=\bigmasan{1}{5'}{3'} \oplus \bigyon{2}{1}{{\gg}6'}{5'} \oplus \bigmasan{3}{2}{6'} \oplus \bigmasan{4}{2}{1} \oplus \bigyon{5}{3}{4}{2} \oplus \bigmasan{6}{5}{4}, \]
We see $\G$ is indeed self-injective, as stated in Proposition \ref{iffhered}. The AR-quiver of $\md^\Z\!\G$ is computed to be
\[ \xymatrix@!C=0.6mm@!R=0.6mm{
\cdots\ar[dr] && {\simp{6}\gra{-1}} \ar[dr] & &{\nagni{2}{1}}\ar[dr]\ar[r] &{\masan{4}{2}{1}}\ar[r]& {\nagni{4}{2}}\ar[dr] & & {\simp{3}}\ar[dr] & & {\nagni{5}{4}}\ar[dr]\ar[r] &{\masan{6}{5}{4}}\ar[r]& {\nagni{6}{5}}\ar[dr] && \cdots \\
                                   & {\nagsan{1}{{\gg}6'}{5'}} \ar[dr]\ar[ur]\ar[r] &{\yon{2{\hh}}{1}{{\gg}6'}{5'}}\ar[r]& {\nagson{2}{1}{6'}} \ar[dr]\ar[ur] & & {\simp{2}}\ar[dr]\ar[ur] & & {\nagsan{3}{4}{2}}\ar[dr]\ar[ur]\ar[r] &{\yon{5}{3}{4}{2}}\ar[r]& {\nagson{5}{3}{4}} \ar[ur]\ar[dr] & & {\simp{5}}\ar[dr]\ar[ur] & & {\nagsan{1}{{\gg}6'}{5'}\gra{1}}\ar[dr]\ar[ur]\ar[r] & \\
                                   \cdots\ar[ur] && {\simp{1}}\ar[ur] & & {\nagni{2^{\hh}}{6'}} \ar[ur]\ar[r] &{\masan{3^{\hh}}{2^{\hh}}{6'}}\ar[r]& {\nagni{3}{2}}\ar[ur] & & {\simp{4}} \ar[ur] & & {\nagni{5}{3}}\ar[ur]\ar[r] &{\masan{1^{\hh}}{5'}{3'}\gra{1}}\ar[r]& {\nagni{'1}{{}^{\hh}5}}\ar[ur] && \cdots.} \]
We deduce a triangle equivalence $\sCM^\Z\!\G \simeq \rD(A_3)$, which certifies Theorem \ref{tilting}.

We next consider the ungraded module category $\smd\G$, whose AR-quiver is the following:
\definecolor{gr}{gray}{0.85}
\setlength{\fboxrule}{0.8pt}
\newcommand{\cobox}[1]{\fcolorbox{black}{gr}{#1}}
\[ \xymatrix@!C=0.6mm@!R=0.6mm{  & {\simp{6}} \ar[dr] & &\cobox{$\nagni{2}{1}$}\ar[dr] & & {\nagni{4}{2}}\ar[dr] & & \colorbox{gr}{$\simp{3}$}\ar[dr] & & {\nagni{5}{4}}\ar[dr] & & {\nagni{6}{5}} \ar[dr] & \ar@{-->}[dd] \\
                                   {\nagsan{1}{6}{5}} \ar[dr]\ar[ur] & & {\nagson{2}{1}{6}} \ar[dr]\ar[ur] & & \colorbox{gr}{$\simp{2}$}\ar[dr]\ar[ur] & & {\nagsan{3}{4}{2}}\ar[dr]\ar[ur] & & {\nagson{5}{3}{4}} \ar[ur]\ar[dr] & & {\simp{5}}\ar[dr]\ar[ur] & & {\nagsan{1}{6}{5}} \\
                                   \ar@{-->}[uu] & \cobox{$\simp{1}$}\ar[ur] & & {\nagni{2}{6}} \ar[ur] & & \cobox{$\nagni{3}{2}$}\ar[ur] & & {\simp{4}} \ar[ur] & & {\nagni{5}{3}}\ar[ur] & & {\nagni{1}{5}} \ar[ur] & \quad ,} \]
where two ends are identified along the dotted lines. Then, $\G_0 \in \smd\G$ has $3$ indecomposable summands, which are in the boxes.

On the one hand, $\md\sG$ consists of $\G$-modules which are annihilated by the idempotents $e_4, e_5, e_6$ of $\G$ corresponding to injective $\L$-modules by Lemma \ref{ring}. Therefore, they are precisely $\G$-modules having only 1, 2, or 3 as composition factors, hence are the shaded modules.
On the other hand, a computation of the subcategory $\{ X \in \smd \G \mid \Ext_\G^{1,2}(\G_0,X)=0 \}$ shows that it actually coincides with $\md\sG$.
This demonstrates Proposition \ref{motothm}.

If there is a 3-cluster tilting subcategory $\U$ containing $\G_0$, since $\U$ is $\nu_3$-stable, we see that it has to be the whole $\smd \G$, which is absurd. This shows Theorem \ref{ungraded}(\ref{ct}) in this case.
\end{Ex}

The next two examples are algebras of finite global dimension and we demonstrate the results in Section \ref{finitegldim}.
\begin{Ex}\label{kantan}
\newcommand{\bigni}[2]{\begin{array}{l}#1 \\ #2 \end{array}}
\newcommand{\bigsan}[3]{\begin{array}{l} #1 \\ #2 \\ #3 \end{array}}
\newcommand{\bigsam}[3]{\begin{array}{r} #1 \\ #2 \\ #3 \end{array}}
\newcommand{\bigyon}[4]{\begin{array}{l@{\hspace{5pt}}l@{\hspace{5pt}}l}& #1 & \\ #2 & & #3 \\ &#4& \end{array}}
\def\hh{\hspace{2pt}}
\def\gg{\hspace{-2pt}}
Let $\L$ and $\G$ be as in the previous example.
Let $\L'$ be an algebra presented by the following quiver with relations:
\[ \xymatrix{ \circ \ar[r]^a & \circ \ar[r]^b & \circ }, \quad ab=0. \]
It has global dimension $2$ and is of finite representation type. The category $\md\L'$ is located in $1, 4, 5, 6, 3[1]$ in $\rD(\L)$ from Example \ref{A3}. Therefore, the Yoneda algebra $\G'$ of $\L'$ is isomorphic to $e\G e$ for the idempotent $e \in \G$ corresponding to these five summands.
Renumbering the vertices, $\G'$ is presented by the following quiver
\[ {
   \xymatrix@!C@!R@C=1.2pt@R=1.2pt{ & \cdots & 1 \ar[ddrr] & & 5 \ar[dr] & & 4 \ar[ddrr] & & 2 \ar[dr] & & 1 \ar[ddrr] & & \cdots \\
                            \cdots & \ar[dr]\ar[ur] & & & & 3 {} \ar[dr]\ar[ur] & & & & 3 \ar[dr]\ar[ur] & & \\
& \cdots & 4 \ar[uurr] & & 2 \ar[ur] & & 1 \ar[uurr] & & 5 \ar[ur] & & 4 \ar[uurr] & & \cdots, } 
}\]
with relations induced from the mesh relations on $\Z A_3$, and the arrows $3 \to 1$ and $5 \to 3$ have degree $1$, and others have degree $0$. The projective and injective $\G'$-modules look
\[
\begin{array}{r@{=}c@{\oplus}c@{\oplus}c@{\oplus}c@{\oplus}c}
\G'&\bigsan{1}{3'}{5''} & \bigni{2}{1} & \bigyon{3}{2}{5'}{} & \bigsan{4}{3}{2} & \bigni{5}{4}, \\
D\G'&\bigni{2}{1} & \bigsan{4}{3}{2} & \bigyon{}{{}'1}{4}{3} & \bigni{5}{4} & \bigsam{''1}{'3}{5}. 
\end{array}
\]
It is easily checked that $\G'$ is indeed a Gorenstein algebra of dimension $1$, as in Theorem \ref{YonIG}. The AR-quiver of $\md^\Z\!\G'$ is computed to be
\newcommand{\ichi}[1]{\begin{smallmatrix}#1\end{smallmatrix}}
\newcommand{\nagni}[2]{\begin{smallmatrix}#1 \\ #2 \end{smallmatrix}}
\newcommand{\san}[3]{\begin{smallmatrix}#1 \\ #2 \\ #3 \end{smallmatrix}}
\newcommand{\niichi}[3]{\begin{smallmatrix}#1 & & #2 \\ & #3 & \end{smallmatrix}}
\newcommand{\ichini}[3]{\begin{smallmatrix}& #1 & \\ #2 & & #3 \end{smallmatrix}}
\newcommand{\gra}[1]{\begin{smallmatrix} (#1) \end{smallmatrix}}
\[ \xymatrix@!R=0.5mm@!C=0.5mm{
    && {\san{1^{\hh\hh}}{3'^{\hh}}{5''}} \ar[dr] &&&& {\nagni{5}{4}\gra{-1}} \ar[dr] &&&& \\
    & *+[Fo:<5mm>]{\nagni{3^{\hh}}{5'}\gra{-1}} \ar[dr]\ar[ur] && {\nagni{1^{\hh}}{3'}} \ar[dr] && *+[Fo:<5mm>]{\ichi{4}\gra{-1}} \ar[ur] && *+[Fo:<5mm>]{\ichi{5}\gra{-1}} \ar[dr] && *+[Fo:<4mm>]{\nagni{3}{2}} \ar[dr] & \\
    \cdots \ar[ur]\ar[dr] && {\ichi{3}\gra{-1}} \ar[dr]\ar[ur] && {\niichi{1}{{\gg}4'}{3'}} \ar[dr]\ar[ur] &&&& {\ichini{3}{2}{5'}} \ar[dr]\ar[ur] && \cdots \\
    & *+[Fo:<5mm>]{\nagni{3}{2}\gra{-1}} \ar[dr]\ar[ur] && {\nagni{4}{3}\gra{-1}} \ar[ur] && *+[Fo:<3mm>]{\ichi{1}} \ar[dr] && *+[Fo:<3mm>]{\ichi{2}} \ar[ur] && *+[Fo:<4mm>]{\nagni{3^{\hh}}{5'}} \ar[ur] & \\
    && {\san{4}{3}{2}\gra{-1}} \ar[ur] &&&& {\nagni{2}{1}} \ar[ur] &&&& \quad.} \]
The Cohen-Macaulay $\G'$-modules are the circled modules and the projective modules.
Consequently, we have $\sCM^\Z\!\G' \simeq \rD(A_1 \times A_1)$, as was proved in Theorem \ref{tilting}.

As in Section \ref{3.5}, we now consider the second Veronese subalgebra $\G'^{(2)}=\Hom_\L(M,M) \oplus \Ext_\L^2(M,M)$ of $\G'$. We see as above that it is presented by the following quiver with relations:
\[ \xymatrix@C=5mm{
   \cdots \ar[r] & 5 \ar[r]^e & 1 \ar[r]^a & 2 \ar[r]^b & 3 \ar[r]^c & 4 \ar[r]^d & 5 \ar[r]^e & 1 \ar[r] & \cdots }, \quad ea=0, \ ab=0, \ cd=0, \ de=0. \]
It is easily verified that this algebra has global dimension $5$, which illustrates Theorem \ref{omakeyon}.
\end{Ex}

\begin{Ex}\label{fukuzatsu}
Let $\L$ be an algebra presented by the following quiver with relations:
\[ \xymatrix{ \circ \ar@<2pt>[r]^a & \circ \ar@<2pt>[l]^b}, \quad ab=0. \]
It has global dimension 2 and is of finite representation type. Its Yoneda algebra $\G$ is presented by the following graded quiver with relations:
\[ \xymatrix@R=4mm{
    &&& 5 \ar[dr]^d &&& \\
    \cdots \ar@<2pt>[dr]^f\ar@<-2pt>[dr]_y && 3 \ar[ur]^b\ar[dr]^c && 4 \ar@<2pt>[dr]^f\ar@<-2pt>[dr]_y && \cdots \\
    & 1 \ar@<2pt>[ur]^a\ar@<-2pt>[ur]_x && 2 \ar[ur]^e && 1 \ar@<2pt>[ur]^a\ar@<-2pt>[ur]_x &, } \]
\[ \deg a= \cdots=\deg f=0, \ \deg x=\deg y=1, \] \[ ac=0, \ bd=ce, \ ef=0, \ fx=ya, \ xb=0, \ dy=0. \] 
The composition series of the projective and the injective modules look
\def\hhh{\hspace{5pt}}
\def\hhhh{\hspace{3pt}}
\def\pr{\begin{array}{l@{\hhh}l@{\hhh}l}&1& \\ 4 && 4' \\ 5 && 2'\end{array}}
\def\ppr{\begin{array}{l} 2 \\ 3 \\ 1' \\ 4'' \\ 2'' \end{array}}
\def\pppr{\begin{array}{l@{\hhh}l@{\hhh}l@{\hhh}l@{\hhhh}l} &&3&& \\ &1&&1'& \\ 4&&4'&&4'' \\ 5&&&&2'' \end{array}}
\def\ppppr{\begin{array}{c@{\hhh}c@{\hhh}c} &4& \\ 5&&2 \\ &3& \end{array}}
\def\pppppr{\begin{array}{c} 5 \\ 3 \\ 1 \\ 4 \\ 5 \end{array}}
\def\ij{\begin{array}{r@{\hhh}r@{\hhh}r} 5&&{}'2 \\ 3&&'3 \\ &1& \end{array}}
\def\iij{\begin{array}{r} ''2 \\ {}''3 \\ {}'1 \\ 4 \\ 2 \end{array}}
\def\iiij{\begin{array}{c@{\hhh}c@{\hhh}c} &4& \\ 5&&2 \\ &3& \end{array}}
\def\iiiij{\begin{array}{r@{\hhh}r@{\hhh}r@{\hhh}r@{\hhhh}r} 5&&&&''2 \\ 3&&'3&&''3 \\ &1&&'1& \\ &&4&& \end{array}}
\def\iiiiij{\begin{array}{c} 5 \\ 3 \\ 1 \\ 4 \\ 5 \end{array}}
\[
\begin{array}{r@{{\hspace{2pt}}=}c@{\oplus}c@{\oplus}c@{\oplus}c@{\oplus}l}
\G & \pr & \ppr & \pppr & \ppppr & \pppppr \\
\rule{0pt}{43pt}
D\G &\ij & \iij & \iiij & \iiiij & \iiiiij. 
\end{array}
\]
Using this we can check that $\G$ is indeed 1-Gorenstein (Theorem \ref{YonIG}).
Moreover, the AR-quiver of $\sCM^\Z\G$ is as follows:
\def\hh{\hspace{2pt}}
\def\gg{\hspace{-2pt}}
\newcommand{\ichi}[1]{\begin{smallmatrix}#1\end{smallmatrix}}
\newcommand{\nagni}[2]{\begin{smallmatrix}#1 \\ #2 \end{smallmatrix}}
\newcommand{\san}[3]{\begin{smallmatrix}#1 \\ #2 \\ #3 \end{smallmatrix}}
\def\niichi{\begin{smallmatrix}2 & & 5 \\ & 3 & \end{smallmatrix}}
\def\ichini{\begin{smallmatrix}& 4 & \\ 2 & & 5 \end{smallmatrix}}
\newcommand{\yon}[4]{\begin{smallmatrix}#1 \\ #2 \\ #3 \\ #4 \end{smallmatrix}}
\def\migi{\begin{smallmatrix} & 1 & \\ 4&&4' \\ &&2' \end{smallmatrix}}
\def\hidari{\begin{smallmatrix} &1& \\ 4&&4' \\ 5&& \end{smallmatrix}}
\def\dai{\begin{smallmatrix} &3& \\ 1&&1'^{\hh} \\ 4&&4'' \\ 5&&2'' \end{smallmatrix}}
\def\saidai{\begin{smallmatrix} &1&&{\gg}1'& \\ 4&&{\gg}4'&& {\gg\gg}4'' \\ 5&&&&{\gg\gg}2'' \end{smallmatrix}}
\newcommand{\gra}[1]{\begin{smallmatrix} (#1) \end{smallmatrix}}
\[ \xymatrix@!R=0.5mm@!C=0.5mm{
   \cdots\ar[dr] && {\san{1}{4}{5}}\ar[dr] && {\yon{3^{\hh\hh}}{1'^{\hh}}{4''}{2''}}\ar[dr] && {\nagni{2}{3}}\ar[dr] && {\ichi{5}}\ar[dr] && {\nagni{4}{2}}\ar[dr] && {\hidari\gra{1}}\ar[dr] && \cdots \\
   & {\saidai}\ar[dr]\ar[ur] && {\dai}\ar[dr]\ar[ur] && {\ichi{3}}\ar[dr]\ar[ur] && {\niichi}\ar[dr]\ar[ur] && {\ichini}\ar[dr]\ar[ur] && {\ichi{4}}\ar[dr]\ar[ur] && {\saidai\gra{1}}\ar[dr]\ar[ur] & \\
   \cdots \ar[ur]&& {\san{1^{\hh}}{4'}{2'}\gra{-1}}\ar[ur]&& {\yon{3}{1}{4}{5}}\ar[ur]&& {\nagni{5}{3}}\ar[ur]&& {\ichi{2}}\ar[ur]&& {\nagni{4}{5}}\ar[ur]&& {\migi}\ar[ur]&& \cdots. } \]
We can verify the triangle equivalence $\sCM^\Z\!\G \simeq \rD(A_3)$ (Theorem \ref{tilting}).

Now let us consider the second Veronese subalgebra $\G^{(2)}=\Hom_\L(M,M) \oplus \Ext_\L^2(M,M)$ of $\G$. It is presented by
\[ \xymatrix@R=4mm{
    &&& 5 \ar[dr]^d &&& \\
    \cdots \ar[dr]_f\ar[rr]^z && 3 \ar[ur]^b\ar[dr]^c && 4 \ar[dr]_f\ar[rr]^z && \cdots \\
    & 1 \ar[ur]_a && 2 \ar[ur]^e && 1 \ar[ur]_a &, } \quad
    \xymatrix@R=2mm{ \\
    ac=0, \ bd=ce, \ ef=0, \\
    zb=0, \ dz=0. }\]
We can easily check that $\G^{(2)}$ has global dimension $5$ as in Theorem \ref{omakeyon}.
\end{Ex}

The next examples are self-injective algebras of infinite global dimension. We illustrate the results in Section \ref{infinitegldim}.
\begin{Ex}
Let $\L$ be a self-injective Nakayama algebra with $n$ vertices and Loewy length $l+1$, so it is presented with the following cyclic quiver with relations:
\[ \xymatrix@C=6mm{ \cdots\ar[r]^-a& n-1 \ar[r]^-a& 0\ar[r]^-a&1\ar[r]^-a&2\ar[r]^-a&\cdots\ar[r]^-a&n-1\ar[r]^-a&0\ar[r]^-a&\cdots }, \quad a^{l+1}=0. \]
Its Yoneda algebra $\G$ is $1$-Gorenstein by Theorem \ref{2IG}. 
By Theorem \ref{tilting2}, we have a triangle equivalence $\rD(\md\sG) \simeq \sGP^\Z\!\G$, with the stable Auslander algebra $\sG$ of $\L$ presented by the following quiver with mesh relations:
\[ \xymatrix@!R=0.5mm@!C=0.5mm{
    \cdots\ar[dr]&& (n-1,l)\ar[dr]&& (0,l)\ar[dr]&& (1,l)\ar[dr]&& \cdots\ar[dr]&& (n-1,l) \ar[dr]\\
    &\cdots\ar[dr]\ar[ur]&&\cdots\ar[dr]\ar[ur]&&\cdots\ar[dr]\ar[ur]&&\cdots\ar[dr]\ar[ur]&&\cdots\ar[dr]\ar[ur]&& \cdots \\
    \cdots\ar[dr]\ar[ur]&& (0,2)\ar[dr]\ar[ur]&& (1,2)\ar[dr]\ar[ur]&& \cdots\ar[dr]\ar[ur]&& (n-1,2)\ar[dr]\ar[ur]&& (0,2)\ar[dr]\ar[ur] \\
    &(0,1)\ar[ur]&& (1,1)\ar[ur]&& \cdots\ar[ur]&& (n-1,1)\ar[ur]&& (0,1)\ar[ur]&& \cdots.} \]
\end{Ex}

\begin{Ex}
We specialize the above example to $n=1$; Let $\L=k[x]/(x^{l+1})$. Then the Yoneda algebra $\G$ of $\L$ is presented by the following graded quiver with relations:
\[ \xymatrix@R=5mm@C=12mm{ \\
    1 \ar@<2pt>[r]^a\ar@/^24pt/[rrrr]<2pt>^x& 2 \ar@<2pt>[l]^b\ar@<2pt>[r]^a\ar@/^12pt/[rr]<2pt>^x& \cdots\ar@<2pt>[l]^b\ar@<2pt>[r]^a& l-1\ar@<2pt>[l]^b\ar@<2pt>[r]^a\ar@/^12pt/[ll]<2pt>^x& l\ar@<2pt>[l]^b\ar@<2pt>[r]^p\ar@/^24pt/[llll]<2pt>^x & l+1\ar@<2pt>[l]^q \\
    &&&&& \qquad, } \]
\[ \deg a= \deg b= \deg p =\deg q =0, \ \deg x=1, \]
\[ ab=ba, \ ba=pq, \quad \ ax=xb, \ bx=xa, \quad xp=0, \ qx=0. \]
By Theorem \ref{2IG}, this is a Gorenstein algebra of dimension $1$. Also, the stable Auslander algebra of $\L$ is the preprojective algebra $\Pi$ of type $A_l$. Therefore by Theorem \ref{tilting2}, we have a triangle equivalence $\rD(\md\Pi) \simeq \sCM^\Z\!\G$.

\renewcommand{\arraystretch}{0.7}
\def\hhh{\hspace{5pt}}
\def\big{\begin{array}{c} 1 \\ 1 \\ 1 \\ \cdot \\ \cdot \end{array}}
\def\chiisai{\begin{array}{c} 1 \\ 2 \end{array}}
We take a closer look for the case $l=1$; Let $\L=k[x]/(x^2)$. This is a self-injective algebra with two indecomposable modules, namely the simple module $S$ and the free module $\L$. Its Yoneda algebra $\G$ is $1$-Gorenstein by Theorem \ref{2IG}. It is presented by the following graded quiver with relations:
\[ \xymatrix{
    1  \ar@(ul,dl)[]_x \ar@<2pt>[r]^p & 2\ar@<2pt>[l]^q }, \quad \deg p=\deg q=0, \ \deg x=1, \quad pq=0, \ xp=0, \ qx=0. \] 
The composition series of of the projective $\G$-modules look
\[ \G=\begin{array}{l@{\hhh}l@{\hhh}l} &1&\\2&&1'\\&&1''\\&&\cdot\\&&\cdot \end{array} \oplus \begin{array}{c} 2\\1\\2 \end{array} \]
Now, by the triangle $S \to \L \to S \to S[1]$ in $\rD(\L)$, we get three non-projective Cohen-Macaulay $\G$-modules;
\[ \begin{array}{c} 2 \end{array}, \quad \big, \quad \chiisai. \]
By the triangle equivalence $\rD(\md k) \simeq \sGP^\Z\!\G$ of Theorem \ref{tilting2}, we see that these are all the graded Cohen-Macaulay $\G$-modules up to degree shift.
\end{Ex}

We end this paper by the following example of $\L$ which is not even Gorenstein.
\begin{Ex}
Let $\L$ be an algebra presented by the following quiver with relations:
\[ \xymatrix{
    \circ \ar@(ul,dl)[]_a \ar[r]^b & \circ }, \quad a^2=0, \ ab=0. \]
This is of finite representation type and its Yoneda algebra $\G$ is presented by the graded quiver
\[ \xymatrix@R=3mm{
    && 2\ar@{.}[rr]\ar[dr]|<<<c&& 5\ar[dlll]|>>>>>>>>>>>>y&&&& \cdots \\
    \ar@{.}[r]& 1\ar@{.}[rr]\ar[dr]_b\ar[ur]^a&& 4 \ar[ur]_e\ar[dr]^f&& 3\ar@{.}[rr]\ar[dr]|<<<d&& 1\ar@{.}[r]\ar[ur]^a\ar[dlll]|>>>>>>>>>>>>x\ar[dr]_b& \\
    \cdots\ar[ur]_f\ar[dr]^e&& 3\ar@{.}[rr]\ar[ur]|<<<d&& 1\ar@{.}[rr]\ar[ulll]|>>>>>>>>>>>>x\ar[ur]^b\ar[dr]_a&& 4\ar[ur]_f\ar[dr]^e&& \cdots \\
    \ar@{.}[r]& 5&&&& 2\ar@{.}[rr]\ar[ur]|<<<c&& 5\ar[ulll]|>>>>>>>>>>>>y & \quad,} \]
\[ \deg a= \cdots =\deg f=0, \quad \deg x= \deg y =1, \]
with relations given as follows:
\begin{itemize}
\item the mesh relations along the dotted lines.
\item a commutativity relation $fx=ey$.
\item any path of degree $1$ which strictly contains the AR sequences $1 \xrightarrow{x} 1 \xrightarrow{b} 3$, $(4 \xrightarrow{f} 1 \xrightarrow{x} 1)=(4 \xrightarrow{e} 5 \xrightarrow{y} 1)$, or $5 \xrightarrow{y} 1 \xrightarrow{a} 2$ is zero.
\item vanishing of extensions: $xa=0, \ yb=0$
\end{itemize}
The stable Auslander algebra of $\L$ is the path algebra of non-linearly oriented type $A_3$. Therefore, by Theorem \ref{tilting2}, we deduce a triangle equivalence $\sGP^\Z\!\G \simeq \rD(A_3)$.
\end{Ex} 

\begin{appendix}
\renewcommand{\theequation}{\Alph{section}.\arabic{equation}}
\section{Existence of realization functors}
In this appendix we give a proof of the existence of a realization functor for algebraic triangulated categories. This is well-known to experts, but we include a proof for the convenience of the reader. Similar constructions are done in \cite[Section 3]{BGSc}, \cite[Section 3]{ChR}.
In the first subsection we recall the notion of f-categories and in the second subsection we give a formulation of a filtered derived category of a DG category. \subsection{F-categories and realization functors}
Let $\T$ be a triangulated category with a $t$-structure, and let $\H$ be its heart. It is a classical problem in the theory of triangulated categories to compare $\T$ with the derived category $\rD(\H)$. However, it is not clear whether we can extend the inclusion $\H \subset \T$ to a triangle functor $\rD(\H) \to \T$ because of the lack of the functoriality of mapping cones in a triangulated category. It was shown in \cite{BBD} that a realization functor $\rD(\H) \to \T$ exists under the assumption that $\T$ has an `f-category' over itself.

We first recall the notion of f-categories (or filtered enhancements) of a given triangulated category.
\begin{Def}[{\cite[Appendix]{B}\cite[3.1]{PV}}]
Let $\T$ be a triangulated category. An {\it f-category} over $\T$ is a triangulated category $\X$ endowed with full triangulated subcategories $\X(\geq\!0)$, $\X(\leq\!0)$ such that $\T \simeq \X(\geq\!0)\cap\X(\leq\!0)$, an autoequivalence $s$ of $\X$, and a natural transformation $\a \colon 1_\X \to s$, satisfying the following conditions, where we put $\X(\geq\!n)=s^n\X(\geq\!0)$ and $\X(\leq\!n)=s^n\X(\leq\!0)$.
\begin{enumerate}
\item\label{one} $\Hom_\X(\X(\geq\!1), \X(\leq\!0))=0$.
\item\label{two} For any $X \in \X$, there exists a triangle
\[ \xymatrix{
   X_{\geq 1} \ar[r]^{a_X} & X \ar[r]^{b_X} & X_{\leq 0} \ar[r]^{c_X} &X_{\geq 1}[1] } \]
with $X_{\geq 1} \in \X(\geq\!1)$ and $X_{\leq 0} \in \X(\leq\!0)$.
\item\label{three} $\X=\bigcup_{n \in \Z}\X(\geq\!n)=\bigcup_{n \in \Z}\X(\leq\!n)$.
\item\label{four} $\X(\geq\!1) \subset \X(\geq\!0)$ and $\X(\leq\!0) \subset \X(\leq\!1)$.
\item\label{five} $\a_X=s(\a_{s^{-1}X})$ in $\Hom_\X(X,sX)$.
\item\label{six} $\Hom_\X(sX,Y) \xrightarrow{\a_X\cdot} \Hom_\X(X,Y)$ and $\Hom_\X(X,s^{-1}Y) \xrightarrow{\cdot \a_{s^{-1}Y}} \Hom_\X(X,Y)$ are isomorphisms for all $X \in \X(\leq\!0)$ and $Y \in \X(\geq\!1)$.
\item\label{nana} For any morphism $f \colon X \to Y$ in $\X$, the commutative diagram of triangles
\[ \xymatrix{
   X_{\geq 1} \ar[r]^{a_X}\ar[d]_{f_{\geq1}\cdot\a_{Y_{\geq1}}} & X \ar[r]^{b_X}\ar[d]_{f\cdot\a_Y} & X_{\leq 0} \ar[r]^{c_X}\ar[d]^{f_{\leq0}\cdot\a_{Y_{\leq0}}} & X_{\geq 1}[1]\ar[d]^{(f_{\geq1}\cdot\a_{Y_{\geq1}})[1]} \\
   s(Y_{\geq 1}) \ar[r]_{s(a_Y)} & sY \ar[r]_{s(b_Y)} &  s(Y_{\leq 0}) \ar[r]_{s(c_Y)} & s(Y_{\geq 1})[1] } \]
can be completed to a $3 \times 3$ diagram of triangles. Here, $f_{\geq1}$ and $f_{\leq0}$ are the truncations of $f$ with respect to the stable $t$-structure $(\X(\geq\!1), \X(\leq\!0))$
\end{enumerate}
\end{Def}

\begin{Rem}
We require the additional axiom (\ref{nana}) from \cite{Sch}\cite[3.10]{PV}, which ensures that the realization functor is triangulated, see \cite[Appendix]{PV}.
\end{Rem}

The following result shows that the existence of an f-category over a given triangulated category assures the existence of a realization functor.
\begin{Thm}[{\cite[Section 3.1]{BBD}\cite[Appendix]{B}\cite[3.11]{PV}}]\label{realization}
Let $\T$ be a triangulated category with a $t$-structure with heart $\H$. Assume there is an f-category over $\T$. Then, the following hold.
\begin{enumerate}
\item There exists a triangle functor $F \colon \rD(\H) \to \T$ extending the inclusion $\H \subset \T$.
\item $F$ induces isomorphisms $\Hom_{\rD(\H)}(X,Y[n]) \to \Hom_\T(FX,FY[n])$ for all $X, Y \in \H$ and $n \leq 1$.
\item\label{tfae} The following are equivalent:
\begin{enumerate}
\item $F$ is fully faithful.
\item\label{eff} For any $X, Y \in \H$, $n \geq 2$, and a morphism $X \to Y[n]$ in $\T$, there exists an epimorphism $P \to X$ in $\H$ such that the composition $P \to X \to Y[n]$ is zero.
\item\label{coeff} For any $X, Y \in \H$, $n \geq 2$, and a morphism $X \to Y[n]$ in $\T$, there exists a monomorphism $Y \to I$ in $\H$ such that the composition $X \to Y[n] \to I[n]$ is zero.
\end{enumerate}
\end{enumerate}
\end{Thm}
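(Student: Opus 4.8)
The plan is to follow the construction of Beilinson--Bernstein--Deligne \cite{BBD}, in the streamlined form of \cite{PV}. Write $\H$ for the heart of the given $t$-structure on $\T$. First I would glue the $t$-structures on $\X$: by axioms (\ref{one}), (\ref{two}), (\ref{four}) each pair $(\X(\geq\!n+1),\X(\leq\!n))$ is a stable $t$-structure on $\X$, and combining these filtrations with the $t$-structure carried by every graded piece $\X(\geq\!n)\cap\X(\leq\!n)\simeq\T$, the gluing lemma for $t$-structures \cite{BBD} (applied inductively in $n$, which terminates on each object thanks to axiom (\ref{three})) produces a $t$-structure $(\X^{\leq0},\X^{\geq0})$ on $\X$; let $\widetilde\H$ be its heart. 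Next, using axioms (\ref{two})--(\ref{six}) — in particular the adjunction isomorphisms of axiom (\ref{six}) — I would construct the forgetful triangle functor $\omega\colon\X\to\T$, characterised up to isomorphism by $\omega|_\T=1_\T$ and $\omega\circ s\simeq\omega$, together with the graded functors $\gr^n\colon\X\to\T$. Each $\gr^n$ is $t$-exact, so it restricts to a functor $\widetilde\H\to\H$, with $\gr^nX=0$ for all but finitely many $n$ when $X\in\widetilde\H$; the connecting morphisms $\gr^nX\to(\gr^{n-1}X)[1]$ arising from the octahedra relating successive truncations make $(\gr^\bullet X,\partial)$ a bounded complex over $\H$. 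The first substantial step is to upgrade this to an equivalence $\widetilde\H\xrightarrow{\ \simeq\ }\rC(\H)$ onto the category of bounded complexes over $\H$.

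Granting this, I would define the realization functor $F\colon\rD(\H)\to\T$ by representing $M\in\rD(\H)$ by a bounded complex $M^\bullet\in\rC(\H)\simeq\widetilde\H\subset\X$ and putting $F(M)=\omega(M^\bullet)$. Because $\omega$ commutes with the truncation functors, the cohomology objects of $\omega(M^\bullet)$ for the $t$-structure on $\T$ are the cohomologies of the complex $M^\bullet$; hence $\omega$ sends quasi-isomorphisms to isomorphisms, and since $\rD(\H)$ is the localization of $\rC(\H)$ at quasi-isomorphisms, $F$ is well defined. Checking that $F$ is a triangle functor is exactly the role of axiom (\ref{nana}): it guarantees that the mapping-cone construction inside $\widetilde\H$ is carried by $\omega$ to distinguished triangles of $\T$ with the required compatibilities. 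Since $\H\subset\widetilde\H$ as complexes concentrated in degree $0$ and $\omega$ is the identity on $\H\subset\T$, the functor $F$ extends the inclusion, which is (1). For (2), for $n\leq0$ and for $n=1$ the map $\Hom_{\rD(\H)}(X,Y[n])\to\Hom_\T(FX,FY[n])$ is identified with the evident isomorphism on $\Hom$-spaces, resp. on $\Ext^1$ computed by Yoneda extensions (which $F$ preserves, being a triangle functor extending $\H$), while both sides vanish for $n<0$.

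For (3), the equivalence (\ref{eff})$\Leftrightarrow$(\ref{coeff}) is formal: since $\X^{\op}$ is an f-category over $\T^{\op}$ with heart $\H^{\op}$, running the construction there interchanges the two conditions. Next, as $F$ is a triangle functor and $\rD(\H)=\thick\H$, a standard dévissage on cohomological amplitude (five lemma with the truncation triangles) shows that $F$ is fully faithful if and only if $\Hom_{\rD(\H)}(X,Y[n])\to\Hom_\T(FX,FY[n])$ is bijective for all $X,Y\in\H$ and all $n\in\Z$; by (2) only the range $n\geq2$ is at issue. I would settle this by induction on $n$: given $X\in\H$, choose an epimorphism $P\twoheadrightarrow X$ in $\H$ with kernel $K$, so that the triangle $K\to P\to X\to K[1]$ in $\T$ yields long exact sequences computing $\Ext^n_\H(X,Y)$ and $\Hom_\T(X,Y[n])$ simultaneously; the inductive hypothesis matches the terms involving $P$ and $K$, and hypothesis (\ref{eff}) — every morphism $X\to Y[n]$ becomes zero after precomposition with some epimorphism onto $X$ — is precisely what forces the connecting maps $\Hom_\T(K,Y[n-1])\to\Hom_\T(X,Y[n])$ to be surjective in the way needed to pass from degree $n-1$ to degree $n$.

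The main obstacle is the pair of facts that make the appendix axioms earn their keep: that the heart $\widetilde\H$ of the glued $t$-structure is equivalent to $\rC(\H)$, and that $\omega$ transports the functorial cone construction on $\widetilde\H$ to genuine distinguished triangles of $\T$. This is where the non-functoriality of cones in a triangulated category is circumvented, and axioms (\ref{five})--(\ref{nana}) are tailored exactly for it; by comparison, the gluing of $t$-structures, the computation of the cohomology of $\omega(M^\bullet)$, and the $\Ext$-bookkeeping in part (3) are routine.
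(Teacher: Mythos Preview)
The paper does not give its own proof of this statement: Theorem~\ref{realization} is quoted from the literature with citations to \cite{BBD}, \cite{B}, and \cite{PV}, and no argument is supplied. Your proposal is a faithful outline of precisely that cited construction --- gluing $t$-structures on the f-category $\X$ to obtain a heart $\widetilde\H\simeq\rC(\H)$, building the forgetful functor $\omega$, defining $F$ as $\omega$ on $\widetilde\H$, and invoking axiom~(\ref{nana}) for the triangle-functor structure --- so there is nothing to compare against except the references themselves, with which your sketch agrees.

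One small remark on part~(3): in the induction step for (\ref{eff})$\Rightarrow$(a) you write ``choose an epimorphism $P\twoheadrightarrow X$'', but condition~(\ref{eff}) only produces such a $P$ depending on the given morphism $X\to Y[n]$, not a single $P$ working for all of them at once. The argument still goes through --- one first proves surjectivity of $\Hom_{\rD(\H)}(X,Y[n])\to\Hom_\T(X,Y[n])$ using a $P$ adapted to the target morphism, and injectivity by a separate (easier) effaceability argument on the $\rD(\H)$ side --- but your phrasing blurs this distinction. This is the usual proof, and is what the cited sources do.
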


\subsection{Filtered derived category of a DG category}
In this subsection, we formulate the filtered derived category of a DG category, and thereby proving that any algebraic triangulated category admits an f-category over itself. We refer to \cite{Ke94, Ke06} for general backgrounds on DG categories.


We follow the construction in \cite{Ke94} of the derived a DG category, together with that of filtered derived category of an abelian category to obtain the filtered derived category of a DG category.
\begin{Con}
(1)
First we construct $\CF(\A)$, the category of {\it filtered DG $\A$-modules}. The objects are finitely filtered DG $\A$-modules, that is, a DG $A$-module $X$ together with a finite descending filtration
\[ \cdots =X=F_aX \supset F_{a+1}X \supset \cdots \supset F_{b}X \supset F_{b+1}X =0= \cdots \]
of DG submodules of $X$. The morphisms of filtered DG modules are those of DG modules which preserve filtrations. 

We have an automorphism called the {\it shift functor} $[1]$ on $\CF(\A)$ which assigns to each $X \in \CF(\A)$ the DG module $X[1]$ with the filtration $F_i(X[1])=(F_iX)[1]$.

We also have an automorphism $s$ of $\CF(\A)$ called the {\it shift of filtration}, defined for each $X \in \CF(\A)$ by $sX=X$ as a DG module with the filtration $F_i(sX)=F_{i-1}X$. It naturally associates a morphism of functors $\a \colon 1_{\CF(\A)} \to s$ induced by the inclusions in filtrations.

We moreover have the ${\gr}${\it -functor} 
\[ \gr^i \colon \CF(\A) \to {\mathrm{C}}(\A), \quad X \mapsto F_iX/F_{i+1}X \]
to the category ${\mathrm{C}}(\A)$ of DG $\A$-modules for each $i \in \Z$.
A filtered DG module $X$ is {\it filtered acyclic} if each $F_iX$ is an acyclic DG module, or equivalently, each $\gr^iX$ is an acyclic DG module. We denote by $\CF_\ac(\A)$ the full subcategory of $\CF(\A)$ consisting of filtered acyclic DG $\A$-modules.

(2)
Next we construct $\KF(\A)$, the {\it filtered homotopy category} of $\A$. We denote by $\GrF(\A)$ the category of filtered graded $\A$-modules. We have the forgetful functor $\CF(\A) \to \GrF(\A)$. Endow $\CF(\A)$ with the exact structure whose conflations are short exact sequences in $\CF(\A)$ which are split in $\GrF(\A)$.
\begin{Prop}[{cf. \cite[2.2]{Ke94}}]\label{cfa}
$\CF(\A)$ with the above exact structure is a Frobenius category. 
\end{Prop}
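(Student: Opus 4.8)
The plan is to mimic, in the filtered setting, the standard proof that the category of DG modules over a DG category with the degreewise-split exact structure is Frobenius (as in \cite[2.2]{Ke94}), keeping careful track of the filtration. Recall that the exact structure on $\CF(\A)$ has as conflations those short exact sequences that split in the category $\GrF(\A)$ of filtered graded modules. First I would describe the projective-injective objects: these should be the filtered analogues of the contractible DG modules built from the \emph{cones} of identity maps, but now assembled compatibly with a filtration. Concretely, for a filtered graded $\A$-module $G = \bigcup F_i G$ one forms the DG module $\mathrm{IC}(G)$ with underlying graded module $G \oplus G[1]$ and differential the canonical one making it contractible, equipped with the filtration $F_i \mathrm{IC}(G) = F_i G \oplus (F_i G)[1]$; I claim these (and their shifts under $[1]$ and $s$) are exactly the objects that are both projective and injective for the exact structure.

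The key steps, in order, are: (i) verify that $\CF(\A)$ with the stated conflations is indeed an exact category — this is essentially formal since ``split in $\GrF(\A)$'' is a pullback/pushout-stable class of kernel-cokernel pairs, the point being that $\GrF(\A)$ is an additive category and splitness is preserved under the relevant operations; (ii) show that $\mathrm{IC}(G)$ is injective, i.e. $\Hom_{\CF(\A)}(-, \mathrm{IC}(G))$ sends conflations to short exact sequences of abelian groups — here one uses that a conflation $0 \to X' \to X \to X'' \to 0$ is $\GrF(\A)$-split, so a filtered graded splitting $X \to X'$ exists, and combined with a contracting homotopy on $\mathrm{IC}(G)$ one lifts filtered DG maps; dually $\mathrm{IC}(G)$ is projective; (iii) show there are enough injectives: given any $X \in \CF(\A)$, embed it into $\mathrm{IC}(X^{\mathrm{gr}})$, where $X^{\mathrm{gr}}$ is $X$ viewed in $\GrF(\A)$, via the standard map $x \mapsto (x, dx)$, and check this is a conflation (the cokernel is $X[1]$ as a graded module, with the shifted filtration, hence lies in $\GrF(\A)$, and the sequence is $\GrF(\A)$-split); dually embed using a deflation from a projective; (iv) conclude that projectives and injectives coincide (both classes equal $\add$ of the $\mathrm{IC}(G)$'s, up to the shift automorphisms), which together with enough projectives/injectives is exactly the definition of a Frobenius category.

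The main obstacle I expect is step (ii)/(iii): one must be scrupulous that all the maps involved — the contracting homotopy on $\mathrm{IC}(G)$, the embedding $X \hookrightarrow \mathrm{IC}(X^{\mathrm{gr}})$, and the $\GrF(\A)$-splittings — respect the finite filtrations, not merely the grading. In particular the contracting homotopy $h$ on $\mathrm{IC}(G)$ must be filtered (it is, since it is built from the identity of $G$ which is filtered), and the cokernel of $X \hookrightarrow \mathrm{IC}(X^{\mathrm{gr}})$ must be checked to carry the filtration $F_i(X[1]) = (F_i X)[1]$ making the quotient map strict. Since all filtrations are finite and exhaustive, no convergence issues arise, and the verification is a routine but slightly tedious diagram-chase parallel to the unfiltered case; the cited reference \cite[2.2]{Ke94} handles the DG-module computations, and the filtered bookkeeping is the only genuinely new content. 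I would therefore present the proof tersely, emphasizing the description of $\mathrm{IC}(G)$ and the two embeddings, and leave the homotopy-lifting verifications as ``as in \cite[2.2]{Ke94}''.
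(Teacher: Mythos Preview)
Your proposal is correct and follows essentially the same approach as the paper. The only cosmetic difference is that the paper packages your step (ii) as an adjunction: it proves $\Hom_{\GrF(\A)}(X,Y)\simeq\Hom_{\CF(\A)}(X,IY)$ directly (your $\mathrm{IC}$ is the paper's $I$), so injectivity of $IY$ follows immediately from the forgetful functor being exact on conflations, rather than via an explicit homotopy-lifting argument; otherwise the objects, the embedding $i_X=(1,d)\colon X\to IX$ with cokernel $X[1]$, and the dual construction of projectives are identical to what you describe.
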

Let $X$ be a graded $\A$-module. Following \cite[2.2]{Ke94}, we define the DG module $IX$ as follows;
\begin{itemize}
\item $(IX)(A)=X(A) \oplus (X[1])(A)$ as a graded abelian group for each $A \in \A$.
\item Make $IX$ into a graded $\A$-module by setting $a(x,y)=(ax,(da)x+(-1)^pay)$ for each morphism $a$ of degree $p$.
\item Make $IX$ into a DG $\A$-module by setting $\left( \begin{smallmatrix} 0 & 0 \\ 1 & 0 \end{smallmatrix}\right)$ as the differential.
\end{itemize}
For $X \in \GrF(\A)$, we give the filtration on $IX$ by $F_i(IX)=I(F_iX)$, which yields the functor $I \colon \GrF(\A) \to \CF(\A)$. The following lemma shows that $I$ is the right adjoint of the forgetful functor $\CF(\A) \to \GrF(\A)$.
\begin{Lem}\label{adj}
For each $X \in \CF(\A)$ and $Y \in \GrF(\A)$, we have an isomorphism
\[ \Hom_{\GrF(\A)}(X,Y) \simeq \Hom_{\CF(\A)}(X,IY). \]
\end{Lem}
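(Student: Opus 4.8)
The plan is to prove that $I \colon \GrF(\A) \to \CF(\A)$ is right adjoint to the forgetful functor $\CF(\A) \to \GrF(\A)$, the displayed isomorphism being the resulting adjunction bijection; this is the filtered refinement of Keller's construction \cite[2.2]{Ke94}, so most of the work is a routine translation, the only genuinely new point being compatibility with the filtrations.

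First I would write down the two maps. In one direction, let $p_Y \colon IY \to Y$ be the canonical projection onto the first summand, $(x,y) \mapsto x$. Using the twisted $\A$-action $a\cdot(x,y)=(ax,(da)x+(-1)^{|a|}ay)$ one checks directly that $p_Y$ is a morphism of graded $\A$-modules, and since $F_i(IY)=I(F_iY)$ it is in fact a morphism in $\GrF(\A)$; composing with $p_Y$ gives
\[ \Phi \colon \Hom_{\CF(\A)}(X,IY) \longrightarrow \Hom_{\GrF(\A)}(X,Y), \qquad g \longmapsto p_Y\circ g . \]
In the other direction, given $f \in \Hom_{\GrF(\A)}(X,Y)$ I would define $\hat f \colon X \to IY$ on homogeneous elements by $\hat f(x)=(f(x),\,\pm f(d_Xx))$, the sign being the standard Koszul sign as in \cite[2.2]{Ke94}, and set $\Psi(f)=\hat f$.

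The verifications then split into three routine checks for $\hat f$: that it is a morphism of graded $\A$-modules, which reduces via the $\A$-linearity of $f$ to the Leibniz rule $d_X(a\cdot x)=(da)\cdot x+(-1)^{|a|}a\cdot d_Xx$ in the DG category $\A$ matched against the twisted $\A$-action on $IY$; that it commutes with the differentials, which is immediate from $d_X^2=0$ and the explicit form of the differential of $IY$ given above; and that it preserves filtrations, which holds because each $F_iX$ is a DG submodule, so $d_X(F_iX)\subseteq F_iX$ and hence $\hat f(F_iX)\subseteq F_iY\oplus(F_iY)[1]=F_i(IY)$. Finally I would check $\Phi$ and $\Psi$ are mutually inverse: $\Phi\Psi=\mathrm{id}$ is clear since $p_Y\circ\hat f=f$, and for $\Psi\Phi=\mathrm{id}$ one writes $g\in\Hom_{\CF(\A)}(X,IY)$ as $g=(g_1,g_2)$ and reads off from the chain-map identity for $g$ (comparing the two components, using the differential of $IY$) that $g_2$ is forced to be $\pm g_1\circ d_X$, so $g=\widehat{g_1}$. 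Naturality in $X$ and in $Y$ is evident from the formulas, which in fact yields the adjunction, though only the bijection is needed below.

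I do not expect a serious obstacle here; the statement is essentially bookkeeping. The one place that requires a little care is the first of the three checks above, verifying that $\hat f$ is compatible with the $\A$-module structures, since this is exactly where the differentials of the morphisms of $\A$ enter through the twist in the definition of $IY$; everything else is formal and identical to the non-filtered setting of \cite[2.2]{Ke94}.
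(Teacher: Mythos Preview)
Your proposal is correct and follows essentially the same approach as the paper. The paper's proof is much terser: it simply records the conflation $X \xrightarrow{i_X} IX \xrightarrow{p_X} X[1]$ with $i_X=(1\ d)$ and states that $f \mapsto i_X\cdot If$ gives the desired isomorphism, which unravels to exactly your formula $\hat f(x)=(f(x),f(d_Xx))$; you have supplied the explicit inverse via the first projection and spelled out the verifications that the paper leaves to the reader.
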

\begin{proof}
For $X \in \CF(\A)$, the construction of $IX$ gives functorial conflations 
\begin{equation}\label{eqa}
X \xrightarrow{i_X} IX \xrightarrow{p_X} X[1],
\end{equation}
where $i_X={\left( \begin{smallmatrix} 1 & d \end{smallmatrix}\right)}$ and $p_X={\left( \begin{smallmatrix} -d \\ 1 \end{smallmatrix}\right)}$. Then the map $f \mapsto i_X\cdot If$ gives a desired isomorphism. 
\end{proof}
\begin{proof}[Proof of Proposition \ref{cfa}]
We deduce from the above lemma that $IY$ is injective in $\CF(\A)$. By the conflations (\ref{eqa}), we see that $\CF(\A)$ has enough injectives.
We can dually construct the projective objects $PX=X[-1] \oplus X$ and the conflations $X[-1] \to PX \to X$. 
The constructions show that the projective and the injective objects coincide.
\end{proof}

Let $\KF(\A)$ be the stable category of $\CF(\A)$, hence $\KF(\A)$ is naturally a triangulated category with suspension $[1]$. 

The shift of filtration $s$ on $\CF(\A)$ clearly takes projectives to projectives in $\CF(\A)$, and induces an automorphism, again denoted by $s$ on $\KF(\A)$ together with a natural transformation $\a \colon 1_{\KF(\A)} \to s$.

The same holds for $\gr$-functors, indeed, $\gr^i(IX)=I(\gr^iX)$. Therefore, there is a family of triangle functors $\gr^i \colon \KF(\A) \to {\mathrm{K}}(\A)$ to the homotopy category $\mathrm{K}(\A)$ of $\A$.
The subcategory of $\KF(\A)$ corresponding to $\CF_\ac(\A)$ is denoted by $\KF_\ac(\A)$, whose objects are again called the filtered acyclic DG modules. Note that $\KF_\ac(\A)$ is a thick subcategory of $\KF(\A)$.

Let $f \colon X \to Y$ be a morphism in $\KF(\A)$. The mapping cone of $f$ in $\KF(\A)$ is, the mapping cone of $f \in {\mathrm{K}}(\A)$ as a DG module, which we denote by $X \oplus Y[1]$, together with the filtration $F_i(X \oplus Y[1])=F_iX \oplus F_iY[1]$.

Let again $f \colon X \to Y$ be a morphism in $\KF(\A)$. We call $f$ a {\it filtered quasi-isomorphism} if it satisfies the following equivalent conditions:
\begin{itemize}
\item $f$ induces quasi-isomorphisms $F_iX \to F_iY$ in ${\mathrm{K}}(\A)$ for all $i$.
\item The mapping cone of $f$ is filtered acyclic.
\item $\gr^i f$ is a quasi-isomorphism in ${\mathrm{K}}(\A)$ for each $i$.
\end{itemize}

(3)
Finally we obtain $\DF(\A)$, the {\it filtered derived category} of $\A$, as the Verdier quotient of $\KF(\A)$ by $\KF_\ac(\A)$, that is, the localization of $\KF(\A)$ with respect to the filtered quasi-isomorphisms.

Since the functors $s \colon \KF(\A) \to \KF(\A) $ and $\gr^i \colon \KF(\A) \to \mathrm{K}(\A)$ take filtered quasi-isomorphisms to (filtered) quasi-isomorphisms, they induce an automorphism $s \colon \DF(\A) \to \DF(\A)$ together with a natural transformation $\a \colon 1_{\DF(\A)} \to s$, and triangle functors $\gr^i \colon \DF(\A) \to {\mathrm{D}}(\A)$ to the derived category $\mathrm{D}(\A)$ of $\A$.
\end{Con}
Set
\begin{equation}\label{eqdf}
\begin{aligned}
\DF(\A)(\geq\!0)&=\{ X \in \DF(\A) \mid \gr^iX=0 \ \text{for} \ i<0 \}, \\
\DF(\A)(\leq\!0)&=\{ X \in \DF(\A) \mid \gr^iX=0 \ \text{for} \ i>0 \}.
\end{aligned}
\end{equation}
These are thick subcategories of $\DF(\A)$. Also define the functor ${\mathrm{D}}(\A) \to \DF(\A)$ by giving the trivial filtration, that is, assign to each DG module $X$ a filtration $\{F_iX \}_i$ by $F_iX=X$ for $i \leq 0$ and $F_iX=0$ for $i>0$. This functor yields an equivalence ${\mathrm{D}}(\A) \to \DF(\A)(\geq\!0) \cap \DF(\A)(\leq\!0)$.

Now we are ready to state the following main result, whose proof is completely analogous to the case of filtered derived categories of abelian categories.
\begin{Thm}
Let $\A$ be a DG category. Then, the filtered derived category $\DF(\A)$ is an f-category over ${\mathrm{D}}(\A)$.
\end{Thm}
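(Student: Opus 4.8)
The plan is to verify the seven axioms of an f-category for $\DF(\A)$ over $\mathrm{D}(\A)$, relying on the constructions already set up: the shift of filtration $s$ with natural transformation $\a \colon 1 \to s$, the $\gr$-functors $\gr^i \colon \DF(\A) \to \mathrm{D}(\A)$, the truncation subcategories $\DF(\A)(\geq\!0)$ and $\DF(\A)(\leq\!0)$ from \eqref{eqdf}, and the equivalence $\mathrm{D}(\A) \simeq \DF(\A)(\geq\!0) \cap \DF(\A)(\leq\!0)$ via the trivial filtration. Axioms \eqref{three} and \eqref{four} are immediate from the definitions: every object of $\CF(\A)$ is finitely filtered, so each $X$ lies in $\DF(\A)(\geq\!n) \cap \DF(\A)(\leq\!m)$ for suitable $n,m$, and $\gr^i X = 0$ for $i<0$ forces $\gr^i X = 0$ for $i < -1$, giving the stated inclusions. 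Axiom \eqref{five} is a formal consequence of the construction of $\a$ from the inclusions $F_{i}X \subset F_{i-1}X$, which are patently compatible with re-indexing the filtration.

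For axiom \eqref{two}, given $X \in \DF(\A)$ with filtration $\cdots = F_a X \supset \cdots \supset F_b X \supset 0$, I would set $X_{\geq 1} = s(F_1 X)$ with its induced filtration (so that it lies in $\DF(\A)(\geq\!1)$) and $X_{\leq 0} = X/F_1 X$ with the quotient filtration (which is concentrated in degrees $\leq 0$), using the short exact sequence $0 \to F_1 X \to X \to X/F_1 X \to 0$, split in $\GrF(\A)$, to produce the required triangle in $\KF(\A)$ and hence in $\DF(\A)$. Axiom \eqref{one}, the vanishing $\Hom_{\DF(\A)}(\X(\geq\!1),\X(\leq\!0)) = 0$, and the isomorphisms in axiom \eqref{six} are the computational heart: one computes morphisms in the Verdier quotient via filtered quasi-isomorphisms and the Frobenius structure on $\CF(\A)$ (Proposition \ref{cfa}), reducing $\Hom$ in $\DF(\A)$ to homotopy classes after replacing the target by a suitable (filtered) fibrant/cofibrant model, and then observes that a DG-module map from something filtered in degrees $\geq 1$ into something filtered in degrees $\leq 0$ must vanish on the nose; for \eqref{six} one checks that multiplication by $\a$ is invertible because, under the degree hypotheses, $F_1 X = X$ or $F_0 Y = 0$ trivializes the relevant filtration step. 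These arguments are parallel to the classical abelian case treated in \cite[Section 3.1]{BBD} and \cite[Appendix]{B}.

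The genuinely delicate point, and the step I expect to be the main obstacle, is axiom \eqref{nana}: producing the $3 \times 3$ diagram of triangles completing the naturality square for the truncation. In a general triangulated category a morphism of triangles need not extend to a $9$-diagram, but here we have the DG enhancement available, so the strategy is to lift everything to the level of $\CF(\A)$: choose honest representatives of $f$, of the truncation triangles (which come from genuine short exact sequences split in $\GrF(\A)$), and of $\a$, and then build the $3 \times 3$ diagram out of an actual diagram of DG modules and split short exact sequences, taking mapping cones functorially at the chain level. The compatibility with filtrations has to be tracked carefully, but since $s$, $[1]$, and the cone construction all interact with the filtration $F_i(X \oplus Y[1]) = F_i X \oplus F_i Y[1]$ in the evident way, the diagram assembles; one then passes to $\DF(\A)$. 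This is exactly the additional verification required by \cite{Sch} and \cite[3.10]{PV}, and I would remark that it goes through mutatis mutandis as in the abelian setting, the DG enhancement playing the role of the chain-level lifts used there.
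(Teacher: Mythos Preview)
Your proposal is correct and follows essentially the same approach as the paper: verify the f-category axioms directly, noting that (\ref{three}), (\ref{four}), (\ref{five}) are immediate and that (\ref{one}), (\ref{two}), (\ref{six}), (\ref{nana}) go through exactly as in Schn\"urer's treatment of the abelian case (the paper simply cites \cite[6.3, 7.4]{Sch} for these rather than sketching them). One small slip: in your construction for axiom (\ref{two}) you write $X_{\geq 1} = s(F_1 X)$, but the shift $s$ is extraneous---taking $F_1 X$ with its induced filtration already places it in $\DF(\A)(\geq\!1)$, whereas applying $s$ would push it into $\DF(\A)(\geq\!2)$.
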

\begin{proof}
Let $s$ be the shift of filtration and $\a \colon 1_{\DF(\A)} \to s$ be the natural transformation constructed above. Then, these data together with the subcategories (\ref{eqdf}) and the equivalence ${\mathrm{D}}(\A) \simeq \DF(\A)(\geq\!0) \cap \DF(\A)(\leq\!0)$ defines an f-category structure. Indeed, the axioms (\ref{three}), (\ref{four}), (\ref{five}) are clear. We refer to \cite[6.3]{Sch} for (\ref{one}), (\ref{two}), (\ref{six}), and to \cite[7.4]{Sch} for (\ref{nana}).
\end{proof}

\begin{Cor}\label{algf}
Any idempotent-complete algebraic triangulated category has an f-category over itself.
\end{Cor}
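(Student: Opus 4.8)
The plan is to deduce Corollary \ref{algf} from the Theorem just proved, by presenting $\T$ inside the derived category of a DG category and then cutting out the appropriate full subcategory of its filtered derived category.

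First I would invoke the structure theory of algebraic triangulated categories: by Keller's work \cite{Ke94}, an idempotent-complete algebraic triangulated category $\T$ is triangle equivalent to a thick subcategory of the derived category ${\mathrm{D}}(\A)$ of a suitable small DG category $\A$ (the idempotent-completeness is what makes this identification, rather than merely an embedding up to direct factors, available). Fixing such an equivalence and regarding $\T \subset {\mathrm{D}}(\A)$, I would set $\X$ to be the full subcategory of $\DF(\A)$ consisting of those filtered DG modules $X$ with $\gr^iX \in \T$ for every $i \in \Z$, and put $\X(\geq\!n)=\X\cap\DF(\A)(\geq\!n)$, $\X(\leq\!n)=\X\cap\DF(\A)(\leq\!n)$, with $s$ and $\a$ the restrictions of the shift of filtration and its natural transformation on $\DF(\A)$.

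The next step is to verify that these data form an f-category over $\T$. Here $\X$ is a triangulated subcategory of $\DF(\A)$ because each $\gr^i \colon \DF(\A)\to{\mathrm{D}}(\A)$ is a triangle functor and $\T$ is a thick subcategory, so $\X$ is closed under $[1]$, cones, and direct summands; the identities $\gr^i(sX)=\gr^{i-1}X$ show $s$ and $\a$ restrict to $\X$ and that $\X(\geq\!n)=s^n\X(\geq\!0)$, $\X(\leq\!n)=s^n\X(\leq\!0)$; and the trivial-filtration functor ${\mathrm{D}}(\A)\to\DF(\A)$ restricts to an equivalence $\T\simeq\X(\geq\!0)\cap\X(\leq\!0)$ since an object with trivial filtration has $\gr^0X=X$ and $\gr^iX=0$ otherwise. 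Axioms (1), (3), (4), (5), (6) then pass to the full subcategory $\X$ immediately from the Theorem. The remaining content is axioms (2) and (7): I would check that the truncation functors of the stable $t$-structure $(\DF(\A)(\geq\!1),\DF(\A)(\leq\!0))$ preserve $\X$, using that $\gr^iX_{\geq1}$ equals $\gr^iX$ for $i\geq1$ and $0$ for $i\leq0$, while $\gr^iX_{\leq0}$ equals $0$ for $i\geq1$ and $\gr^iX$ for $i\leq0$, so every graded piece of $X_{\geq1}$ and $X_{\leq0}$ is either some $\gr^iX\in\T$ or $0$; this gives (2), and for (7) the objects of the $3\times3$ diagram in $\DF(\A)$ are all cones of maps between such truncations, so applying $\gr^i$ keeps everything in $\T$ and the whole diagram lies in $\X$.

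The main obstacle is really the first step: the reduction to a thick subcategory of ${\mathrm{D}}(\A)$, which rests on Keller's embedding theorem for algebraic triangulated categories. Once $\T$ has been presented this way, the rest is routine bookkeeping with the $\gr$-functors, the only point requiring a moment's care being that the truncations of an object of $\X$ stay inside $\X$, which is needed for axioms (2) and (7).
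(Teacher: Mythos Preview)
Your proposal is correct and follows essentially the same route as the paper. The paper's proof invokes \cite[4.3]{Ke94} to present $\T$ as $\per\A$ and then cites \cite[3.8]{PV} for the statement that an f-category over ${\mathrm D}(\A)$ restricts to one over any thick subcategory; you have reproduced the content of that cited restriction result by hand, taking $\X=\{X\in\DF(\A)\mid\gr^iX\in\T\text{ for all }i\}$ and checking the axioms via the $\gr$-functors. The only cosmetic difference is that the paper names the specific thick subcategory $\per\A$, whereas you work with an arbitrary thick subcategory, which is all the argument needs.
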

\begin{proof} By \cite[4.3]{Ke94}, any idempotent-complete algebraic triangulated category is the perfect derived category $\per\A$ a DG category $\A$. By \cite[3.8]{PV}, the f-category $\DF(\A)$ over ${\mathrm{D}}(\A)$ restricts to that over $\per\A$.
\end{proof}

We summarize the important consequence in the following corollary.
\begin{Cor}\label{algreal}
Let $\T$ be an idempotent-complete algebraic triangulated category. Assume $\T$ has a $t$-structure with heart $\H$. Then, there exists a realization functor $\rD(\H) \to \T$.
\end{Cor}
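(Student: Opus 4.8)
The plan is to combine the two results just established, so the argument is very short. First I would invoke Corollary \ref{algf}: since $\T$ is idempotent-complete and algebraic, it admits an f-category over itself. Unwinding this, $\T \simeq \per\A$ for some DG category $\A$ by \cite[4.3]{Ke94}, and by \cite[3.8]{PV} the filtered derived category $\DF(\A)$, which is an f-category over $\mathrm{D}(\A)$, restricts along the inclusion $\per\A \subset \mathrm{D}(\A)$ to an f-category over $\per\A \simeq \T$.

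With an f-category over $\T$ in hand, the second step is simply to feed it into Theorem \ref{realization}. Since $\T$ carries a $t$-structure with heart $\H$ and admits an f-category over itself, part (1) of that theorem produces a triangle functor $F \colon \rD(\H) \to \T$ extending the inclusion $\H \subset \T$; by definition this is a realization functor, which is exactly what is claimed.

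I expect no real obstacle here, since all the substantive work has already been absorbed: the construction of $\DF(\A)$ and the verification of the f-category axioms feed into Corollary \ref{algf}, and the general Beilinson--Bernstein--Deligne machinery (in the form refined by \cite{B,PV}) is recorded in Theorem \ref{realization}. The only subtlety worth flagging in the write-up is the role of idempotent-completeness: algebraicity alone only presents $\T$ as a direct summand of a perfect derived category, and it is the idempotent-completeness that upgrades this to an actual triangle equivalence $\T \simeq \per\A$, which is what allows Corollary \ref{algf} to apply. Everything else is a direct citation.
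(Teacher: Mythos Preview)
Your proposal is correct and matches the paper's own proof exactly: the paper simply states that the result is a direct consequence of Theorem~\ref{realization} and Corollary~\ref{algf}, which is precisely the two-step argument you outline. Your additional unpacking of Corollary~\ref{algf} and the remark on the role of idempotent-completeness are accurate elaborations but not required.
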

\begin{proof} This is a direct consequence of Theorem \ref{realization} and Corollary \ref{algf}.
\end{proof}
\end{appendix}
\thebibliography{BGSo}
\bibitem[AIR]{AIR} C. Amiot, O. Iyama, and I. Reiten, {Stable categories of Cohen-Macaulay modules and cluster categories}, Amer. J. Math, 137 (2015) no.3, 813-857.
\bibitem[AHK]{AHK} L. Angeleri-H\"{u}gel, D. Happel, and H. Krause, {Handbook of tilting theory}, London Mathematical Society Lecture Note Series 332, Cambridge University Press, Cambridge, 2007.
\bibitem[ASS]{ASS} I. Assem, D. Simson and A. Skowro\'nski, {Elements of the representation theory of associative algebras, vol.1}, London Mathematical Society Student Texts 65, Cambridge University Press, Cambridge, 2006.
\bibitem[A]{Au66} M. Auslander, {Coherent functors}, in: Proceedings of the Conference on Categorical Algebra (La Jolla), Springer-Verlag, Berlin-Heidelberg-New York, 1966, 189-231.
\bibitem[AB]{ABr} M. Auslander and M. Bridger, {Stable module theory}, Mem. Amer. Math. Soc. 94, American Mathematical Society, Province, RI, 1969.
\bibitem[AR1]{AR} M. Auslander and I. Reiten, {Stable equivalence of dualizing $R$-varieties}, Adv. Math. 12 (1974) 306-366.
\bibitem[AR2]{AR96} M. Auslander and I. Reiten, {$D\mathrm{Tr}$-periodic modules and functors}, CMS Conference Proceedings 18, American Mathematical Society, Province, RI, (1996) 39-50. 
\bibitem[AS]{AS} M. Auslander and S. O. Smal\o, {Almost split sequences in subcategories}, J. Algebra. 69 (1981) 426-454.
\bibitem[Bei]{B} A. A. Beilinson, {On the derived category of perverse sheaves}, in: K-theory, arithmetic and geometry (Moscow, 1984-1986), 27-41. Lecture Notes in Mathematics, 1289, Springer-Verlag, 1987.
\bibitem[BBD]{BBD} A. A. Beilinson, J. Bernstein, and P. Deligne, {Faisceaux pervers}, in: Analysis and topology on singular spaces I, Luminy, 1981, Ast\'erisque 100 (1982) 5-171.
\bibitem[BGSc]{BGSc} A. A. Beilinson, V. A. Ginsburg, and V. V. Schechtman, {Koszul duality}, J. Geom. Phys. 5 (1988) 317-350.
\bibitem[BGSo]{BGS} A. Beilinson, V. Ginzburg, and W. Soergel, {Koszul duality patterns in representation theory}, J. Amer. Math. Soc. 9 (2) 1996, 473-527.
\bibitem[Ben]{Ben2} D. J. Benson, {Representations and cohomology II}, Cambridge studies in advanced mathematics 31, Cambridge University Press, Cambridge, 1991.
\bibitem[BGG]{BGG} I. N. Bernstein, I. M. Gel'fand and S. I. Gel'fand, {Algebraic bundles over $\mathbb{P}^n$ and problems of linear algebra, Funktsional. Anal. i Prilozhen. 12 (1978), no. 3, 66-67.
\bibitem[BIRS]{BIRSc} A. B. Buan, O. Iyama, I. Reiten, and J. Scott, {Cluster structures for $2$-Calabi-Yau categories and unipotent groups}, Compos. Math. 145 (2009) 1035-1079.
\bibitem[Bu]{Bu} R. O. Buchweitz, {Maximal Cohen-Macaulay modules and Tate-cohomology over Gorenstein rings}, unpublished manuscript.
\bibitem[BIY]{BIY} R. O. Buchweitz, O. Iyama, and K. Yamaura, {Tilting theory for Gorenstein rings in dimension one}, arXiv:1803.05269.
\bibitem[ChR]{ChR} X. W. Chen and C. M. Ringel, {Hereditary triangulated categories}, J. Noncommut. Geom. 12 (2018) 1425-1444.
\bibitem[CuR]{CR} C. W. Curtis and I. Reiner, {Methods of representation theory with applications to finite groups and orders, vol. 1}, Pure and Applied Mathematics, Wiley-Interscience Publication, John Wiley \& Sons, New York, 1981.
\bibitem[DL]{DL} L. Demonet and X. Luo, {Ice quivers with potential associated with triangulations and Cohen-Macaulay modules over orders}, Trans. Amer. Math. Soc. 368 (2016) no. 6, 4257–4293.
\bibitem[EJ]{EJ} E. Enochs and O. Jenda {Relative homological algebra}, De Gruyter Expositions in Mathematics 30, Walter de Gruyter \& Co. Berlin, 2000.
\bibitem[GLS]{GLS} C. Geiss, B. Leclerc, and J. Schr\"oer, {Quivers with relations for symmetrizable Cartan matrices I: Foundations}, Invent. math. 209 (2017) 61-158.
\bibitem[GM1]{GM1} E. L. Green and R. Martinez-Villa, {Koszul and Yoneda algebras}, in: {Representation theory of algebras}, CMS Conference Proceedings 18, Amer. Math. Soc., Providence, RI, (1996) 247-297.
\bibitem[GM2]{GM2} E. L. Green and R. Martinez-Villa, {Koszul and Yoneda algebras II}, in: {Algebras and modules II}, CMS Conference Proceedings 24, Amer. Math. Soc., Providence, RI, (1998) 227-244.
\bibitem[Han]{ha} N. Hanihara, {Auslander correspondence for triangulated categories}, arXiv:1805.07585.
\bibitem[Har]{Har} R. Hartshorne, {Local cohomology}, A seminar given by A. Grothendieck, Harvard University, Fall, 1961. Lecture Notes in Mathematics, No. 41 Springer-Verlag, Berlin-New York 1967 vi+106 pp.
\bibitem[Hap]{Hap} D. Happel, {Triangulated categories in the representation theory of finite dimensional algebras}, London Mathematical Society Lecture Note Series 119, Cambridge University Press, Cambridge, 1988.
\bibitem[HX]{HX} W. Hu and C. Xi, {Derived equivalences for $\Phi$-Auslander-Yoneda algebras}, Trans. Amer. Math. Soc. 365. (11) 2013, 5681-5711.
\bibitem[I1]{Iy07a} O. Iyama, {Higher-dimensional Auslander-Reiten theory on maximal orthogonal subcategories}, Adv. Math. 210 (2007) 22-50.
\bibitem[I2]{Iy} O. Iyama, {Cluster tilting for higher Auslander algebras}, Adv. Math. 226 (2011) 1-61.
\bibitem[I3]{Iy2} O. Iyama, {Tilting Cohen-Macaulay representations}, to appear in the ICM 2018 proceedings, arXiv:1805.05318.
\bibitem[IO]{IO} O. Iyama and S. Oppermann, {Stable categories of higher preprojective algebras}, Adv. Math. 244 (2013), 23-68.
\bibitem[IT]{IT} O. Iyama and R. Takahashi, {Tilting and cluster tilting for quotient singularities}, Math. Ann. 356 (2013), 1065-1105.
\bibitem[IY]{IYo} O. Iyama and Y. Yoshino, {Mutation in triangulated categories and rigid Cohen-Macaulay modules}, Invent. math. 172 (2008) 117-168.
\bibitem[JKS]{JKS}, B. T. Jensen, A. D. King, and X. Su, {A categorification of Grassmannian cluster algebras},	Proc. Lond. Math. Soc. (3) 113 (2016), no. 2, 185–212.
\bibitem[KST]{KST} H. Kajiura, K. Saito, and A. Takahashi, {Matrix factorizations and representations of quivers II: Type ADE case}, Adv. Math. 211 (2007) 327-362.
\bibitem[Ke1]{Ke90} B. Keller, {Chain complexes and stable categories}, Manus. Math. 67 (1990) 379-417.
\bibitem[Ke2]{Ke94} B. Keller, {Deriving DG categories}, Ann. scient. \'Ec. Norm. Sup. (4) 27 (1) (1994) 63-102.
\bibitem[Ke3]{Ke01} B. Keller, {Introduction to $A$-infinity algebras and modules}, Homology, Homotopy, and Applications 3 (2001) 1-35.
\bibitem[Ke4]{Ke05} B. Keller, {On triangulated orbit categories}, Doc. Math. 10 (2005), 551-581.
\bibitem[Ke5]{Ke06} B. Keller, {On differential graded categories}, Proceedings of the International Congress of Mathematicians, vol. 2, Eur. Math. Soc, 2006, 151-190.
\bibitem[KMV]{KMV} B. Keller, D. Murfet, and M. Van den Bergh, {On two examples of Iyama and Yoshino}, Compos. Math. 147 (2011) 591-612.
\bibitem[KR]{KR} B. Keller and I. Reiten, {Cluster tilted algebras are Gorenstein and stably Calabi-Yau}, Adv. Math. 211 (2007) 123-151.
\bibitem[KV]{KV} B.Keller and D.Vossieck, Sous les cat\'egories d\'eriv\'ees, C. R. Acad. Sci. Paris Sér. I Math. 305 (6) (1987) 225-228.
\bibitem[Ki1]{Ki2} Y. Kimura, {Tilting and cluster tilting for preprojective algebras and Coxeter groups}, Int. Math. Res. Not. IMRN 2019, no. 18, 5597–5634.
\bibitem[Ki2]{Ki} Y. Kimura, {Singularity categories of derived categories of hereditary algebras are derived categories}, J. Pure Appl. Algebra 224 (2020), no. 2, 836–859.
\bibitem[Kr]{Kr} H. Krause, {Derived categories, resolutions, and Brown representability}, Interactions between homotopy theory and algebra, Contemp. Math. 436, 101-139, Amer. Math. Soc, Province. RI, 2007.
\bibitem[LW]{LW} G. J. Leuschke and R. Wiegand, {Cohen-Macaulay representations}, vol. 181 of Mathematical Surveys and Monographs, American Mathematical Society, Province, RI, (2012).
\bibitem[LZ]{LZ} M. Lu and B. Zhu, {Singularity categories of Gorenstein monomial algebras}, arXiv:1708.00311.
\bibitem[Ma]{Ma} H. Matsumura, {Commutative ring theory}, Cambridge studies in advanced mathematics 8, Cambridge University Press, Cambridge, 1989.
\bibitem[MYa]{MYa} H. Minamoto and K. Yamaura, {Happel's functor and homologically well-graded Iwanaga-Gorenstein algebras}, arXiv:1811.08036.
\bibitem[MYe]{MY} J. Miyachi and A. Yekutieli, {Derived Picard groups of finite dimensional hereditary algebras}, Compos. Math. 129 (2001) 341-368.
\bibitem[M]{M} G. C. Modoi, {Reasonable triangulated categories have filtered enhancements}, Proc. Amer. Math. Soc. 147 (2019) no. 7, 2761-2773.
\bibitem[MU]{MU} I. Mori and K. Ueyama, {Stable categories of graded maximal Cohen-Macaulay modules over noncommutative quotient singularities}, Adv. Math. 297 (2016) 54-92.
\bibitem[O]{O} D. Orlov, {Triangulated categories of singularities and D-branes in Landau--Ginzburg modules}, Tr. Mat. Inst. Steklova 246 (2004), Algebr. Geom. Metody, Svyazi i Prilozh, 240–262.
\bibitem[PV]{PV} C. Psaroudakis and J. Vit\'oria, {Realisation functors in tilting theory}, Math. Z. 288 (2018) 965-1028.
\bibitem[Ric]{Ric} J. Rickard, {Derived categories and stable equivalence}, J. Pure Appl. Algebra 61 (1989) 303-317.
\bibitem[Rie]{Rie} C. Riedtmann, {Algebren, Darstellungsk\"ocher, Uberlagerugen und zur\"uck}, Comment. Math. Helvetici 55 (1980) 199-224.
\bibitem[Sc]{Sch} O. M. Schn\"urer, {Homotopy categories and idempotent completeness, weight structures and weight complex functors}, arXiv:1107.1227.
\bibitem[Si]{Si} D. Simson, {Linear representations of partially ordered sets and vector space categories}, Algebra, Logic, and Applications, vol. 4, Gordon and Breach Science Publishers, 1992.
\bibitem[SV]{SV} S. Paul Smith and M. Van den Bergh, {Noncommutative quadric surfaces}, J. Noncommut. Geom. 7 (2013) 817–856.
\bibitem[U]{U} K. Ueda, {On graded stable derived categories of isolated Gorenstein quotient singularities}, J. Algebra 352 (2012), 382–391.
\bibitem[Ya]{Ya} K. Yamaura, {Realizing stable categories as derived categories}, Adv. Math. 248 (2013) 784-819.
\bibitem[Yo]{Yo} Y. Yoshino, {Cohen-Macaulay modules over Cohen-Macaulay rings}, London Mathematical Society Lecture Note Series 146, Cambridge University Press, Cambridge, 1990.
\end{document}